\documentclass[11pt]{article}
\usepackage{amssymb,amsmath,amsfonts,mathrsfs,amsthm,epsfig,latexsym,color}
\usepackage{enumitem,geometry}
\usepackage{fourier}
\usepackage[all]{xy}
\usepackage{xcolor}

\geometry{
	top=2.5cm,left=3cm,right=2.5cm,bottom=2.5cm
}

\makeatletter
\newcommand{\subsectionruninhead}{\@startsection{subsection}{2}{0mm}
	{-\baselineskip}{-0mm}{\bf\large}}
\newcommand{\subsubsectionruninhead}{\@startsection{subsubsection}{3}{0mm}
	{-\baselineskip}{-0mm}{\bf\normalsize}}
\makeatother

\newtheorem*{theorem*}{Theorem}
\newtheorem*{proof*}{Proof}
\newtheorem*{proposition*}{Proposition}
\newtheorem*{notation*}{Notation}
\newtheorem*{corollary*}{Corollary}
\newtheorem*{claim*}{Claim}
\newtheorem*{remark*}{Remark}

\newtheorem{theorem}{Theorem}[section]
\newtheorem{proposition}{Proposition}[section]

\newtheorem{corollary}[proposition]{Corollary}
\newtheorem{lemma}[proposition]{Lemma}
\newtheorem{claim}[proposition]{Claim}

\theoremstyle{definition}
\newtheorem{definition}[proposition]{Definition}
\theoremstyle{remark}
\newtheorem{remark}[proposition]{Remark}

\numberwithin{equation}{section}

\def\NN{\mathbb{N}}
\def\RR{\mathbb{R}}

\def\TT{\mathbb{T}}
\def\ZZ{\mathbb{Z}}
\def\tildeL{\tilde{\mathcal{L}}}
\def\tildeF{\tilde{\mathcal{F}}}
\def\e{{\varepsilon}}

\setcounter{tocdepth}{3}

\begin{document}
	\title{Rigidity of stable Lyapunov exponents and integrability for Anosov maps}
	\author{Jinpeng An, ~ Shaobo Gan, ~ Ruihao Gu, ~ Yi Shi}
	\date{\today}
	\maketitle
	
	\begin{abstract}
	Let $f$ be a non-invertible irreducible Anosov map on $d$-torus. We show that if the stable bundle of $f$ is one-dimensional, then $f$ has the integrable unstable bundle, if and only if, every periodic point of $f$ admits the same Lyapunov exponent on the stable bundle with its linearization.  For  higher-dimensional stable bundle case, we get the same result on the assumption that $f$ is a $C^1$-perturbation of a linear Anosov map with real simple Lyapunov spectrum on the stable bundle. In both cases, this implies if $f$ is topologically conjugate to its linearization, then the conjugacy is smooth on the stable bundle.
	\end{abstract}
	
	\section{Introduction}\label{sec:introduction}
	
	 Let $M$ be a $d$-dimensional smooth closed Riemannian manifold. A diffeomorphism $f:M\to M$ is Anosov if there exists a continuous $Df$-invariant splitting $TM=E^s\oplus E^u$ such that $Df$ is uniformly contracting in $E^s$ and $Df$ is uniformly expanding in $E^u$. The classical Stable Manifold Theorem (e.g. \cite{pesinbook}) shows that both $E^s$ and $E^u$ are uniquely integrable. So there are $f$-invariant stable and unstable foliations tangent to $E^s$ and $E^u$ respectively.
	
	 The most well-known example of Anosov diffeomorphisms is a linear automorphism $A\in{\rm GL}_d(\ZZ)$ with all eigenvalues whose absolute values are not equal to $1$. The induced diffeomorphism $A:\TT^d\to\TT^d$ is Anosov.
	 All known Anosov diffeomorphisms are conjugate to affine automorphisms of infra-nilmanifolds. In particular, every Anosov diffeomorphism with ${\rm dim}E^s=1$ or ${\rm dim}E^u=1$ must be supported on $\TT^d$ \cite{Franks70}, and every Anosov diffeomorphism $f:\TT^d\to\TT^d$ is topologically conjugate to its linearization $f_*:\pi_1(\TT^d)\to\pi_1(\TT^d)$ acting on $\TT^d$\cite{Franks69,Manning74}.
	
	 In 1974, Ma\~n\'e and Pugh extended the concept of Anosov diffeomorphisms to non-invertible Anosov maps.
	
	\begin{definition}[\cite{manepugh}]\label{defmanepugh}
		A $C^1$ local diffeomorphism $f:M\to M$ is called  \textit{Anosov map}, if there exists a $Df$-invariant continuous subbundle $E^s\subset TM$ such that it is uniformly $Df$-contracting and its quotient bundle $TM/E^s$ is unformly $Df$-expanding.
	\end{definition}
	
	The set of Anosov maps on $M$ is $C^1$-open in the space $C^r(M)$ which consists of all $C^r$-maps of $M$ with $r\geq1$. All known Anosov maps are conjugate to affine endomorphisms of infra-nilmanifolds.
	
	Differing from Anosov diffeomorphisms,  there is a priori no $Df$-expanding subbundle $E^u\subset TM$ for a non-invertible Anosov map because the negative orbit for a point is not unique. For instance, Przytycki \cite{Przytycki} constructed a class of Anosov maps on torus which has infinitely many expanding directions on certain points. In fact, the set of expanding directions on a certain point in Przytycki's example contains a curve homeomorphic to interval in the $({\rm dim}M-{\rm dim}E^s)$-Grassman space \cite[Theorem 2.15]{Przytycki}.
	In the same paper \cite{Przytycki},
	Przytycki defined Anosov maps in the way of orbit space (see Definition \ref{defprz}) which allowed us to define the unstable bundle along every orbit. However, these unstable bundles are not integrable in general when project on the manifold $M$.

	We say an Anosov map $f$ has an \textit{integrable unstable bundle}, if there exists a continuous $Df$-invariant splitting $TM=E^s\oplus E^u$, such that $Df$ is uniformly contracting on $E^s$ (\textit{stable bundle}) and expanding on $E^u$ (\textit{unstable bundle}). Here $E^u$ is uniquely integrable, see \cite{Przytycki}.
	For example,
	$$
	A_0=\begin{bmatrix}
		3&1\\
		1&1
	\end{bmatrix} :\mathbb{T}^2\to\mathbb{T}^2,
	$$
	is an Anosov map on torus with  integrable unstable bundle.
	
	There are plenty of Anosov maps without integrable unstable bundles.
	Actually, Przytycki \cite[Theorem 2.18]{Przytycki} showed that any non-invertible Anosov map $f$ on any manifold $M$ with non-trivial stable bundle can be $C^1$-approximated by Anosov maps without integrable unstable bundles. Moreover, for every transitive Anosov map without integrable unstable bundle,
	it must have a residual set in the manifold in which every point has infinitely many expanding directions (\cite{tahzibi}).

	\vspace{0.3cm}
	
	In this paper, we give an equivalent characterization for a class of Anosov maps on $d$-torus $\TT^d$ which has integrable unstable bundle.
	
	Let $f:\mathbb{T}^d\to \mathbb{T}^d$ be an Anosov map on torus,
	then $f$ is homotopic to a linear toral map $A=f_*:\pi_1(\TT^d)\to\pi_1(\TT^d)$.
	Here $A$ is also an Anosov map \cite[Theorem 8.1.1]{aoki}  and is called the \textit{linearization} of $f$.
	A toral Anosov map $f$ is called \textit{irreducible}, if its linearization $A\in GL_d(\mathbb{R}) \cap M_d(\mathbb{Z})$ has irreducible characteristic polynomial over $\mathbb{Q}$.

	\begin{theorem}\label{main theorem 1}
		Let $f:\TT^d\to\TT^d$ be a $C^{1+\alpha}$ irreducible non-invertible Anosov map with one-dimensional stable bundle.
		Then $f$ has integrable unstable bundle, if and only if, every periodic point of $f$ admits the same Lyapunov exponent on the stable bundle.
	\end{theorem}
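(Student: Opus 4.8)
Since the characteristic polynomial of $A$ is irreducible over $\mathbb{Q}$ and $\dim E^s=1$, the matrix $A$ has a simple \emph{real} eigenvalue $\lambda_s$ with $0<|\lambda_s|<1$ spanning $E^s_A$, and all of its other eigenvalues have modulus larger than $1$. The stable derivative of $f$ defines an $\alpha$-H\"older cocycle $\phi(x):=\log\|Df|_{E^s_x}\|$ over $f$ -- a genuine function on $\TT^d$, since $E^s$ is defined on $\TT^d$ -- and the periodic-data hypothesis says exactly that $\sum_{i=0}^{n-1}\phi(f^i p)=n\log|\lambda_s|$ whenever $f^n p=p$. By the Liv\v{s}ic theorem for Anosov maps (available because Anosov maps have the shadowing/closing property) this is equivalent to the solvability of the cohomological equation $\phi=\log|\lambda_s|+u\circ f-u$ with $u\in C^{\alpha}(\TT^d,\RR)$. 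So the plan is to prove: $E^u$ is integrable if and only if $\phi$ is cohomologous to the constant $\log|\lambda_s|$.

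For the implication ``cohomologous $\Rightarrow$ integrable'', given such a $u$ I would replace the metric on $E^s$ by $e^{u}$ times the old one; then $f$ contracts every stable leaf by exactly the factor $|\lambda_s|$, so in an arc-length parametrization of the (renormalized) stable leaves, $f$ acts on each leaf as $t\mapsto\lambda_s t$ (pass to $f^2$ if $\lambda_s<0$). The Franks--Manning argument produces a continuous surjection $h:\TT^d\to\TT^d$, homotopic to the identity, with $h\circ f=A\circ h$ and carrying stable leaves of $f$ into stable leaves of $A$; on each stable leaf $h$ is therefore a continuous monotone map commuting with $t\mapsto\lambda_s t$, and since $h$ is at bounded distance from the identity, the leafwise map is forced to be the identity in the arc-length parameter -- in particular $h$ is a $C^{1+\alpha}$ diffeomorphism along every stable leaf and is injective there. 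Because $h(a)=h(b)$ forces $a,b$ onto a common stable leaf (standard expansiveness of Anosov maps), $h$ is then injective, hence a homeomorphism, so $f$ is topologically conjugate to $A$. Finally pass to $\RR^d$: writing $\tilde h\circ F=\bar A\circ\tilde h$ with $\tilde h$ a homeomorphism of $\RR^d$, for any backward orbit $\tilde x$ over a point $x$ the local unstable manifold $W^u_{\mathrm{loc}}(\tilde x)$ lifts to an embedded $(d-1)$-disk, and the identity $\tilde h(\tilde x_{-n})=\bar A^{-n}\tilde h(\tilde x_0)$ together with the fact that points of $W^u_{\mathrm{loc}}(\tilde x)$ have backward orbits asymptotic to that of $\tilde x$ (and $\bar A^{-n}$ contracts precisely the unstable directions) shows $\tilde h$ maps this disk into the affine unstable plane of $\bar A$ through $\tilde h(x)$ -- a plane independent of the chosen backward orbit. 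By invariance of domain $\tilde h(W^u_{\mathrm{loc}}(\tilde x))$ is open in that plane; intersecting the images coming from two backward orbits over the same $x$ shows the two local unstable manifolds coincide on a neighbourhood of $x$, so $E^u_{\tilde x}$ is independent of the backward orbit and $E^u$ is integrable.

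For the converse ``integrable $\Rightarrow$ cohomologous'', when $E^u$ is integrable one has a genuine $Df$-invariant splitting $TM=E^s\oplus E^u$ with both $\mathcal F^s$ and $\mathcal F^u$ well defined on $\TT^d$, and $f$ is topologically conjugate to $A$ by an $h$ carrying $\mathcal F^s_f,\mathcal F^u_f$ to $\mathcal F^s_A,\mathcal F^u_A$; the task is then to extract the stable periodic data by comparison with the affine model, and here non-invertibility must be used in an essential way: the $k=|\det A|$ local inverse branches of $f$, transported through $h$, differ from those of $A$ by the fixed torsion translations in $\ker(A\colon\TT^d\to\TT^d)$, which are independent of the base point, and propagating this self-similar structure along a periodic orbit and its preimages pins down $\|Df^n|_{E^s_p}\|$ to $|\lambda_s|^n$, equivalently makes $\phi$ cohomologous to $\log|\lambda_s|$. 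I expect this converse, together with the conjugacy-upgrade step in the first implication, to be the main obstacle: ``$E^u$ integrable'' is an a priori global geometric condition that cannot be detected from expansion rates along the (possibly high-dimensional) unstable directions -- over which the hypothesis gives no control -- whereas the periodic-data condition is one-dimensional and cohomological, and for Anosov \emph{diffeomorphisms} the converse is simply false. Concretely the delicate points are (i) turning the Franks--Manning semiconjugacy into a homeomorphism that is $C^{1+\alpha}$ along $E^s$, a Liv\v{s}ic-regularity argument adapted to the non-invertible, a-priori-only-semiconjugate setting, and (ii) the invariance-of-domain argument, whose subtlety is that the unstable leaves of $A$ are dense in $\TT^d$, so one must genuinely work on $\RR^d$ and control the displacement of $h$ to keep images inside a single embedded unstable sheet.
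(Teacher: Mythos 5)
Your plan for the implication ``same periodic stable exponent $\Rightarrow$ integrable $E^u$'' has a circular step that breaks it: you invoke a Franks--Manning \emph{semiconjugacy} $h:\TT^d\to\TT^d$ with $h\circ f=A\circ h$, but for a non-special Anosov endomorphism of $\TT^d$ no such map exists. What one has a priori is only the conjugacy $H:\RR^d\to\RR^d$ on the universal cover (Proposition~\ref{lifting conjugate}); any continuous $h$ on $\TT^d$ homotopic to the identity satisfying $h\circ f=A\circ h$ would lift to a bounded-displacement map on $\RR^d$ that, by pushing the difference through $F^{\pm n}$, must coincide with $H$, forcing $H$ to be $\ZZ^d$-periodic --- which by Proposition~\ref{special and conjugate} is exactly the statement that $f$ is special. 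So ``the semiconjugacy $h$ exists on $\TT^d$'' is equivalent to what you want to prove, and the entire content of the sufficiency direction is to force $H$ to descend (this is what Section~\ref{special}, culminating in Proposition~\ref{H is no distance on Fi}, actually does).

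Even after replacing $h$ by $H$ on $\RR^d$, the step ``bounded distance from the identity forces the leafwise map to be the identity in arc-length'' does not go through as stated. The relation $\psi(\mu t)=\mu\psi(t)$ is only a self-commutation on the stable leaf through a fixed point of $F$ (on other leaves $F$ sends $\tildeF^s(x)$ to a different leaf), and $|H-\mathrm{Id}|\le C_0$ in Euclidean distance does \emph{not} bound $|\psi(s)-s|$ in the (affine) arc-length parameter, because $\tildeF^s$ is only quasi-isometrically embedded in $\RR^d$ with multiplicative constants $>1$; the scaling argument genuinely needs a bounded additive error. The paper takes a different route: $H$ is bi-Lipschitz along $\tildeF^s$, hence differentiable somewhere (Rademacher), and the derivative is propagated to a constant by the density of the $\ZZ^d$-translates coming from irreducibility (Propositions~\ref{nmH}--\ref{projection leaf dense}, Proposition~\ref{s-leaf isometric}); the $\ZZ^d$-periodicity of $H$ is then established by a separate linear-growth contradiction (Proposition~\ref{H is no distance on Fi}). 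Your invariance-of-domain finale inherits the same circularity. For the converse direction your sketch is also too vague to evaluate: the paper's proof (Proposition~\ref{periodic data}) is a concrete length-ratio contradiction obtained by pulling a short stable arc near one periodic point back to another along the dense preimage set, with the crucial time-ratio estimate (Claim~\ref{time-rate}) resting on the quantitative density of preimages in Proposition~\ref{preimage dense}. Both that estimate and the $\ZZ^d$-propagation above use \emph{irreducibility} of $A$ essentially --- a hypothesis your proposal never invokes --- and you also implicitly assume the common periodic exponent equals $\log|\lambda_s|$, which is not part of the hypothesis and itself requires proof (Proposition~\ref{s-exp coincide with linear}).
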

	
	\begin{remark}\label{C1 necessity}
		In both cases, the Lyapunov exponent of $f$ on the stable bundle is equal to its linearization $f_*$ on the stable bundle.
		Moreover, the necessity only need $C^1$ regularity of $f$: if a $C^1$ irreducible non-invertible Anosov map $f:\TT^d\to\TT^d$ with ${\rm dim}E^s=1$ has integrable unstable bundle, then every periodic point of $f$ admits the same stable Lyapunov exponent to its linearization $f_*$.
	\end{remark}

     In fact, an Anosov map on torus is conjugate to its linearization if and only if it admits an integrable unstable bundle \cite{specialconjugate}. A direct corollary is the following, which is an interesting example of rigidity in smooth dynamics, in the sense of ``weak equivalence''(topological conjugacy) implies ``strong equivalence''(smooth conjugacy).

    \begin{corollary}\label{cor-1}
    	Let $f:\TT^d\to\TT^d$ be a $C^{1+\alpha}$ irreducible non-invertible Anosov map with one-dimensional stable bundle. If $f$ is topologically conjugate to its linearization $f_*:\TT^d\to\TT^d$, then the conjugacy is $C^{1+\alpha}$-smooth along stable foliation.
    \end{corollary}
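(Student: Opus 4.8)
The plan is to reduce the statement to Theorem~\ref{main theorem 1} and then run a one-dimensional cocycle rigidity argument along the stable foliation. First, recall from \cite{specialconjugate} that a toral Anosov map is topologically conjugate to its linearization if and only if it has an integrable unstable bundle. Thus the hypothesis of the corollary provides a continuous $Df$-invariant splitting $T\TT^d=E^s\oplus E^u$, and Theorem~\ref{main theorem 1} applies: every periodic point of $f$ has the same Lyapunov exponent along $E^s$, which by Remark~\ref{C1 necessity} equals $\log|\mu|$, where $\mu$ is the stable eigenvalue of $A=f_*$ (real and simple, since the characteristic polynomial of $A$ is irreducible over $\mathbb{Q}$ and $\dim E^s=1$). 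Let $h:\TT^d\to\TT^d$ denote the conjugacy, $h\circ f=A\circ h$; it carries the $C^{1+\alpha}$ stable foliation $\mathcal{F}^s_f$ onto the linear foliation $\mathcal{F}^s_A$, so what remains is to prove that $h$ is $C^{1+\alpha}$ in restriction to the leaves of $\mathcal{F}^s_f$.

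Second, I would study the stable Jacobian cocycle. With arc-length parameters fixed on the leaves of $\mathcal{F}^s_f$, the function $\varphi(x)=\log\|Df|_{E^s_x}\|$ is $\alpha$-Hölder on $\TT^d$, and the previous step says precisely that every periodic Birkhoff average of $\varphi$ equals $\log|\mu|$. I would then invoke the Livšic theorem: since $f$ is a transitive non-invertible Anosov map, one runs the closing-lemma argument on the natural extension $(\hat{\TT}^d,\hat f)$ of $(\TT^d,f)$, produces an $\alpha$-Hölder transfer function there, and---using that $\varphi$ depends only on the base point---descends it to an $\alpha$-Hölder $v:\TT^d\to\RR$ with $\varphi=\log|\mu|+v\circ f-v$. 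Integrating $e^{v}$ against arc length along each stable leaf yields a leafwise $C^{1+\alpha}$ change of parameter in which $f$ acts on every leaf of $\mathcal{F}^s_f$ as $x\mapsto\mu x$; on the linear side, $A$ already acts on every leaf of $\mathcal{F}^s_A$ as an affine map of ratio $\mu$.

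Finally, I would upgrade the continuous conjugacy. In these coordinates $h$ restricts on each stable leaf to a homeomorphism of $\RR$ intertwining $x\mapsto\mu x$ with itself, and differentiating $h\circ f^{n}=A^{n}\circ h$ formally along the leaf gives $\log D(f^{n}y)=\log D(y)-\sum_{k=0}^{n-1}\bigl(\varphi(f^{k}y)-\log|\mu|\bigr)$ for the leafwise derivative $D$ of $h$, which singles out the candidate $D=c\,e^{-v}$ on each leaf. Turning this into a genuine statement is a de~la~Llave--Marco--Moriy\'on-type bootstrap: one first establishes the leafwise derivative at periodic points, extends it continuously using the Hölder modulus of $\varphi$ together with the recurrence of generic forward orbits, and then verifies by an absolute-continuity argument that $h$ is the antiderivative of $c\,e^{-v}$ along each leaf---whence $h\in C^{1+\alpha}$ along $\mathcal{F}^s_f$, since $v$ is $\alpha$-Hölder. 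I expect the main obstacle to lie exactly here, together with the Livšic step that feeds it: the closing lemma, the cohomological equation and the equidistribution inputs are classical for Anosov diffeomorphisms and for expanding maps, but the non-invertibility of $f$ forces them to be set up on the natural extension, after which one must still check both that the Hölder transfer function descends to the base and that the bootstrap delivers the precise Hölder regularity of the leafwise derivative rather than mere continuity or $C^1$-smoothness. This is essentially where the work behind Theorem~\ref{main theorem 1} is concentrated.
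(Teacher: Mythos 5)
Your plan follows the same route as the paper: use \cite{specialconjugate} to turn topological conjugacy into speciality, apply the ``necessary'' direction of Theorem~\ref{main theorem 1} (Theorem~\ref{special implies s-rigidity} in the body) to get the common stable exponent $\log|\mu|=\lambda^s(A)$, run the Livschitz theorem for the non-invertible Anosov map (Proposition~\ref{Livsic}) to produce the H\"older transfer function, and integrate it to the affine leafwise metric of Proposition~\ref{affine metric}, under which both $F$ and $A$ act as $x\mapsto\mu x$ on stable leaves. Where you then invoke a de~la~Llave--Marco--Moriy\'on-type bootstrap (derivative at periodic points, propagated via H\"older modulus and recurrence), the paper instead proves directly that $H$ is an isometry for the affine metric (Proposition~\ref{s-leaf isometric} and Corollary~\ref{ key corollary}): one first shows $H$ is bi-Lipschitz along stable leaves, takes a Lebesgue point of differentiability, and propagates the derivative to all points using irreducibility and the $\mathbb{Z}^d$-sequence density statements of Propositions~\ref{nmH}--\ref{projection leaf dense}. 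Both routes establish that $h$ is the antiderivative of a $C^\alpha$ density along the $C^{1+\alpha}$ leaves; the paper's Lebesgue-plus-minimality version is chosen because it generalizes cleanly to the higher-dimensional case where no $\mathcal{F}^s_i$ ($i\ge 2$) exists on $\mathbb{T}^d$ and one must work on the universal cover. Also, the detour through the natural extension in your Livschitz step is unnecessary: since $f$ is special, $E^s$ is defined on $T\mathbb{T}^d$, so $\varphi$ lives on $\mathbb{T}^d$ and Proposition~\ref{Livsic} applies directly.
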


    In particular, we have the following corollary on two torus $\TT^2$.

    \begin{corollary}\label{cor-T2}
    	Let $f:\TT^2\to\TT^2$ be a non-invertible Anosov map, then the following are equivalent:
    	\begin{itemize}
    		\item $f$ has integrable unstable bundle;
    		\item $f$ is topologically conjugate to its linearization $f_*:\TT^2\to\TT^2$.
    	\end{itemize}
        Both of them imply the conjugacy between $f$ and $f_*$ is $C^{1+\alpha}$-smooth along the stable foliation.
    \end{corollary}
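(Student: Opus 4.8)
The plan is to deduce the corollary from Theorem~\ref{main theorem 1}, Corollary~\ref{cor-1}, and the cited equivalence of \cite{specialconjugate} (having an integrable unstable bundle $\Longleftrightarrow$ being topologically conjugate to the linearization). The only point that really needs an argument is that, on $\TT^2$, the two hypotheses required by those results — \emph{irreducibility} and $\dim E^s=1$ — hold automatically; everything else is then a formal consequence.

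First I would dispose of the degenerate case. Write $A=f_*\in M_2(\ZZ)$ for the linearization and let $\lambda_1,\lambda_2$ be its eigenvalues. Non-invertibility gives $|\det A|=|\deg f|\ge 2$, and hyperbolicity of $A$ gives $|\lambda_i|\ne 1$ for $i=1,2$. The possibility $|\lambda_1|,|\lambda_2|<1$ is excluded, since it would force $|\det A|<1$. If $|\lambda_1|,|\lambda_2|>1$ then $f$ is an expanding map of $\TT^2$: here $E^s=0$, the stable foliation is the partition into points, $E^u=T\TT^2$ is trivially integrable, and $f$ is topologically conjugate to $f_*$ by the classical rigidity theorem for expanding maps of tori, so both bullets hold and the final assertion is vacuous. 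Hence I may assume $|\lambda_1|<1<|\lambda_2|$, i.e. $\dim E^s=\dim E^u=1$.

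Next I would check that $f$ is irreducible. The characteristic polynomial $p(x)=x^2-\operatorname{tr}(A)\,x+\det(A)$ is monic of degree $2$ in $\ZZ[x]$; were it reducible over $\mathbb{Q}$ it would have a rational — hence integer — root, but its roots are $\lambda_1,\lambda_2$, and $\lambda_1$ is a nonzero real number (as $\det A\ne 0$) with $|\lambda_1|<1$, which is not an integer. This contradiction shows $p$ is irreducible over $\mathbb{Q}$, so $f$ is an irreducible non-invertible Anosov map with one-dimensional stable bundle, and Theorem~\ref{main theorem 1} and Corollary~\ref{cor-1} apply to it (for the $C^{1+\alpha}$ statement we assume, as there, that $f$ is $C^{1+\alpha}$).

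With these observations the corollary follows: the equivalence of the two bullets is \cite{specialconjugate} applied to $f$, and when $f$ is topologically conjugate to $f_*$, Corollary~\ref{cor-1} — whose hypotheses we have just verified — shows the conjugacy is $C^{1+\alpha}$-smooth along the stable foliation. I do not expect a genuine obstacle here: the whole point is that on $\TT^2$ irreducibility and $\dim E^s=1$ come for free, so no new dynamical input is needed beyond the results already in hand; the only mildly non-obvious step is the arithmetic remark that a non-invertible hyperbolic integer $2\times 2$ matrix cannot have a rational eigenvalue.
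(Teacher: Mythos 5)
Your proof is correct and takes essentially the same approach as the paper, which states Corollary~\ref{cor-T2} as an immediate consequence of Corollary~\ref{cor-1} and Proposition~\ref{special and conjugate} without further argument. The content of your verification --- that a non-invertible hyperbolic $2\times 2$ integer matrix with one-dimensional stable bundle is automatically irreducible over $\mathbb{Q}$, because its contracting eigenvalue has modulus strictly between $0$ and $1$ and hence cannot be an integer root of the monic integral characteristic polynomial --- is exactly the implicit step the paper leaves to the reader.
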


    \begin{remark}\label{Rk-1}
    	Recently, Micena \cite[Theorem 1.10]{micena} shows that for a $C^\infty$ non-invertible Anosov map $f:\TT^2\to\TT^2$ with integrable unstable bundle, if it admits periodic data on the stable and unstable bundle, then $f$ is $C^\infty$-conjugate to $f_*$ (also see \cite[Theorem C]{cantarino2021anosov}). Our result shows that we only need to assume $f$ admits periodic data on the unstable bundle, then it is $C^\infty$-conjugate to $f_*$.
    \end{remark}

    For higher-dimensional stable bundle case, we prove a local rigidity result for linear Anosov maps on $\TT^d$ with real simple spectrum.
    We say a hyperbolic matrix $A\in M_d(\ZZ) \cap GL_d(\RR)$ has \textit{real simple spectrum on stable bundle}, if all eigenvalues on the stable bundle are real and have mutually distinct moduli. Then $A$ admits a dominated splitting
    $$
    T\TT^d=L^s_1\oplus\cdots\oplus L^s_k\oplus L^u,
    $$
    with ${\rm dim}L^s_i=1$ for $i=1,\cdots,k$.

    If a map $f:\TT^d\to\TT^d$ is $C^1$-close to a hyperbolic $A\in M_d(\ZZ) \cap GL_d(\RR)$ with real simple spectrum on stable bundle, then $f$ is Anosov and has $k$-Lyapunov exponents on the stable bundle:
    $$
    \lambda^s_1(p,f)<\lambda^s_2(p,f)<\cdots\cdots<
    \lambda^s_k(p,f)<0,
    \qquad \forall p\in{\rm Per}(f).
    $$
    We say $f$ has \textit{spectral rigidity on  stable bundle} if for every periodic point $p\in{\rm Per}(f)$, it satisfies
    $$
    \lambda^s_i(p,f)=\log|\mu_i|, \qquad i=1,\cdots,k.
    $$
    Here $\mu_i$ is the eigenvalue of $A$ in the eigenspace  $L^s_i$.

    \begin{theorem}\label{main theorem 2}
    	Let $A\in M_d(\ZZ) \cap GL_d(\RR)$ be hyperbolic and irreducible with real simple spectrum on stable bundle. If $A:\TT^d\to\TT^d$ is non-invertible, then for every $f\in C^{1+\alpha}(\TT^d)$ which is $C^1$-close to $A$, $f$ has integrable unstable bundle, if and only if, it has spectral rigidity on  stable bundle.
    \end{theorem}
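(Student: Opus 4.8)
The plan is to separate the two implications, reducing the ``if and only if'' to a combination of Liv\v sic's theorem, a leafwise linearization of the stable dynamics, and the known equivalence \cite{specialconjugate} between having an integrable unstable bundle and being topologically conjugate to the linearization. Since $f$ is $C^1$-close to $A$, it is Anosov and its stable bundle inherits a dominated splitting $E^s=E^s_1\oplus\cdots\oplus E^s_k$ with $\dim E^s_i=1$ and contraction along $E^s_i$ comparable to $|\mu_i|$; each $E^s_i$ is a genuine continuous subbundle of $T\TT^d$, being determined by the forward dynamics. Let $h:\TT^d\to\TT^d$ be the canonical semiconjugacy, $h\circ f=A\circ h$ with $h$ homotopic to the identity and surjective; by \cite{specialconjugate}, $f$ has integrable unstable bundle if and only if $h$ is a homeomorphism, and in any case $h$ restricts to a homeomorphism of each stable leaf of $f$ onto the corresponding linear stable leaf of $A$. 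For the necessity I would argue as in the one-dimensional case (Theorem \ref{main theorem 1} and Remark \ref{C1 necessity}), applied along the dominated flag: if $f$ has integrable unstable bundle then it is topologically conjugate to $A$, the conjugacy intertwines the stable subfoliations, and comparing the derivative cocycles along the consecutive quotient line bundles $E^s_i$ forces $\lambda^s_i(p,f)=\log|\mu_i|$ at every periodic point; only $C^1$ regularity is needed here, and this is the easy implication.

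For the sufficiency, assume spectral rigidity on the stable bundle. Each $\varphi_i(x):=\log\|Df|_{E^s_i(x)}\|$ is H\"older, and by hypothesis every periodic Birkhoff sum of $\varphi_i$ of period $n$ equals $n\log|\mu_i|$; since $f$ is Anosov, Liv\v sic's theorem --- applied on the inverse limit $\TT^d_f=\varprojlim(\TT^d,f)$, where the shift is a genuine Anosov homeomorphism, together with the observation that the resulting transfer function depends only on the present coordinate --- provides H\"older functions $u_i$ on $\TT^d$ with $\varphi_i=\log|\mu_i|+u_i\circ f-u_i$. Rescaling the Finsler structure on each $E^s_i$ by $e^{u_i}$ makes $f$ contract $E^s_i$ with the exact constant rate $|\mu_i|$; combining this with the dominated splitting one linearizes the stable dynamics leafwise, producing a continuous family of homeomorphisms $\Phi_x:W^s_f(x)\to L^s$ with $\Phi_{f(x)}\circ f=(A|_{L^s})\circ\Phi_x$. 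I would build $\Phi_x$ by induction on the length $k$ of the flag --- the base case $k=1$ being the topological linearization of a one-dimensional contraction once its periodic rate has been normalized, i.e.\ the mechanism behind Theorem \ref{main theorem 1} --- and glue the sub-leaf conjugacies by Journ\'e's lemma; possible multiplicative resonances among the $|\mu_i|$ would affect only the smoothness, not the existence, of $\Phi_x$.

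It remains to convert this leafwise normalization into integrability of the unstable bundle, and this is the step I expect to be the main obstacle. On $\TT^d_f$ there is a genuine unstable foliation $\widehat W^u$, a canonical conjugacy $\widehat h:\TT^d_f\to\TT^d_A$ onto the natural extension of $A$, and the identity $h\circ\pi_0=\pi_0^A\circ\widehat h$; integrability of $E^u$ is precisely the statement that $\pi_0(\widehat W^u(\widehat x))$ depends only on $\pi_0(\widehat x)$, equivalently that $h$ is injective. The plan is to feed the normalized stable structure into the graph-transform construction of $\widehat E^u(\widehat x)$ along negative orbits and show, using the domination between $E^s$ and $E^u$, that the graph-transform fixed point no longer depends on the chosen negative orbit, yielding a continuous $Df$-invariant expanding bundle $E^u$ on $\TT^d$. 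The difficulty is that the potential failure of integrability is an unstable-direction phenomenon living in the solenoidal fibers of $\TT^d_f$, on which the stable normalization has no direct grip; one must show that the rigidity of the stable holonomies nevertheless forces these fibers to be flat in the unstable direction, and here irreducibility of $A$ should be used to exclude a residual nontrivial holonomy between the graph-transform limits along different pasts. Carrying this out while keeping the normalization simultaneously compatible with the whole flag $E^s_1,\dots,E^s_k$ is the technical heart of the argument.
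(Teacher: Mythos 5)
There is a genuine gap — you flag it yourself — and it is precisely the step the paper is built around. Your sufficiency sketch gets as far as the paper does in Section~\ref{section affine metric}: Livschitz on the inverse limit gives transfer functions $u_i$ depending only on the present coordinate, and rescaling by $e^{u_i}$ produces an affine metric $d^s_i$ along each $\tildeF^s_i$ making $F$ contract at exactly rate $|\mu_i|$. But the conversion of this leafwise normalization into integrability of $E^u$ is not carried out by graph-transform on the solenoid; your proposed route (feed the normalization into the fixed-point construction of $\widehat E^u$ and argue the limit is past-independent) is the part you admit you cannot do, and it is unclear how the stable normalization would control the unstable graph-transform at all. The paper's mechanism is different and is the real content of Section~\ref{special}: one works on the universal cover with the Franks--Manning conjugacy $H$, shows first that $H$ is an \emph{isometry} along each $\tildeF^s_i$ with respect to $d^s_i$ (Proposition~\ref{s-leaf isometric}, which uses the density of the sequences $n_m\in A^m\ZZ^d$ and the holonomy-invariance of $d^s_i$ to propagate a single derivative $C_0$ everywhere), next upgrades $H(\tildeF^s_{(i,k)})=\tildeL^s_{(i,k)}$ to $H(\tildeF^s_i)=\tildeL^s_i$ by downward induction (Proposition~\ref{s-leaf conjugacy introduction}), then proves the $\tildeF^s_i$ are $\ZZ^d$-periodic (Proposition~\ref{Zd foliation}), and finally shows that the deviation $\tilde d^s_i\bigl(H^{-1}(x+n)-n,\,H^{-1}(x)\bigr)$ is a constant $\alpha$ in $x$; iterating $x\mapsto x+n$ produces deviation $l\alpha$, which must be bounded because $\|H^{-1}-\mathrm{Id}\|$ is bounded and $\tildeF^s_i$ is quasi-isometric, so $\alpha=0$. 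Combined with Proposition~\ref{H is Zd restricted on stable} (the deviation lives entirely inside the stable leaf), this forces $H^{-1}(x+n)=H^{-1}(x)+n$, i.e.\ $H$ descends to $\TT^d$, and $f$ is special by Proposition~\ref{special and conjugate}. Nothing in your plan supplies this boundedness-versus-iteration argument, nor the intermediate facts ($\ZZ^d$-periodicity of $\tildeF^s_i$, isometry of $H$ along each flag line) that make it run.

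The necessity direction is also glossed too quickly. You write that ``comparing the derivative cocycles along the consecutive quotient line bundles forces $\lambda^s_i(p,f)=\log|\mu_i|$,'' but this hides two nontrivial steps: first one must show that all periodic points share the \emph{same} stable spectrum, which in the paper (Proposition~\ref{periodic data}) is a quantitative argument comparing the dwell-time $N_\e$ near one periodic point against the preimage-reaching time $k_\e$ to another, with the lower bound $N_\e/k_\e\geq C_0$ coming from the Minkowski-type density estimate of Proposition~\ref{preimage dense}; and second one must show the conjugacy $h$ actually intertwines the \emph{finest} stable subfoliations $\mathcal{F}^s_i\leftrightarrow\mathcal{L}^s_i$ (Proposition~\ref{su-integrable and leaf-conjugate}), which requires proving joint integrability of $E^s_{(1,i)}\oplus E^u$ (using density of preimages again) and that $h(\mathcal{F}^s_{(1,i)})$ is a linear foliation (using minimality of $\mathcal{L}^u$). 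Neither of these is automatic from ``the conjugacy intertwines the stable subfoliations,'' and without them Proposition~\ref{s-exp coincide with linear} cannot be invoked. In short, your outline has the right scaffolding (Livschitz plus affine metric plus the special$\Leftrightarrow$conjugate equivalence), but the load-bearing arguments in both directions are missing and are replaced by appeals to mechanisms that are either unstated (necessity) or explicitly not worked out (sufficiency).
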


    Since $f$ having integrable unstable bundle is equivalent to $f$ being topologically conjugate to $A$, we have the following corollary.

    \begin{corollary}\label{cor-d}
    	Let $A\in M_d(\ZZ) \cap GL_d(\RR)$ be hyperbolic and irreducible with real simple spectrum on stable bundle. Assume $A:\TT^d\to\TT^d$ is non-invertible and $f\in C^{1+\alpha}(\TT^d)$ is $C^1$-close to $A$. If $f$ is topologically conjugate to $A$, then the conjugacy is $C^{1+\beta}$-smooth  along stable foliation, for some $0<\beta\leq \alpha$.
    \end{corollary}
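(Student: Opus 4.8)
The plan is to combine Theorem \ref{main theorem 2} with a leaf-wise regularity argument along the stable foliation, in the same spirit as the one behind Corollary \ref{cor-1}. Since $f$ is topologically conjugate to $A$, the equivalence recalled before the corollary (from \cite{specialconjugate}) shows that $f$ admits an integrable unstable bundle, and hence genuine $Df$-invariant stable and unstable foliations $W^s,W^u$ on $\TT^d$; Theorem \ref{main theorem 2} then yields spectral rigidity on the stable bundle, $\lambda^s_i(p,f)=\log|\mu_i|$ for all $p\in{\rm Per}(f)$ and $i=1,\dots,k$. Writing $h$ for the conjugacy ($h\circ f=A\circ h$), a lift of $h$ intertwines a lift of $f$ with $A$ and sends stable leaves of $f$ to the affine stable leaves of $A$. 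Because $f$ is $C^1$-close to $A$ and $A$ has real simple stable spectrum, the contracting bundle of $f$ carries (as recalled before Theorem \ref{main theorem 2}) a dominated splitting $E^s=E^s_1\oplus\cdots\oplus E^s_k$ with each $E^s_i$ one-dimensional, and inside each stable leaf an $f$-invariant flag of fast-stable sub-foliations $W^s_1\subset W^s_{12}\subset\cdots\subset W^s_{1\cdots k}=W^s$ tangent to the partial sums $E^s_1\oplus\cdots\oplus E^s_i$; these are uniquely integrable, being sub-bundles of the uniformly contracting bundle. The remaining content of the corollary — that integrability together with spectral rigidity forces $h$ to be $C^{1+\beta}$ along $W^s$ — refines the necessity direction of Theorem \ref{main theorem 2}, and I would establish it as follows.

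Concretely, I would prove $h\in C^{1+\beta}$ along $W^s$ by induction up the flag $W^s_1\subset W^s_{12}\subset\cdots$. The base case is one-dimensional: using the non-stationary normal form of $f$ along $W^s_1$ (in the spirit of Katok--Lewis and Kalinin--Sadovskaya) and the fact that the leaf-wise log-Jacobian cocycle of $f$ along $W^s_1$ has the same periodic sums as the constant $\log|\mu_1|$ (spectral rigidity), a Liv\v{s}ic-type cohomological argument — carried out on the inverse limit / orbit space of \cite{Przytycki} (see Definition \ref{defprz}) to accommodate non-invertibility — shows that the leaf-wise derivative of $h$ is a H\"older coboundary, so $h$ is uniformly $C^{1+\beta_1}$ along $W^s_1$ for some $0<\beta_1\le\alpha$; this also forces $h$ to carry $W^s_1(f)$ to the $\mu_1$-eigendirection foliation of $A$. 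For the inductive step, given $C^{1+\beta_{i-1}}$-regularity along $W^s_{1\cdots i-1}$, I would restrict to a leaf of $W^s_{1\cdots i}$ — foliated by $W^s_{1\cdots i-1}$-leaves — reduce the remaining transverse one-dimensional direction to the same Liv\v{s}ic/normal-form problem with matching periodic data, and then apply a Journ\'e-type lemma to combine this transverse regularity with the inductive leaf-wise regularity, obtaining $h\in C^{1+\beta_i}$ along $W^s_{1\cdots i}$ with $\beta_i\le\beta_{i-1}$. Taking $i=k$ and descending to $\TT^d$ gives $h\in C^{1+\beta}$ along $W^s$, $\beta=\beta_k>0$.

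The hard part, I expect, is the inductive bootstrap rather than the base case. The fast-stable sub-foliations $W^s_{1\cdots i}$ are in general only H\"older transversally, with regularity governed by the domination and bunching constants of $f$, so one must verify they provide the local product structure required by the Journ\'e-type lemma; one must deal with possible resonances among $\log|\mu_1|<\cdots<\log|\mu_k|$, which turn the non-stationary normal forms polynomial rather than linear; and — most importantly — one must control the loss of H\"older exponent at each of the $k$ steps carefully enough that $\beta>0$ survives. This is exactly why the conclusion is $C^{1+\beta}$ for some $0<\beta\le\alpha$, rather than the full $C^{1+\alpha}$ available in the one-dimensional case (Corollary \ref{cor-1}). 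A further, pervasive technicality — shared with the rest of the paper — is that $f$ is non-invertible, so negative iterates and the cocycles built from them are multi-valued on $\TT^d$; this is handled throughout by passing to Przytycki's orbit space \cite{Przytycki}, or equivalently to stable leaves lifted to $\RR^d$, where every object becomes single-valued.
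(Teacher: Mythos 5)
Your overall plan is in the right spirit: reduce to the special/spectral-rigidity situation via Theorem~\ref{main theorem 2} (actually via Theorem~\ref{special implies s-rigidity}), obtain one-dimensional regularity of the conjugacy along each piece of the stable flag, and then glue by Journ\'e's lemma. That matches the paper's broad outline. But there are two places where the proposal elides steps that the paper spends real effort on and that are genuinely non-automatic.

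First, you never actually establish that the conjugacy $h$ preserves the fast-stable flag. You assert that the Liv\v sic/normal-form argument ``also forces $h$ to carry $W^s_1(f)$ to the $\mu_1$-eigendirection foliation of $A$,'' and the inductive step tacitly assumes $h$ carries $W^s_{1\cdots i}(f)$ to the corresponding linear foliation $\mathcal{L}^s_{(1,i)}$. For the weak-stable flag $\mathcal{F}^s_{(i,k)}$ this is indeed essentially free (Proposition~\ref{leaf proposition} item 4), but for the strong-stable flag $\mathcal{F}^s_{(1,i)}$ it is a nontrivial statement, proved in the paper as Proposition~\ref{su-integrable and leaf-conjugate}: one first shows $E^s_{(1,i)}\oplus E^u$ is jointly integrable (Claim~\ref{su-jointlyclaim}, using density of preimage sets and the unstable holonomy), then that $h(\mathcal{F}^s_{(1,i)})$ must be a linear $A$-invariant foliation (Claim~\ref{linearfoliationclaim}, using minimality of $\mathcal{L}^u$ and additivity), and only then identifies it with $\mathcal{L}^s_{(1,i)}$. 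Once both the weak and strong flags are preserved, the one-dimensional $\mathcal{F}^s_i=\mathcal{F}^s_{(1,i)}\cap\mathcal{F}^s_{(i,k)}$ is preserved. A contraction-rate argument alone does not give this, because $h$ is a priori only H\"older, so exact rates can be distorted; the circularity you would face is exactly the reason the paper argues through integrability and minimality rather than through rates.

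Second, the smoothing mechanism you propose---showing ``the leaf-wise derivative of $h$ is a H\"older coboundary'' via normal forms---presupposes a derivative of $h$ along the one-dimensional leaves, which is what one is trying to produce. The paper instead constructs, via Livschitz on the quotient cocycle $\log\|\overline{D}f|_{N}\|$ (Proposition~\ref{quotient bundle}) and Proposition~\ref{affine metric}, an explicit $C^{1+\beta}$ affine metric $d^s_i$ on $\tildeF^s_i$ under which $F$ is strictly affine, and then proves (Proposition~\ref{s-leaf isometric} and Corollary~\ref{ key corollary}) that $H$ is bi-Lipschitz and in fact an isometry from $(\tildeF^s_i,d^s_i)$ to $(\tildeL^s_i,\text{Euclid})$; smoothness of $h$ along $\mathcal{F}^s_i$ follows because an isometry between a $C^{1+\beta}$ metric and the Euclidean one is $C^{1+\beta}$. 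This is also why normal forms never appear: since every $E^s_i$ is one-dimensional, the Livschitz-built density $e^\Psi$ already plays the role of the linearizing coordinate, and resonance issues simply do not arise. Finally, your ``induction up the flag'' is the mirror image of the paper's, which goes down from $\tildeF^s_k$ (automatically preserved, as the weakest stable) to $\tildeF^s_1$ via Proposition~\ref{s-leaf conjugacy introduction}; either direction can be made to work, but only after the flag-preservation step above is supplied. The loss from $\alpha$ to $\beta$ indeed comes from the H\"older exponent of the bundle $E^s_i$ (Remark~\ref{holderbundle}), exactly as you anticipated.
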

    \begin{remark}
    	Here we lose the regularity of conjugacy because the weak stable bundle may only be $C^{\beta}$ continuous for some $0<\beta\leq\alpha$.
    \end{remark}

	We mention that the irreducible condition is necessary for our result. Indeed,
	$$
	A_1=\begin{bmatrix}
		2&1&0\\
		1&1&0\\
		0&0&2
	\end{bmatrix} :\mathbb{T}^3\to\mathbb{T}^3,
	$$
	is a non-invertible Anosov map with one-dimensional stable bundle and integrable unstable bundle, and it can be treated as a product system on $\mathbb{T}^2 \times S^1$.
	Note that one of its factor systems
	$$
	A_2=\begin{bmatrix}
		2&1\\
		1&1
	\end{bmatrix} :\mathbb{T}^2\to\mathbb{T}^2,
	$$
	is an Anosov diffeomorphism. So, we can make a smooth perturbation $f:\mathbb{T}^2\to\mathbb{T}^2$ of $A_2$ which is not smooth conjugate to $A_2$.
	Then the product map
	$$
	F=\begin{bmatrix}
		f&0\\
		0&2
	\end{bmatrix} :\mathbb{T}^3\to\mathbb{T}^3,
	$$
	is still an Anosov map with integrable unstable bundle, but it loses the rigidity of Lyapunov exponents on stable bundle.
	
	\vspace{0.3cm}
	
	We would like to give another view of Theorem \ref{main theorem 1} and Theorem \ref{main theorem 2}.  In \cite{GogolevGuysinsky08}, Gogolev and Guysinsky show that for an Anosov diffeomorphism $g:\TT^3\to\TT^3$ with partially hyperbolic splitting $T\TT^3=E^{ss}\oplus E^{ws}\oplus E^u$, if $g$ has spectral rigidity on the weak stable bundle $E^{ws}$, then $E^{ss}\oplus E^u$ is integrable. See \cite{Gogolevhighdimrigidity,GKS2011}  for higher-dimensional Anosov diffeomorphisms on $\TT^d$.  Conversely, Gan and Shi \cite{ganshirigidity} proved that if $E^{ss}\oplus E^u$ is integrable, then $g$ has spectral rigidity on $E^{ws}$. See \cite{gogolevshi} for higher-dimensional Anosov diffeomorphisms on $\TT^d$.
	
	For a non-invertible Anosov map $f:\TT^d\to\TT^d$ and every $x\in\TT^d$, we can see the preimage set of $x$:
	$$
	{\rm Perimage}(x)=\bigcup_{k\geq0}{\rm P}_k(x)
	\qquad \text{where} \qquad
	{\rm P}_k(x) = \{z\in\mathbb{T}^d|f^k(z)=f^k(x) \}
	$$
	as the strongest stable manifold of $x$. So the stable bundle of $f$ is corresponding to the weak stable bundle $E^{ws}$ of the Anosov diffeomorphism $g$. It is clear that $f$ has integrable unstable bundle implies for every $x\in\TT^d$, the unstable bundle of $x$ is independent of the choice of negative orbits of $x$. So we can see the unstable bundle is jointly integrable with the strongest stable bundle, which is corresponding to the case that $E^{ss}\oplus E^u$ is integrable for the Anosov diffeomorphism $g$. Thus we can expect the Anosov map has some rigidity on the stable bundle.
	
	\vskip3mm

	 The regularity of conjugacy for Anosov diffeomorphisms under the assumption of rigidity for Lyapunov exponents of periodic points has been extensively studied by many researchers e.g. \cite{delallave,Gogolevhighdimrigidity,GKS2011,Gogolev2017}. Recently, there are elegant works about the smooth conjugacy for conservative Anosov diffeomorphisms under the assumption of rigidity for Lyapunov exponents with respect to Lebesgue measures e.g. \cite{Yangjiagangradu,GogolevKalinin}. Our Corollary \ref{cor-1} and Corollary \ref{cor-d} show that we only need to assume spectral rigidity on the unstable bundle to get smooth conjugacy for non-invertible Anosov map with integrable unstable bundles, see Remark \ref{Rk-1} and \cite{micena}.
	
	 Finally, we would like to mention that Anosov maps on $\TT^2$ are a special class of partially hyperbolic maps on surfaces. A series of impressive results on SRB measures and statistical properties for partially hyperbolic maps on surfaces have been obtained, see \cite{tsujiiacta,liverani2,liverani1}. Meanwhile, the classification of partially hyperbolic endomorphisms on $\TT^2$ up to leaf conjugacy has  also been studied, see \cite{HeShiWang, Hammerlindl-Hall-21, Hammerlindl-Hall-22}. It will be interesting to classify all partially hyperbolic maps with integrable unstable bundle on surfaces.

	\vskip 0.5 \baselineskip
	
	\noindent {\bf Organization of this paper:}
	In section \ref{sec:preliminaries}, we recall some general properties of Anosov maps and give some useful properties on the assumptions of Theorem \ref{main theorem 1} and Theorem \ref{main theorem 2}.
    In section \ref{s-rigidity}, we prove the "necessary" parts of Theorem \ref{main theorem 1} and Theorem \ref{main theorem 2} with $C^1$ regularity, which state that
    the existence of integrable unstable bundle implies the spectral rigidity on stable bundle.
    In section \ref{section affine metric}, on the assumption of spectral  rigidity on stable bundle, we endow an affine metric on each leaf of the lifting stable foliations, which will be useful for the proof of sufficient  parts  of our theorems.
	In section \ref{special}, we prove the "sufficient" parts of Theorem \ref{main theorem 1} and Theorem \ref{main theorem 2}, which state that the rigidity of periodic stable Lyapunov spectrums implies the existence of integrable unstable bundle and we  also prove Corollary \ref{cor-1} and Corollary \ref{cor-d} in this section.
	
	\vskip 0.5 \baselineskip
	
	\noindent {\bf Acknowledgements:}  S. Gan was partially supported by NSFC (11831001, 12161141002) and National Key R\&D Program of China (2020YFE0204200). Y. Shi was partially supported by National Key R\&D Program of China (2021YFA1001900) and NSFC (12071007, 11831001, 12090015).
	
	\section{Preliminaries}\label{sec:preliminaries}
	
	For short, an  Anosov map $f:M\to M$  is called \textit{special},  if it has the integrable unstable bundle.
	\subsection{Global properties}
	
	For studying an Anosov map, one can lift it to the universal cover.  In fact,
	Ma$\rm \tilde{n} \acute{e}$ and Pugh proved the following proposition which allows us to observe the dynamics on the universal cover.
	
	\begin{proposition}[\cite{manepugh}]\label{lifting Anosov}
		Let $\tilde{M}$ be the universal cover of $M$ and $F:\tilde{M}\to \tilde{M}$ be a lift of $f:M\to M$.
		Then $f$ is an Anosov map if and only if $F$ is an Anosov diffeomorphism.
	\end{proposition}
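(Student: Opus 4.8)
The plan is to transport everything through the covering projection $\pi\colon\tilde M\to M$, equipped with the pulled‑back Riemannian metric so that $\pi$ is a local isometry intertwining $F$ and $f$, and then to exploit that $\tilde M$ is simply connected. First I would observe that $F$ is automatically a $C^1$ diffeomorphism of $\tilde M$: the local diffeomorphism $f$ of the compact manifold $M$ is a finite–degree covering map, hence $f\circ\pi$ is again a (universal, since $\tilde M$ is simply connected) covering of $M$, and the relation $\pi\circ F=f\circ\pi$ exhibits $F$ as a morphism between two universal coverings of $M$ over $\mathrm{id}_M$, so $F$ is a homeomorphism; since locally $F=\pi^{-1}\circ f\circ\pi$ for suitable branches of $\pi^{-1}$, both $F$ and $F^{-1}$ are $C^1$. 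The same relation shows that the deck group $\Gamma\cong\pi_1(M)$, acting on $\tilde M$ by isometries, satisfies $F\Gamma F^{-1}\subseteq\Gamma$; writing $\phi(\gamma):=F\gamma F^{-1}\in\Gamma$ one gets $F^{n}\gamma=\phi^{n}(\gamma)\,F^{n}$ for all $n\ge 0$ (here $\phi$ is injective, and surjective exactly when $f$ is invertible).

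\textbf{The direction ``$f$ Anosov $\Rightarrow$ $F$ Anosov''.} Set $\tilde E^s:=(D\pi)^{-1}E^s$, a continuous $DF$-invariant subbundle of $T\tilde M$. Since $\pi$ is a local isometry conjugating $F$ to $f$, the contraction estimates pass upstairs verbatim with the same constants inherited from the compact $M$: $\|DF^n|_{\tilde E^s}\|\le C\lambda^n$, and the map induced by $DF$ on $T\tilde M/\tilde E^s$ is uniformly expanding with a factor $\mu>1$; in particular $\lambda<1<\mu$, so the pair ($\tilde E^s$, quotient) is dominated. Now invertibility of $F$ enters: fixing an auxiliary continuous complement $H$ (say $(\tilde E^s)^\perp$) and the cone field $\mathcal C_a(x)=\{v_s+v_h:v_s\in\tilde E^s_x,\ v_h\in H_x,\ \|v_s\|\le a\|v_h\|\}$, domination yields $a>0$ and $N\ge 1$ with $DF^N\big(\mathcal C_a(x)\big)\subset\mathcal C_{a/2}(F^Nx)$, hence inside the interior of $\mathcal C_a(F^Nx)$, so that $\tilde E^u(x):=\bigcap_{n\ge0}DF^{n}\big(\mathcal C_a(F^{-n}x)\big)$ is a well-defined continuous $DF$-invariant subbundle complementary to $\tilde E^s$, on which the quotient expansion restricts to uniform expansion of $DF$. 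Thus $T\tilde M=\tilde E^s\oplus\tilde E^u$ is an Anosov splitting and $F$ is an Anosov diffeomorphism.

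\textbf{The direction ``$F$ Anosov $\Rightarrow$ $f$ Anosov''.} The only real point is that the stable bundle $\tilde E^s$ of $F$ is $\Gamma$-invariant, for then $E^s:=D\pi(\tilde E^s)$ is a well-defined continuous $Df$-invariant subbundle of $TM$, uniformly contracted (constants pushed down through the local isometry $\pi$), and $TM/E^s$ — which pulls back to $T\tilde M/\tilde E^s$, canonically isomorphic under $DF$ to the $DF$-expanded bundle $\tilde E^u$ — is uniformly expanded; so $f$ is an Anosov map. To get $\Gamma$-invariance I would use the intrinsic description $\tilde E^s(x)=\{v\in T_x\tilde M:\|DF^n v\|\to0 \text{ as }n\to+\infty\}$, valid for any Anosov diffeomorphism: given $v\in\tilde E^s(x)$ and $\gamma\in\Gamma$, one has $DF^n(D\gamma\,v)=D\phi^n(\gamma)\big(DF^n v\big)$, and $\phi^n(\gamma)\in\Gamma$ acts by an isometry, whence $\|DF^n(D\gamma\,v)\|=\|DF^n v\|\to0$; thus $D\gamma\,\tilde E^s(x)\subseteq\tilde E^s(\gamma x)$, and equality follows by a dimension count (or by repeating the argument with $\gamma^{-1}$).

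\textbf{Where the difficulty lies.} Both substantive steps are exactly where the non-invertibility of $f$ makes itself felt. In the first direction the expanding subbundle $\tilde E^u$ exists only because $F$, unlike $f$, is invertible — the cone intersection $\bigcap_n DF^n(\cdot)$ relies on the backward iterates $F^{-n}$ — and one must be careful to run the cone argument from the \emph{weak} hypothesis that the quotient $TM/E^s$ is expanding, rather than from an a priori invariant complement. In the second direction one must handle the fact that $F$ only conjugates $\Gamma$ \emph{into} itself, through the endomorphism $\phi$ which is not surjective when $f$ is non-invertible; the descent argument nonetheless survives because the dynamical characterization of $\tilde E^s$ is unaffected by composing the derivative cocycle with the isometries $\phi^n(\gamma)$, and because the unstable bundle $\tilde E^u$ — which is genuinely \emph{not} $\Gamma$-invariant in general, this being precisely why non-invertible Anosov maps may fail to have an integrable unstable bundle on $M$ — is never asked to descend.
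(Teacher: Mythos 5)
The paper does not actually prove this statement; it is recorded as a citation to Ma\~n\'e--Pugh, and the text moves directly on to the discussion of conjugacies on the universal cover. So there is no ``paper's own proof'' to compare against. Judged on its own terms, your proof is correct and self-contained, and it is essentially the argument one finds in the Ma\~n\'e--Pugh/Przytycki circle of ideas. The three substantive steps are all handled properly: (i) $F$ is a diffeomorphism because $f\circ\pi$ is a universal covering of $M$ and $F$ is the morphism between two universal coverings lying over $\mathrm{id}_M$, so $F$ is a $C^1$ local diffeomorphism which is a homeomorphism; (ii) lifting $E^s$ and the quotient-expansion through the local isometry $\pi$ gives a dominated pair with uniform constants (the key point being that the constants come from the compact base, since $\tilde M$ itself is noncompact), and the invertibility of $F$ is exactly what lets you run the backward cone iteration to extract $\tilde E^u$; (iii) the descent argument correctly isolates that only the \emph{stable} bundle need descend, and the dynamical characterization $\tilde E^s(x)=\{v:\|DF^nv\|\to0\}$ together with the identity $F^n\gamma=\phi^n(\gamma)F^n$ and the isometry of $\phi^n(\gamma)\in\Gamma$ gives $\Gamma$-invariance. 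You also rightly stress that $\tilde E^u$ is \emph{not} asked to be $\Gamma$-invariant, which is precisely the source of non-integrability downstairs.

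Two small points worth being aware of, though neither is a gap. First, the claim $DF^N(\mathcal C_a)\subset\mathcal C_{a/2}$ is correct only after choosing $a$ large relative to the ``saturation'' value $a^\ast=\|B\|/(\mu-\lambda)$ where $B$ is the off-diagonal block of $DF$ in the frame $\tilde E^s\oplus H$; the cone sizes contract geometrically toward $a^\ast$, not toward $0$, so showing that $\bigcap_n DF^n(\mathcal C_a(F^{-n}x))$ is a genuine $\dim H$-dimensional subspace requires the graph-transform (or a compactness-of-graphs) argument rather than ``shrinking cones,'' though this is standard and what you clearly intend. Second, the characterization $\tilde E^s(x)=\{v:\|DF^nv\|\to0\}$ on the noncompact $\tilde M$ uses that the angle between $\tilde E^s$ and $\tilde E^u$ is uniformly bounded below; this does hold here, because $\sup_{\tilde M}\|DF^{\pm1}\|<\infty$ (again inherited from the compact base) forces the angle bound, but it is worth saying since on a general noncompact manifold it can fail. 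With these caveats stated, the proof stands.
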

	
	As usual, we define the \textit{stable manifolds} of the Anosov diffeomorphism $F$, denoted by $\tildeF^s(x)$,
	\begin{align}
		\tildeF^s(x):=\big\{y\in \tilde{M} \; \big|\;  d(F^k(y),F^k(x))\to 0 \;\;{\rm as}\;\; k\to +\infty \big\},\label{defineglobalmanifold}
	\end{align}
	for all $x\in \tilde{M}$,
	and the  \textit{unstable manifolds} $\tildeF^u(x)$ by iterating backward.  And we define the \textit{local (un)stable} manifolds with size $\delta$, denoted by  $\tildeF^{s/u}(x,\delta)$,
	\begin{align}
		\tildeF^{s/u}(x,\delta):=\big\{y\in \tildeF^{s/u}(x) \; \big|\;  d_{\tildeF^{s/u}}(x,y)\leq \delta \big\},  \label{definelocalmanifold}
	\end{align}
	for all $x\in \tilde{M}$,  where $d_{\tildeF^{s/u}}(\cdot,\cdot)$ is induced by the metric on $\tilde{M}$.
	\vspace{0.3cm}
	
	In the rest of this paper, we restrict the manifolds $M$ to be a $d$-torus $\mathbb{T}^d$.
	Let $f:\mathbb{T}^d\to \mathbb{T}^d$ be an Anosov map and $A:\mathbb{T}^d\to \mathbb{T}^d$  its linearization. The following proposition exhibits the equivalent condtion of	an Anosov map being conjugate with its linearization.
	
	\begin{proposition}[\cite{specialconjugate}]\label{special and conjugate}
		Let $f$ be an Anosov map on torus, then $f$ is conjugate to its linearization
		if and only if $f$ is special.
	\end{proposition}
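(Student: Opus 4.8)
The plan is to lift everything to the universal cover $\RR^d$, where, by Proposition~\ref{lifting Anosov}, the lift $F$ of $f$ is an Anosov diffeomorphism; it is homotopic to its linearization $A$, now viewed as a linear hyperbolic automorphism of $\RR^d$, and $F-A$ is $\ZZ^d$-periodic, hence bounded. The first step is the classical semiconjugacy (Franks--Shub--Manning type): there is a continuous surjection $h:\TT^d\to\TT^d$ homotopic to the identity with $h\circ f=A\circ h$, lifting to $H:\RR^d\to\RR^d$ with $H(x+k)=H(x)+k$ for $k\in\ZZ^d$ and, after adjusting the lift $F$ by a translation, $H\circ F=A\circ H$. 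Then $H\circ F^{n}=A^{n}\circ H$ for all $n\in\ZZ$ since $F$ is invertible, and the whole statement reduces to showing that $H$ is injective if and only if $f$ is special.

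The second step, valid unconditionally, is that $H$ carries the invariant manifolds of $F$ into the affine foliations $W^s_A,W^u_A$ of $A$: $H(\tildeF^s(x))\subseteq W^s_A(Hx)$ and $H(\tildeF^u(x))\subseteq W^u_A(Hx)$. This uses only $\|H-\mathrm{id}\|_\infty<\infty$ and hyperbolicity of $A$: if $y\in\tildeF^u(x)$ then $\sup_{n\ge0}\|A^{-n}(Hy-Hx)\|\le\sup_{n\ge0}d(F^{-n}y,F^{-n}x)+2\|H-\mathrm{id}\|_\infty<\infty$, which forces the stable component of the vector $Hy-Hx$ to be zero; the stable case is symmetric.

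For ``$f$ special $\Rightarrow$ conjugate'', the key gain from specialness is that the $Df$-invariant expanding bundle $E^u$ lifts to a $\ZZ^d$-periodic bundle, so $\tildeF^u$ descends to a genuine foliation $W^u$ of the compact torus $\TT^d$; together with the stable foliation this gives $f$ a local product structure, just as for Anosov diffeomorphisms. I would then follow Manning's scheme: $H$ restricted to each stable leaf and to each unstable leaf is injective, because these leaves are quasi-isometrically embedded in $\RR^d$, and then, by invariance of domain plus properness of $H$, $H$ maps each leaf onto the corresponding affine leaf of $A$. Local product structure upgrades this to $H$ being a local homeomorphism of $\RR^d$; being proper (as $H-\mathrm{id}$ is bounded) and $\RR^d$ simply connected, $H$ is then a homeomorphism, and it descends to a topological conjugacy between $f$ and $A$.

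For the converse, if $f$ is conjugate to $A$, then after composing with a toral automorphism (which commutes with $A$) we may take the conjugacy homotopic to the identity, so its lift $H$ is a $\ZZ^d$-equivariant homeomorphism conjugating $F$ to $A$. Since the unstable leaves of $F$ are properly embedded in $\RR^d$ (uniform size of local unstable manifolds), the second step together with invariance of domain and connectedness shows $H$ maps $\tildeF^u(x)$ homeomorphically onto $W^u_A(Hx)$, so $\tildeF^u(x)=H^{-1}(W^u_A(Hx))$. As $H$ is $\ZZ^d$-equivariant and $W^u_A$ is $\ZZ^d$-periodic, $\tildeF^u$ and its tangent bundle are $\ZZ^d$-periodic, hence descend to a continuous $Df$-invariant uniformly expanding bundle $E^u$ on $\TT^d$ complementary to $E^s$; that is, $f$ is special. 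The main obstacle is the injectivity of $H$ along unstable leaves in the first direction: one must prove the unstable leaves of the lifted diffeomorphism $F$ are quasi-isometrically embedded in $\RR^d$, which is genuinely false for non-special endomorphisms --- note $F^{-1}$ is not the lift of any torus map --- and for which one really needs the integrability hypothesis so that the usual uniform-expansion estimate can be run on the compact torus for the foliation $W^u$.
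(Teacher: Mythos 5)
Your argument has a fundamental gap at the very first step: you assert that ``there is a continuous surjection $h:\TT^d\to\TT^d$ homotopic to the identity with $h\circ f = A\circ h$,'' lifting to a $\ZZ^d$-equivariant $H$. For a non-invertible Anosov map this Franks--Shub--Manning semiconjugacy does \emph{not} exist unconditionally; its existence is in fact equivalent to the statement you are trying to prove. Concretely, writing $h=\mathrm{Id}+u$ with $u$ a $\ZZ^d$-periodic map into $\RR^d$, the relation $h\circ f = A\circ h$ is the cohomological equation $u\circ f - Au = A - f$. The $L^u$-component $u^u=-\sum_{k\ge 0}A_u^{-(k+1)}(F-A)^u\circ f^k$ converges and is $\ZZ^d$-periodic, but the $L^s$-component requires iterating $f^{-1}$, which is multivalued on $\TT^d$; iterating the lift $F^{-1}$ on $\RR^d$ does produce a bounded solution, but it is not $\ZZ^d$-periodic because $A^{-1}\ZZ^d\not\subset\ZZ^d$. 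What exists unconditionally is only the non-equivariant conjugacy $H:\RR^d\to\RR^d$ of Proposition~\ref{lifting conjugate}, which by Proposition~\ref{H is Zd restricted on stable} is $\ZZ^d$-equivariant only up to a displacement along $\tildeL^s$. Moreover, if a bounded $\ZZ^d$-equivariant semiconjugacy did exist, a two-sided iteration argument using only $\|\tilde H-\mathrm{Id}\|<\infty$ shows any such map is unique, hence equal to the bijection $H$ of Proposition~\ref{lifting conjugate}; it would therefore already be a conjugacy and $f$ would be special. So the first step tacitly presupposes the direction ``special~$\Rightarrow$~conjugate.''

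That direction therefore needs a different engine: starting from the non-equivariant $H$, one must show that specialness forces the stable displacement $H(x+n)-H(x)-n$ to vanish for all $n\in\ZZ^d$, which is close to the mechanism the paper develops in Propositions~\ref{H is Zd restricted on stable} and~\ref{nmH} and Section~\ref{special} (and is the content of the cited reference; the paper itself does not reprove this proposition). Your converse direction (``conjugate~$\Rightarrow$~special'') is essentially sound: a conjugacy homotopic to the identity lifts to a $\ZZ^d$-equivariant homeomorphism $H$ satisfying $H\circ F=A\circ H$, which by the same uniqueness coincides with the $H$ of Proposition~\ref{lifting conjugate}; since $H(\tildeF^u)=\tildeL^u$ and both $H$ and $\tildeL^u$ are $\ZZ^d$-periodic, so is $\tildeF^u$, hence $\tilde E^u$ descends to a $Df$-invariant expanding bundle on $\TT^d$. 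The obstacle you flag at the end (quasi-isometry of unstable leaves) is secondary compared to the missing semiconjugacy.
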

	
	On the other hand, from the observation of Proposition \ref{lifting Anosov}, we can expect that the liftings of $f$ and $A$ are conjugate.  Let $F:\mathbb{R}^d \to \mathbb{R}^d$ be a lift of $f$ and $\overline{A}:\mathbb{R}^d \to \mathbb{R}^d$ be the lift of
	$A:\mathbb{T}^d\to \mathbb{T}^d$ induced by the same projection $\pi:\mathbb{R}^d\to \mathbb{T}^d$.
	It means that  $\pi\circ F= f\circ \pi$ and $\pi\circ \overline{A}= A\circ \pi$.
	For short, we denote $\overline{A}$ by $A$ if there is no confusion.
	The following proposition \cite[Proposition 8.2.1 and Proposition 8.4.2]{aoki} says that we do have a conjugacy between $F$ and $A$.
	
	\begin{proposition}[\cite{aoki}]\label{lifting conjugate}
		Let $f:\TT^d\to\TT^d$ be an Anosov map with lifting $F:\RR^d\to \RR^d$ and $A$ be its linearization.  There is a unique bijection $H:\mathbb{R}^d\to \mathbb{R}^d$ such that
		\begin{enumerate}
			\item $A\circ H=H\circ F$.
			\item $H$ and $H^{-1}$ are both uniformly continuous.
			\item There exists $C>0$ such that $\|H-Id\|<C$ and $\|H^{-1}-Id\|<C$.
		\end{enumerate}
	\end{proposition}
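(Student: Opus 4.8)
The plan is to run the Franks--Manning construction on the universal cover. First I would record some elementary facts. Since $f$ is homotopic to its linearization, lifting a homotopy from $A$ to $f$ and observing that the displacement $F_t(x+v)-F_t(x)$ is a continuous $\ZZ^d$-valued quantity (hence constant in $t$ and $x$) shows $F(x+v)=F(x)+Av$ for all $v\in\ZZ^d$. Thus $\gamma:=F-A$ is $\ZZ^d$-periodic; being periodic and continuous it is bounded and uniformly continuous, and the same holds for $D\gamma$, so $F$ is Lipschitz. As $F$ is an Anosov diffeomorphism (Proposition \ref{lifting Anosov}) whose hyperbolic constants descend from the compact quotient $\TT^d$, $DF_x$ is invertible with uniformly bounded inverse, so $F^{-1}$ is Lipschitz too; in particular $F,F^{-1}$ are uniformly continuous. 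Finally $A$, being hyperbolic, is a linear isomorphism of $\RR^d$; I fix an adapted norm and the $A$-invariant splitting $\RR^d=E^s_A\oplus E^u_A$, writing $A^s=A|_{E^s_A}$, $A^u=A|_{E^u_A}$ with $\|A^s\|=\rho_s<1$ and $\|(A^u)^{-1}\|=\rho_u<1$.

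For existence I would write $H=\mathrm{Id}+u$; then $A\circ H=H\circ F$ is equivalent to $u(Fx)=Au(x)-\gamma(x)$. Splitting $u=u^s+u^u$ and $\gamma=\gamma^s+\gamma^u$ along $E^s_A\oplus E^u_A$, the unstable component becomes a fixed-point equation $u^u=T_u(u^u)$ with $T_u\phi(x):=(A^u)^{-1}\big(\phi(Fx)+\gamma^u(x)\big)$, and since $F$ is a bijection the stable component becomes $u^s=T_s(u^s)$ with $T_s\psi(y):=A^s\,\psi(F^{-1}y)-\gamma^s(F^{-1}y)$. Both $T_u$ and $T_s$ are contractions, with constants $\rho_u$ and $\rho_s$, of the Banach space of bounded uniformly continuous maps $\RR^d\to E^u_A$ (resp.\ $E^s_A$): that space is complete, and $T_u,T_s$ preserve it because $F,F^{-1},\gamma$ are uniformly continuous. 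Their unique fixed points give $u:=u^s+u^u$, and $H:=\mathrm{Id}+u$ then satisfies (1), is uniformly continuous, and has $\|H-\mathrm{Id}\|_\infty=\|u\|_\infty<\infty$. For uniqueness, if $H_1,H_2$ both satisfy (1) with bounded displacement then $w:=H_1-H_2$ is bounded and $Aw=w\circ F$; splitting along $E^s_A\oplus E^u_A$, the relations $w^u=(A^u)^{-1}(w^u\circ F)$ and $w^s=A^s(w^s\circ F^{-1})$ force $\|w^u\|_\infty\le\rho_u\|w^u\|_\infty$ and $\|w^s\|_\infty\le\rho_s\|w^s\|_\infty$, so $w\equiv0$.

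For bijectivity I would argue as follows. If $H(x)=H(y)$ then $H(F^nx)=A^nH(x)=A^nH(y)=H(F^ny)$ for all $n\in\ZZ$ (using that $A$ and $F$ are invertible), hence $d(F^nx,F^ny)\le 2\|u\|_\infty$ for all $n$; by expansiveness of $F$ this gives $x=y$, so $H$ is injective. Since $|H(x)|\ge|x|-\|u\|_\infty$, $H$ is proper, hence a closed map, and being injective it is open by invariance of domain; so $H(\RR^d)$ is nonempty, open and closed in the connected space $\RR^d$, i.e.\ $H$ is surjective. Thus $H^{-1}$ exists, and $\|H^{-1}-\mathrm{Id}\|_\infty=\|u\|_\infty$ because $H^{-1}(x)-x=-u(H^{-1}(x))$.

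The remaining point, uniform continuity of $H^{-1}$, is where I expect the actual work to lie: it amounts to showing $H$ is \emph{uniformly injective}, i.e.\ $\inf\{\,|H(a)-H(b)| : |a-b|\ge\e\,\}>0$ for every $\e>0$. If this failed one could take $a_n,b_n$ with $|a_n-b_n|\ge\e$ and $v_n:=H(a_n)-H(b_n)\to 0$; boundedness of $u$ then bounds $|a_n-b_n|$, and $H(F^ka_n)-H(F^kb_n)=A^kv_n$ gives, for each fixed $N$, $\limsup_n\sup_{|k|\le N}d(F^ka_n,F^kb_n)\le 2\|u\|_\infty$. Feeding this into the \emph{uniform} expansiveness estimate for the Anosov diffeomorphism $F$ — namely that $\sup_{|k|\le N}d(F^ka,F^kb)\le R$ forces $d(a,b)\to0$ as $N\to\infty$, with constants inherited from $\TT^d$ — yields $|a_n-b_n|<\e$ for large $n$, a contradiction; the same estimate is exactly what makes the injectivity step above work. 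In summary, the genuine difficulty is not any single estimate but the regularity bookkeeping: arranging all contractions to act on the complete space of bounded uniformly continuous maps, and upgrading expansiveness of $F$ to the uniform-in-$N$ form needed to control $H^{-1}$.
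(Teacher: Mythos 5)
Your proof is correct and is the standard Franks--Manning argument that the cited reference \cite{aoki} also uses: write $H=\mathrm{Id}+u$, split the cohomological equation $u\circ F=Au-\gamma$ along the $A$-invariant splitting, solve each half by a contraction on bounded uniformly continuous maps, and combine expansiveness of $F$ (injectivity) with invariance of domain and properness (surjectivity). You correctly identify the only point needing further justification, namely the uniform-in-$N$ expansiveness of $F$ required for uniform continuity of $H^{-1}$; this indeed follows from the global product structure and quasi-isometry of the lifted stable/unstable foliations, whose constants descend from $\TT^d$.
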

	
	\begin{remark}
		Without losing generality, we can always assume that $F(0)=0$ and $H(0)=0$.
	\end{remark}	
	
	By proposition \ref{special and conjugate}, it is clear that $f$ is special if and only if $H$ is commutative with $\ZZ^d$-action, namely, $$H(x+n)=H(x)+n,\quad \forall x\in\RR^d \;\;\;{\rm and}\;\;\; \forall n\in\ZZ^d.$$
	Although in general $H$ cannot be commutative with $\ZZ^d$-action, we will see it can be commutative with $\ZZ^d$-action as a stable leaf conjugacy.
	
	\begin{notation*}
		Denote the stable/unstable bundles and foliations of $A$ on $\TT^d$ by $L^{s/u}$, $\mathcal{L}^{s/u}$ and on $\RR^d$ by
		$\tilde{L}^{s/u}$, $\tildeL^{s/u}$ respectively.
	\end{notation*}
	It is clear that $H$ is a stable/unstable \textit{leaf conjugacy} between $F$ and $A$, namely,
	\begin{align}
		H(\tildeF^{s/u})=\tildeL^{s/u} \quad {\rm and} \quad H(\tildeF^{s/u}(Fx))= A(\tildeL^{s/u}(Hx)). \label{Hpreservestable}
	\end{align}
	Indeed, by the topological character of  stable/unstable  foliations \eqref{defineglobalmanifold} for $A$ and $F$, one can get \eqref{Hpreservestable}, directly.
	Especially, we mention that $\tildeF^s$ and $\tildeF^u$ admit the \textit{Global Product Structure}, namely, any two leaves $\tildeF^s(x)$ and $\tildeF^u(y)$ intersect transversely at a unique point in $\RR^d$.
	
	\begin{proposition}\label{H is Zd restricted on stable}
		Assume that $H$ is given by Proposition \ref{lifting conjugate}. Then for every $x\in\RR^d$ and $n\in\ZZ^d$,
		$$H(x+n)-n\in \tildeL^s\big(H(x)\big) \quad {\rm and } \quad H^{-1}(x+n)-n\in \tildeF^s\big(H^{-1}(x)\big).$$
	\end{proposition}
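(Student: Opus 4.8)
The plan is to push everything through the conjugacy $H$ to the \emph{linear} model $A$, exploiting only that $H$ is a stable leaf conjugacy with $\|H-Id\|<C$, and then to deduce the $H^{-1}$-statement as a formal consequence of the $H$-statement (thereby avoiding a separate, slightly more delicate argument for the nonlinear map $F$).

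First I would record the covering identities. Since $A=f_*$ acts on $\pi_1(\TT^d)=\ZZ^d$ as the deck action, the lift $F$ satisfies $F(x+n)=F(x)+An$ for all $n\in\ZZ^d$, while $A(x+n)=Ax+An$ trivially; iterating gives $F^k(x+n)=F^k(x)+A^kn$ and $A^k(x+n)=A^kx+A^kn$ for every $k\ge 0$. Iterating $A\circ H=H\circ F$ likewise gives $A^k\circ H=H\circ F^k$.

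For the first assertion, fix $x\in\RR^d$, $n\in\ZZ^d$ and set $v:=H(x+n)-n-H(x)$. Using $A^kH=HF^k$ and the covering identity,
$$
A^k v \;=\; A^kH(x+n)-A^kn-A^kH(x)\;=\;H\big(F^k(x)+A^kn\big)-A^kn-H\big(F^k(x)\big),
$$
so $\|A^kv\|\le \|H(F^k(x)+A^kn)-(F^k(x)+A^kn)\|+\|H(F^k(x))-F^k(x)\|<2C$ for all $k\ge 0$. Now decompose $v=v^s+v^u$ along the $A$-invariant splitting $\RR^d=\tilde L^s\oplus\tilde L^u$. The projection onto $\tilde L^u$ along $\tilde L^s$ is a bounded linear operator commuting with $A$, and $A$ expands $\tilde L^u$ uniformly, so $v^u\ne 0$ would force $\|A^kv\|\to\infty$, contradicting the bound $2C$. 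Hence $v\in\tilde L^s$, and since the stable leaves of the linear map $A$ are the affine subspaces $\tildeL^s(y)=y+\tilde L^s$, we conclude $H(x+n)-n=H(x)+v\in\tildeL^s\big(H(x)\big)$.

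Finally, the second assertion follows formally. Put $w:=H^{-1}(x+n)-n$, so that $H(w+n)=H(H^{-1}(x+n))=x+n$. Applying the first assertion to the point $w$ (with the same $n$) gives $H(w+n)-n-H(w)\in\tilde L^s$, that is, $x-H(w)\in\tilde L^s$, i.e. $H(w)\in\tildeL^s(x)$. Since $H$ is a stable leaf conjugacy (so $H^{-1}$ carries $\tildeL^s$-leaves to $\tildeF^s$-leaves by \eqref{Hpreservestable}), we get
$$
w\;=\;H^{-1}\big(H(w)\big)\;\in\;H^{-1}\big(\tildeL^s(x)\big)\;=\;\tildeF^s\big(H^{-1}(x)\big),
$$
which is exactly the claim. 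The only non-formal point is the estimate $\|A^kv\|<2C$ combined with hyperbolicity of the linear $A$; once one has a uniformly bounded forward $A$-orbit of the displacement vector, membership in the stable subspace is immediate, and I expect no real obstacle beyond careful bookkeeping of the covering identities $F^kT_n=T_{A^kn}F^k$.
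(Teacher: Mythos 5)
Your proof is correct, and it is essentially the paper's argument run in the opposite order. The paper first proves the $H^{-1}$ half directly: it iterates the two points $H^{-1}(x+n)-n$ and $H^{-1}(x)$ forward under the \emph{nonlinear} map $F$, uses $F^k(x+n)=F^k(x)+A^kn$ and $\|H^{-1}-Id\|<C_0$ to bound the distance by $2C_0$ for all $k$, and then invokes the (Anosov) fact that a uniformly bounded forward $F$-orbit distance forces the two points to lie on the same stable leaf of $F$; the $H$ half is then deduced from $H(\tildeF^s)=\tildeL^s$, which in the paper's order also requires using the $\ZZ^d$-periodicity of $\tildeF^s$. You instead prove the $H$ half directly by computing $A^kv$ with $v=H(x+n)-n-H(x)$, where "bounded forward orbit $\Rightarrow$ stable" reduces to elementary linear algebra for the hyperbolic matrix $A$ (no nonlinear stable-manifold characterization needed), and then you recover the $H^{-1}$ half by a clean substitution $w:=H^{-1}(x+n)-n$ plus $H^{-1}(\tildeL^s)=\tildeF^s$, without appealing to $\ZZ^d$-periodicity. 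So the skeleton is the same — bound forward iterates of the displacement, conclude stable membership, transfer across the leaf conjugacy — but your choice of which half to prove directly shifts the load onto the linear model and makes both steps slightly more elementary.
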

	\begin{proof}
		By Proposition \ref{lifting conjugate}, let $H:\mathbb{R}^d\to \mathbb{R}^d$
		be the unique conjugacy with $\|H^{-1}-Id\|<C_0$.
		Now, we iterate these two points $H^{-1}(x+n)-n$ and $H^{-1}(x)$  forward by $F$. Note that since $f$ and $A$ are homotopic, we have $F^k(x+n)=F^k(x)+A^kn$, for all $x\in \mathbb{R}^d$, $n\in \mathbb{Z}^d$ and $k\in \NN$.
		It follows that
		\begin{align*}
			\qquad \qquad \qquad \qquad
			&\quad d\left(F^k\circ H^{-1}(x)\;, \; F^k (H^{-1}(x+n)-n)\right),\\
			&=d\left(H^{-1}(A^kx)+A^kn \;,\; H^{-1}(A^kx+A^kn))\right),\\
			&\le d\left(H^{-1}(A^kx)+A^kn \;,\; A^kx+A^kn\right)\;+\;d\left(A^kx+A^kn \;,\; H^{-1}(A^kx+A^kn))\right),\\
			& = d\left(H^{-1}(A^kx)\;,\; A^kx\right)\;+\; d\left(A^kx+A^kn \;,\; H^{-1}(A^kx+A^kn))\right)
			\le 2C_0.
		\end{align*}
		Let $k$ tend to infinity, the fact that $d\big(F^k(H^{-1}(x)+n),F^k (H^{-1}(x+n))\big)$ is always bounded by a uniform constant is sufficient to prove $H^{-1}(x+n)-n\in \tildeF^s\big(H^{-1}(x)\big)$.
		
		The proof for $H(x+n)-n\in \tildeL^s\big(H(x)\big)$ deduces from the fact that $H(\tildeF^s)=\tildeL^s$.
	\end{proof}
	
		The following three propositions are all related to approching by "special" $\ZZ^d$-sequences which will be useful in Section \ref{special}.
	
	\begin{proposition}\label{nmH}
		Let $f:\TT^d\to\TT^d$ be an Anosov map with linearization $A$, and $H$ be the conjugacy between its lifting $F$ and $A$.  There exist $C>0$ and  $\{\varepsilon_m\}$ with $\varepsilon_m\to 0$ as $m\to +\infty$, such that for every $x\in\RR^d$ and every $n_m\in \ZZ^d$ satisfying
		$$
		A^{-i}n_m \in \ZZ^d, \qquad \forall 1\leq i\leq m,
		$$
		the following two inequations hold
		$$|H(x+n_m)-H(x)-n_m|<C\cdot \| A|_{L^s}\|^m,$$
		and
		$$|H^{-1}(x+n_m)-H^{-1}(x)-n_m|<\varepsilon_m.$$
	\end{proposition}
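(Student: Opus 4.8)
The plan is to prove the first inequality directly and then deduce the second from it together with the uniform continuity of $H^{-1}$. For the first inequality, fix $x\in\RR^d$ and $n_m\in\ZZ^d$ with $A^{-i}n_m\in\ZZ^d$ for all $1\le i\le m$, and set $u_m:=H(x+n_m)-H(x)-n_m$. By Proposition~\ref{H is Zd restricted on stable}, $H(x+n_m)-n_m\in\tildeL^s\big(H(x)\big)=H(x)+\tilde{L}^s$, so $u_m$ lies in the linear stable subspace $\tilde{L}^s$. I would then push $u_m$ \emph{backward}: since $f$ and $A$ are homotopic one has $F^{-m}(z+n_m)=F^{-m}(z)+A^{-m}n_m$ for every $z\in\RR^d$ (this is exactly where the hypothesis $A^{-i}n_m\in\ZZ^d$, $1\le i\le m$, enters), while $A\circ H=H\circ F$ gives $A^{-m}\circ H=H\circ F^{-m}$. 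Combining these,
\begin{align*}
A^{-m}u_m
&=H\big(F^{-m}(x+n_m)\big)-H\big(F^{-m}(x)\big)-A^{-m}n_m\\
&=H\big(F^{-m}(x)+A^{-m}n_m\big)-H\big(F^{-m}(x)\big)-A^{-m}n_m.
\end{align*}
Setting $y:=F^{-m}(x)$ and $N:=A^{-m}n_m\in\ZZ^d$, the right side equals $\big(H(y+N)-(y+N)\big)-\big(H(y)-y\big)$, which has norm at most $2C_0$, where $\|H-Id\|<C_0$ by Proposition~\ref{lifting conjugate}. Since $\tilde{L}^s$ is $A$-invariant, $A^{-m}u_m\in\tilde{L}^s$, and therefore
$$
|u_m|=\big|A^m(A^{-m}u_m)\big|\le\big\|A^m|_{L^s}\big\|\cdot\big|A^{-m}u_m\big|<2C_0\,\big\|A|_{L^s}\big\|^m
$$
by submultiplicativity of the operator norm; this gives the first inequality with $C:=2C_0$.

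For the second inequality, put $p:=H^{-1}(x)$ and $q:=H^{-1}(x+n_m)-n_m$, so that the left side equals $|q-p|$. Applying the first inequality at the base point $r-n_m$, where $r:=H^{-1}(x+n_m)$ (legitimate, as it holds at every point of $\RR^d$), yields $\big|H(r)-H(r-n_m)-n_m\big|<C\|A|_{L^s}\|^m$; since $H(r)=x+n_m$ and $r-n_m=q$, this says $|x-H(q)|<C\|A|_{L^s}\|^m$, i.e.\ $|H(q)-H(p)|<C\|A|_{L^s}\|^m$ because $H(p)=x$. As $H^{-1}$ is uniformly continuous (Proposition~\ref{lifting conjugate}), it has a modulus of continuity $\omega$ with $\lim_{t\to0^+}\omega(t)=0$, whence
$$
|q-p|=\big|H^{-1}(H(q))-H^{-1}(H(p))\big|\le\omega\big(C\|A|_{L^s}\|^m\big)=:\varepsilon_m,
$$
and $\varepsilon_m\to0$ as $m\to\infty$ since $\|A|_{L^s}\|<1$.

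The hard part is the first inequality, specifically seeing that one should push $u_m$ \emph{backward} rather than forward: a forward iteration keeps $u_m$ merely bounded, whereas a backward iteration converts it into a difference $H(y+N)-H(y)-N$ with $N=A^{-m}n_m\in\ZZ^d$ that is \emph{uniformly} bounded, and the integrality hypothesis $A^{-i}n_m\in\ZZ^d$ is precisely what makes the commutation $F^{-m}(z+n_m)=F^{-m}(z)+A^{-m}n_m$ available. Once this is in place, the linearity of the stable foliation of $A$ upgrades contraction on $L^s$ into the geometric rate $\|A|_{L^s}\|^m$. The second inequality is then essentially formal: there is no linear structure on $\tildeF^s$ to exploit and hence no geometric rate, but the uniform continuity of $H^{-1}$ transports the rate obtained for $H$ to some sequence $\varepsilon_m\to0$, which is all that is claimed.
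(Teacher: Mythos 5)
Your proof is correct and takes essentially the same route as the paper. For the first inequality, you and the paper both exploit the integrality hypothesis to commute $F^{-m}$ with the translation by $n_m$, observe that the resulting deviation lies in $\tilde{L}^s$ and is bounded by $2C_0$ via $\|H - Id\| < C_0$ and Proposition~\ref{H is Zd restricted on stable}, and then push forward by $A^m$ to contract it to $2C_0\|A|_{L^s}\|^m$; your encapsulation via $u_m$ and the identity $|u_m| = |A^m(A^{-m}u_m)|$ is a clean rewriting of exactly the computation in the paper. For the second inequality, the paper applies $H^{-1}$ to $H(x+n_m)$ and $H(x)+n_m$ and then reparametrizes by $x \mapsto H(x)$, whereas you apply the first inequality at the shifted base point $q = H^{-1}(x+n_m)-n_m$ and then use uniform continuity of $H^{-1}$; these are the same argument up to a change of the starting variable. (Both you and the paper implicitly assume $\|A|_{L^s}\|<1$, which can be arranged by an adapted metric; this is a shared, standard omission, not a gap in your reasoning.)
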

	\begin{proof}
		By $A^{-i}n_m \in \ZZ^d (1\leq i\leq m)$, we have $F^{-i}(x+n_m)-F^{-i}(x)=A^{-i}n_m\in\ZZ^d$, for all $1\leq i\leq m$.
		By $|H-id|<C_0$, we have $$d\left(H(F^{-m}x+A^{-m}n_m)\;,\;H(F^{-m}x)+A^{-m}n_m\right) \leq 2C_0.$$
		By Proposition \ref{H is Zd restricted on stable}, one has $H(x)+n\in \tildeL^s\big(H(x+n)\big)$, for every $x\in\RR^d$ and $n\in \ZZ^d$.
	    Hence,
		$$d\Big(A^m\left(H(F^{-m}x+A^{-m}n_m)\right) \;,\;  A^m\left(H(F^{-m}x)+A^{-m}n_m\right) \Big) \leq 2C_0\cdot \| A|_{L^s}\|^m .$$
		That is
		$$|H(x+n_m)-H(x)-n_m|<2C_0\cdot \| A|_{L^s}\|^m.$$
		On the other hand, by the uniform continuity of $H^{-1}$, there exists $\varepsilon_m$ satisfying $\varepsilon_m\to 0$ as $m\to +\infty$ such that if  $d(x,y)<2C_0\cdot \| A|_{L^s}\|^m$, then $d(H^{-1}(x),H^{-1}(y))<\varepsilon_m$.
		Thus,
		$$d\left(x+n_m \;,\; H^{-1}(H(x)+n_m) \right)<\varepsilon_m.$$
		Equivalently, for every $x\in\RR^d$, we have
		$$\Big|H^{-1}(x+n_m)-H^{-1}(x)-n_m\Big|<\varepsilon_m.$$
	\end{proof}
	
	The following proposition  is a corollary of Proposition \ref{nmH}. It says that although the $F$-invariant foliation may not be commutative with $\ZZ^d$-actions, it can "almost" be commutative with $n_m$-actions if $H$ maps it to an $A$-invariant linear foliation.
	\begin{proposition}\label{nmF}
	 On the assumption of Proposition \ref{nmH} and assume that  $\tildeL$  is an $A$-invariant linear foliation and the sequence $\{n_m\}_{m\in\NN}\subset \ZZ^d$ satisfies $n_m \in A^m\ZZ^d $. If each leaf of the $F$-invariant foliation $\tildeF:=H^{-1}(\tildeL)$ is $C^1$-smooth, then for every $x\in\RR^d$ and every $R>0$,
		$$\tildeF(x+n_m,R)-n_m\to \tildeF(x,R),$$
		as $m\to +\infty$, where the local manifold  $\tildeF(x,R):=\{ y\in\tildeF(x)\; |\;d_{\tildeF} (x,y)\leq R\}$.
	\end{proposition}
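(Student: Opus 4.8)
The plan is to prove the proposition in two stages: first show that the full leaves converge, $\tildeF(x+n_m)-n_m\to\tildeF(x)$, in the Hausdorff (equivalently, local $C^0$) sense by a double application of Proposition \ref{nmH}, and then upgrade this to convergence of the pieces of prescribed leafwise radius $R$ by promoting $C^0$-convergence to $C^1$-convergence, which is where the hypothesis that the leaves are $C^1$ gets used. First note that the arithmetic condition $n_m\in A^m\ZZ^d$ is equivalent to $A^{-i}n_m\in\ZZ^d$ for all $1\le i\le m$ (since $A\in M_d(\ZZ)$ and $m-i\ge0$, $A^{-i}n_m=A^{m-i}A^{-m}n_m\in\ZZ^d$), so Proposition \ref{nmH} applies and supplies $\eta_m:=C\|A|_{L^s}\|^m\to0$ and $\varepsilon_m\to0$ with $|H(x+n_m)-H(x)-n_m|<\eta_m$ and $|H^{-1}(z+n_m)-H^{-1}(z)-n_m|<\varepsilon_m$ for all $x,z\in\RR^d$. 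Recall also that, $\tildeL$ being linear, each of its leaves is a coset of a fixed subspace $V$, hence $\tildeL(p+n)=\tildeL(p)+n$ for $n\in\ZZ^d$, and if $|p-q|<\eta$ then every point of $\tildeL(p)$ lies within $\eta$ of $\tildeL(q)$.

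\emph{Stage 1 (leaves converge).} Given $y\in\tildeF(x)$, set $p:=H(y)\in\tildeL(Hx)$; then $p+n_m\in\tildeL(Hx)+n_m=\tildeL(Hx+n_m)$, and since $|H(x+n_m)-(Hx+n_m)|<\eta_m$ there is $q\in\tildeL(H(x+n_m))$ with $|q-(p+n_m)|<\eta_m$, so $H^{-1}(q)\in\tildeF(x+n_m)$ and, with $\omega$ a modulus of uniform continuity for $H^{-1}$ (Proposition \ref{lifting conjugate}),
\[
\bigl|H^{-1}(q)-n_m-y\bigr|\le\bigl|H^{-1}(q)-H^{-1}(p+n_m)\bigr|+\bigl|H^{-1}(p+n_m)-H^{-1}(p)-n_m\bigr|<\omega(\eta_m)+\varepsilon_m.
\]
The symmetric argument — start from $y'$ with $y'+n_m\in\tildeF(x+n_m)$, push $H(y'+n_m)\in\tildeL(H(x+n_m))$ to within $\eta_m$ of $\tildeL(Hx)+n_m$, then apply $H^{-1}$ and the same two estimates — shows every point of $\tildeF(x+n_m)-n_m$ lies within $\omega(\eta_m)+\varepsilon_m$ of $\tildeF(x)$. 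Hence $\tildeF(x+n_m)-n_m$ and $\tildeF(x)$ are within Hausdorff distance $\omega(\eta_m)+\varepsilon_m\to0$; moreover each $\tildeF(x+n_m)-n_m$ passes through $x$, as does $\tildeF(x)$, so these are pointed $C^1$ submanifolds converging in $C^0_{\mathrm{loc}}$ to the pointed $C^1$ submanifold $\tildeF(x)$.

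\emph{Stage 2 (the radius is preserved — the main obstacle).} It remains to see that the leafwise balls $\tildeF(x+n_m,R)-n_m$ converge to $\tildeF(x,R)$, i.e.\ that the converging leaves do not ``wiggle'': this fails for a general $C^0$-convergent sequence of $C^1$ submanifolds and is exactly where the $C^1$-regularity of the leaves must intervene. Granting the expected uniform $C^1$ control of the leaves $\tildeF(x+n_m)$ near $x$ — equivalently, that $\tildeF=H^{-1}(\tildeL)$, being $F$-invariant with $C^1$ leaves, is tangent to a continuous $DF$-invariant subbundle, so the $\tildeF(x+n_m)-n_m$ have a common modulus of continuity for their tangent directions — Arzel\`a--Ascoli upgrades the $C^0_{\mathrm{loc}}$ convergence of Stage 1 to $C^1_{\mathrm{loc}}$ convergence (a $C^1$ subsequential limit exists, and its tangent map, being the $C^0$-limit of secants, coincides with that of $\tildeF(x)$, forcing the limit to be $\tildeF(x)$ as a $C^1$ manifold). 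The induced leafwise metrics then converge locally uniformly, so the leafwise $R$-balls converge, giving $\tildeF(x+n_m,R)-n_m\to\tildeF(x,R)$; since the limit is independent of the subsequence, the convergence holds along the full sequence.

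I expect Stage 2 to be the delicate part. Stage 1 is a clean two-fold application of Proposition \ref{nmH}, but controlling the leafwise distance $d_{\tildeF}$ under the translation $x\mapsto x+n_m$ — which is filtered through the merely homeomorphic conjugacy $H$, a priori a wild distorter of distances — genuinely requires the uniform $C^1$ geometry of the $F$-invariant leaves; without the $C^1$-smoothness hypothesis one recovers only the set-wise statement of Stage 1 and cannot pin down the radius $R$ in the limit.
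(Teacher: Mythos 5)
Your proof is correct and follows essentially the same route as the paper's: a double application of Proposition~\ref{nmH}, combined with the translation-invariance of the linear foliation $\tildeL$ and the uniform continuity of $H^{-1}$, gives the Hausdorff convergence of the (pieces of) leaves, and the $C^1$-regularity then controls the leafwise size $R$. Your Stage~2 usefully spells out, via equicontinuity of tangent directions and Arzel\`a--Ascoli, the upgrade from Hausdorff to $C^1$-convergence that the paper compresses into the one-line assertion ``its size tends to $R$ as $m\to+\infty$''; as you note, what is really needed there is uniform continuity of the tangent bundle $\tilde{E}$ over all of $\RR^d$ (so that translating by $n_m$ preserves a common modulus of continuity for the tangent directions), which is strictly stronger than ``each leaf is $C^1$'' but does hold in every application of this proposition, e.g.\ by Remark~\ref{holderbundle}.
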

	
	\begin{proof}
		By Proposition \ref{nmH}, when $m\to +\infty$,
		\begin{align}
			d_H\Big( H\left(  \tildeF(x+n_m,R)-n_m \right) \; \;,\; \;   H\left(  \tildeF(x+n_m,R) \right)-n_m   \Big)\to 0, \label{prop2.7.1}
		\end{align}
		where $d_H(\cdot,\cdot)$ is Hausdorff distance.  Note that $\mathcal{T}:=H\left(  \tildeF(x+n_m,R) \right)$ is a local leaf on $\tildeL\big( H(x+n_m)\big)$. Since $\tildeL$ is commutative with $\ZZ^d$-actions, the set $(\mathcal{T}-n_m)$ is a copy of $\mathcal{T}$ on $\tildeL\big( H(x+n_m)-n_m\big)$.	Again, by Proposition \ref{nmH}, as $m\to +\infty$,
		\begin{align}
			d_H\big(\mathcal{T}-n_m\; \;,\;\;  \mathcal{T}-H(x+n_m)+H(x) \big)\to 0. \label{prop2.7.2}
		\end{align}
		Then, combining \eqref{prop2.7.1} and  \eqref{prop2.7.2},  one has
		$$d_H\Big(  H\left(  \tildeF(x+n_m,R)-n_m \right)  \; \;,\;\;  \mathcal{T}-H(x+n_m)+H(x)\Big)\to 0.$$
		By the uniform continuity of $H^{-1}$, it follows that
		$$d_H\Big(   \tildeF(x+n_m,R)-n_m  \; \;,\;\; H^{-1}\big( \mathcal{T}-H(x+n_m)+H(x)\big)\Big)\to 0.$$
		Since $\big( \mathcal{T}-H(x+n_m)+H(x)\big)$ is  a copy of $\mathcal{T}$ on $\tildeL\big( H(x)\big)$, the set $H^{-1}\big( \mathcal{T}-H(x+n_m)+H(x)\big)$ is a local leaf on $\tildeF(x)$. Moreover, its size tends to $R$ as $m\to+\infty$.
	\end{proof}
	\begin{remark}
		The foliation $\tildeF$ in Proposition \ref{nmF} can be the unstable foliation or the center foliation of $F$, where $F:\RR^d\to\RR^d$ is a lifting of an Anosov map on torus.
	\end{remark}
	
		A foliation $\mathcal{F}$ on $\TT^d$ is called \textit{minimal}, if its every leaf is dense. It is clear that  if $A$ is irreducible, then every $A$-invariant linear foliation on $\TT^d$ is minimal (a complete proof is available in \cite{ganzhangren}). The following proposition actually says that  the projection of each leaf of $F$-invariant foliation onto $\TT^d$ is dense  if $H$ maps it to an $A$-invariant linear foliation.
	\begin{proposition}\label{projection leaf dense}
		  On the assumption of Proposition \ref{nmH} and assume that  $A$ is irreducible and $\tildeL$  is an $A$-invariant linear foliation.  Let $\tildeF=H^{-1}(\tildeL)$ be a foliation on $\RR^d$. Then for any $x,y\in\RR^d$,
		 \begin{enumerate}
		 	\item There exist $x_m\in\tildeL(x)$ and $n_m\in A^m \ZZ^d$ , such that $(x_m+n_m)\to y$ as $m\to +\infty$.
		 	\item There exist $z_m\in\tildeF(x)$ and $n_m\in A^m \ZZ^d$ , such that $(z_m+n_m)\to y$ as $m\to +\infty$.
		 \end{enumerate}
	\end{proposition}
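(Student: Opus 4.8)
The plan is to prove both statements simultaneously via a single density argument on $\RR^d$, exploiting the minimality of $A$-invariant linear foliations together with the approximation result of Proposition~\ref{nmH}. The key observation is that the $\ZZ^d$-orbit of any point, restricted to the sublattices $A^m\ZZ^d$, interacts with the leaves of $\tildeL$ in a controlled way: since $A$ is irreducible, every $A$-invariant linear foliation $\tildeL$ is minimal on $\TT^d$, so $\pi(\tildeL(x))$ is dense in $\TT^d$ for every $x$; and because $A$ is non-invertible (so $[\ZZ^d : A\ZZ^d] > 1$), the lattices $A^m\ZZ^d$ shrink but are still cocompact, hence the union over $n_m \in A^m\ZZ^d$ of translates of a fixed dense-in-$\TT^d$ leaf continues to sweep out all of $\RR^d$ in the limit.

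For part (1), I would argue as follows. Fix $x, y \in \RR^d$. By minimality of $\tildeL$ on the torus, for each $m$ I can find a point on $\tildeL(x)$ whose projection to $\TT^d$ is within $1/m$ of $\pi(y)$; lifting this appropriately, there is $x'_m \in \tildeL(x)$ and some lattice vector $\ell_m \in \ZZ^d$ with $x'_m + \ell_m \to y$. The issue is that I need $\ell_m \in A^m\ZZ^d$, not merely in $\ZZ^d$. To fix this, I use that $\tildeL$ is $A$-invariant and linear: applying $A^{-m}$ maps $\tildeL(x)$ to $\tildeL(A^{-m}x)$ and $\ell_m \in \ZZ^d$ to $A^{-m}\ell_m$, which need not be integral, so instead I run the density argument at the level $A^m$ — pick a point on $\tildeL(A^{-m}x)$ close to $A^{-m}y$ modulo $\ZZ^d$, i.e. $w_m \in \tildeL(A^{-m}x)$ with $w_m + p_m \to A^{-m}y$ for $p_m \in \ZZ^d$; then apply $A^m$ to get $A^m w_m \in \tildeL(x)$, $n_m := A^m p_m \in A^m\ZZ^d$, and $A^m w_m + n_m \to y$. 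The care needed here is uniformity: since $A^{-m}y$ and $A^{-m}x$ may be large, one should instead reduce $A^{-m}y - A^{-m}x$ modulo $\ZZ^d$ into a fundamental domain first and use minimality with a quantitative (uniform over the torus) modulus; this makes $w_m$ bounded and the convergence rate controlled.

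For part (2), I would transport the result of part (1) through the conjugacy $H$ using Proposition~\ref{nmH}. Given $x, y \in \RR^d$, apply part (1) to the points $H(x)$ and $H(y)$ and the foliation $\tildeL$: obtain $x_m \in \tildeL(H(x))$ and $n_m \in A^m\ZZ^d$ with $x_m + n_m \to H(y)$. Set $z_m := H^{-1}(x_m - n_m + \text{(correction)})$; more precisely, since $H^{-1}(\tildeL) = \tildeF$ and $H^{-1}$ is uniformly continuous, the point $H^{-1}(x_m) \in \tildeF(x)$, and I want $H^{-1}(x_m) + n_m \to y$. By Proposition~\ref{nmH}, $\lvert H^{-1}(u + n_m) - H^{-1}(u) - n_m\rvert < \varepsilon_m \to 0$ for $n_m$ with $A^{-i}n_m \in \ZZ^d$ for $1 \le i \le m$ — and $n_m \in A^m\ZZ^d$ gives exactly this. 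Hence $H^{-1}(x_m) + n_m$ is within $\varepsilon_m$ of $H^{-1}(x_m + n_m)$, which converges to $H^{-1}(H(y)) = y$ by uniform continuity of $H^{-1}$. Thus $z_m := H^{-1}(x_m) \in \tildeF(x)$ works.

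The main obstacle I anticipate is the bookkeeping in part (1): ensuring that the lattice vectors produced by the torus-minimality argument genuinely lie in the sublattice $A^m\ZZ^d$ while keeping the approximating points $x_m$ (or $w_m$) in a bounded region so that applying $A^m$ does not destroy the convergence. This is precisely where non-invertibility of $A$ enters — it guarantees $A^m\ZZ^d \subsetneq \ZZ^d$ is still a full-rank cocompact lattice, so $\RR^d / A^m\ZZ^d$ is a torus and minimality of the (still irreducible, $A^m$-invariant) linear foliation on it applies with a modulus of density that, after rescaling by $A^{-m}$ and then $A^m$, can be shown to converge. Everything else is a routine combination of uniform continuity of $H^{\pm 1}$ and Proposition~\ref{nmH}.
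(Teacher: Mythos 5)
Your proposal is essentially correct and follows the same route as the paper. Part~(2) is exactly the paper's argument: apply part~(1) to $H(x)$ and $H(y)$, set $z_m = H^{-1}(x_m) \in \tildeF(x)$, and use Proposition~\ref{nmH} together with uniform continuity of $H^{-1}$ to conclude $z_m + n_m \to y$. For part~(1), the paper's version is simpler than yours: since $A^m : \RR^d \to \RR^d$ is a linear homeomorphism with $A^m\tildeL(0) = \tildeL(0)$, the image of the dense set $\tildeL(0) + \ZZ^d$ under $A^m$ is precisely $\tildeL(0) + A^m\ZZ^d$, hence dense for every fixed $m$; then one just picks a point of $\tildeL(x) + A^m\ZZ^d$ within $1/m$ of $y$. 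Your detour through $A^{-m}$ works, but the ``uniformity'' obstacle you flag is not a real one: density at level $m$ lets you choose the approximation error $\delta_m$ as small as you wish (e.g.\ $\delta_m < \|A^m\|^{-1}/m$), which survives multiplication by $A^m$; no fundamental-domain reduction or uniform modulus of density is required. Also, non-invertibility of $A$ on $\TT^d$ plays no role in this proposition — the argument only uses that $A^m$ is invertible as a real linear map, which holds since $A \in GL_d(\RR)$.
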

	\begin{proof}
		Since $A$ is irreducible,  the set $\{\tildeL(0)+\ZZ^d\}$ is dense in $\RR^d$.  Fix $m\in \NN$, one has that the set $A^m\{\tildeL(0)+\ZZ^d \}$ is dense in $\RR^d$. It follows  that
		$\{\tildeL(0)+A^m\ZZ^d \}$ is dense in $\RR^d$, since $A^m\tildeL(0)=\tildeL(0)$. This complete the proof for the first item.
		
		Now, applying the first item for points $H(x)$ and $H(y)$,  we can take $n_m\in A^m\ZZ^d$ and $x_m'\in\tildeL\big(H(x)\big)$ with $x_m'+n_m\to H(y)$. Let $z_m=H^{-1}(x_m')\in \tildeF(x)$. By Proposition \ref{nmH}, one has $d\big(H(z_m+n_m), H(z_m)+n_m \big)\to 0 $ as $m\to +\infty$. By the uniform continuity of $H^{-1}$, we get $(z_m+n_m)\to y$.
	\end{proof}
	
	To end this subsection, we state a proposition about the density of preimage sets for an irreducible toral endomorphism (may not be Anosov) which will be used in Section \ref{s-rigidity}.
	\begin{proposition}\label{preimage dense}
		Let $A\in M_d(\mathbb{Z})\cap \rm{GL}$$_d(\mathbb{R})$ be irreducible over $\mathbb{Q}$. It induces a torus endomorphism $A:\mathbb{T}^d\to \mathbb{T}^d$.
		Then there exists $C>0$ such that for every $k\ge 1$ and every $x_0\in\mathbb{T}^d$, the $k$-preimage set of $x_0$
		\begin{align*}
			 \{x\in\mathbb{T}^d|A^k(x)=x_0 \}
		\end{align*}
		is $\;C|{\rm det}(A)|^{-k/d}$-dense in $\mathbb{T}^d$.	
	\end{proposition}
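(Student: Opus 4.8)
The plan is to translate the stated density property into a covering-radius estimate for a lattice in $\RR^d$, and then to control that covering radius by combining Minkowski's theorem on short vectors with the irreducibility of $A$. First I would observe that, since $A\in M_d(\ZZ)$ has nonzero determinant, $A^k:\TT^d\to\TT^d$ is a surjective covering map, so the $k$-preimage set of $x_0$ is a nonempty coset of $\ker(A^k:\TT^d\to\TT^d)$. Lifting to $\RR^d$, this kernel is $\Lambda_k:=L_k/\ZZ^d$, where $L_k:=A^{-k}\ZZ^d$ is a lattice which \emph{contains} $\ZZ^d$ (because $A^k$ is an integer matrix) and has covolume $|\det A|^{-k}$. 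Since translations are isometries of $\TT^d$, the preimage set is $\rho$-dense iff $\Lambda_k$ is $\rho$-dense; and since $\ZZ^d\subseteq L_k$, the distance in $\TT^d=\RR^d/\ZZ^d$ from a point to $\Lambda_k$ equals the distance in $\RR^d$ from a lift to $L_k$. Hence it suffices to prove $\mu(L_k):=\sup_{x\in\RR^d}d(x,L_k)\le C\,|\det A|^{-k/d}$ for a constant $C$ depending only on $A$ (in particular independent of $k$ and of $x_0$, which has already disappeared from the problem).

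Next I would let $u$ be a shortest nonzero vector of $L_k$. By Minkowski's convex body theorem applied to a Euclidean ball, $|u|\le 2V_d^{-1/d}\operatorname{covol}(L_k)^{1/d}=2V_d^{-1/d}|\det A|^{-k/d}$, where $V_d$ is the volume of the unit ball. The decisive step is to manufacture a full-rank sublattice of $L_k$ whose covering radius is comparable to $|u|$. Because the characteristic polynomial of $A$ is irreducible over $\QQ$, it coincides with the minimal polynomial of $A$, and therefore the minimal polynomial of $A$ acting on the single nonzero vector $u$ also has degree $d$; equivalently $u,Au,\dots,A^{d-1}u$ are linearly independent over $\QQ$ and generate a full-rank sublattice $M\subseteq\RR^d$. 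Moreover $M\subseteq L_k$: writing $v:=A^k u\in\ZZ^d$ we have $A^j u=A^{-k}(A^j v)\in A^{-k}\ZZ^d=L_k$ for every $j\ge 0$, since $A^j v\in\ZZ^d$. Using $\{A^j u\}_{j=0}^{d-1}$ as a basis of $M$ together with the elementary rounding bound for the covering radius in terms of a basis, and $|A^j u|\le\|A\|^j|u|$, I get $\mu(M)\le\tfrac12\sum_{j=0}^{d-1}|A^j u|\le C_A|u|$ with $C_A:=\tfrac12\sum_{j=0}^{d-1}\|A\|^j$ depending only on $A$. Finally, since $M\subseteq L_k$ we have $\mu(L_k)\le\mu(M)\le C_A|u|\le 2C_A V_d^{-1/d}|\det A|^{-k/d}$, which is the claimed inequality with $C:=2C_A V_d^{-1/d}$.

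The main obstacle — and the only place irreducibility is genuinely used — is the construction of the auxiliary lattice $M$ and the simultaneous verification that it is full-rank (so that $\mu(M)$ is finite and bounded by a multiple of $|u|$) and contained in $L_k$ (so that $\mu(L_k)\le\mu(M)$). Irreducibility is exactly what forces a single short vector of $L_k$ together with its first $d-1$ forward iterates under $A$ to span all of $\RR^d$; without it the preimage sets can accumulate near a proper invariant subtorus and the statement fails, precisely as in the reducible examples $A_1$ and $F$ from the introduction. Everything else — Minkowski's first theorem and the basis bound for the covering radius — is standard and routine.
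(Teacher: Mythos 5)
Your proof is correct, and it takes a genuinely different (though cousin) route from the paper's. You apply Minkowski's theorem directly to the lattice $L_k=A^{-k}\ZZ^d$ in $\RR^d$ to produce a short vector $u$, then invoke irreducibility through the fact that the cyclic $\QQ[A]$-module generated by the nonzero rational vector $u$ is all of $\QQ^d$, so that $u,Au,\dots,A^{d-1}u$ is a basis of a full-rank sublattice $M\subseteq L_k$ whose covering radius is controlled by $\tfrac12\sum_{j}\|A\|^j|u|$. The paper instead applies Minkowski inside the $d$-dimensional matrix space $V=\mathrm{span}_\RR\{I_d,A,\dots,A^{d-1}\}$, with the lattice $V_\ZZ=V\cap M_d(\ZZ)$, scaled by the map $X\mapsto |\det A|^{-k/d}A^kX$; this produces a nonzero integer matrix $M=m(A)\in V_\ZZ$ with $A^{-k}M$ uniformly small, and irreducibility enters via the statement that $\QQ[A]$ is a field so $M$ is invertible, whence $A^{-k}M\ZZ^d$ is the desired full-rank sublattice of $A^{-k}\ZZ^d$ that is uniformly $C|\det A|^{-k/d}$-dense. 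Both arguments hinge on the same two pillars (Minkowski plus the equivalent algebraic consequences of irreducibility: a nonzero rational vector is $A$-cyclic $\Longleftrightarrow$ nonzero elements of $\QQ[A]$ are invertible), and both produce a convenient full-rank sublattice of $A^{-k}\ZZ^d$. Your version is the more standard geometry-of-numbers presentation (shortest vector, covering radius by the rounding bound) and avoids the paper's detour through the companion-matrix computation $\det(X\mapsto AX)=\det A$; the paper's version has the advantage that the sublattice it produces, $A^{-k}M\ZZ^d$, comes already packaged inside a small convex body, so no separate covering-radius-from-a-basis estimate is needed.
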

	\begin{proof}
		Consider the $d$-dimensional real vector space
		$$V=\text{span}_{\mathbb{R}}\{ I_d,A,...,A^{d-1}\}$$
		and the lattice $V_{\mathbb{Z}}:=V\cap M_d(\mathbb{Z})$ in $V$. We claim that nonzero matrices in $V_{\mathbb{Z}}$ are invertible in $GL_d(\RR)$.
		In fact, if $M\in V_{\mathbb{Z}}-\{0\}$, then there is a nonzero rational polynomial $m\in \mathbb{Q}[x]$ with deg$(m)\leq d-1$ such that $M=m(A)$. Note that $m $ and the characteristic polynomial $\chi$ of $A$ are coprime over $\mathbb{Q}$, since $A$ is irreducible. So there exist $g,h\in\mathbb{Q}[x]$ such that $mg+\chi h=1$. It follows that $m(A)g(A)=I_d$. So $M=m(A)$ is invertible.
		
		Fix $\mathcal{K}\subset V $ be a compact covex symmetric subset with vol$(\mathcal{K})\geq 2^d\text{vol}(V/V_{\mathbb{Z}})$,
		and let $$C=
        {\sqrt{d}}\cdot
        \sup_{K\in\mathcal{K},v\in\mathbb{R}^d,\|v\|=1} \|Kv\|.$$
		We prove that for every $k\geq 1$,
  the $k$-preimage set of every $x_0\in\mathbb{T}^d$
  is $C|\text{det}(A)|^{-k/d}$-dense in $\mathbb{T}^d$.
		It suffices to prove that $A^{-k}\mathbb{Z}^d$ is $C|\text{det}(A)|^{-k/d}$-dense in $\mathbb{R}^d$.
		
		Let $L:V\to V$ be the linear map defined be $L(X)=AX$. The matrix of $L$ relative to the basis $\{ I_d,A,...,A^{d-1}\}$ of $V$ is
		the companion matrix of $\chi$. So $\text{det}(L)=\text{det}(A)$. It follows that the compact convex symmetric set $|\text{det}(A)|^{-k/d} \cdot L^k(\mathcal{K})$ has volume
		\begin{align*}
		   {\rm vol}\left( |\text{det}(A)|^{-k/d} \cdot L^k(\mathcal{K})  \right) 
		   &=|\text{det}(A)|^{-k} \cdot \text{vol}\left(L^k(\mathcal{K})\right) \\
		   &=|\text{det}(A)|^{-k}|\text{det}(L)|^k
		   \cdot\text{vol}(\mathcal{K}) \\
		   &=\text{vol}(\mathcal{K})\geq 2^d\text{vol}(V/V_{\mathbb{Z}}).
		\end{align*}
		By Minkowski's convex body theorem, it contains a nonzero matrix in $V_{\mathbb{Z}}$.
		Namely, there exist $K\in\mathcal{K}$ and $M\in V_{\mathbb{Z}}-\{0\}$  such that $|\text{det}(A)|^{-k/d}A^kK=M$.
		Note that $M$ is invertible, so $K$ is also invertible. For $r>0$, let $B(r)\subset \mathbb{R}^d$ denote the open ball with radius $r$
		centered at the origin. Then
		\begin{align*}
			\mathbb{R}^d&=|\text{det}(A)|^{-k/d}K\mathbb{R}^d=|\text{det}(A)|^{-k/d} K\big(\mathbb{Z}^d+B(\sqrt{d})\big) \\
			&=A^{-k}M\mathbb{Z}^d+|\text{det}(A)|^{-k/d} KB(\sqrt{d}) \\
			&\subseteq A^{-k}\mathbb{Z}^d+B\big(C|\text{det}(A)|^{-k/d}\big).
		\end{align*}
		This means that $A^{-k}\mathbb{Z}^d$ is $C|\text{det}(A)|^{-k/d}$-dense in $\mathbb{R}^d$.
	\end{proof}

	\subsection{Dominated splitting on the inverse limit space}
	Now we introduce the dynamics on the inverse limit space.
	Note that the inverse limit space has compactness which the universal cover  lacks.
	
	Firstly,  we clear the definition of \textit{inverse limit space}. Let $(M,d)$ be a compact metric space and $M^{\mathbb{Z}}:=\{(x_i) \;|\;x_i\in M,  \forall i\in \mathbb{Z} \}$ be the product topological space.  $M^{\mathbb{Z}}$ is compact by Tychonoff theorem  and it can be metrizable by the metric
	$$
	\tilde{d}((x_i),(y_i))= \sum_{-\infty}^{+\infty} \frac{d(x_i,y_i)}{2^{|i|}}.
	$$
	Let $\sigma: M^{\mathbb{Z}}\to M^{\mathbb{Z}}$ be the \textit{(left) shift} homeomorphism by
	$(\sigma (x_i))_j=x_{j+1}$, for all $j\in \mathbb{Z}$.
	For a continuous map $f:M\to M$, the inverse limit space of $f$ is
	$$
	M_f:=\big\{(x_i) \;|\; x_i\in M \;\; {\rm and} \;\; f(x_i)=x_{i+1},  \forall i\in \mathbb{Z} \big\}.
	$$
	With the metric $\tilde{d}(\cdot,\cdot)$, the inverse limit space
	$(M_f,\tilde{d})$ is a closed subset of $(M^{\mathbb{Z}},\tilde{d})$. So it is a compact metric space.
	It is clear that $(M_f,\tilde{d})$ is $\sigma$-invariant.
	
	\begin{definition}[\cite{Przytycki}]\label{defprz}
		A $C^1$ local diffeomorphism $f:M\to M$ is called Anosov map, if there exist constants $C>0$
		and $0<\mu<1$ such that, for every $\tilde{x}=(x_i) \in M_f$,
		there exists a hyperbolic splitting
		\begin{align*}
			T_{x_i}M=E^s(x_i,\tilde{x})\oplus E^u(x_i,\tilde{x}),\quad  \forall i\in \mathbb{Z},
		\end{align*}
		which is $Df$-invariant
		\begin{align*}
			D_{x_i}f\left(E^{s}(x_i,\tilde{x})\right)
			=E^s(x_{i+1},\tilde{x})
			\qquad{\rm and}\qquad D_{x_i}f\left(E^{u}(x_i,\tilde{x})\right)
			=E^u(x_{i+1},\tilde{x}),
			\qquad\forall i\in\mathbb{Z},
		\end{align*}
		and for all $n>0$ the following estimates hold:
		\begin{align*}
			&\|D_{x_i}f^n (v)\|\le C\mu^n\|v\|, \qquad \quad \forall v\in E^s(x_i,\tilde{x}), \;\;\forall i\in\mathbb{Z},\\
			&\|D_{x_i}f^n (v)\|\ge C^{-1}\mu^{-n}\|v\|, \;\;\quad \forall v\in E^u(x_i,\tilde{x}),\;\;\forall i\in\mathbb{Z}.
		\end{align*}
	\end{definition}

	We extend the hyperbolic splitting on the inverse limit space to the dominated splitting case. We say a local diffeomorphism $g:M\to M$ admits a \textit{dominated splitting}
	$$TM=E_1\oplus E_2\oplus...\oplus E_m,$$
	if for every $i=1,\cdots,m$, each subbundle $E_i$ is $Dg$-invariant and there exist  $C>0$, $0< \lambda< 1$ such that for any $x\in M$  and any unit vectors $u\in E_i(x)$ and $v\in E_{i+1}(x)$,
	$$\| D_xg^nu \| \leq C\lambda^n \| D_xg^nv \|, \quad \forall n\in\NN.$$
	
	Let $A:\TT^d\to \TT^d$ be a linear Anosov map and admits the \textit{finest (on stable bundle) dominated  splitting},
	\begin{align}
		T\TT^d=L^s_1\oplus L^s_2\oplus...\oplus L^s_k\oplus L^u, \label{Acondition1}
	\end{align}
	where ${\rm dim}L^s_i=1$, $1\leq i\leq k$, such that
	\begin{align}
		0<|\mu^s_1(A)|<|\mu^s_2(A)|<...<|\mu^s_k(A)|<1<|\mu_{\rm min}^u(A)|,\label{Acondition2}
	\end{align}
	where $\mu^s_i(A)$ is the eigenvalue with respect to $L^s_i$ and $\mu_{\rm min}^u(A)=m(A|_{L^u})$ the mininorm of $A$ restricted on $L^u$. Denote by $\mu_{\rm max}^u(A)=\|A\|$, the norm of $A$. 	And denote by $\lambda^s_i(A)={\rm log}|\mu^s_i(A)|$ the stable Lyapunov exponent of the subbundle $L^s_i$.
	
	\begin{notation*}
		We use the following notations to denote the joint bundles of $A$ which are anological for $\tilde{L}$, $\mathcal{L}$, $\tildeL$ and the bundles and foliations of $f$ and $F$, if they are well defined.
		\begin{enumerate}
			\item $L^s_{(i,j)}:=L^s_i\oplus L^s_{i+1}\oplus ...\oplus L^s_j$, $1\leq i\leq j\leq k$.
			\item $L^{s,u}_{(i,j)}:=L^s_i\oplus L^s_{i+1}\oplus ...\oplus L^s_j\oplus L^u$, $1\leq i\leq j\leq k$.
			\item $L^s_{(i,i)}=L^s_i$,  $L^{s,u}_{(i,i)}=L^{s,u}_i=L^s_i\oplus L^u$, $1\leq i\leq k$.
		\end{enumerate}	
	\end{notation*}

	\begin{proposition}\label{s-bundle-continuous}
	Let $A\in M_d(\ZZ)\cap GL_d(\RR)$ induce a toral Anosov map with the finest (on stable bundle)	dominated splitting satisfying \eqref{Acondition1} and \eqref{Acondition2}. Then, there exists a $C^1$ neighborhood $\mathcal{U}\subset C^1(\TT^d)$ of $A$ such that for all $f\in\mathcal{U}$ and all $\tilde{x}=(x_n) \in \TT^d_f$,  there exists a dominated splitting
		$$T_{x_n}\TT^d=E^s_1(x_n,\tilde{x})\oplus...\oplus E^s_k(x_n,\tilde{x}) \oplus E^u(x_n,\tilde{x}),\quad  \forall n\in \ZZ,$$
		where dim$E^s_i=1, 1\leq i\leq k$.
		And the bundles $E^s_i(x_n,\tilde{x}) $ and $E^u(x_n,\tilde{x})$ are continuous with $\tilde{x}$. Especially, there exist bundles $E^s_{(1,i)} (1\leq i\leq k)$ defined on $\TT^d$ such that
		\begin{align}
			E^s_1\subset E^s_{(1,2)}\subset...\subset E^s_{(1,k-1)}\subset E^s_{(1,k)}=E^s. \label{strongstableexistsonTd}
		\end{align}
	
		Moreover, for any $\alpha\in(0,\frac{\pi}{2})$ and $\delta>0$, there exists $\mathcal{U}\subset C^1(\TT^d)$ such that for every $f\in\mathcal{U}$ and $\tilde{x}=(x_n)\in\TT^d_f$ and  $1\leq i\leq k$,
		\begin{align}
				\angle\left( E^s_i(x_0,\tilde{x}),  L^s_i(x_0) \right)\leq \alpha \quad
			{\rm and} \quad \|Df|_{E^s_i(x_0,\tilde{x})}\| \in \big[\mu^s_i(A)-\delta, \mu^s_i(A)+\delta\big].\qquad     \label{Acontrols}
		\end{align}
	    And,
		\begin{align}
			\angle\left( E^u(x_0,\tilde{x}),  L^u(x_0) \right)\leq \alpha \quad
			{\rm and} \quad \|Df|_{E^u(x_0,\tilde{x})}\| \in \big[\mu^u_{\rm max}(A)-\delta, \mu_{\rm max}^u(A)+\delta\big].      \label{Acontrolu}
		\end{align}
		
	\end{proposition}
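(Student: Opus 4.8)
\emph{Proof plan.}
The plan is to realize $Df$ as a continuous linear cocycle over the shift $\sigma$ on the inverse limit space $\TT^d_f$ and to run the classical invariant cone‑field argument for persistence of dominated splittings, while keeping track of which subbundles are determined by forward orbits alone. The advantage of working on $\TT^d_f$ rather than on the universal cover (Proposition~\ref{lifting Anosov}) is that $\sigma$ is a homeomorphism of a \emph{compact} metric space, so all constants below can be taken uniform.

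First fix the trivialization $T\TT^d\cong\TT^d\times\RR^d$, so that for $\tilde x=(x_n)\in\TT^d_f$ each $D_{x_0}f$ is a matrix in $GL_d(\RR)$ (invertible once $\mathcal U$ is small, since $A\in GL_d(\RR)$). For $A$ the derivative cocycle over $(\TT^d_A,\sigma)$ is the constant cocycle $\tilde x\mapsto A$, and $\RR^d=L^s_1\oplus\cdots\oplus L^s_k\oplus L^u$ is $A$-invariant and dominated by \eqref{Acondition1}--\eqref{Acondition2}. Hence for each $0\le i\le k$ one has constant cones $\mathcal C^{ss}_i$ around $L^s_{(1,i)}$ and $\mathcal C^{wu}_i$ around $L^{s,u}_{(i+1,k)}$ (with the degenerate cases $\mathcal C^{ss}_0=\{0\}$, $\mathcal C^{wu}_0=\RR^d$, $\mathcal C^{ss}_k$ around $L^s$, $\mathcal C^{wu}_k$ around $L^u$), and an iterate $N$, such that $A^N\mathcal C^{wu}_i\subset{\rm int}\,\mathcal C^{wu}_i$, $A^{-N}\mathcal C^{ss}_i\subset{\rm int}\,\mathcal C^{ss}_i$, the openings shrink by a definite factor, and $A^N$ contracts vectors of $\mathcal C^{ss}_i$ by a definite factor relative to vectors of $\mathcal C^{wu}_i$. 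If $f$ is $C^1$-close to $A$, then $D_xf$ is uniformly close to $A$ over $x\in\TT^d$, so after slightly enlarging the cones these strict invariances and uniform domination gaps hold for every $D_xf$, hence along every orbit in $\TT^d_f$.

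Next define, for $\tilde x=(x_n)\in\TT^d_f$,
\[
E^{s,u}_{(i+1,k)}(\tilde x):=\bigcap_{n\ge0}D_{x_{-n}}f^{n}\big(\mathcal C^{wu}_i\big),
\qquad
E^{s}_{(1,i)}(\tilde x):=\bigcap_{n\ge0}\big(D_{x_0}f^{n}\big)^{-1}\big(\mathcal C^{ss}_i\big),
\]
so that $E^u(\tilde x):=\bigcap_{n\ge0}D_{x_{-n}}f^{n}(\mathcal C^{wu}_k)$ is the unstable bundle of Definition~\ref{defprz}. The standard cone criterion gives that these are $Df$-invariant linear subspaces of dimensions $d-i$ and $i$, that the nested intersections converge exponentially at a rate uniform in $\tilde x$ (hence they are continuous in $\tilde x$), and — since $L^s_{(1,i-1)}$ is transverse to $L^{s,u}_{(i,k)}$ — that $E^s_{(1,i-1)}(\tilde x)$ is transverse to $E^{s,u}_{(i,k)}(\tilde x)$ once $f$ is close to $A$. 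Setting $E^s_i(\tilde x):=E^s_{(1,i)}(\tilde x)\cap E^{s,u}_{(i,k)}(\tilde x)$ produces continuous $1$-dimensional bundles with $E^s_{(1,i)}=E^s_1\oplus\cdots\oplus E^s_i$ and the splitting $T_{x_n}\TT^d=E^s_1\oplus\cdots\oplus E^s_k\oplus E^u$. The crucial observation for \eqref{strongstableexistsonTd} is that $E^s_{(1,i)}(\tilde x)$ involves only the forward iterates $x_0,f(x_0),f^2(x_0),\dots$ and the constant cone $\mathcal C^{ss}_i$, hence depends on $\tilde x$ only through $x_0$; this defines continuous bundles $E^s_{(1,i)}$ on $\TT^d$ with $E^s_1\subset E^s_{(1,2)}\subset\cdots\subset E^s_{(1,k)}=E^s$, whereas $E^{s,u}_{(i+1,k)}$, in particular $E^u$, genuinely depends on all of $\tilde x$. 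Domination of the splitting and the estimates \eqref{Acontrols}--\eqref{Acontrolu} follow from the fact that as $f\to A$ in $C^1$ each $E^s_i(\tilde x)$ converges uniformly to $L^s_i(x_0)$ and $E^u(\tilde x)$ to $L^u(x_0)$, with the corresponding restricted norms of $Df$ converging uniformly to those of $A$ (here compactness of $\TT^d_f$ is used); since $|\mu^s_1(A)|<\cdots<|\mu^s_k(A)|<1<|\mu_{\rm min}^u(A)|$ and all relevant inequalities are open conditions, shrinking $\mathcal U$ gives the statement.

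The main obstacle is the cone‑field bookkeeping: one must check that the slightly enlarged cones retain \emph{strict} invariance and a domination gap uniform in the base point and in $f\in\mathcal U$, and that the intersections above have exactly the asserted dimensions and are transverse in the right pairs. This is routine but somewhat tedious. Granting it, the structural content — that the strong stable flag $E^s_{(1,i)}$ is a bundle over $\TT^d$ and not merely over $\TT^d_f$ — is immediate from the forward‑orbit characterization of $E^s_{(1,i)}$, and this is precisely what will be used in the later sections.
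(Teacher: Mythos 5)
Your proof takes essentially the same route as the paper: both set up invariant cone fields around the $A$-invariant flags $L^s_{(1,i)}$ and $L^{s,u}_{(i+1,k)}$, transfer strict invariance and domination gaps to $Df$ for $f$ in a $C^1$-neighborhood, obtain the bundles $E^s_{(1,i)}$ and $E^{s,u}_{(i+1,k)}$ as nested cone intersections over the inverse limit space $\TT^d_f$, set $E^s_i=E^s_{(1,i)}\cap E^{s,u}_{(i,k)}$, and observe that $E^s_{(1,i)}$ is defined by pulling back cones along the forward orbit alone, hence descends to a bundle on $\TT^d$. Your explicit formulas $E^s_{(1,i)}(\tilde x)=\bigcap_{n\ge0}(D_{x_0}f^n)^{-1}(\mathcal C^{ss}_i)$ and $E^{s,u}_{(i+1,k)}(\tilde x)=\bigcap_{n\ge0}D_{x_{-n}}f^n(\mathcal C^{wu}_i)$ state the paper's intended construction in a cleaner index convention, and the rest of the argument (continuity, uniform closeness to $L^s_i$ and $L^u$, and the norm estimates via openness of the conditions) matches the paper's outline.
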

	
	\begin{proof}
		We use invariant cone-fields to complete this proof. One may find more details in \cite[Appendix B.1]{BDV} and  \cite[Section 2.2 ]{PotrieCrovisierbook}.
		
		Let $S\oplus U$ be a dominated splitting on $T\TT^d$ of $A$ and $\mathcal{C}_{\alpha}$ be an $A$-invariant cone-fields contain $U$ with size $\alpha$. That is
		$$\mathcal{C}_{\alpha}(x)=\big\{  \nu=\nu_S+\nu_U\in  T_x\TT^d: \;\; \| \nu_S \|\leq \alpha \|\nu_U\|   \big\} \quad {\rm and} \quad
		DA(\mathcal{C}_{\alpha}(x))\subset \mathcal{C}_{\mu\cdot\alpha}(x),$$
		for some $\mu<1$.
		So, there exists a $C^1$-neighborhood  $\mathcal{U}$ of $A$ such that for any $f\in\mathcal{U}$ and any $x\in\TT^d$,
		one has $Df(\mathcal{C}_{\alpha}(x)) \subset {\rm int}\left(    \mathcal{C}_{\alpha}(fx)\right)$. This implies the existence of dominated splitting (see  \cite[Section 2.2 ]{PotrieCrovisierbook}). We mention that we can make a larger perturbation as long as $\mathcal{C}_{\alpha}$ can be contracted into itself by finite iterations of $Df$.
		And let $\mathcal{C}^*_{\alpha}$ be the closure of the complement of $\mathcal{C}_{\alpha}$. It is clear that $\mathcal{C}^*_{\alpha}$ is also a cone-field  which is contracted by $Df^{-1}$, if the preimage is given.
		Fix an $f$-orbit $\tilde{x}=(x_n)\in \TT^d_f$ and let
		$$S_f(x_0,\tilde{x}):= \bigcap_{n\leq0} Df^{-n}\big(\mathcal{C}^*_{\alpha}(x_n)\big),\quad U_f(x_0,\tilde{x}):= \bigcap_{n\leq0} Df^{n}\big(\mathcal{C}_{\alpha}(x_{-n})\big).$$
		Then $S_f\oplus U_f$ give a dominated splitting on the inverse limits space $\TT^d_f$. Note that $S_f(x_0,\tilde{x})$ is independent of the choice of orbits for $x_0$.
		
		For finding out $E_i^s(x_0,\tilde{x})$, we consider these two dominated splittings for $A$
		$$L^s_{(1,i)}\oplus L^{s,u}_{(i+1,k)} \quad {\rm and} \quad L^s_{(1,i-1)}\oplus L^{s,u}_{(i,k)}.$$
		So, we can get the following two dominated splittings under the $C^1$-perturbation,
		$$E^s_{(1,i)}(x_0,\tilde{x}) \oplus E^{s,u}_{(i+1,k)}(x_0,\tilde{x})
		\quad {\rm and} \quad E^s_{(1,i-1)}(x_0,\tilde{x}) \oplus E^{s,u}_{(i,k)}(x_0,\tilde{x}).$$
		Hence, we get  $E_i^s(x_0,\tilde{x})= E^s_{(1,i)}(x_0,\tilde{x})\cap E^{s,u}_{(i,k)}(x_0,\tilde{x})$.
    	It is clear that  this dominated splitting is continuous with respect to orbits.  Meanwhile, $E^s_{(1,i)}(x_0,\tilde{x})$ only depends on $x_0$. It follows that the bundle $E^s_{(1,i)} $ is well defined on $\TT^d$. The previous proof also allows the control \eqref{Acontrols} and \eqref{Acontrolu} for bundles of $f$.
	\end{proof}

	\subsection{Foliations on the universal cover}\label{subsec: foliation on Rd}
	In this subsection, we always assume that $A\in M_d(\ZZ)\cap GL_d(\RR)$ admits the finest (on stable bundle) dominated splitting (see \eqref{Acondition1} and \eqref{Acondition2}).
	It is clear that there exists a $C^1$ neighborhood $\mathcal{U}$ of $A$ such that for every $f\in\mathcal{U}$, its lifting $F:\RR^d\to\RR^d$ admits dominated splitting
	$$
	T\RR^d=
	\tilde{E}^s_1\oplus\tilde{E}^s_2\oplus ...\oplus \tilde{E}^s_k\oplus \tilde{E}^u.
	$$
	Let $H:\RR^d\to \RR^d$ be the unique conjugacy between $F$ and $A$ guaranteed by Proposition \ref{lifting conjugate}.
	
	Let $\pi:\RR^d\to \TT^d$ be the natural projection such that $\pi\circ F= f\circ \pi$.
	Note that any $f$-orbit $\tilde{x}\in \TT^d_f$ can be approached by $F$-orbits.
	Hence for every $1\leq i\leq j\leq k$, one can get
	\begin{align}
		\bigcup_{\tilde{x}\in\TT^d_f, \tilde{x}_0=x_0 } E^s_{(i,j)}(x_0,\tilde{x})=\overline{ \bigcup_{\pi(y)=x_0} D_y\pi \left( \tilde{E}^s_{(i,j)}(y) \right)},
		\qquad \forall\tilde{x}=(x_n)\in \TT^d_f.
		\label{bundleonorbitandlift}
	\end{align}
	It is similar to $E^u(x_0,\tilde{x})$. We refer to \cite[Proposition 2.5]{tahzibi} for more details.
	This projection allows us to get properties of $F$ from ones of $\TT^d_f$. Especially, we have the following remarks.
	
	\begin{remark}\label{size of U}
		By Proposition \ref{s-bundle-continuous}, we can choose $\mathcal{U}\subset C^1(\TT^d)$ small enough such that
		\begin{align}
			\mu^s_i(A) -\frac{\varepsilon_0}{3} \leq \|DF|_{\tilde{E}^s_i(x)} \| \leq \mu^s_i(A) +\frac{\varepsilon_0}{3} \quad
			{\rm and} \quad   \angle\left(  \tilde{E}^s_i(x), \tilde{L}^s_i(x)    \right)<\alpha <\frac{\pi}{2},       \label{epsilon0}
		\end{align}
		for every $x\in\RR^d$ and $1\leq i\leq k$, where $\varepsilon_0={\rm min}\left\{ 1-\mu_k^s(A), \mu^s_{i+1}(A)-\mu^s_i(A):  1\leq i\leq k-1  \right\}$.
	\end{remark}
\vspace{2mm}
   \begin{remark}\label{holderbundle}
	The distributions $\tilde{E}^s_{(i,j)} (1\leq i\leq j\leq k)$ and $\tilde{E}^u$ are all H$\ddot{\rm o}$lder continuous on $T\RR^d$. One can get this from \cite[Theorem 2.3]{pesinbook} with the fact that the angle $\angle(\tilde{E}_1,\tilde{E}_2)$ is uniformly away from $0$, where $\tilde{E}_1, \tilde{E}_2 $ are two different bundles in $\big\{  \tilde{E}^u,\; \tilde{E}^s_i: \;1\leq i\leq k  \big\}$ (also see Remark \ref{uniform continuity of foliation on Rd}).
    \end{remark}
	\vspace{0.3cm}
	We say $\tildeF$ is a \emph{quasi-isometric} foliation on $\RR^d$, if there exist contants $a,b>0$ such that
	\begin{align}
		d_{\tildeF}(x,y)\leq a\cdot d(x,y)+b,\quad \forall x\in \RR^d, y\in\tildeF(x). \label{defquasi-isometric}
	\end{align}
	 A foliation $\tildeF$ defined on $\RR^d$ is called \emph{$\ZZ^d$-periodic} (equivalently,  \emph{commutative with $\ZZ^d$-actions}), if
	 $$
	 \tildeF(x+n)=\tildeF(x)+n,
	 \qquad \forall x\in\RR^d,
	 \quad\forall n\in\ZZ^d.
	 $$
	Similarly, a \emph{$\ZZ^d$-periodic} bundle $\tilde{E}$ on $T\RR^d$  means that
	$$
	\tilde{E}(x+n)=DT_n\left(\tilde{E}(x)\right),
	\qquad \forall x\in\RR^d, \quad\forall n\in\ZZ^d.
	$$
	Here $T_n:\RR^d\to\RR^d$ is the translation $T_n(x)=x+n$.
	
	\begin{proposition}\label{leaf proposition}
		There exists a $C^1$ neighborhood $\mathcal{U}\subset C^1(\TT^d)$  of $A$ such that for every $f\in\mathcal{U}$ and its lifting $F$, we have the followings,
		\begin{enumerate}
			\item  The $F$-invariant bundles $\tilde{E}^s_{(i,j)} (1\leq i\leq j\leq k)$ and $\tilde{E}^u$ are uniquely integrable.  Denote the integral foliations by $\tildeF^s_{(i,j)}$, $\tildeF^u$ and $\tildeF^s_i$ ( if $j=i$).  Moreover, the foliation $\tildeF^s_{(i_0,j_0)}$ is subfoliated by $\tildeF^s_{(i,j)}$, for any $1\leq i_0\leq i \leq j \leq j_0 \leq k$.
			\item The strong stable bundle $\tilde{E}^s_{(1,i)} $ and strong stable foliation $\tildeF^s_{(1,i)}$ $(1\leq i\leq k)$ are both $\ZZ^d$-periodic.
			\item The foliation $\tildeF^s_{(i,j)} (1\leq i\leq j\leq k)$ is quasi-isometric. Especially, for every Anosov map on torus with one-dimensional stable bundle, the lifting of stable foliation is quasi-isometric.
			\item $H$ preserves the weak stable foliations. It means that $H(\tildeF^s_{(i,k)})=\tildeL^s_{(i,k)}$,  for all $1\leq i\leq k$.
		\end{enumerate}
	\end{proposition}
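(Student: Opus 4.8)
The plan is to treat the four items in turn; items (1)--(3) are relatively soft, and item (4) is the heart of the matter.

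\textbf{Items (1) and (2).} Fix $f\in\mathcal U$ with lift $F$, an Anosov diffeomorphism of $\RR^d$. The full stable foliation $\tildeF^s=\tildeF^s_{(1,k)}$ and the unstable foliation $\tildeF^u$ exist and are uniquely integrable by the classical stable manifold theorem. To integrate the intermediate bundles I would restrict to a stable leaf $W=\tildeF^s(x)$, where $F|_W$ is a $C^{1+\alpha}$ contraction carrying the finest dominated splitting $\tilde E^s_1\oplus\cdots\oplus\tilde E^s_k$ into lines; for each $l$ the ``fast'' piece $\tilde E^s_{(1,l)}$ is the strongest‑contracting $l$‑plane field of $F|_W$, hence integrates to the strong stable foliation $\tildeF^s_{(1,l)}|_W$, and letting $x$ vary this produces $\tildeF^s_{(1,l)}$ on $\RR^d$ (which is also the ambient strong stable foliation of $F$). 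For a general $\tilde E^s_{(i,j)}$ I would work inside a leaf of $\tildeF^s_{(1,j)}$, where $\tilde E^s_{(i,j)}$ is the $DF$‑invariant complement of the strong stable direction $\tilde E^s_{(1,i-1)}$: since the fibers of the quotient by $\tildeF^s_{(1,i-1)}$ are contracted strictly faster than the base, the graph transform on sections of this quotient fibration is a contraction and produces a unique invariant family of $C^1$ leaves, i.e. $\tildeF^s_{(i,j)}$; unique integrability and the stated subfoliation relations are built into this construction. Item (2) is immediate: by Proposition~\ref{s-bundle-continuous} each $E^s_{(1,i)}$ is well defined on $\TT^d$ (it depends only on the base point, not on the orbit), so its lift through $\pi$ is $\ZZ^d$‑periodic, and a uniquely integrable $\ZZ^d$‑periodic bundle has a $\ZZ^d$‑periodic integral foliation because $\tildeF^s_{(1,i)}(x)+n$ is an integral leaf through $x+n$.

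\textbf{Item (3).} Shrinking $\mathcal U$, Proposition~\ref{s-bundle-continuous} and Remark~\ref{size of U} let me assume $\angle(\tilde E^s_i(x),\tilde L^s_i)<\alpha$ uniformly; since the fixed lines $\tilde L^s_i$ are pairwise uniformly transverse, it follows that $\tilde E^s_{(i,j)}(x)$ stays in a fixed cone of half‑angle $\alpha'<\tfrac{\pi}{2}$ around the fixed subspace $\tilde L^s_{(i,j)}$, uniformly in $x$. Hence the orthogonal projection $P$ onto $\tilde L^s_{(i,j)}$, restricted to a leaf of $\tildeF^s_{(i,j)}$, is a local diffeomorphism whose inverse stretches lengths by at most $(\cos\alpha')^{-1}$; as the leaves are complete and $\tilde L^s_{(i,j)}$ is simply connected, $P$ restricts to a diffeomorphism of each leaf onto $\tilde L^s_{(i,j)}$. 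Pulling back the straight segment $[Px,Py]$ yields a path in the leaf from $x$ to $y$ of length $\le(\cos\alpha')^{-1}|x-y|$, and comparing the (uniformly equivalent) Euclidean and Riemannian metrics gives \eqref{defquasi-isometric} with $b=0$. The one‑dimensional‑stable statement is the case $k=1$ (or, without closeness to a linear map, the classical quasi‑isometry of one‑dimensional (un)stable foliations of toral Anosov diffeomorphisms, applied to $F$).

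\textbf{Item (4), the main point.} I would prove $H(\tildeF^s_{(i,k)}(x))=\tildeL^s_{(i,k)}(Hx)$ for every $x$ and $i$ directly. First, by the (strong) stable manifold theorem for the $C^1$‑small perturbation $F$ of the linear $A$, every leaf $\tildeF^s_{(i,k)}(x)$ is a graph over $\tilde L^s_{(i,k)}$ whose $C^0$‑size is controlled by the size of the perturbation; in particular it stays within a \emph{uniformly} bounded Hausdorff distance of the affine subspace $x+\tilde L^s_{(i,k)}$. Second, since $\|H-\mathrm{Id}\|,\|H^{-1}-\mathrm{Id}\|\le C_0$ (Proposition~\ref{lifting conjugate}) and $H(\tildeF^s(x))=\tildeL^s(Hx)$, the image $S:=H(\tildeF^s_{(i,k)}(x))$ is a subset of the affine subspace $Hx+\tilde L^s$ lying within some uniform Hausdorff distance $R_1$ of $Hx+\tilde L^s_{(i,k)}$; and as $H\circ F=A\circ H$, for every $m\in\ZZ$ the set $A^m S=H(\tildeF^s_{(i,k)}(F^m x))$ lies within $R_1$ of $A^m Hx+\tilde L^s_{(i,k)}$. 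Third (linear algebra), pick $y\in S$ and write $w=y-Hx\in\tilde L^s$ as $w=\sum_{l=1}^{k}w_l$ with $w_l\in\tilde L^s_l$; then the component of $A^m w=\sum_l(\mu^s_l)^m w_l$ transverse to $\tilde L^s_{(i,k)}$, namely $\sum_{l<i}(\mu^s_l)^m w_l$, has norm bounded uniformly in $m\in\ZZ$ by uniform transversality of $\tilde L^s_{(1,i-1)}$ and $\tilde L^s_{(i,k)}$; if some $w_l\ne0$ with $l<i$, then as $m\to-\infty$ the term with the smallest modulus among the $|\mu^s_l|$ ($l<i,\ w_l\ne0$) dominates and forces this norm to infinity, a contradiction. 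Hence $w\in\tilde L^s_{(i,k)}$, so $S\subseteq\tildeL^s_{(i,k)}(Hx)$; as $S$ is a closed connected $(k-i+1)$‑submanifold of this subspace, invariance of domain forces equality. (The strong stables $H(\tildeF^s_{(1,i-1)})=\tildeL^s_{(1,i-1)}$ then follow by the symmetric argument with $m\to+\infty$, or as transverse complements inside $\tildeF^s$.)

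\textbf{Expected main obstacle.} Everything above is either classical invariant‑manifold machinery or elementary linear algebra, \emph{except} the first step of item (4): the uniform ``structural stability'' assertion that the global leaves of the stable sub‑foliations stay within a uniformly bounded Hausdorff distance of their linear models. This is exactly where the $C^1$‑closeness to $A$ (hence Proposition~\ref{s-bundle-continuous}) is genuinely needed, and I expect it to require the most care; it should be obtained, as usual, from the graph transform with uniformly bounded fixed‑point sections, using the cone control of item (3) together with the global product structure of $F$ on $\RR^d$. Items (1)--(3) themselves use closeness to a linear map only insofar as Proposition~\ref{s-bundle-continuous} is invoked, and not at all in the one‑dimensional‑stable case.
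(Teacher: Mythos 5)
Items (1)--(3) of your proposal are sound and are, in substance, the same arguments the paper uses. For (1)--(2) the paper invokes the stable manifold theorem for $\tilde E^s_{(1,i-1)}$ and $\tilde E^u$, robust dynamical coherence for $\tilde E^s_{(i,k)}$, and then sets $\tildeF^s_{(i,j)}=\tildeF^s_{(1,j)}\cap\tildeF^s_{(i,k)}$; the $\ZZ^d$-periodicity of the strong stables is read off from Proposition~\ref{s-bundle-continuous} and \eqref{bundleonorbitandlift}, just as you say. For (3) the paper cites Brin's quasi-isometry criterion, which is precisely the ``uniform cone around a fixed transverse subspace implies the orthogonal projection is a bi-Lipschitz diffeomorphism of each leaf'' argument you spell out; your treatment of the $k=1$ global case also matches the paper (a global product structure argument, cf.\ \cite{BBI2009}).

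Item (4) is where you genuinely diverge from the paper, and it is also where your proposal has a real gap. Your reduction hinges on the assertion that each leaf $\tildeF^s_{(i,k)}(x)$ stays within a \emph{uniformly bounded} Hausdorff distance of $x+\tilde L^s_{(i,k)}$, and you propose to get this from a graph transform with uniformly bounded fixed-point sections. This does not go through: if one writes $\tildeF^s_{(i,k)}(x)-x$ as $\mathrm{graph}(\phi_x)$ over $\tilde L^s_{(i,k)}$, the nonlinearity $\eta(x,v)=F(x+v)-Fx-Av$ satisfies only $|\eta(x,v)|\lesssim\epsilon'|v|$, so after one application of the graph transform the new section acquires an error term that grows linearly in $|u|$; the estimate $\|\phi_x\|_\infty\le R$ is not preserved (what is preserved is at best $|\phi_x(u)|\le R(1+|u|)$). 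Indeed, the cone control you quote from item (3) gives only a Lipschitz (linear) bound $|\phi_x(u)|\le L|u|$, never a $C^0$ bound. More to the point, the uniform Hausdorff bound you want is, modulo the easy linear-algebra step, \emph{equivalent} to the conclusion $H(\tildeF^s_{(i,k)})=\tildeL^s_{(i,k)}$ (via $\|H-\mathrm{Id}\|\le C_0$ in both directions), so proving it ``first'' does not circumvent the difficulty. Your linear-algebra argument can actually be salvaged under a linear-growth bound $|Q(z)|\le R(1+|P(z)|)$ rather than a constant bound (dividing by $|\mu^s_i|^{-n}$ as $n\to\infty$ still produces the contradiction), but you would then need to prove a uniform linear-growth bound for $H(\tildeF^s_{(i,k)}(x))-Hx$ over $\tilde L^s_{(i,k)}$, and $H$'s distortion makes that far from immediate.

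The paper's proof of item (4) is more economical and avoids this issue entirely. It characterizes $H(y)\in\tildeL^s_{(i,k)}(Hx)$ by the decay/growth rate of $d(A^{-n}Hy,A^{-n}Hx)$, transfers this (using only $\|H-\mathrm{Id}\|\le C_0$) to a rate condition on $d(F^{-n}y,F^{-n}x)$, and then, if $y\notin\tildeF^s_{(i,k)}(x)$, splits $y$ into a strong-stable piece $z\in\tildeF^s_{(1,i-1)}(x)$ and a weak piece, and uses the quasi-isometry from item (3) to bound intrinsic leaf distances by ambient ones; the rate gap $|\mu^s_{i-1}|<|\mu^s_i|$ then forces a contradiction. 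You would do well to adopt that route, since it needs no claim about global graph sizes at all.
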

	
	\begin{proof}
		Let  $\mathcal{U}$ be given by Remark \ref{size of U}. Consider the dominated splitting
		$$\tilde{E}^s_{(1,i-1)}\oplus \tilde{E}^s_{(i,k)}\oplus \tilde{E}^u,$$
		where $2\leq i\leq k$. By the Stable Manifold Theorem e.g.  \cite[Theorem 4.1 and Theorem 4.8]{pesinbook}, we have that
		$\tilde{E}^s_{(1,i-1)}$ and $\tilde{E}^u$ are always integrable. Although \cite{pesinbook} prove it for diffeomorphism, in our case Remark \ref{size of U} and Remark \ref{holderbundle} provides the uniform continuity and domination of bundles to replace the compactness. Moreover, by Proposition \ref{s-bundle-continuous} and \eqref{bundleonorbitandlift}, the bundle $\tilde{E}^s_{(1,i)} $ and foliation $\tildeF^s_{(1,i)}$ $(1\leq i\leq k)$ are $\ZZ^d$-periodic.
		
		For the integrability of $\tilde{E}^s_{(i,k)}$,  in the case of diffeomorphism,  the linear Anosov system $A:\RR^d\to \RR^d$ is robustly dynamically coherent (see \cite[Theorem 7.6]{HPS} also \cite[Proposition 3.2]{PughShub}) and this also holds for our case by the same reason of integrability for strong stable bundles. Hence, we have that
		 $$
		\tildeF^s_{(i,j)}=\tildeF^s_{(1,j)}\cap\tildeF^s_{(i,k)}
		$$
		is a foliation tangent to $\tilde{E}^s_{(i,j)}$. So $\tilde{E}^s_{(i,j)}$ is integrable for all $1\leq i\leq j\leq k$.  We refer to  \cite[Lemma 6.1]{Gogolevhighdimrigidity} for uniquely integrable property which is proved on the universal cover and also holds for non-invertible Anosov maps.
		It is clear that $\tildeF^s_{(i_0,j_0)}$ is subfoliated by $\tildeF^s_{(i,j)}$ for any $1\leq i_0\leq i \leq j \leq j_0 \leq k$. Moreover, $\tildeF^s_{(i_0,i)}$ and $\tildeF^s_{(i+1,j_0)}$ admit the Global Product Structure  on $\tildeF^s_{(i_0,j_0)}$.
		
		For a small perturbation,  in particular from \eqref{epsilon0}, we have that the foliation $\tildeF^s_{(i,j)}$ is uniformly transverse to $\tildeL^s_{(1,i-1)}\oplus\tildeL^s_{(j+1,k)}\oplus\tildeL^u$. By \cite[Proposition 4 ]{brinquasiisometric}, we have the quasi-isometric property for $\tildeF^s_{(i,j)}$. We mention that the proof of this actually need $\tildeF^s_{(i,j)}$  has a uniformly transverse plane and it is uniformly continuous (see Remark \ref{uniform continuity of foliation on Rd}).
		
    	For the case of dim$E^s=1$, since $|H-Id|$ is bounded, the unstable foliation $\tildeF^u$ is uniformly bounded by $\tildeL^u$. Namely, there exists $C_0>0$ such that $ \tildeF^u(x)$ and $\tildeL^u(x)$ are contained in the $C_0$-neighborhoods of each other,  for all $x\in\RR^d$. Fix  $n_0\in\ZZ^d$ such that $d(x,x+n_0)\geq 3C_0$, for all $x\in\RR^d$. It follows that the Hausdorff distance between $\tildeF^u(x)$ and $\tildeF^u(x+n_0)$ is bigger than $C_0$. Since the stable foliation $\tildeF^s$ and the unstable foliation $\tildeF^u$ admit the Global Product Structure, there exists $L_0$ such that for any $x\in\RR^d$ with $L(x)\leq L_0$, one has  $\tildeF^s(x,L(x))$ intersects $\tildeF^u(x+n_0)$ exactly once and the distance between $x$ and the intersection is bigger than $C_0$. Thus the one -dimensional foliation $\tildeF^s$ is always quasi-isometric, whether $f$ is a small perturbation or not. We refer readers to \cite{BBI2009} for more details.

		Finally, we prove that $H$ preserves the weak stable foliations.
		Let $y\in\tildeF^s(x)$ and we always have $H(y)\in\tildeL^s\big(H(x)\big)$. Note that $H(y)\in\tildeL^s_{(i,k)}\big(H(x)\big)$ if and only if
		$$d\left(  A^{-n}(Hy),  A^{-n}(Hx)   \right) \leq \left(\mu^s_i(A)\right)^{-n} \cdot d \left(Hy,Hx\right), \quad \forall n\in\NN.$$
		By Proposition \ref{lifting conjugate}, let $|H-id|<C_0$. One has that $H(y)\in\tildeL^s_{(i,k)}\big(H(x)\big)$ if and only if
		\begin{align}
			d\left(  F^{-n}(y),  F^{-n}(x)   \right) \leq \left(\mu^s_i(A)\right)^{-n}\cdot  d \left(Hy,Hx\right) +2C_0, \quad \forall n\in\NN. \label{leafproposition.1}
		\end{align}
		It implies that $H(y)\in\tildeL^s_{(i,k)}(H(x))$ if and only if $y\in\tildeF^s_{(i,k)}(x)$.
		Indeed, if $y\notin \tildeF^s_{(i,k)}(x)$, then there exists the unique point $z\in \tildeF^s_{(1,i-1)}(x)\cap \tildeF^s_{(i,k)}(y)$ with $a:=d_{\tildeF^s_{(1,i-1)}}(x,z)>0$. Let $b=d_{\tildeF^s_{(i,k)}}(z,y)$, note that $b$ may be zero.
		For $n\in\NN$ big enough, one has
		\begin{align}
			d(F^{-n}y,F^{-n}z)\leq d_{\tildeF^s_{(i,k)}}(F^{-n}y,F^{-n}z) \leq \left(\mu^s_i(A)-\frac{\varepsilon_0}{2}\right)^{-n}\cdot b, \label{leafproposition.2}
		\end{align}
		where $\varepsilon_0$ is given by \eqref{epsilon0} and
		$$d_{\tildeF^s_{(1,i-1)}}(F^{-n}x,F^{-n}z) \geq \left(\mu^s_{i-1}(A)+\frac{\varepsilon_0}{2}\right)^{-n}\cdot a.$$
		Since $\tildeF^s_{(1,i-1)}$ is quasi-isometric, there exists $0<C_1<1$ such that
		\begin{align}
			d(F^{-n}x,F^{-n}z)\geq C_1   \left(\mu^s_{i-1}(A)+\frac{\varepsilon_0}{2}\right)^{-n}\cdot a. \label{leafproposition.3}
		\end{align}
		Hence by \eqref{leafproposition.2} and \eqref{leafproposition.3}, one has
		$$d(F^{-n}x,F^{-n}y)\geq C_1   \left(\mu^s_{i-1}(A)+\frac{\varepsilon_0}{2}\right)^{-n}\cdot a-
		\left(\mu^s_i(A)-\frac{\varepsilon_0}{2}\right)^{-n} \cdot b,$$
		which contradicts with \eqref{leafproposition.1}.
	\end{proof}

	\begin{remark}\label{uniform continuity of foliation on Rd}
		We state the \textit{ uniform continuity of foliation} as follow.
		For given $\tildeF^s_i, (1\leq i\leq k)$ and two constants $C>0$,
		there exists $ \delta>0$ such that for every $x\in\RR^d$ and $y\in\tildeF^s_i(x)$ with $d_{\tildeF^s_i}(x,y)>C$,
		we have $d(x,y)>\delta$.
		Just note that, by the choice of the neighborhood $\mathcal{U}$, the angle $\angle (\tilde{E}^s_i, \tilde{L}^s_i)$ is uniformly bounded by $\alpha$. In fact, for any Anosov map (may not be a small perturbation) with a dominated splitting along orbit, $T_{x_n}\TT^d=E^s_1(x_n,\tilde{x})\oplus...\oplus E^s_k(x_n,\tilde{x}) \oplus E^u(x_n,\tilde{x})$  the angle between any two distinct subbundles is uniformly away from $0$, where dim$E^s_i$ may bigger than one.
	\end{remark}
	
	The following proposition says that the same periodic Lyapunov exponent implies it coincides with one of the linearization on the assumption that $H$ preserves the corresponding foliation.
\begin{proposition}\label{s-exp coincide with linear}
	Let $f\in\mathcal{U}$ given by Proposition \ref{leaf proposition}. Fix $1\leq i\leq k$ and suppose that  $H(\tildeF^s_i)=\tildeL^s_i$ and $\lambda^s_i(p,f)=\lambda^s_i(q,f)$ for every $p,q\in {\rm Per}(f)$.
	Then $\lambda^s_i(p,f)=\lambda^s_i(A)$, for all $p\in{\rm Per}(f)$.
	Especially, for every Anosov map $f$ on torus with dim$E^s=1$, if  $\lambda^s(p,f)=\lambda^s(q,f)$ for every $p,q\in {\rm Per}(f)$, then  $\lambda^s_i(p,f)=\lambda^s_i(A)$, for all $p\in{\rm Per}(f)$.
\end{proposition}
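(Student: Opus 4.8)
The plan is to pin down the common value $\lambda:=\lambda^s_i(p,f)$, $p\in\mathrm{Per}(f)$, by computing in two ways the exponential rate at which the leaves of $\tildeF^s_i$ grow under backward iteration of $F$: one computation extracts the exponent $-\lambda$ from the periodic‑data hypothesis, the other extracts $-\lambda^s_i(A)$ from the conjugacy $H$, using that $A^{-1}$ acts on each leaf of $\tildeL^s_i$ as the explicit homothety of ratio $|\mu^s_i(A)|^{-1}=e^{-\lambda^s_i(A)}$.

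First I would convert ``equal on periodic orbits'' into a uniform statement; this is where $\dim E^s_i=1$ is essential. The function $\phi(\tilde x):=\log\|D_{x_0}f|_{E^s_i(x_0,\tilde x)}\|$ is continuous on the compact inverse limit $\TT^d_f$ by Proposition \ref{s-bundle-continuous}, and since $E^s_i$ is one‑dimensional, $\sum_{j=0}^{n-1}\phi(\sigma^j\tilde x)=\log\|D_{x_0}f^n|_{E^s_i(x_0,\tilde x)}\|$ is a genuine Birkhoff sum. Periodic $\sigma$‑orbits of $\TT^d_f$ project bijectively onto periodic $f$‑orbits (a periodic point of $f$ has a unique periodic backward orbit), so the hypothesis says $\int\phi\,d\nu=\lambda$ for every periodic measure $\nu$. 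Using the Anosov closing property of the hyperbolic system $(\TT^d_f,\sigma)$ together with Birkhoff's theorem and the ergodic decomposition, one upgrades this to $\int\phi\,d\mu=\lambda$ for every $\sigma$‑invariant $\mu$, and then the usual empirical‑measure compactness argument gives $\tfrac1n\log\|D_{x_0}f^n|_{E^s_i(x_0,\tilde x)}\|\to\lambda$ uniformly on $\TT^d_f$. Transporting this to $\RR^d$ through $\pi$ and \eqref{bundleonorbitandlift} and passing to inverses yields $\|D_zF^{-n}|_{\tilde{E}^s_i(z)}\|=e^{-n\lambda+o(n)}$ uniformly in $z\in\RR^d$; since $\tildeF^s_i$ is the integral foliation of $\tilde{E}^s_i$ (Proposition \ref{leaf proposition}(1)), integrating this estimate along a leaf segment gives, for every $x\in\RR^d$ and $y\in\tildeF^s_i(x)$,
$$
d_{\tildeF^s_i}(F^{-n}x,F^{-n}y)=e^{-n\lambda+o(n)}\cdot d_{\tildeF^s_i}(x,y).
$$

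Next I would read off the same quantity through $H$. Fix $x\neq y$ with $y\in\tildeF^s_i(x)$ and set $\rho:=|H(x)-H(y)|>0$. Because $H(\tildeF^s_i)=\tildeL^s_i$, the points $H(x),H(y)$ lie on a single leaf of the linear foliation $\tildeL^s_i$, so $H(x)-H(y)\in L^s_i$; hence, using $H\circ F^{-1}=A^{-1}\circ H$,
$$
\bigl|H(F^{-n}x)-H(F^{-n}y)\bigr|=\bigl|A^{-n}(H(x)-H(y))\bigr|=|\mu^s_i(A)|^{-n}\rho=e^{-n\lambda^s_i(A)}\rho .
$$
On the other hand $\|H-\mathrm{Id}\|<C_0$ (Proposition \ref{lifting conjugate}) gives $\bigl||u-v|-|H(u)-H(v)|\bigr|\le 2C_0$, and combining $|u-v|\le d_{\tildeF^s_i}(u,v)\le a\,|u-v|+b$ — the quasi‑isometry of $\tildeF^s_i=\tildeF^s_{(i,i)}$ from Proposition \ref{leaf proposition}(3) — with $u=F^{-n}x$, $v=F^{-n}y$, one gets
$$
e^{-n\lambda^s_i(A)}\rho-2C_0\ \le\ d_{\tildeF^s_i}(F^{-n}x,F^{-n}y)\ \le\ a\,e^{-n\lambda^s_i(A)}\rho+(2aC_0+b).
$$
Since $-\lambda$ and $-\lambda^s_i(A)$ are both positive, comparing this with the formula of the previous step, taking $\tfrac1n\log$ and letting $n\to\infty$ (so that the additive constants are negligible) forces $-\lambda=-\lambda^s_i(A)$; that is, $\lambda^s_i(p,f)=\lambda^s_i(A)$ for all $p\in\mathrm{Per}(f)$.

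Finally, the ``especially'' assertion is the case $k=1$, $i=1$: when $\dim E^s=1$ the bundle $E^s$ and the function $\phi=\log\|Df|_{E^s}\|$ are already defined on $\TT^d$, the equality $H(\tildeF^s)=\tildeL^s$ holds automatically by \eqref{Hpreservestable}, and $\tildeF^s$ is quasi‑isometric by the last clause of Proposition \ref{leaf proposition}(3), so the argument above applies with no perturbative hypothesis on $f$. I expect the only delicate point to be the first step, namely upgrading equality of periodic exponents to uniform convergence of $\tfrac1n\log\|Df^n|_{E^s_i}\|$; it rests on the closing lemma for $(\TT^d_f,\sigma)$ and on $\dim E^s_i=1$, which is precisely what makes the logarithmic‑derivative cocycle additive rather than merely subadditive, so that its integrals against invariant measures — and not just a subadditive limit — control the leafwise growth. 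Everything after that step is a soft comparison using the bounded leaf‑conjugacy $H$ and the quasi‑isometry of $\tildeF^s_i$.
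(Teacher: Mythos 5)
Your proposal is correct and follows the same two-step strategy as the paper: first pin down uniform exponential control of $\|DF^n|_{\tilde{E}^s_i}\|$ from the constancy of periodic stable exponents, then derive a contradiction by comparing backward growth along $\tildeF^s_i$ with the exact rate $|\mu^s_i(A)|^{-n}$ read off through $H$, using quasi-isometry of $\tildeF^s_i$ and the bound $\|H-\mathrm{Id}\|<C_0$. The second half of your argument is essentially identical to the paper's (Proposition 2.14, second part). The first half differs in packaging: the paper uses the Shadowing Lemma directly, shadowing a long orbit segment by a nearby periodic orbit and averaging the derivative over that segment to build a smooth adapted norm in which $\|DF|_{\tilde{E}^s_i}\|$ is $\delta$-close to $\mu=e^\lambda$ pointwise (Claim 4.3, ``adapted metric on $\RR^d$''), whereas you route through Sigmund-type density of periodic measures, ergodic decomposition, and empirical-measure compactness to get uniform convergence of the Birkhoff averages of $\log\|Df|_{E^s_i}\|$ on $\TT^d_f$. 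Both implementations rest on the same closing lemma for $(\TT^d_f,\sigma)$; the paper's adapted-metric construction is more self-contained and yields a pointwise rather than Cesàro bound, while your measure-theoretic version is softer and perhaps more routine for those comfortable with the invariant-measure toolkit, but it does require invoking density of periodic measures (specification for the inverse-limit system), which the paper avoids by constructing the metric by hand.
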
	

\begin{proof}[Proof of Proposition \ref{s-exp coincide with linear}]\label{proof of s-exp}
		Since $\lambda^s_i(p,f)=\lambda^s_i(q,f)$ for all $p,q\in \rm{Per}$$(f)$,  $\mu:={\rm exp}\big(\lambda^s_i(f,p)\big)$ is a constant. We claim that there exists an adapted metric on $\RR^d$.
		
	\begin{claim}\label{adapted metric on Rd}
			For any $\delta>0$, there exists a smooth adapted Riemannian metric on $T\RR^d$ such that
			$$
			\mu\cdot (1+\delta)^{-1} <\|DF|_{\tilde{E}_i^s(x)}\|<\mu\cdot(1+\delta), \quad\forall x\in\RR^d.
			$$
		\end{claim}
	
	\begin{proof}[Proof of Claim \ref{adapted metric on Rd}]
	Since we have the Shadowing Lemma for Anosov maps (see \cite{aoki}), it can be proved as the existence of adapted  metrics for Anosov diffeomorphisms. For the convenience of readers, we prove it as follow.

Fix in advance a Riemannian metric on $T\RR^d$ which induces a norm $|\cdot|$. Let $$\mu_+:=\sup_{x\in\RR^d}|DF|_{\tilde{E}^s_i(x)}| \quad {\rm and } \quad \mu_-:=\inf_{x\in\RR^d}|DF|_{\tilde{E}^s_i(x)}|.$$
	By 	\eqref{bundleonorbitandlift} and the compactness of $\TT^d_f$, one has $\mu_+<+\infty$ and $\mu_->0$. Moreover, since $E^s_i(x_0,\tilde{x})$ is continuous with respect to $\tilde{x}=(x_i)\in\TT^d$, for any $\delta>0$, there exists $\alpha>0$ such that
		\begin{align}
		(1+\delta/2)^{-1}\le \frac{|DF|_{\tilde{E}^s_i(x)}|}{|DF|_{\tilde{E}^s_i(y)}|}\le 1+\delta/2,\label{adapted metric 1}
		\end{align}
	for any $x,y\in\RR^d$ with $d(x,y)<\alpha$.
	
	By the Shadowing Lemma, for given $\alpha>0$, there exists $\beta>0$ such that each periodic $\beta$-pseudo-orbit in $\TT^d$ can be $\alpha$-shadowing by a periodic orbit. Let $B_1,...,B_{n(\beta)}$ be finite many open $\beta$-balls  cover $\TT^d$. Since $f$ is transtive (also see Section \ref{section affine metric}), there exists $N_1\in\NN$ such that for any $B_i$ and $B_j$, $B_i$ can intersect $B_j$ within $N_1$-times iteration by $f$.
	
	Let $\pi:\RR^d\to \TT^d$ be the natural projection. For any $x\in\RR^d$ and $N_0\in\NN$ with $x_0:=\pi(x)\in B_{i_0}$ and $f^{N_0}(x_0)\in B_{i_1}$, there exists $y_0\in B_{i_1}$  and $N_2\in[0,N_1]$ such that $f^{N_2}y_0\in B_{i_0}$. It follows that
	$$\big\{x_0, f(x_0), ..., f^{N_0-1}(x_0), y_0,f(y_0),...,f^{N_2-1}y_0\big\}$$ is a periodic $\beta$-pseudo-orbit and is  $\alpha$-shadowing by a periodic orbit $p_0\in\TT^d$ with period $N(p_0):=N_0+N_2$.  Hence, there exists $p\in\pi^{-1}(p_0)$  such that $d\left(\pi(F^jx),\pi(F^jp)\right)<\alpha$, for all $j\in[0,N_0]$.
	
	Now, let  $N=N_0+N_1$ and
	$$\|\nu\|_N:= \prod_{n=0}^{N-1} |DF^n|_{\tilde{E}^s_i(x)}\nu |^{\frac{1}{N}}, \quad \forall \nu\in \tilde{E}^s_i(x).$$
	Note that $|DF^{N(p_0)}|_{\tilde{E}^s_i(p)}|= \mu^{N(p_0)}$, one has
	\begin{align}
		|DF^{N_0}|_{\tilde{E}^s_i(p)}|\in \left[ \mu^{N_0} \cdot \left( \frac{\mu}{\mu_+} \right)^{N_2}\;,\;   \mu^{N_0} \cdot \left( \frac{\mu}{\mu_-} \right)^{N_2}   \right]. \label{adapted metric 2}
	\end{align}
    Taking $\nu\in \tilde{E}^s_i(x)-\{0\}$,  by \eqref{adapted metric 1} and \eqref{adapted metric 2}, we calculate directly,
	\begin{align*}
		\frac{\| D_xF (\nu) \|_N}{\|\nu\|_N}
		&=\frac{| D_xF^N (\nu) |^{1/N}}{|\nu|^{1/N}}\\
		&=\left( \frac{\left| D_{F^{N_0}(x_0)} f^{N_1}\circ D_{x_0}F^{N_0} (\nu) \right|}{|\nu|} \right)^{1/N}\\
		&\in \left[\mu_-^{N_1/ N} \cdot \left( \frac{\mu}{1+\delta/2}  \right)^ {N_0/ N } \cdot  \left( \frac{\mu}{\mu_+} \right)^{N_2/N}\;,\;
		\mu_+^{N_1/ N} \cdot  \big( \mu \cdot (1+\delta/2 ) \big)^ {N_0/ N} \cdot  \left( \frac{\mu}{\mu_-} \right)^{N_2/N} \right].
	\end{align*}
	Hence, there exists $N_0$ big enough such that
	$$\frac{\| D_xF (\nu) \|_{N}}{\|\nu\|_{N}} \in \big(	\mu\cdot(1+\delta)^{-1}, 	\mu\cdot(1+\delta)    \big).$$
	There exists a norm $\|\cdot \|$ whose restriction on subbundle $\tilde{E}^s_i$ is $\|\cdot \|_{N}$ gives the smooth adapted Riemannian metric we want.
	\end{proof}
	
	  Assume that $\mu\neq |\mu^s_i(A)|={\rm exp}\big(\lambda^s_i(A)\big)$. Fix $\delta< {\rm min}  \big\{  |\frac{\mu}{\mu^s_i(A)}-1|,  |\frac{\mu^s_i(A)}{\mu}-1|  \big\}$  and an adapted norm $\|\cdot\|$ from Claim \ref{adapted metric on Rd}. Let $H:\mathbb{R}^d\to \mathbb{R}^d$ be the conjugacy defined in Proposition \ref{lifting conjugate} satisfying $|H-Id|\leq C_0$. We take two points $x,y \in \mathbb{R}^d$ such that $y\in \tildeF^s_i(x)$.   One has
		$$\mu^{-k}(1+\delta)^{-k}\cdot d^s(x,y)\leq d^s(F^{-k}x, F^{-k}y )\leq \mu^{-k}(1+\delta)^{k}\cdot d^s(x,y),$$
		further,
		\begin{align}
			a\cdot\mu^{-k}(1+\delta)^{-k}\cdot d^s(x,y)\leq d(F^{-k}x, F^{-k}y)\leq \mu^{-k}(1+\delta)^{k}\cdot d^s(x,y), \label{prop3.8.1}
		\end{align}
		where $a$ is given by $\eqref{defquasi-isometric}$, since $\tildeF^s_i$ is quasi-isometric (by Proposition \ref{leaf proposition}).
		Meanwhile, since  $H(\tildeF^s_i)=\tildeL^s_i$ ( when dim$E^s=1$, $H(\tildeF^s)=\tildeL^s$ always holds),
		\begin{align}
			d\left(H(F^{-k}x), H(F^{-k}x)\right)= d\left(A^{-k}(Hx), A^{-k}(Hx)\right)=\left(\mu^s_i(A)\right)^{-k}\cdot d\left(Hx,Hy\right).\label{prop3.8.2}
		\end{align}
		The formulas \eqref{prop3.8.1} and \eqref{prop3.8.2} jointly contradict with the fact
		$$\Big| d(F^{-k}x, F^{-k}y)-  d\left(H(F^{-k}x), H(F^{-k}x)\right)  \Big| \leq 2C_0.$$
	\end{proof}
	
	For the convenience of readers, we state Journ$\acute{\rm e}$ Lemma \cite{journe} as the following proposition which will be useful in Section \ref{section affine metric} and Section \ref{special}.
	\begin{proposition}[ \cite{journe}]\label{journe}
		Let $M_i \; (i=1,2)$ be a smooth manifold and $\mathcal{F}_i^s, \mathcal{F}_i^u$ be continuous transverse foliations on $M_i$ with uniformly $C^{r+\alpha}$-smooth leaves $(r\geq 1, 0<\alpha<1 )$. Assume that $h:M_1\to M_2$ is a homeomorphism and maps $\mathcal{F}_1^{\sigma}$ to $\mathcal{F}^{\sigma}_2 (\sigma=s,u)$. If $h$ restricted on leaves of both $\mathcal{F}_1^s$ and $\mathcal{F}_1^u$ is uniformly $C^{r+\alpha}$, then $h$ is $C^{r+\alpha}$-smooth.
	\end{proposition}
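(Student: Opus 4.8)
The statement is Journ\'e's regularity lemma, so the plan is to follow the classical scheme: localize, straighten the foliations leafwise, reduce to a purely function-theoretic statement about separate versus joint regularity, and prove that by induction on $r$ with a scale-by-scale interpolation at its heart. First I would fix $p\in M_1$ and $q=h(p)$ and use the uniform $C^{r+\alpha}$ regularity of the leaves, together with transversality, to build near $p$ and near $q$ local ``foliation charts'' in which $\mathcal{F}_i^s$ becomes the foliation by horizontal $d_s$-planes and $\mathcal{F}_i^u$ a transverse foliation whose leaves are graphs over the complementary $d_u$-plane with $C^{r+\alpha}$ graphing functions that depend continuously on the transverse parameter (these charts are only $C^0$ in the transverse direction, but that is all the continuity of a foliation gives, and it is enough). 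Since $h$ carries $\mathcal{F}_1^\sigma$ to $\mathcal{F}_2^\sigma$, its expression in these charts is a homeomorphism preserving both coordinate foliations which, read in the $C^{r+\alpha}$ leaf-atlases, is uniformly $C^{r+\alpha}$ along the horizontals and uniformly $C^{r+\alpha}$ along the $\mathcal{F}_1^u$-leaves. Thus everything reduces, coordinate by coordinate, to the following assertion: a continuous function on a product of balls that is uniformly $C^{r+\alpha}$ along each of two transverse continuous foliations with $C^{r+\alpha}$ leaves is $C^{r+\alpha}$.

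For this I would induct on $r\ge 0$. The base case $r=0$ is immediate: any two nearby points are joined by a short $\mathcal{F}^s$-path followed by a short $\mathcal{F}^u$-path, so the two separate $\alpha$-H\"older estimates add up to a joint one by the triangle inequality. For the inductive step the task is to show that the first-order leaf-derivatives of $h$ along $\mathcal{F}^s$ and along $\mathcal{F}^u$ — which a priori exist only tangentially to the respective foliations, but together span the tangent space by transversality — patch into continuous ambient functions that are again uniformly $C^{(r-1)+\alpha}$ along \emph{both} foliations; the inductive hypothesis then yields $h\in C^{r+\alpha}$, and unwinding the charts gives Proposition \ref{journe}.

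The step I expect to be the genuine obstacle is precisely this cross-regularity: that $\partial_{\mathcal{F}^s}h$, manifestly $C^{(r-1)+\alpha}$ along $\mathcal{F}^s$-leaves, is also $C^{(r-1)+\alpha}$ along $\mathcal{F}^u$-leaves, and symmetrically. This is the heart of Journ\'e's argument \cite{journe} and is not soft. The plan here is the dyadic interpolation trick: produce smooth approximants $h_k$ of $h$ by mollifying at dyadic scales $2^{-k}$ (performed leafwise, in the $C^{r+\alpha}$ parametrizations, so as to remain meaningful even though the foliations are merely continuous); use the $C^{r+\alpha}$ regularity along one foliation to control the approximation errors $\|h-h_k\|$, and the regularity along the other, via Landau--Kolmogorov convexity inequalities, to control the growth of the high-order derivatives of $h_k$; then telescope $h=h_0+\sum_{k\ge 0}(h_{k+1}-h_k)$ and sum the resulting geometric series, which returns exactly the missing H\"older modulus for $\partial_{\mathcal{F}^s}h$ transverse to $\mathcal{F}^s$. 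The delicate points in executing this are keeping the mollification and its estimates honest along the only-continuous foliations (whose holonomies need not be smooth) and verifying that the interpolation exponents combine to land in $C^{r+\alpha}$ rather than a strictly weaker H\"older class; the remainder is bookkeeping with leaf-atlases and the chain rule. No globalization step is needed, since in this paper the lemma is invoked locally, with $M_1,M_2$ the torus or its universal cover (or a stable leaf carrying the auxiliary affine structure built later) and $\mathcal{F}^s,\mathcal{F}^u$ the stable and unstable foliations, so the local conclusion is already exactly what is used.
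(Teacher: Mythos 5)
The paper offers no proof of this proposition: it is stated verbatim as a restatement of Journ\'e's regularity lemma and is invoked as a black box, with the reference \cite{journe} supplying the argument. So there is no in-paper proof to compare your sketch against; what can be assessed is whether your outline would actually reproduce Journ\'e's theorem.

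Your route is genuinely different from Journ\'e's, and at the critical juncture it has a gap. Journ\'e's argument is not an induction on $r$ and does not mollify. It is a single unified polynomial-approximation scheme: one characterizes $C^{r+\alpha}$ by the existence, at every point $p$ and every small scale $\rho$, of a degree-$r$ polynomial approximating $h$ to order $\rho^{r+\alpha}$ on a cube of side $\rho$, and one \emph{builds} such polynomials by sampling $h$ on finite grids of points traced out along the two foliations (move $\sim\rho$ along $\mathcal{F}^s$, then $\sim\rho$ along $\mathcal{F}^u$) and taking Lagrange/tensor interpolants; the separate leafwise $C^{r+\alpha}$ estimates then control the interpolation error. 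Nothing is ever smoothed transverse to the foliations, which is exactly what makes the argument viable when the transverse structure is merely $C^0$. Your inductive scheme with dyadic mollification and Landau--Kolmogorov interpolation is in the Littlewood--Paley/Hurder--Katok style, and it is a legitimate strategy in contexts where one can build genuinely smooth approximants; but here that is precisely what you cannot do. Mollifying leafwise along $\mathcal{F}^s$ produces functions $h_k$ that are smooth \emph{along} $\mathcal{F}^s$-leaves and no better than $C^0$ transversally (the leafwise mollifier averages inherit the mere continuity of the foliation in the transverse parameter), while mollifying along $\mathcal{F}^u$ gives the symmetric defect, and the two smoothings do not compose into an ambient smooth approximant because the holonomies are only continuous. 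You flag this yourself, but it is not bookkeeping: the telescoping $h=h_0+\sum_k(h_{k+1}-h_k)$ needs $h_k$ that are at least ambiently $C^{r+1}$ with controlled derivative growth in order for the Landau--Kolmogorov inequalities to bite, and the leafwise mollification does not supply that. To close the gap you would either have to switch to Journ\'e's grid-interpolation device (which circumvents transverse smoothing entirely), or impose extra regularity on the foliations' transverse structure, which the proposition deliberately does not assume. The base case and the localization/straightening preamble of your sketch are fine, modulo stating the uniform local product structure that makes the two-segment path estimate legitimate.
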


\section{Spectral rigidity on stable bundle}\label{s-rigidity}
	
	In this section, we prove  the necessary parts of both Theorem \ref{main theorem 1} and Theorem \ref{main theorem 2}. As mentioned, we can actually prove them under $C^1$ assumption. For convenience, we restate these as follow.
	
	\begin{theorem}\label{special implies s-rigidity}
		Let $A:\TT^d\to \TT^d$ be an irreducible linear non-invertible Anosov map. Assume that $A$ admits the finest (on stable bundle) dominated splitting,
		$$T\TT^d=L^s_1\oplus  L^s_2\oplus ...\oplus  L^s_k\oplus L^u,$$
		where ${\rm dim}L^s_i=1$, $1\leq i\leq k$.
		
		Then there exists a $C^1$ neighborhood $\mathcal{U}\subset C^1(\TT^d)$ of $A$ such that for every $f\in \mathcal{U}$, if $f$ is special, then
		$\lambda^s_i(p,f)=\lambda^s_i(A)$, for all $p\in {\rm Per}(f)$ and all $1\leq i\leq k$.
		Moreover, $f$ admits the finest (on stable bundle) dominated splitting,
			$$T\TT^d=E^s_1\oplus E^s_2\oplus...\oplus E^s_k\oplus E^u,$$
		where {\rm dim}$E^s_i=1$, for all $1\leq i\leq k$.
		
		Moreover, when $k=1$, for every $C^1$-smooth non-invertible Anosov map $f$ with irreducible linearization $A$, if $f$ is special, then $\lambda^s(p,f)=\lambda^s(A)$, for all $p\in{\rm Per}(f)$.
	\end{theorem}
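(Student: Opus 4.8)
The plan is to exploit the leaf conjugacy $H$ between the lift $F$ and the linear map $A$, together with the structure theory of Section~\ref{sec:preliminaries}, to pin down periodic data. The starting observation is that when $f$ is special, Proposition~\ref{special and conjugate} tells us $f$ is topologically conjugate to $A$; equivalently, by the discussion following Proposition~\ref{lifting conjugate}, the conjugacy $H:\RR^d\to\RR^d$ is commutative with the $\ZZ^d$-action. In particular $H$ descends to a homeomorphism $h:\TT^d\to\TT^d$ with $h\circ f = A\circ h$, and $H$ carries $\tildeF^u$ onto $\tildeL^u$ and $\tildeF^s$ onto $\tildeL^s$. Once we know $H$ is $\ZZ^d$-periodic, the strong-stable bundles behave well: I would first run the cone-field argument of Proposition~\ref{s-bundle-continuous} to get, for $f$ in a small $C^1$-neighborhood $\mathcal{U}$ of $A$, the finest dominated splitting $T\TT^d = E^s_1\oplus\cdots\oplus E^s_k\oplus E^u$ along orbits in the inverse limit, and the $\ZZ^d$-periodic strong stable bundles $\tilde E^s_{(1,i)}$ and foliations $\tildeF^s_{(1,i)}$ on $\RR^d$ from Proposition~\ref{leaf proposition}(2).

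The heart of the matter is to show $H(\tildeF^s_i)=\tildeL^s_i$ for each $i$, i.e. that the conjugacy respects the \emph{one-dimensional} pieces of the stable filtration, not merely the weak stable flag $\tildeF^s_{(i,k)}$ (which Proposition~\ref{leaf proposition}(4) already gives unconditionally). Here is where speciality is used in an essential way. Since $H$ is $\ZZ^d$-periodic, it conjugates not only the flag $\tildeF^s_{(i,k)}\mapsto\tildeL^s_{(i,k)}$ but also, by the same dynamical characterization of the strong stable manifolds via contraction rates under $F^{-1}$ as in the proof of Proposition~\ref{leaf proposition}(4) applied to $F^{-1}$-iteration with the \emph{faster} rates, the strong stable foliations $\tildeF^s_{(1,i)}\mapsto\tildeL^s_{(1,i)}$. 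Intersecting the two filtrations, $H(\tildeF^s_i)=H(\tildeF^s_{(1,i)}\cap\tildeF^s_{(i,k)})=\tildeL^s_{(1,i)}\cap\tildeL^s_{(i,k)}=\tildeL^s_i$. The subtlety is that $\tildeF^s_{(1,i)}$ being $\ZZ^d$-periodic is exactly what makes the characterization ``$y\in\tildeF^s_{(1,i)}(x)$ iff $d(F^n x,F^n y)$ decays at rate $|\mu^s_i(A)|^n$'' robust under the bounded error $\|H-Id\|\le C_0$ — the same quasi-isometry trick as in Proposition~\ref{leaf proposition}, but now iterating forward for the strong stable side.

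With $H(\tildeF^s_i)=\tildeL^s_i$ in hand for every $i$, the conclusion $\lambda^s_i(p,f)=\lambda^s_i(A)$ for all $p\in{\rm Per}(f)$ is almost immediate from Proposition~\ref{s-exp coincide with linear}: that proposition requires only $H(\tildeF^s_i)=\tildeL^s_i$ together with the equality of $\lambda^s_i(p,f)$ across all periodic $p$. So it remains to establish that $\lambda^s_i(\cdot,f)$ is constant on ${\rm Per}(f)$. For this I would argue that the one-dimensional leaf-conjugacy $H|_{\tildeF^s_i}:\tildeF^s_i(x)\to\tildeL^s_i(H(x))$, being a homeomorphism between the leaf and a Euclidean line that is bi-bounded-distance from the identity (via $\|H-Id\|\le C_0$ and quasi-isometry of $\tildeF^s_i$), intertwines the return map of $F$ on a periodic leaf with multiplication by $|\mu^s_i(A)|$; comparing the exponential contraction rates on the two sides forces $\exp(\lambda^s_i(p,f))=|\mu^s_i(A)|$ for every periodic $p$, which simultaneously gives constancy and the identification with the linear exponent. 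The main obstacle, and the place I expect to spend the most care, is the previous paragraph: promoting the ``weak flag'' statement $H(\tildeF^s_{(i,k)})=\tildeL^s_{(i,k)}$ to the ``strong flag'' statement $H(\tildeF^s_{(1,i)})=\tildeL^s_{(1,i)}$, which genuinely needs the $\ZZ^d$-periodicity of the strong stable foliations (hence speciality) and a careful bookkeeping of the domination gaps $\varepsilon_0$ exactly as in the contradiction argument closing the proof of Proposition~\ref{leaf proposition}.

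For the last sentence of the statement — the case $k=1$ without any closeness-to-linear hypothesis — the argument simplifies: when ${\rm dim}E^s=1$ there is only the single bundle $E^s$, Proposition~\ref{leaf proposition}(3) guarantees $\tildeF^s$ is quasi-isometric for \emph{every} toral Anosov map regardless of perturbation size, and $H(\tildeF^s)=\tildeL^s$ holds automatically. So the third paragraph's comparison-of-rates argument applies verbatim with $i=k=1$, yielding $\lambda^s(p,f)=\lambda^s(A)$ for all $p\in{\rm Per}(f)$, which is the claim.
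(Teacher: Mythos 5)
Your proposal has two genuine gaps, both at places where the paper deploys ideas that your argument does not supply.

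First, the step $H(\tildeF^s_{(1,i)})=\tildeL^s_{(1,i)}$ cannot be obtained by ``the same argument as Proposition~\ref{leaf proposition}(4), but iterating forward with the faster rates.'' The weak stable characterization survives the conjugacy because backward iteration \emph{expands} stable distances: both $d(F^{-n}x,F^{-n}y)$ and $d(A^{-n}Hx,A^{-n}Hy)$ go to infinity, so the additive error $2C_0$ from $\|H-Id\|\le C_0$ is asymptotically negligible. For the strong stable flag, forward iteration makes both quantities go to $0$, and the error term $2C_0$ swamps the comparison; the rate of decay of $d(F^nx,F^ny)$ is simply not transported through a homeomorphism that is merely at bounded distance from the identity. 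The paper's actual route (Lemma~\ref{lemma su-integrable and leaf-conjugate}) is structurally different: it first proves joint integrability of $E^s_{(1,i)}\oplus E^u$ (Claim~\ref{su-jointlyclaim}, which needs the quantitative density of preimage sets from Proposition~\ref{preimage dense}, hence irreducibility and speciality), then uses minimality of $\mathcal{L}^u$ to show $h(\mathcal{F}^s_{(1,i)})$ is a linear $A$-invariant subfoliation of $\mathcal{L}^s$ (Claim~\ref{linearfoliationclaim}), and finally intersects with the already-known image of the weak flag.

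Second, the ``comparison-of-rates on a periodic leaf'' does not prove constancy of $\lambda^s_i(\cdot,f)$ on ${\rm Per}(f)$, and cannot be merged with the identification step. The only uniform control on $H$ is the quasi-isometric bound $\|H-Id\|\le C_0$, which constrains growth of distances at infinity but says nothing about local H\"older behaviour along a leaf. Forward iteration toward a periodic point (where the derivative is indeed asymptotically $\exp(\lambda^s_i(p,f))$) runs into the same $2C_0$-dominates-a-vanishing-quantity problem; backward iteration along $\tildeF^s_i(p)$ gives a growth rate governed by $\|DF|_{\tilde E^s_i}\|$ at points escaping to infinity along the leaf, which is \emph{not} determined by $\lambda^s_i(p,f)$ alone. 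The argument you invoke, Proposition~\ref{s-exp coincide with linear}, circumvents precisely this by first constructing a uniformly pinched adapted metric (Claim~\ref{adapted metric on Rd}); building that metric by shadowing is where constancy of the periodic exponent is consumed as a hypothesis, not produced as a conclusion. Constancy itself is proved beforehand, in Proposition~\ref{periodic data}, by an entirely different mechanism: two periodic points $p,q$ with unequal exponents, a segment near $q$ pushed by $f^{k_\varepsilon}$ onto $\tildeF^s_i(p)$ through a preimage of $p$, a comparison segment $J_\varepsilon$ on $\tildeF^s_i(p)$, and the lower bound $N_\varepsilon/k_\varepsilon\ge C_0$ from Proposition~\ref{preimage dense} forcing the ratio $|f^{k_\varepsilon}(I_\varepsilon)|/|f^{k_\varepsilon}(J_\varepsilon)|$ to blow up. That time-scale comparison, which is also exactly what handles the $k=1$, $C^1$-only case, is the key idea missing from your outline.
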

	
	Now, we give the scheme of our proof. In this section, we always assume that  $A:\TT^d\to \TT^d$ satisfies the condition of Theorem \ref{special implies s-rigidity}.
	 To get the spectral rigidity on stable bundle , we firstly prove that every periodic point $p\in{\rm Per}(f)$ has the same \textit{stable Lyapunov spectrum} $\big\{ \lambda^s_i(p,f) : i=1,2,...,k\big\}$.
	\begin{proposition}\label{periodic data}
		Let $f:\TT^d\to \TT^d$ be an irreducible non-invertible Anosov map with a $Df$-invariant one-dimensional subbundle $E^s_i\subset E^s$. If $f$ is special and there exists an $f$-invariant foliation on $\TT^d$ tangent to $E^s_i$, then $\lambda^s_i(p,f)=\lambda^s_i(q,f)$, for all $p,q\in {\rm Per}(f)$, where $\lambda^s_i(p,f)$ is the Lyapunov exponent of $f$ for $p$ corresponding the bundle $E^s_i$.
	\end{proposition}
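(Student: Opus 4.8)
The plan is to compare $f$ with its linearization via the conjugacy coming from specialness, transport the invariant foliation $\mathcal{W}^s_i$ to an $A$-invariant foliation, identify that foliation with a linear eigen-foliation using irreducibility, and finally extract the exponent from the conjugacy restricted to the $\mathcal{W}^s_i$-leaf of a periodic point. First, since $f$ is special, Propositions~\ref{special and conjugate} and \ref{lifting conjugate} give a topological conjugacy $h$ between $f$ and $A$ lifting to $H\colon\RR^d\to\RR^d$ with $H\circ F=A\circ H$, $\|H-Id\|$ bounded, $H$ and $H^{-1}$ uniformly continuous, and --- crucially, because $f$ is special --- $H$ commuting with the $\ZZ^d$-action, with $H(\tildeF^s)=\tildeL^s$. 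Lifting $\mathcal{W}^s_i$ to an $F$-invariant, $\ZZ^d$-periodic foliation $\tilde{\mathcal{W}}^s_i$ subfoliating $\tildeF^s$, the image $\mathcal{G}:=H(\tilde{\mathcal{W}}^s_i)$ is an $A$-invariant, $\ZZ^d$-periodic foliation whose leaves are curves inside the leaves of $\tildeL^s$, and it descends to a foliation on $\TT^d$ subfoliating $\mathcal{L}^s$.

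Next I would show $\mathcal{G}$ is a linear eigen-foliation. This is where irreducibility enters: $\mathcal{L}^s$ is minimal, so it has a dense leaf; restricting $\mathcal{G}$ to that leaf and using the continuity of $\mathcal{G}$ together with the density, one sees that along each leaf of $\tildeL^s$ the foliation $\mathcal{G}$ is invariant under a dense set of translations, hence under all translations; being also invariant under the diagonalizable contraction $A|_{\tilde L^s}$, it must coincide with the linear eigen-foliation $\tildeL^s_j$ for a fixed $j$. Thus $\tilde{\mathcal{W}}^s_i=H^{-1}(\tildeL^s_j)$. (When $\dim E^s=1$ this step is vacuous, since then $\tilde{\mathcal{W}}^s_i=\tildeF^s$ and $H(\tildeF^s)=\tildeL^s=\tildeL^s_1$.)

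Finally, fix $p\in{\rm Per}(f)$ of period $n$ and a lift $\tilde p$ with $F^n\tilde p=\tilde p+v$, $v\in\ZZ^d$. The return map $G:=T_{-v}\circ F^n$ fixes $\tilde p$, preserves $\tilde{\mathcal{W}}^s_i(\tilde p)$, and contracts it with multiplier $e^{n\lambda^s_i(p,f)}$ at $\tilde p$; since $H$ commutes with $\ZZ^d$, $H$ conjugates $G$ to $\bar G:=T_{-v}\circ A^n$, which fixes $H\tilde p$ and acts on the affine leaf $\tildeL^s_j(H\tilde p)$ as the homothety of ratio $(\mu^s_j(A))^n$. Combining quasi-isometry of $\tilde{\mathcal{W}}^s_i(\tilde p)$ (Proposition~\ref{leaf proposition}(3), or automatic when $\dim E^s=1$), the boundedness of $\|H-Id\|$, and the approximations along special $\ZZ^d$-sequences of Propositions~\ref{nmH}--\ref{projection leaf dense} (to transfer the large-scale control of $H$ along the leaf down to infinitesimal scales near $\tilde p$), one forces $e^{n\lambda^s_i(p,f)}=(\mu^s_j(A))^n$, so $\lambda^s_i(p,f)=\log|\mu^s_j(A)|$; being independent of $p$, this yields $\lambda^s_i(p,f)=\lambda^s_i(q,f)$ for all $p,q\in{\rm Per}(f)$ (and $j=i$ follows from the angle estimate \eqref{epsilon0}).

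The hard part is this last step: a merely $C^0$-bounded conjugacy does not a priori see Lyapunov exponents, so one must genuinely use the $\ZZ^d$-periodic structure --- the special sequences $n_m$ with $A^{-i}n_m\in\ZZ^d$ --- to pin the multiplier of $G$ along the leaf to that of the linear model. The linearity of $\mathcal{G}$ in the second step --- ruling out non-linear $A$-invariant subfoliations of $\mathcal{L}^s$, for which irreducibility is indispensable, as the reducible example $A_1$ of the introduction shows --- is the other point requiring care.
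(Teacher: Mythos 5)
There is a genuine gap in the last step, and it is exactly where you flag the ``hard part.'' You claim that quasi-isometry of $\tilde{\mathcal{W}}^s_i$, boundedness of $\|H-Id\|$, and the special $\ZZ^d$-sequences of Propositions~\ref{nmH}--\ref{projection leaf dense} let you ``transfer the large-scale control of $H$ along the leaf down to infinitesimal scales near $\tilde p$'' and conclude $e^{n\lambda^s_i(p,f)}=(\mu^s_j(A))^n$. These tools do not do that. Boundedness of $H-Id$ together with quasi-isometry only pins the \emph{coarse} asymptotic growth of leaf distances under $F^{-n}$ to that of $A^{-n}$; it says nothing about the \emph{pointwise} derivative $\|DF^n|_{E^s_i(p)}\|$, because without an adapted metric the local contraction rate at $\tilde p$ and the large-scale leaf growth need not agree. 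The cleanest way to see this must fail is to observe that every ingredient you invoke is also available for an Anosov diffeomorphism (for a diffeomorphism $H$ \emph{always} commutes with $\ZZ^d$, the stable foliations are quasi-isometric, and $\|H-Id\|$ is bounded), yet topological conjugacy of Anosov diffeomorphisms emphatically does not force equal periodic Lyapunov exponents --- that is the whole point of local rigidity theory. So your argument proves too much. Indeed, in this paper the large-scale $\Leftrightarrow$ infinitesimal bridge is the affine metric of Proposition~\ref{affine metric}, and that metric is constructed via Livschitz \emph{from} the equality of periodic exponents, which would make your route circular.

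The paper's proof is structurally quite different and uses non-invertibility in an essential, quantitative way that has no analogue in the diffeomorphism setting. Assuming two periodic points $p,q$ have different $\mu^s_i$, one exploits the density of $k$-preimage sets (Proposition~\ref{preimage dense}, specialness gives this on $\TT^d$), picks $x_\e\in B_\e(q)$ with $f^{k_\e}x_\e=p$, and runs a small stable arc $I_\e$ near $q$ for $N_\e$ iterates (multiplier $\approx\mu_+$) before it lands at $p$, comparing it with an arc $J_\e$ through $p$ (multiplier $\approx\mu_-$) satisfying $f^{k_\e}J_\e=f^{k_\e}I_\e$. The key Claim~\ref{time-rate} is a uniform lower bound $N_\e/k_\e\ge C_0>0$, proved by transferring to the linear model with the conjugacy and the explicit $C|\det A|^{-k/d}$-density of $A^{-k}\ZZ^d$; this yields $|f^{k_\e}I_\e|/|f^{k_\e}J_\e|\to\infty$, a contradiction. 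Your plan never touches this combinatorial/distortion mechanism, and without it (or some substitute that genuinely uses the branching of preimages) the conclusion of Proposition~\ref{periodic data} is simply out of reach. A secondary issue: your identification $j=i$ via the angle estimate~\eqref{epsilon0} presupposes $f\in\mathcal{U}$ is a small perturbation of $A$, whereas Proposition~\ref{periodic data} is stated for an arbitrary special irreducible non-invertible Anosov map admitting an $f$-invariant foliation tangent to $E^s_i$; outside the perturbative regime~\eqref{epsilon0} is not available. (This is moot, though, since the proposition only asserts equality of $\lambda^s_i(p,f)$ across periodic $p$, not equality with the linearization --- that is deferred to Proposition~\ref{s-exp coincide with linear}.)
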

	
	We emphasize here that in the proof of Proposition \ref{periodic data}, $f$ need not be a small perturbation of $A$. To get that every periodic point of $f$ has the same stable Lyapunov spectrum through Proposition \ref{periodic data}, we need that $f$ admits the finest (on stable bundle) dominated splitting.

	\begin{proposition}\label{finest dominated splitting of f}
		There exists a $C^1$ neighborhood $\mathcal{U}\subset C^1(\TT^d)$ of $A$ such that for every  $f\in \mathcal{U}$, if it is special, then it admits the finest (on stable bundle) dominated splitting
		$$T\TT^d=E^s_1\oplus E^s_2\oplus...\oplus E^s_k\oplus E^u,$$
		where $E^s_i$ is one-dimensional and integrable, for all $1\leq i\leq k$.
	\end{proposition}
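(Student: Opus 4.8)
The plan is to lift to the universal cover, where by Proposition \ref{leaf proposition} the intermediate bundles of $F$ are already uniquely integrable, and then to use specialness to show that all of them are $\ZZ^d$-periodic and hence descend to $\TT^d$. Take $\mathcal{U}$ to be the $C^1$-neighborhood of $A$ provided by Proposition \ref{leaf proposition}, fix a special $f\in\mathcal{U}$, let $F$ be a lift of $f$ and $H:\RR^d\to\RR^d$ the conjugacy between $F$ and $A$ from Proposition \ref{lifting conjugate}. Recall (Proposition \ref{special and conjugate} and the discussion following Proposition \ref{lifting conjugate}) that $f$ being special is equivalent to $H$ and $H^{-1}$ commuting with the $\ZZ^d$-action, i.e.\ $H(x+n)=H(x)+n$ for all $x\in\RR^d$, $n\in\ZZ^d$.

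First I would show that each weak stable foliation $\tildeF^s_{(i,k)}$ is $\ZZ^d$-periodic. By Proposition \ref{leaf proposition}(4) we have $H(\tildeF^s_{(i,k)})=\tildeL^s_{(i,k)}$, and $\tildeL^s_{(i,k)}$ is a linear, hence $\ZZ^d$-periodic, foliation; combining this with the $\ZZ^d$-equivariance of $H^{-1}$ gives
\[
\tildeF^s_{(i,k)}(x+n)=H^{-1}\!\big(\tildeL^s_{(i,k)}(H(x)+n)\big)=H^{-1}\!\big(\tildeL^s_{(i,k)}(H(x))\big)+n=\tildeF^s_{(i,k)}(x)+n .
\]
Since the strong stable foliations $\tildeF^s_{(1,i)}$ are $\ZZ^d$-periodic by Proposition \ref{leaf proposition}(2), and leafwise $\tildeF^s_i=\tildeF^s_{(1,i)}\cap\tildeF^s_{(i,k)}$, the fact that intersection commutes with translation shows every $\tildeF^s_i$, and hence its tangent bundle $\tilde E^s_i$, is $\ZZ^d$-periodic for $1\le i\le k$. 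Applying the same argument to the unstable leaf conjugacy $H(\tildeF^u)=\tildeL^u$ (see \eqref{Hpreservestable}) shows $\tilde E^u$ is $\ZZ^d$-periodic as well; this also just reflects that $f$ special means $E^u$ already lives on $\TT^d$.

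It remains to push the structure down. A $\ZZ^d$-periodic continuous $DF$-invariant bundle on $\RR^d$ projects under $\pi:\RR^d\to\TT^d$ to a well-defined continuous $Df$-invariant bundle on $\TT^d$; let $E^s_1,\dots,E^s_k,E^u$ be the projections of $\tilde E^s_1,\dots,\tilde E^s_k,\tilde E^u$, with each $E^s_i$ one-dimensional. Because $\pi$ is a local isometry, the uniform domination estimates for the splitting $T\RR^d=\tilde E^s_1\oplus\cdots\oplus\tilde E^s_k\oplus\tilde E^u$ (Proposition \ref{s-bundle-continuous}, Remark \ref{size of U}) descend verbatim, so $T\TT^d=E^s_1\oplus\cdots\oplus E^s_k\oplus E^u$ is a dominated splitting, and it is the finest one on the stable bundle because every $E^s_i$ is one-dimensional. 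Finally, each $\ZZ^d$-periodic foliation $\tildeF^s_i$ projects to an $f$-invariant foliation of $\TT^d$ tangent to $E^s_i$, giving the asserted integrability.

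The substantive ingredient is Proposition \ref{leaf proposition}: it supplies both the integrability of the intermediate bundles on the cover and, crucially, the fact that $H$ preserves the weak stable foliations; granted that, the argument reduces to the one-line equivariance computation above, so I do not foresee a serious obstacle. The point worth flagging is that for a general (non-special) $f$ the bundles $\tilde E^s_{(i,k)}$ with $i\ge 2$ need not be $\ZZ^d$-periodic, so specialness, through the $\ZZ^d$-equivariance of $H$, is used in an essential way and cannot be dispensed with.
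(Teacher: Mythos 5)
Your proposal is correct and takes essentially the same route as the paper: the paper also reduces to showing the weak stable foliations $\tildeF^s_{(i+1,k)}$ are $\ZZ^d$-periodic (its Lemma \ref{special and simplest dominated}), deriving this from the $\ZZ^d$-equivariance of $H$ (specialness) together with the leaf conjugacy $H(\tildeF^s_{(i,k)})=\tildeL^s_{(i,k)}$ of Proposition \ref{leaf proposition}, and then intersecting with the always-$\ZZ^d$-periodic strong stable foliations. The only cosmetic difference is that the paper records the computation as a chain of equivalences $y+n\in\tildeF^s_{(i+1,k)}(x+n)\Leftrightarrow\cdots\Leftrightarrow y\in\tildeF^s_{(i+1,k)}(x)$ rather than as your one-line set identity.
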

	
	We leave the proofs of Proposition \ref{periodic data} and Proposition \ref{finest dominated splitting of f} in subsection \ref{subsection 3.1}.

	To obtain the relationship between the periodic stable Lyapunov spectrum  and one of its linearization, we can use Proposition \ref{s-exp coincide with linear}. For a special $f\in\mathcal{U}$ given by Proposition \ref{finest dominated splitting of f},  let $h$ be the conjugacy between $f$ and $A$ given by Proposition \ref{special and conjugate}.  We need to prove that the conjugacy $h$ is also a leaf conjugacy between $\mathcal{F}^s_i$ and $\mathcal{L}^s_i$.

	\begin{proposition} \label{su-integrable and leaf-conjugate}
	 Let $f\in\mathcal{U}$ given by Proposition \ref{finest dominated splitting of f} be special. Then 	$h(\mathcal{F}^s_i)=\mathcal{L}^s_i$, for every $1\leq i\leq k$.
	\end{proposition}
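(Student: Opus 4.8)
I would reduce everything to showing that, for each $i$, the lifted conjugacy $H$ carries the strong stable foliation onto the linear one, i.e. $H(\tildeF^s_{(1,i)})=\tildeL^s_{(1,i)}$. Indeed, Proposition~\ref{leaf proposition}(1) gives $\tildeF^s_i(x)=\tildeF^s_{(1,i)}(x)\cap\tildeF^s_{(i,k)}(x)$, and Proposition~\ref{leaf proposition}(4) already gives $H(\tildeF^s_{(i,k)})=\tildeL^s_{(i,k)}$; since $H$ is a bijection, $H(\tildeF^s_{(1,i)})=\tildeL^s_{(1,i)}$ would yield $H(\tildeF^s_i)=\tildeL^s_{(1,i)}\cap\tildeL^s_{(i,k)}=\tildeL^s_i$. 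Because $f$ is special, $H$ commutes with the $\ZZ^d$-action, so $\tildeF^s_i$ is $\ZZ^d$-periodic and the identity $H(\tildeF^s_i)=\tildeL^s_i$ descends to $h(\mathcal{F}^s_i)=\mathcal{L}^s_i$ on $\TT^d$, which is what we want.

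The plan is to prove $H(\tildeF^s_{(1,i)})=\tildeL^s_{(1,i)}$ by downward induction on $i$, also recording at each step that $\lambda^s_j(p,f)=\lambda^s_j(A)$ for all $p\in\mathrm{Per}(f)$ and all $j>i$; the base case $i=k$ is just $H(\tildeF^s)=\tildeL^s$. Assume the statement for $i$. Then, as above, $H(\tildeF^s_i)=\tildeL^s_i$; by Proposition~\ref{finest dominated splitting of f} the bundle $E^s_i$ is integrable and by Proposition~\ref{periodic data} the exponent $\lambda^s_i(\,\cdot\,,f)$ is constant on $\mathrm{Per}(f)$, so Proposition~\ref{s-exp coincide with linear} applies and gives $\lambda^s_i(p,f)=\lambda^s_i(A)$ for every $p\in\mathrm{Per}(f)$ — the bookkeeping needed for the next step. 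It remains to deduce $H(\tildeF^s_{(1,i-1)})=\tildeL^s_{(1,i-1)}$. For this I would work inside a single leaf: fix $x$, put $W=\tildeF^s_{(1,i)}(x)$ and $L=\tildeL^s_{(1,i)}(Hx)$, so $H|_W\colon W\to L$ is a homeomorphism onto the \emph{linear} leaf $L$ which conjugates the leaf dynamics of $F$ with that of $A$ and, by the inductive hypothesis together with Proposition~\ref{leaf proposition}(4), maps $\tildeF^s_i|_W$ and $\tildeF^s_{(i+1,k)}$ onto the linear subfoliations $\tildeL^s_i|_L$ and $\tildeL^s_{(i+1,k)}$. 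Hence $H|_W$ carries $\tildeF^s_{(1,i-1)}|_W$ onto a foliation $\mathcal G$ of $L$ that is transverse to $\tildeL^s_i|_L$, invariant under the leaf dynamics, $\ZZ^d$-periodic, and has quasi-isometric leaves (Proposition~\ref{leaf proposition}(3) transported through $H$, using $\|H-\mathrm{Id}\|<C_0$); the goal is $\mathcal G=\tildeL^s_{(1,i-1)}|_L$.

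I would establish $\mathcal G=\tildeL^s_{(1,i-1)}|_L$ first along periodic leaves and then propagate. For a periodic point $p$, passing to its first return map and recentering affine coordinates so that the (now linear) return map of $A$ and $H$ both fix the origin, the set $\Sigma:=H(\tildeF^s_{(1,i-1)}(\tilde p))\subset L^s_{(1,i)}$ is invariant under this linear return map $A_p:=A^{N(p)}$ and, being transverse to $L^s_i$, is the graph of a continuous map $\psi\colon L^s_{(1,i-1)}\to L^s_i$ with $\psi(0)=0$ and $\psi(A_p v)=A_p\psi(v)$. Since $\tildeF^s_{(1,i)}(\tilde p)=H^{-1}(\tildeL^s_{(1,i)}(\tilde p))$ lies in a $C_0$-neighbourhood of the affine subspace $\tildeL^s_{(1,i)}(\tilde p)$, and $\tildeF^s_{(1,i-1)}$ is quasi-isometric and $\ZZ^d$-periodic, I expect to be able to control the $L^s_i$-drift of $\tildeF^s_{(1,i-1)}(\tilde p)$, i.e. to show that $\psi$ is \emph{bounded}; then $\psi(v)=A_p^{m}\psi(A_p^{-m}v)$ with $|A_p^{m}\psi(\,\cdot\,)|\le|\mu^s_i(A)|^{mN(p)}\sup|\psi|\to0$ forces $\psi\equiv0$, i.e. $\Sigma$ is the linear strong stable subspace. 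Finally, since $A$ is irreducible, Proposition~\ref{projection leaf dense} shows the leaves of $H^{-1}$ of a linear foliation project densely, so the periodic leaves of $\tildeF^s_{(1,i-1)}$, together with their $\ZZ^d$-translates (controlled by Propositions~\ref{nmH} and~\ref{nmF}), are dense; continuity of $H$ and of the foliations then upgrades $\mathcal G=\tildeL^s_{(1,i-1)}|_L$ from periodic leaves to all leaves, completing the induction. Taking $i=1$ and intersecting with Proposition~\ref{leaf proposition}(4) gives $H(\tildeF^s_i)=\tildeL^s_i$ for every $i$, hence $h(\mathcal{F}^s_i)=\mathcal{L}^s_i$.

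The main obstacle is exactly the boundedness of the deviation $\psi$ — equivalently, that the lifted strong stable leaves of $f$ do not drift away from the linear strong stable subspaces of $A$: quasi-isometry alone does not exclude ``exotic'' $A_p$-invariant graphs $\psi$ satisfying $\psi(A_p v)=A_p\psi(v)$, so this step must genuinely combine the quasi-isometry and $\ZZ^d$-periodicity of $\tildeF^s_{(1,i-1)}$ with the irreducibility of $A$ (and, where helpful, the matching of the weaker periodic exponents $\lambda^s_j$, $j\ge i$, already secured by the induction). Once $\psi$ is known to be bounded, both the vanishing $\psi\equiv0$ and the density propagation are routine given the results of Section~\ref{sec:preliminaries}.
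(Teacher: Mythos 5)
Your reduction to $H(\tildeF^s_{(1,i)})=\tildeL^s_{(1,i)}$ (and then intersecting with $H(\tildeF^s_{(i,k)})=\tildeL^s_{(i,k)}$) matches the paper's, but the route you take to establish it is genuinely different and, as you yourself flag, has an unfilled hole. You work \emph{inside the stable leaves}: downward induction on $i$, periodic leaves, $A$-equivariant graphs $\psi\colon L^s_{(1,i-1)}\to L^s_i$ with $\psi(A_pv)=A_p\psi(v)$, then $\psi\equiv0$ provided $\psi$ is bounded, then density propagation. The paper instead goes \emph{through the unstable direction}: using that $f$ is special it proves (Claim \textit{su-jointlyclaim}) that $E^s_{(1,i)}\oplus E^u$ is jointly integrable, via unstable holonomies and density of preimage sets (Proposition \ref{preimage dense}); then (Claim \textit{linearfoliationclaim}) it shows $\tilde{\mathcal W}:=H(\tildeF^{s,u}_{(1,i)})$ is additively closed by combining $\ZZ^d$-periodicity of $\tilde{\mathcal W}$, its invariance under $\tildeL^u(0)$-translations, and minimality of $\mathcal{L}^u$ (irreducibility); intersecting with the additively closed $\tildeL^s$ makes $H(\tildeF^s_{(1,i)})$ a linear, $A$-invariant subfoliation of $\tildeL^s$, and comparison with $H(\tildeF^s_{(i+1,k)})=\tildeL^s_{(i+1,k)}$ forces it to be $\tildeL^s_{(1,i)}$. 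Note also that the paper's proof makes no use of exponent rigidity at this stage — the bookkeeping $\lambda^s_j(p,f)=\lambda^s_j(A)$ you thread through the induction is not needed, and in the paper's logical order that rigidity is a \emph{consequence} of the present proposition rather than an input.

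The genuine gap is exactly where you say it is: the boundedness of $\psi$. Quasi-isometry of $\tildeF^s_{(1,i-1)}$ controls leaf-length versus ambient distance, and $\|H-\mathrm{Id}\|<C_0$ keeps $H(\tildeF^s_{(1,i)}(\tilde p))$ within $C_0$ of $\tildeL^s_{(1,i)}(H\tilde p)$ — but neither of these bounds the drift of the $(i{-}1)$-dimensional leaf $\tildeF^s_{(1,i-1)}(\tilde p)$ in the \emph{weak} $L^s_i$-direction inside that $i$-dimensional plane. Small angle $\angle(\tilde E^s_{(1,i-1)},L^s_{(1,i-1)})\le\alpha$ does not preclude unbounded drift over long distances, and there is no a priori reason the strong stable leaf must stay at bounded distance from the linear strong stable subspace; indeed the absence of such an a priori bound is precisely why the paper instead extracts linearity from translation-invariance via $su$-integrability and minimality of $\mathcal{L}^u$, which never needs drift control. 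Without a concrete argument for $\sup|\psi|<\infty$ (and you do not supply one), the step $\psi\equiv0$ does not follow, and the induction does not close.
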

	
    We leave the proof of Proposition \ref{su-integrable and leaf-conjugate} in subsection \ref{subsection 3.2}.
	Now, we can prove  Theorem \ref{special implies s-rigidity}.
	\begin{proof}[Proof of Theorem \ref{special implies s-rigidity}]
		For one-dimensional stable bundle case, the special Anosov map $f$ admits the dominated splitting $T\TT^d=E^s\oplus E^u$ and $H(\tildeF^s)=\tildeL^s$  always holds whether $f$ is a small perturbation  of its linearization or not. Hence by Proposition \ref{s-exp coincide with linear} and Proposition \ref{periodic data}, we get $\lambda^s(p,f)=\lambda^s(A)$, for all $p\in {\rm Per}(f)$, immediately.
		
		For higher-dimensional stable bundle case,  by Proposition \ref{finest dominated splitting of f}, there exists a $C^1$ neighborhood $\mathcal{U}$ of $A$ such  that every special $f\in\mathcal{U}$ admits the finest (on stable bundle) dominated splitting. Thus by Proposition \ref{periodic data}, every periodic point $p\in{\rm Per}(f)$ has the same stable Lyapunov spectrum. Now,
		combining Proposition \ref{s-exp coincide with linear} and Proposition \ref{su-integrable and leaf-conjugate}, we have that
		$\lambda^s_i(p,f)=\lambda^s_i(A)$, for every $p\in {\rm Per}(f)$ and every $1\leq i\leq k$.
	\end{proof}

	\subsection{Periodic stable Lyapunov spectrums coincide}\label{subsection 3.1}
	In this subsection, we prove Proposition \ref{periodic data} and Proposition \ref{finest dominated splitting of f}. Fix $1\leq i\leq k$, let $f:\TT^d\to \TT^d$ be a special irreducible non-invertible Anosov map with $Df$-invariant subbundle $E^s_i\subset E^s$. Let $\mathcal{F}^s_i$ be an $f$-invariant integral foliation for $E^s_i$.  For short, we denote $\mu^s_i(p,f):={\rm exp}\left(\lambda^s_i(p,f)\right)$ by $\mu^s_i(p)$, for all $p\in{\rm Per}(f)$.
	
	\begin{proof}[Proof of Proposition \ref{periodic data}]\label{proof of rigidity}
	We assume that there exist $p,q\in \rm{Per}$$(f)$ such that $\mu^s_i(p)<\mu^s_i(q)$, then to get a contradiction.
	By the assumption of the existence of different periodic stable Lyapunov exponents, the infimum $\mu_-$ and the supremum $\mu_+$ of the set $\big\{\mu_i^s(p):p\in{\rm Per}(f)\big\}$ satisfy $0<\mu_-<\mu_+<1$.
	Given $\delta>0$ arbitrarily small, we can choose two periodic points $p,q$ of $f$ such that
		$$
		\mu^s_i(p)\le\mu_- \cdot (1+\delta) \quad {\rm{and}} \quad \mu^s_i(q)\ge {\mu_+}\cdot(1+\delta)^{-1},
		$$
		Moreover, as Claim \ref{adapted metric on Rd}, there exists a smooth adapted Riemannian metric such that
		$$
		\mu_-\cdot(1+\delta)^{-1} <\|Df|_{E_i^s(x)}\|<\mu_+\cdot(1+\delta), \quad\forall x\in\mathbb{T}^d.
		$$

		For convenience, we can assume that $p,q$ are both fixed points. Otherwise we can go through the rest of this proof by using $f^{n_0}$ instead of $f$, where $n_0$ is the minimal common period of $p$ and $q$.
		Let $\eta_0>0$ small enough such that, for any $x_1,x_2\in \mathbb{T}^d$ with $d(x_1,x_2)\le \eta_0$,  we have
		$$(1+\delta)^{-1}\le \frac{\|Df|_{E^s_i(x_1)}\|}{\|Df|_{E^s_i(x_2)}\|}\le 1+\delta.$$
		
		Fix $\e>0$$(\e\ll\eta_0)$, there exists $x_{\e}$ in the $\e$-Ball $B_{\e}(q)$ and $k_{\e}=k(\e,x_{\e})>0$ such that  $f^{k_{\e}}(x_{\e})=p$. Indeed, since $f$ is special, the preimage set of $p$ for $f$ is dense  by Proposition \ref{special and conjugate} and Proposition \ref{preimage dense}.
		
		Shrinking $\e$, by the local product structure, there exist $\eta_1,\eta_2>0$ such that the local unstable leaf $\mathcal{F}^u(q,\eta_1)\subset B_{\eta_0}(q)$ intersects with the local stable leaf $\mathcal{F}^s(x_{\e},\eta_2)\subset B_{\eta_0}(q)$ at the unique point $y_{\e}$, namely, $y_{\e}=\mathcal{F}^s(x_{\e},\eta_2)\cap \mathcal{F}^u(q,\eta_1)$.
		Note that one has $d_{\mathcal{F}^u}(y_{\e},q) \le d(\e)$, where $d(\e) $ tends to $0$ as $\e$ goes to $0$.
		
		Therefore, we can choose a point $z_{\e}\in \mathcal{F}^s_i(x_{\e},\eta_2)$ such that $d_{\mathcal{F}^s_i}(x_{\e},z_{\e}) \ge \eta_2/3$.
		We denote by $I_{\e}$ the curve in $\mathcal{F}^s_i(x_{\e})$ from $x_{\e}$ to $z_{\e}$.
		Since $x_{\e}$ is a $k_{\e}$-preimage of the fixed point $p$, we can find a curve $J_{\e}$ in $\mathcal{F}^s_i(p)$ such that $$f^{k_{\e}}(J_{\e})=f^{k_{\e}}(I_{\e}).$$
		
		Let $N_{\e}$ be the maximal positive integer such that $d(f^j(w),f^j(q)) < \eta_0$, for all $w\in I_{\e}$ and $j\in [0,N_{\e}]$.
		Let $k_{\e}$ be the minimal positive integer such that $\big\{x\in \mathbb{T}^d | f^{k_{\e}}=p\big\} \cap B_{\e}(q) \ne \emptyset$.
		
		\begin{claim}\label{time-rate}
			There exist $\e_0>0$ and $C_0>0$ such that
			$$\frac{N_{\e}}{k_{\e}}\ge C_0, \quad \forall \e \le \e_0.$$
		\end{claim}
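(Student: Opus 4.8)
The plan is to trap both $N_\varepsilon$ and $k_\varepsilon$ between constant multiples of $\log(1/\varepsilon)$: I will show $N_\varepsilon\ge c_1\log(1/\varepsilon)$ and $k_\varepsilon\le c_2\log(1/\varepsilon)$ for all sufficiently small $\varepsilon$, where $c_1,c_2>0$ depend only on $f$ and on the already-fixed constants $\eta_0,\eta_2$, and then set $C_0=c_1/c_2$ (and $\varepsilon_0$ small enough that both estimates hold). This ratio bound is exactly what is needed afterward to make ``time spent shadowing $q$'' a definite fraction of the total time $k_\varepsilon$.

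For the lower bound on $N_\varepsilon$, I would use that $q$ is a fixed point and $x_\varepsilon\in B_\varepsilon(q)$. Put $L:=\max\{1,\sup_{x}\|D_xf\|\}>1$. The Lipschitz estimate gives $d(f^{j}(x_\varepsilon),f^{j}(q))=d(f^{j}(x_\varepsilon),q)\le L^{j}\varepsilon$. Any $w\in I_\varepsilon$ lies on $\mathcal{F}^s_i(x_\varepsilon)$ at intrinsic distance $\le\eta_2$ from $x_\varepsilon$, and since $Df$ uniformly contracts $E^s\supset E^s_i$, this intrinsic distance does not grow under forward iteration, so $d(f^{j}(w),f^{j}(x_\varepsilon))\le\kappa\eta_2$ for a uniform constant $\kappa$ and all $j\ge0$. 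Choosing $\eta_2$ small enough \emph{before} $\varepsilon$ is selected so that $\kappa\eta_2<\eta_0/2$, I get $d(f^{j}(w),f^{j}(q))\le\kappa\eta_2+L^{j}\varepsilon<\eta_0$ as long as $L^{j}\varepsilon<\eta_0/2$, i.e. for every $j\le\log(\eta_0/2\varepsilon)/\log L$. Hence $N_\varepsilon\ge\big\lfloor\log(\eta_0/2\varepsilon)/\log L\big\rfloor\ge c_1\log(1/\varepsilon)$ once $\varepsilon$ is small.

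For the upper bound on $k_\varepsilon$, I would exploit that $f$ is special, hence conjugate to $A$ by Proposition~\ref{special and conjugate}: there is a homeomorphism $h$ with $h\circ f=A\circ h$, so $f^{-k}(p)=h^{-1}\big(\{z\in\TT^d:\,A^{k}z=h(p)\}\big)$. Since $A$ is irreducible and non-invertible, $|\det A|\ge2$, and Proposition~\ref{preimage dense} says $\{z:\,A^{k}z=h(p)\}$ is $C|\det A|^{-k/d}$-dense in $\TT^d$, that is, exponentially dense in $k$. Transporting this through $h^{-1}$ costs a modulus of continuity, and here I would invoke the (classical) H\"older regularity of the topological conjugacy between a hyperbolic system and its linear model: $h^{-1}$ is $\theta$-H\"older for some $\theta\in(0,1]$. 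Then $f^{-k}(p)$ is $K\big(C|\det A|^{-k/d}\big)^{\theta}$-dense, which is $<\varepsilon$ as soon as $k\ge c_2\log(1/\varepsilon)$ for a suitable $c_2$ and all small $\varepsilon$; by minimality of $k_\varepsilon$ (the least $k$ with $f^{-k}(p)\cap B_\varepsilon(q)\neq\emptyset$) this forces $k_\varepsilon\le c_2\log(1/\varepsilon)$. Combining the two estimates gives the claim.

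The main obstacle is precisely the $k_\varepsilon$-bound. Proposition~\ref{preimage dense} is a statement about the \emph{linear} map $A$, and pushing its exponential density rate onto $f$ genuinely requires a polynomial (H\"older) modulus for $h^{-1}$: the bare uniform continuity of Proposition~\ref{lifting conjugate} would only yield $k_\varepsilon\le\frac{d}{\log|\det A|}\log\!\big(C/\rho(\varepsilon)\big)+1$, with $\rho$ the modulus of continuity of $h$, and $\log(1/\rho(\varepsilon))$ need not be $O(\log(1/\varepsilon))$. So the crux is to have available — or to re-derive in this non-invertible setting, e.g.\ on the universal cover via the standard graph-transform/structural-stability argument — the H\"older continuity of the conjugacy $H$. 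A secondary but real point is the bookkeeping of the order in which $\eta_0$, $\eta_2$ and $\varepsilon$ are fixed, so that $c_1$ is a genuine constant independent of $\varepsilon$, which is consistent with the order in which the auxiliary constants were introduced above.
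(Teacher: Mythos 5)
Your route is genuinely different from the paper's, and you have correctly diagnosed its Achilles heel. You estimate $N_\e$ directly in the $f$-world (obtaining $N_\e\gtrsim \log(1/\e)/\log L$ via the Lipschitz constant of $f$ and stable contraction), while your bound on $k_\e$ is obtained by transporting the exponential density of Proposition \ref{preimage dense} through $h^{-1}$, which forces you to invoke a H\"older modulus for $h^{-1}$ so that the two bounds live on the same logarithmic scale in $\e$. That reliance on H\"older continuity of the conjugacy is a real extra ingredient: it is not established anywhere in the paper, and the authors explicitly flag that they are avoiding it (``A direct way to get Claim \ref{time-rate} is using the H\"older continuity of $h$, but here we prove it by only uniform continuity'').

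The paper's trick, which you did not find, is to transport \emph{both} time counts to the linear side before estimating. Push $B_{\eta_0}(q)$ and $B_\e(q)$ through $h$, fit su-foliation boxes $\tilde B_{\tilde\eta_0}(\tilde q)\subset h(B_{\eta_0}(q))$ and $\tilde B_{\tilde\e}(\tilde q)\subset h(B_\e(q))$ inside them, and define the analogous $\tilde N_\e$ (time the $A$-orbit of $\tilde I_\e$ stays in the $\tilde\eta_0$-box) and $\tilde k_\e$ (first time an $A$-preimage of $\tilde p$ hits the $\tilde\e$-box). Then $N_\e\ge\tilde N_\e$ and $k_\e\le\tilde k_\e$ by the conjugacy, and in the \emph{linear} model one computes explicitly $\tilde N_\e\gtrsim\log(\tilde\eta_0/\tilde\e)/\log\mu^u_{\rm max}(A)$ and, via Proposition \ref{preimage dense}, $\tilde k_\e\lesssim d\,\log(C/\tilde\e)/\log|\det A|$. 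Both are linear in $\log(1/\tilde\e)$, so the ratio $\tilde N_\e/\tilde k_\e$ is bounded below \emph{independently} of how $\tilde\e$ relates to $\e$; the unknown modulus of continuity of $h$ cancels. Your hybrid split (one bound in $\e$, one in $\tilde\e$) is exactly what re-introduces the need to control the modulus. Your lower bound for $N_\e$ and your bookkeeping observation about fixing $\eta_0,\eta_2$ before $\e$ are both fine; the only substantive gap is that the $k_\e$ bound rests on a H\"older regularity statement for $h^{-1}$ that would have to be proved (or cited) separately in the non-invertible setting, whereas the paper's symmetric transport renders it unnecessary.
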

		
		We estimate the upper bound and the lower bound of $k_{\e}$ and $N_{\e}$, respectively. Note that we can get the lower bound of $N_{\e}$ by controlling the distance of $f^{N_{\e}}(y_{\e})$ and $f^{N_{\e}}(q)$ along unstable leaves directly.
		However, it is difficult to estimate the upper bound of $k_{\e}$ under the dynamics of $f$, while it is convient in linear systems (see Proposition \ref{preimage dense}). So, by Proposition \ref{special and conjugate}, let $f$ conjugate to its linearization $A$. We calculate the "$N_{\e}$" and "$k_{\e}$" of $A$. A direct way to get Claim \ref{time-rate} is using the H$\ddot{\rm o}$lder continuity of $h$, but here we prove it by only uniform continuity. See Figure 1.
		
			\begin{figure}[htbp]
			\centering
			\hspace*{-2.3cm}
			\includegraphics[width=18cm]{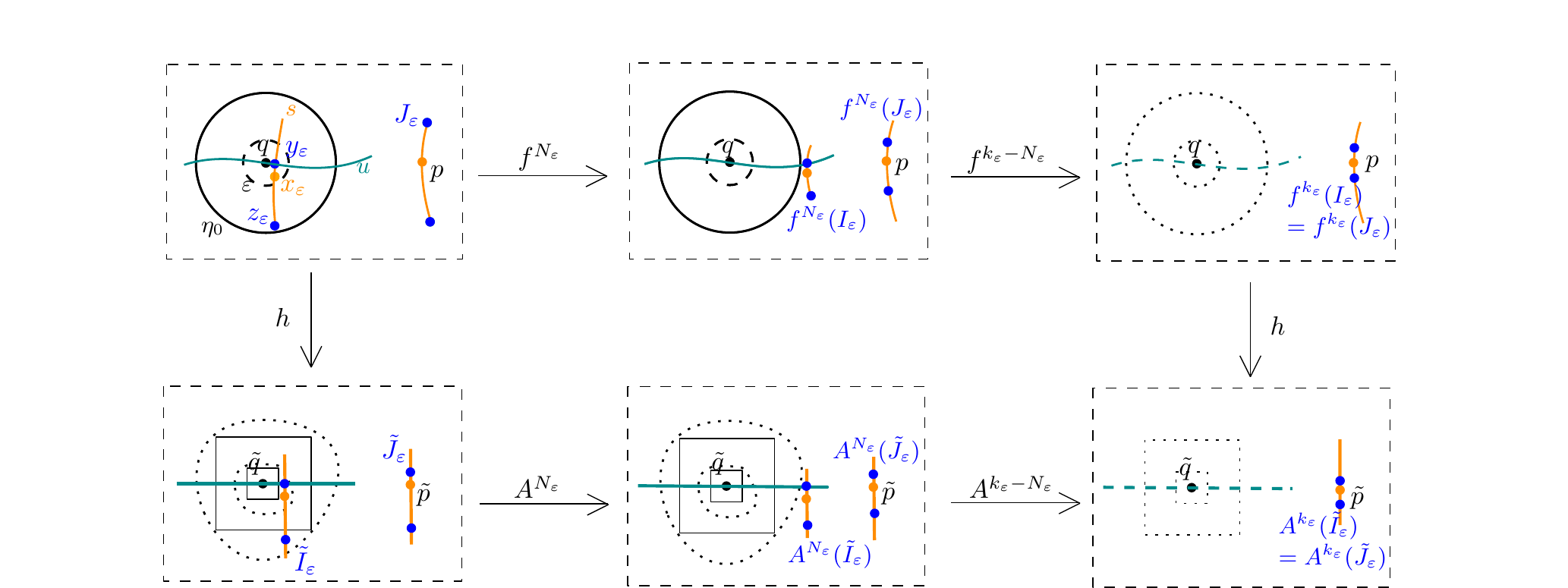}
			\caption{Iteration of local stable leaves in the case of one-dimensional stable bundle}	
		\end{figure}

		\begin{proof}[Proof of Claim\ref{time-rate}]\label{proof of time-rate}
			Let $h:\mathbb{T}^d\to \mathbb{T}^d$ be the conjugacy between $f$ and $A$ with $h\circ f= A\circ h $. For short, denote $h(x_{\e}), h(y_{\e}), h(p)$ and $h(q)$ by $\tilde{x}_{\e}, \tilde{y}_{\e}, \tilde{p}$ and $\tilde{q}$, respectively.
			The homeomorphism $h$ maps $B_{\eta_0}(q)$ and $B_{\e}(q)$ to be two neighborhoods of $\tilde{q}$ such that we can choose two su-foliation boxes of $A$, $\tilde{B}_{\tilde{\eta}_0}(\tilde{q}) \subset h(B_{\eta_0}(q))$
			and $\tilde{B}_{\tilde{\e}}(\tilde{q}) \subset h(B_{\e}(q))$.
			Here, $\tilde{\eta}_0$ is fixed by $\eta_0$, while $\tilde{\e}$ tends to $0$ following $\e$.
			Thus we can shorten $I_{\e}$ sligtly such that $\tilde{I_{\e}}:=h(I_{\e}) \subset \tilde{B}_{\tilde{\e}}(\tilde{q})\cap \mathcal{L}^s(\tilde{x}_{\e})$
			and the length $|\tilde{I_{\e}}| = \frac{\tilde{\eta}_0}{3}$.

			Let $\tilde{N_{\e}} $ be the maximal positive integer such that $d(A^j(\tilde{w}),A^j(\tilde{q})) < \tilde{\eta}_0$, for all $\tilde{w}\in \tilde{I_{\e}}$ and $j\in [0,\tilde{N_{\e}}]$.
			Let $\tilde{k}_{\e}$ be the minimal positive integer such that $\big\{x\in \mathbb{T}^d | A^{\tilde{k}_{\e}}=\tilde{p}\big\} \cap \tilde{B}_{\tilde{\e}}(\tilde{q}) \ne \emptyset$, where $\tilde{p}:=h(p)$.   	
			It is clear that $N_{\e} \ge \tilde{N_{\e}}$ and $k_{\e} \le \tilde{k}_{\e} $.
			So, we get $N_{\e}/ k_{\e} \ge \tilde{N_{\e}}/\tilde{k}_{\e}$.
			
			For every $\tilde{w}\in \tilde{I_{\e}}$ and $j\in [0,\tilde{N_{\e}}]$, we have
			\begin{align*}
				d\left(A^j(\tilde{w}),A^j(\tilde{q})\right) &\le d_{\mathcal{L}^s}\left(A^j(\tilde{w}),A^j(\tilde{y}_{\e})\right)+d_{\mathcal{L}^u}\left(A^j(\tilde{y}_{\e}),A^j(\tilde{q})\right)\\	
				&\le \frac{\tilde{\eta}_0}{3} +d(\tilde{x}_{\e}, \tilde{y}_{\e}) + \tilde{\e} \cdot  \left(\mu_{max}^u(A)\right)^j.
			\end{align*}
			Note that as $\e$ small enough, we have $d(\tilde{x}_{\e}, \tilde{y}_{\e})\leq \frac{\tilde{\eta}_0}{6}$.  So, the maximal positive integer $\tilde{N_{\e}}$ such that
			$$\frac{\tilde{\eta}_0}{3} +d(\tilde{x}_{\e}, \tilde{y}_{\e}) + \tilde{\e} \cdot  \left(\mu_{max}^u(A)\right)^j  \le\frac{\tilde{\eta}_0}{2}+\tilde{\e} \cdot  \left(\mu_{max}^u(A)\right)^j \le \tilde{\eta}_0,$$
			 holds should satisfy
			\begin{align*}
				\tilde{N_{\e}} \ge \frac{{\rm ln}\tilde{\eta}_0-{\rm ln}(2\tilde{\e})}{{\rm ln}\mu_{\rm max}^u(A)}.	
			\end{align*}
			
			On the other hand, by Proposition \ref{preimage dense}, there exists $C>0$ such that for every $\tilde{\e}>0$,
			\begin{align*}
				\tilde{k}_{\e} \le d\cdot \frac{{\rm ln}C- {\rm ln}\tilde{\e}}{{\rm ln}|{\rm det}A|}.
			\end{align*}
			Thus, as $\tilde{\e}$ tending to $0$, there exists $C_0>0$ such that,
			\begin{align*}
				\frac{N_{\e}}{k_{\e}} \ge \frac{\tilde{N_{\e}}}{\tilde{k}_{\e}} \ge C_0,
			\end{align*}
			where $C_0$ could be close to $\frac{1}{d}\cdot \frac{\rm{ln} |\rm{det}A|}{{\rm ln}\mu_{\rm max}^u(A)}$ arbitrarily.
		\end{proof}

		Now, using the uniform lower bound of the time ratio $N_{\e}/k_{\e}$ of $I_{\e}$ being around $q$ to reaching $p$, we can get an exponential error between $|f^{k_{\e}}(I_{\e})|$ and  $|f^{k_{\e}}(J_{\e})|$.
		
		Firstly, we claim that there exists $C_2>1$ such that
		\begin{align*}
			\frac{|I_{\e}|}{|J_{\e}|}\in\big[C_2^{-1},C_2\big], \quad \forall \e \ll \eta_0.	
		\end{align*}
		Indeed, by the construction of $J_{\e}$, we have $h\circ f^{k_{\e}} (J_{\e})= h\circ f^{k_{\e}} (I_{\e})$,
		equivalently, $A^{k_{\e}}(\tilde{J_{\e}})=A^{k_{\e}}(\tilde{I_{\e}})$.
		This implies that $\tilde{J_{\e}}$ is just a translation of $\tilde{I_{\e}}$,
		thus $|\tilde{J_{\e}}|=|\tilde{I_{\e}}|\ge \frac{\tilde{\eta}_0}{2}$.
		Since $h^{-1}$ is uniformly continuous, by the uniform continuity for $\mathcal{F}^s_i$ (see Remark \ref{uniform continuity of foliation on Rd}), we have that   $\frac{|I_{\e}|}{|J_{\e}|}$ is uniformly bounded away from $0$.
		
    	Note that we can assume $J_{\e} \subset B_{\eta_0}(p)$. Otherwise, by the uniform continuity of $h$ again, we can shorten the length of $I_{\e}$, meanwhile ensure that the length is independent of $\e$.
	
	    Since we have assumed that $p,q$ are fixed points of $f$, then $f^j(J_{\e}) \subset B_{\eta_0}(p)$, for every $j\ge0$.
		So,
		\begin{align*}
			\big|f^{k_{\e}}(J_{\e})\big| \le \big( \mu_- \cdot (1+\delta)^2 \big)^{k_{\e}}\big|J_{\e}\big|.
		\end{align*}
		And,
		\begin{align*}
			|f^{k_{\e}}(I_{\e})|
			&=|f^{k_{\e}-N_{\e}} \circ f^{N_{\e}} (I_{\e})|\\
			&\ge \left(\frac{\mu_-}{1+\delta}\right)^{k_{\e}-N_{\e}} \cdot \left(\frac{\mu_+}{(1+\delta)^2}\right)^{N_{\e}} |I_{\e}|.
		\end{align*}
		Consquently,
		\begin{align*}
			\frac{|f^{k_{\e}}(I_{\e})|}{|f^{k_{\e}}(J_{\e})|}
			& \ge \frac{1}{(1+\delta) ^ {3k_{\e}+N_{\e}}} \cdot \left(\frac{\mu_+}{\mu_-}\right)^{N_{\e}} \cdot \frac{|I_{\e}|}{|J_{\e}|}
			\ge \frac{1}{(1+\delta) ^ {4k_{\e}}} \cdot \left(\frac{\mu_+}{\mu_-}\right)^{N_{\e}} \cdot \frac{1}{C_2}\\
			& \ge \frac{1}{C_2 \cdot (1+\delta) ^ {4k_{\e}}} \cdot \left(\frac{\mu_+}{\mu_-}\right)^{C_0\cdot k_{\e}}
			= \frac{1}{C_2} \cdot \Big((1+\delta) ^ 4 \cdot \left(\frac{\mu_+}{\mu_-}\right)^{C_0}\Big)^ {k_{\e}}.
		\end{align*}
		We can assume that $k_{\e} \ge N_{\e}$, so that we have the second inequality. Otherwise, when $k_{\e} < N_{\e}$, the whole estimation of $\frac{|f^{k_{\e}}(I_{\e})|}{|f^{k_{\e}}(J_{\e})|}$ is trivial.

		Let $\delta$ is small enough such that $(1+\delta) ^ 4 \cdot \left(\frac{\mu_+}{\mu_-}\right)^{C_0} >1$,
		after that  we can fix $\eta_0$.
		Let $\e$ tend to zero. We have $k_{\e}\to +\infty$,
		hence $\frac{|f^{k_{\e}}(I_{\e})|}{|f^{k_{\e}}(J_{\e})|}\to +\infty$.
		This contradics the fact that $f^{k_{\e}}(I_{\e})=f^{k_{\e}}(J_{\e})$.
	\end{proof}
	
	Now, we show that there exists a $C^1$ neighbohood $\mathcal{U}$ of $A$ in which every special $f$ admits the finest (on stable bundle) dominated splitting. Combining with Proposition \ref{periodic data}, if $f\in\mathcal{U}$ is special, then $\lambda^s_i(p,f)= \lambda^s_i(q,f)$, for all $p,q\in{\rm Per}(f)$ and $1\leq i\leq k$.
	
	\begin{proof}[Proof of Proposition \ref{finest dominated splitting of f}]
			Since $E^s_i=E^s_{(1,i)}\cap E^s_{(i,k)}$, it suffices to prove the following lemma.
		\begin{lemma}\label{special and simplest dominated}
		There exists a $C^1$ neighborhood $\mathcal{U}\subset C^1(\TT^d)$ of $A$ such that, for every $f\in \mathcal{U}$ and  $1\leq i\leq k-1$, if $f$ is special, then it admits the following dominated splitting
		$$E^s_{(1,i)}\oplus E^s_{(i+1,k)}\oplus E^u,$$
		and $E^s_{(i+1,k)}$ is integrable.
	\end{lemma}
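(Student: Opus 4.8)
The plan is to descend the weak stable foliation from the universal cover to $\TT^d$, exploiting that a special map carries a $\ZZ^d$-equivariant conjugacy. Take $\mathcal{U}$ to be the neighborhood furnished by Proposition \ref{leaf proposition}, fix a special $f\in\mathcal{U}$ and an index $1\le i\le k-1$, let $F$ be a lift of $f$, and let $H$ be the conjugacy between $F$ and $A$ from Proposition \ref{lifting conjugate}. First I would record, via Proposition \ref{special and conjugate} together with the observation stated right after Proposition \ref{lifting conjugate}, that $f$ being special is equivalent to $H$ commuting with the $\ZZ^d$-action, i.e. $H(x+n)=H(x)+n$ for all $x\in\RR^d$ and $n\in\ZZ^d$.

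Next I would invoke Proposition \ref{leaf proposition}: item (1) gives that $\tilde E^s_{(i+1,k)}$ is uniquely integrable on $\RR^d$, with integral foliation $\tildeF^s_{(i+1,k)}$, and item (4) gives $H(\tildeF^s_{(i+1,k)})=\tildeL^s_{(i+1,k)}$. Since $\tildeL^s_{(i+1,k)}$ is a linear foliation it is $\ZZ^d$-periodic, so combining this with the $\ZZ^d$-equivariance of $H$ shows that $\tildeF^s_{(i+1,k)}=H^{-1}(\tildeL^s_{(i+1,k)})$ is $\ZZ^d$-periodic. Hence $\tildeF^s_{(i+1,k)}$ descends to an $f$-invariant foliation on $\TT^d$, tangent to a well-defined, $Df$-invariant subbundle $E^s_{(i+1,k)}\subset E^s$; in particular $E^s_{(i+1,k)}$ is integrable on $\TT^d$.

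It then remains to assemble the dominated splitting on $\TT^d$. The strong stable bundle $E^s_{(1,i)}$ is already defined on $\TT^d$ by Proposition \ref{s-bundle-continuous} (see \eqref{strongstableexistsonTd}), and $E^u$ is defined on $\TT^d$ precisely because $f$ is special; together with the previous step this yields a continuous $Df$-invariant splitting $T\TT^d=E^s_{(1,i)}\oplus E^s_{(i+1,k)}\oplus E^u$. Domination is then inherited for free: these three bundles are unions of consecutive blocks of the finest dominated splitting on the inverse limit space $\TT^d_f$ from Proposition \ref{s-bundle-continuous}, and the domination inequalities there involve only forward differentials $Df^n$, which project to $\TT^d$; evaluating at any orbit over a point $x\in\TT^d$ gives the domination at $x$.

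The step I expect to be the crux is the descent in the second paragraph. Without the special hypothesis, $E^s_{(i+1,k)}$ is a \emph{backward-built} bundle that genuinely depends on the chosen negative orbit, so it lives only on $\TT^d_f$ and need not descend to $\TT^d$ at all; the whole content of the lemma is to convert specialness into the $\ZZ^d$-periodicity of $\tildeF^s_{(i+1,k)}$, via the equivariance of $H$ and the identity $H(\tildeF^s_{(i+1,k)})=\tildeL^s_{(i+1,k)}$. Everything upstream of that — unique integrability of $\tilde E^s_{(i+1,k)}$ on $\RR^d$ and the fact that $H$ carries it onto the linear weak stable foliation — is already in hand from Proposition \ref{leaf proposition}, so no further dynamical estimates should be needed.
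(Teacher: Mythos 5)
Your proposal is correct and follows essentially the same route as the paper: both proofs take $\mathcal{U}$ from Proposition \ref{leaf proposition}, note that $E^s_{(1,i)}$ descends by Proposition \ref{s-bundle-continuous} and $E^u$ descends because $f$ is special, and then use the identity $H(\tildeF^s_{(i+1,k)})=\tildeL^s_{(i+1,k)}$ from Proposition \ref{leaf proposition}(4) together with the $\ZZ^d$-equivariance of $H$ (which is the content of specialness) to conclude that $\tildeF^s_{(i+1,k)}$, and hence $\tilde E^s_{(i+1,k)}$, is $\ZZ^d$-periodic and so descends to $\TT^d$. The paper phrases the descent as a short chain of equivalences using the characterization $y\in\tildeF^s_{(i+1,k)}(x)\Leftrightarrow H(y)\in\tildeL^s_{(i+1,k)}(H(x))$, while you phrase it as $\tildeF^s_{(i+1,k)}=H^{-1}(\tildeL^s_{(i+1,k)})$ being $\ZZ^d$-periodic, but these are the same argument.
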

	\begin{proof}[Proof of Lemma \ref{special and simplest dominated}]
		By Proposition \ref{s-bundle-continuous}, $E^s_{(1,i)}$ is well defined on $\TT^d$. By the assumption that $f$ is special, the unstable bundle $E^u$ is also well defined on $\TT^d$.
		
		Let $F$ be a lifting of $f$ and $H$ be the conjugacy between $F$ and $A$.	As the proof of Proposition \ref{leaf proposition}, there exists a $C^1$ neighborhood $\mathcal{U}\subset C^1(\TT^d)$ of $A$ such that for every $f\in\mathcal{U}$, its lifting $F$ admits a dominated splitting
		$$\tilde{E}^s_{(1,i)}\oplus_<\tilde{E}^s_{(i+1,k)}\oplus_<\tilde{E}^u,$$
		and $\tilde{E}^s_{(i+1,k)}$ is integrable.  Moreover, by the forth item of Proposition \ref{leaf proposition}, $H(y)\in \tildeL^s_{(i+1,k)}(H(x))$  if and only if $y\in \tildeF^s_{(i+1,k)}(x)$. Note that, since $f$ is special, $H(x+n)=H(x)+n$, for every $x\in\RR^d$ and $n\in\ZZ^d$.
		Thus, we have that
		\begin{align*}
			y+n\in \tildeF^s_{(i+1,k)}(x+n)
			& \Longleftrightarrow H(y+n)\in \tildeL^s_{(i+1,k)}\big(H(x+n)\big),\\
			& \Longleftrightarrow	H(y)+n\in \tildeL^s_{(i+1,k)}\big(H(x)+n\big),\\
			& \Longleftrightarrow	H(y)\in \tildeL^s_{(i+1,k)}\big(H(x)\big),\\
			& \Longleftrightarrow y\in \tildeF^s_{(i+1,k)}(x).
		\end{align*}
		It means that $\tilde{E}^s_{(i+1,k)}$ is  $\ZZ^d$-periodic, hence it can descend to $\TT^d$ through \eqref{bundleonorbitandlift}.
	\end{proof}
	\end{proof}

	\subsection{The conjugacy preserves strong stable foliations}\label{subsection 3.2}
	Now we prove Proposition \ref{su-integrable and leaf-conjugate} that is $h(\mathcal{F}^s_i)=\mathcal{L}^s_i$, for all $1\leq i\leq k$, where $h$ is the conjugacy between the special $f$ and its linearization $A$.
	
	\begin{proof}[Proof of Proposition \ref{su-integrable and leaf-conjugate}]
		  By Proposition \ref{leaf proposition}, we already have $h\left(\mathcal{F}^s_{(i,k)}\right)= \mathcal{L}^s_{(i,k)}$, for every $1\leq i\leq k$. Since
		$\mathcal{F}^s_i= \mathcal{F}^s_{(1,i)}\cap\mathcal{F}^s_{(i,k)}$, it suffices to prove the following lemma.
		
	\begin{lemma}\label{lemma su-integrable and leaf-conjugate}
		Let $f\in\mathcal{U}$ given by Proposition \ref{finest dominated splitting of f} be special. Then for every $1\leq i\leq k$,
		$h(\mathcal{F}^s_{(1,i)})=\mathcal{L}^s_{(1,i)}$.
	\end{lemma}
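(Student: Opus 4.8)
The plan is to pass to the universal cover and prove $H(\tildeF^s_{(1,i)})=\tildeL^s_{(1,i)}$; since $f$ is special $H$ commutes with the $\ZZ^d$-action, and by Proposition \ref{finest dominated splitting of f} each $E^s_j$ (hence each $\tildeF^s_{(i,j)}$) is $\ZZ^d$-periodic, so this statement descends to $h(\mathcal F^s_{(1,i)})=\mathcal L^s_{(1,i)}$. The case $i=k$ is trivial since $\tildeF^s_{(1,k)}=\tildeF^s$, so I fix $i<k$ and argue by induction on $i$. By item 4 of Proposition \ref{leaf proposition} we already know $H(\tildeF^s_{(i+1,k)})=\tildeL^s_{(i+1,k)}$; hence inside each stable leaf $\tildeF^s(x)$ the transverse pair $\tildeF^s_{(1,i)},\tildeF^s_{(i+1,k)}$ — which enjoys the Global Product Structure — is carried by $H$ onto the linear pair $\tildeL^s_{(1,i)},\tildeL^s_{(i+1,k)}$ with the second (weak) factor already correctly matched. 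Thus the whole content of the lemma is that $H|_{\tildeF^s(x)}$ does not \emph{shear} this product structure, i.e. that it also sends the complementary strong stable factor $\tildeF^s_{(1,i)}$ onto $\tildeL^s_{(1,i)}$.

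The mechanism that should force this is domination together with the fact that $f$ is $C^1$-close to $A$. Shrinking $\mathcal U$ as in Remark \ref{size of U}, leaves of $\tildeF^s_{(1,i)}$ are contracted forward at rate at most $|\mu^s_i(A)|+\tfrac{\e_0}{3}$ and expanded backward at rate at least $|\mu^s_i(A)|^{-1}-\tfrac{\e_0}{3}$, whereas leaves of $\tildeF^s_{(i+1,k)}$ are expanded backward at rate at most $|\mu^s_{i+1}(A)|^{-1}+\tfrac{\e_0}{3}$, and the two backward windows are disjoint by the gap in \eqref{Acondition2}. Using the quasi-isometry of $\tildeF^s_{(1,i)}$ and $\tildeF^s_{(i+1,k)}$ (item 3 of Proposition \ref{leaf proposition}) to pass between intrinsic leaf distances and distances in $\RR^d$ at large scales, together with the conjugacy relation $H\circ F^{-n}=A^{-n}\circ H$, one transports this dynamical dichotomy on the $F$-side to the $A$-side: a point $y\in\tildeF^s_{(1,i)}(x)$ whose image $Hy$ fell off $\tildeL^s_{(1,i)}(Hx)$ can be routed, through the already-matched foliation $\tildeF^s_{(i+1,k)}$, to a point $z\in\tildeF^s_{(i+1,k)}(x)$ with $Hz\in\tildeL^s_{(1,i)}(Hx)$; then the incompatible backward growth rates of $d(F^{-n}x,F^{-n}z)$ and of $d(A^{-n}Hx,A^{-n}Hz)$ yield a contradiction, forcing $z=x$. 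Finally $H(\tildeF^s_{(1,i)}(x))$ is an $i$-dimensional topological disc inside the $i$-plane $\tildeL^s_{(1,i)}(Hx)$, so by invariance of domain and quasi-isometry the inclusion produced this way is in fact an equality.

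The main obstacle is exactly this transport across $H$. The conjugacy satisfies only $\|H-\mathrm{Id}\|\le C_0$ (Proposition \ref{lifting conjugate}), so it preserves distances merely up to a bounded additive error; an exponential rate can therefore be detected through $H$ only where the relevant distances are growing, which is why one is forced to iterate backward and to work along the already-matched weak stable foliation $\tildeF^s_{(i+1,k)}$ rather than directly with the forward contraction that defines $\tildeF^s_{(1,i)}$. Arranging the two comparisons — one along $\tildeF^s_{(1,i)}$ and one along $\tildeF^s_{(i+1,k)}$ — so that they interlock and the additive constant $C_0$ becomes genuinely negligible against the exponentially growing leaf distances, all while respecting the product decomposition $\tildeF^s(x)=\tildeF^s_{(1,i)}(x)\times\tildeF^s_{(i+1,k)}(x)$, is the technical heart of the argument; the induction on $i$ is what guarantees that at each stage the auxiliary (weak) foliation needed for the routing is already known to be matched.
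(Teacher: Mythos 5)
There is a genuine gap, and the approach you propose cannot work as stated. The tell‑tale symptom is that the dynamical‑rate argument you sketch never actually uses that $f$ is special: you invoke specialness only to descend from $\RR^d$ to $\TT^d$ at the very end, while the ``routing'' and the backward‑iteration comparison would apply verbatim to any $f$ close to $A$. But the statement is false for a generic non‑special $f$ in $\mathcal U$ (there the conjugacy $H$ on $\RR^d$ preserves $\tildeF^s_{(i+1,k)}$ but in general shears $\tildeF^s_{(1,i)}$), so any proof must use specialness at the key step.

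Concretely, the backward‑rate comparison via $H\circ F^{-n}=A^{-n}\circ H$ and $\|H-\mathrm{Id}\|\le C_0$ only detects the dominant growth rate of $d(F^{-n}x,F^{-n}y)$ vs.\ $d(A^{-n}Hx,A^{-n}Hy)$. If $Hy-Hx$ has a nonzero component in \emph{both} $\tilde L^s_{(1,i)}(0)$ and $\tilde L^s_{(i+1,k)}(0)$, then under $A^{-n}$ both components blow up, and the overall distance is governed by the faster one (the $(1,i)$ part), which is entirely consistent with the growth of $d(F^{-n}x,F^{-n}y)$ coming from $y\in\tildeF^s_{(1,i)}(x)$. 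So no contradiction arises, and what the rate dichotomy actually yields is only \emph{transversality}: $H(\tildeF^s_{(1,i)}(x))$ meets each $\tildeL^s_{(i+1,k)}$‑leaf in at most one point, i.e.\ it is a graph over $\tildeL^s_{(1,i)}(Hx)$. It does not force the graph to be the zero (linear) one. Moreover, the ``routing'' step is circular or vacuous: the only $z\in\tildeF^s_{(i+1,k)}(x)$ with $Hz\in\tildeL^s_{(1,i)}(Hx)$ is $z=x$ by the transversality you already have, and the natural candidate $z=H^{-1}\bigl(\text{proj}_{\tildeL^s_{(1,i)}(Hx)} Hy\bigr)$ satisfies $Hz\in\tildeL^s_{(1,i)}(Hx)$ but there is no reason for it to lie on $\tildeF^s_{(i+1,k)}(x)$ — that would be exactly the conclusion you are trying to prove. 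Consequently the final appeal to invariance of domain is built on an inclusion that was never established.

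What the paper actually does is fundamentally different and exploits specialness in an essential way. First it proves, using the density of preimage sets of $A$ (Proposition~\ref{preimage dense}) and the fact that the conjugacy $h$ exists on $\TT^d$, that the bundle $E^s_{(1,i)}\oplus E^u$ is jointly integrable: the unstable holonomy of a strong‑stable arc can be approximated by preimages under $f^n$ of a fixed arc on $\mathcal F^s_{(1,i)}(p)$, and passing to the limit keeps the arc inside $\mathcal F^s_{(1,i)}$. Then, using that $\tildeF^{s,u}_{(1,i)}$ and hence $\tilde{\mathcal W}=H(\tildeF^{s,u}_{(1,i)})$ is $\ZZ^d$‑periodic (this uses $H(x+n)=H(x)+n$, i.e.\ specialness), that $\tilde{\mathcal W}$ is saturated by the linear $\tildeL^u$, and that $\mathcal L^u$ is minimal on $\TT^d$ (irreducibility), it concludes $\tilde{\mathcal W}$ is additively closed, hence $H(\tildeF^s_{(1,i)})=\tilde{\mathcal W}\cap\tildeL^s$ is a \emph{linear} $A$‑invariant subfoliation of $\tildeL^s$ of the right dimension. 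Intersecting with the already known $H(\tildeF^s_{(i+1,k)})=\tildeL^s_{(i+1,k)}$ forces it to be $\tildeL^s_{(1,i)}$. None of this global/algebraic structure (joint integrability with $E^u$, minimality, linearity) appears in your sketch, and it is precisely what closes the gap that the rate argument leaves open.
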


	\begin{proof}[Proof of Lemma \ref{lemma su-integrable and leaf-conjugate}]
		Fix $1\leq i\leq k-1$. Firstly, we prove the joint integrability of the bundle $E^s_{(1,i)}\oplus E^u$.
	
		\begin{claim}\label{su-jointlyclaim}
			The bundle	$E^{s,u}_{(1,i)}=E^s_{(1,i)}\oplus E^u$ is jointly integrable.
		\end{claim}
		\begin{proof}[Proof of Claim \ref{su-jointlyclaim}]
			For any $x\in\TT^d$,  $y\in \mathcal{F}^s_{(1,i)}(x,\delta)\setminus \{x\}$ and $x'\in \mathcal{F}^u(x)\setminus \{x\}$, let $y'={\rm Hol}^u_{x,x'}(y)\in\mathcal{F}^s(x')$, where Hol$^u_{x,x'}: \mathcal{F}^s(x,\delta)\to \mathcal{F}^s(x',\delta)$ is the holonomy map along the unstable foliation $\mathcal{F}^u$. It suffices to show that  $y'\in\mathcal{F}^s_{(1,i)}(x')$.
			
			Let $I$ be any curve homeomorphic to $[0,1]$ and laying on $\mathcal{F}^s_{(1,i)}(x,\delta)$ with endpoints $x,y$. Let $J={\rm Hol}^u_{x,x'}(I)$. Since the conjugacy $h$ maps $\mathcal{F}^u$ to $\mathcal{L}^u$, the curve $h(J)$ is a translation of $h(I)$. Now, by Proposition \ref{preimage dense}, there exists $z_n\to h(x')$ with $A^nz_n=A^nh(x)$. Moreover, we can pick curves $I_n$ with $z_n\in I_n$ such that $A^nI_n=A^nh(I)$. Hence, the other endpoint $w_n(\neq z_n)$ of $I_n$ also has $A^nw_n=A^nh(y)$. Since $I_n$ is also a translation of $h(I)$, one has $I_n\to h(J)$ and $w_n\to h(y')$. See Figure 2.
			
				\begin{figure}[htbp]
				\centering
				\includegraphics[width=16cm]{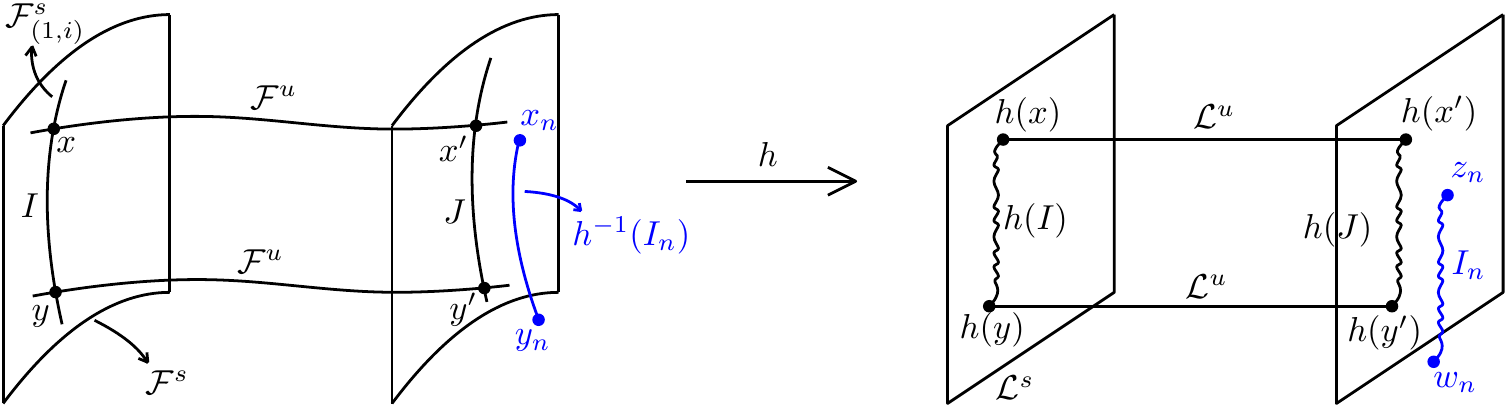}
				\caption{The image of holonomy map is approached by preimage sets.}	
			\end{figure}

			Let $x_n=h^{-1}(z_n)$ and $y_n=h^{-1}(w_n)$.  We have that $x_n\to x'$ and $y_n\to y'$.  Since $A^nI_n=A^nh(I)$, one has $f^n(h^{-1}I_n)=f^n(I)$. It follows that $h^{-1}(I_n)$ is a curve laying on $\mathcal{F}^s_{(1,i)}(x_n)$ with endpoints $x_n$ and $y_n$. Moreover, by the continuity of $h$ and $h^{-1}$, the curve $h^{-1}(I_n)$ is  located in a small tubular neighborhood of $J$, when $n$ is big enough. Hence, we have that $h^{-1}(I_n)\to J$. By the continuity of the foliation $\mathcal{F}^s_{(1,i)}$, we get $J\subset \mathcal{F}^s_{(1,i)}(x')$ and $y'\in \mathcal{F}^s_{(1,i)}(x')$.
		\end{proof}
		
		Now, by the fact that $\mathcal{F}^{s,u}_{(1,i)}$ is  subfoliated by the unstable foliation $\mathcal{F}^u$  which is minimal, we can get that  $h(\mathcal{F}^s_{(1,i)})$ is a linear foliation.
		\begin{claim}\label{linearfoliationclaim}
			$h(\mathcal{F}^s_{(1,i)})$ is a linear foliation.
		\end{claim}
		\begin{proof}[Proof of Claim \ref{linearfoliationclaim}]
			For convenience, we prove it on the universal cover space $\RR^d$.
			Let $\tilde{\mathcal{W}}=H(\tildeF^{s,u}_{(1,i)})$. Note that
			$$ H(\tildeF^s)=\tildeL^s,\quad H(\tildeF^u)=\tildeL^u, \quad H(\tildeF^s_{(1,i)})=\tilde{\mathcal{W}}\cap \tildeL^s.$$
			We are going to prove that $\tilde{\mathcal{W}}$ is additively closed, that is $x+y\in \tilde{\mathcal{W}}(0),$
			for all $x,y\in\tilde{\mathcal{W}}(0)$. Combining with the fact that $\tildeL^s$ is additively closed, we have that $H(\tildeF^s_{(1,i)})=\tildeL^s\cap \tilde{\mathcal{W}}$ is additively closed. Hence it is a linear foliation.
			Note that
			\begin{enumerate}
				\item $\tilde{\mathcal{W}}(x+n)=\tilde{\mathcal{W}}(x)+n$, for all $x\in \RR^d$ and $n\in \ZZ^d$.
				\item $\tilde{\mathcal{W}}(x+v)=\tilde{\mathcal{W}}(x)$, for all $x\in \RR^d$ and $v\in \tildeL^u(0)$.
			\end{enumerate}
			Indeed,  we just need the fact that $\tildeL^u$ is linear and
			$\tildeF^u(x+n)=\tildeF^u(x)+n$, for all $x\in \RR^d$ and $n\in \ZZ^d$.
			
			Since the foliation $\mathcal{L}^u$ is minimal, there exist $n\in \ZZ^d$ and $v_n\in \tildeL^u(0)$ such that $n+v_n\to x$. By $y\in \tilde{\mathcal{W}}(0)$, we have $v_n+y\in \tilde{\mathcal{W}}(0)$.  Hence, $n+v_n+y\in\tilde{\mathcal{W}}(n)=\tilde{\mathcal{W}}(n+v_n)$. Let $n+v_n\to x$, it follows that $x+y\in \tilde{\mathcal{W}}(x)=\tilde{\mathcal{W}}(0)$.
		\end{proof}
		
		Now, we can finish our proof of Lemma \ref{lemma su-integrable and leaf-conjugate}.
		Note that the linear foliation $H(\tildeF^s_{(1,i)})\subset \tildeL^s$ is $A$-invariant. So, it must be a union foliation of $i$ subfoliations of $\tildeL^s$, that is $H(\tildeF^s_{(1,i)})=\tildeL^s_{j_1}\oplus...\oplus \tildeL^s_{j_i}.$
		On the other hand, by Proposition \ref{leaf proposition}, we have
		$ H(\tildeF^s_{(i+1,k)})=\tildeL^s_{(i+1,k)}.$
		Since $H$ is a homeomophism, we get $H(\tildeF^s_{(1,i)})=\tildeL^s_{(1,i)}$.
	\end{proof}
	\end{proof}

	\section{Affine metric}\label{section affine metric}
	Let $f$ belong to $\mathcal{U}$ given by Proposition \ref{leaf proposition}. Applying Livschitz Theorem  for Anosov maps (Proposition \ref{Livsic}), the spectral rigidity on the stable bundle implies that we can endow with a metric to each stable foliation $\tildeF^s_i$ such that $F$ is affine restricted on it.  Especially, for the case of dim$E^s=1$, the existence of the affine metric is just like the diffeomorphism case (\cite[Lemma 3.1]{ganshirigidity}) whether $f$ is a small perturbation or not. However, for the case of dim$E^s>1$, there are something quite different: we do not \textit{a priori} have foliation $\mathcal{F}^s_i$  on $\mathbb{T}^d$ for non-invertible Anosov maps. So, we will give this affine metric on the lifting $\tildeF^s_i$.   Moreover, lack of the bundle $E^s_i (2\leq i \leq k)$ on $T\TT^d$ prevents us from defining the contracting-rate function for $E^s_i$.  We overcome this by using quotient dynamics.
	
	\begin{proposition}[Livschitz Theorem]\label{Livsic}
		Let $M$ be a closed Riemannian manifold, $f:M\to M$ be a $C^{1+\alpha}$ transitive Anosov map and $\phi: M \to \mathbb{R} $ be a H$\ddot{o}$lder continuous function.
		Suppose that, for every $p\in \rm{Per}$$(f)$ with period $\pi(p)$, $\sum_{i=0}^{\pi(p)-1}\phi(f^i(p))=0$ .
		Then there exists a continuous function $\psi: M \to \mathbb{R}$ such that $\phi=\psi\circ f - \psi$. Moreover $\psi$ is unique up to an additive constant and H$\ddot{o}$lder with the same exponent as $\phi$.
	\end{proposition}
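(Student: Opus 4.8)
The plan is to run the classical Livschitz (cohomology) argument used for Anosov diffeomorphisms, with the Anosov closing lemma replaced by its counterpart for non-invertible Anosov maps. Write $\gamma\in(0,1)$ for the H\"older exponent of $\phi$. Since $f$ is transitive, fix a point $x_*\in M$ whose forward orbit $\{f^n(x_*):n\ge 0\}$ is dense in $M$, and define
\[
\psi\big(f^n(x_*)\big):=\sum_{i=0}^{n-1}\phi\big(f^i(x_*)\big),\qquad n\ge 0 .
\]
It then suffices to show that $\psi$ is $\gamma$-H\"older along this orbit: being $\gamma$-H\"older (in particular uniformly continuous) on a dense subset of the compact metric space $M$, it extends uniquely to a $\gamma$-H\"older function $\psi:M\to\RR$, and the remaining assertions follow quickly.

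The heart of the matter is the following estimate. Let $n<m$ with $d\big(f^n(x_*),f^m(x_*)\big)<\e$ small; put $y=f^n(x_*)$ and $N=m-n$, so that $y,f(y),\dots,f^{N-1}(y)$ together with the return jump $d\big(f^N(y),y\big)<\e$ is a periodic $\e$-pseudo-orbit, and
\[
\psi\big(f^m(x_*)\big)-\psi\big(f^n(x_*)\big)=\sum_{j=0}^{N-1}\phi\big(f^j(y)\big).
\]
By the Anosov closing lemma for Anosov maps there are constants $C>0$ and $\theta\in(0,1)$ depending only on $f$, and a periodic point $p\in{\rm Per}(f)$ with $f^N(p)=p$, such that $d\big(f^j(y),f^j(p)\big)\le C\e\,\theta^{\min(j,\,N-j)}$ for all $0\le j\le N$. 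Using the hypothesis (applied at $p$, whose period divides $N$) that $\sum_{j=0}^{N-1}\phi\big(f^j(p)\big)=0$, together with the $\gamma$-H\"older continuity of $\phi$,
\[
\Big|\sum_{j=0}^{N-1}\phi\big(f^j(y)\big)\Big|
=\Big|\sum_{j=0}^{N-1}\Big(\phi\big(f^j(y)\big)-\phi\big(f^j(p)\big)\Big)\Big|
\le L_\phi\,(C\e)^{\gamma}\sum_{j=0}^{N-1}\theta^{\gamma\min(j,\,N-j)}\le K\e^{\gamma},
\]
where $L_\phi$ is the H\"older seminorm of $\phi$ and the bound is uniform because $\sum_{j\ge 0}\theta^{\gamma j}<\infty$. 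Hence $\big|\psi(f^m(x_*))-\psi(f^n(x_*))\big|\le K\,d\big(f^n(x_*),f^m(x_*)\big)^{\gamma}$, as required. To finish: on the dense orbit one has $\psi(f^{n+1}(x_*))-\psi(f^n(x_*))=\phi(f^n(x_*))$ by construction, so $\psi\circ f-\psi=\phi$ holds on a dense set, hence on all of $M$ by continuity, and $\psi$ inherits the exponent $\gamma$. If $\psi_1,\psi_2$ both solve the equation then $\psi_1-\psi_2$ is continuous and $f$-invariant, hence constant on the dense forward orbit of $x_*$ and therefore constant on $M$; this is uniqueness up to an additive constant.

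The main obstacle, relative to the diffeomorphism case, is precisely the exponential Anosov closing lemma for \emph{non-invertible} Anosov maps: the textbook proof produces the shadowing periodic point using the local product structure and backward orbits, neither of which is available verbatim for $f$. Two ways to supply it: (i) derive the exponential shadowing directly from the Shadowing Lemma for Anosov maps (\cite{aoki}) together with the hyperbolic splitting along orbits of Definition \ref{defprz} and the angle/contraction control of Proposition \ref{s-bundle-continuous}, estimating the shadowing error separately along the (orbit-dependent) stable and unstable bundles; or (ii) reformulate the periodic $\e$-pseudo-orbit inside the inverse limit space $(M_f,\sigma)$, where $\sigma$ is a homeomorphism carrying the usual hyperbolic local product structure, apply the classical closing lemma there, and project back to a genuine periodic point $p\in{\rm Per}(f)$ with $f^N(p)=p$. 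I would present route (ii), noting that the only role of the $C^{1+\alpha}$ regularity of $f$ is to furnish a H\"older hyperbolic structure, which is exactly what makes the exponential shadowing (and hence the matching H\"older exponent for $\psi$) go through.
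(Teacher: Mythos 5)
Your proposal is correct and follows essentially the same route the paper points to: the paper does not prove Proposition \ref{Livsic} itself but remarks that the argument is identical to the classical one for transitive Anosov diffeomorphisms (defining $\psi$ on a dense forward orbit, establishing H\"older continuity via an exponential closing estimate, and extending by density), citing Katok--Hasselblatt for the diffeomorphism version and Przytycki's stable/unstable manifold theorem and local product structure as the ingredients needed to transplant the Anosov closing lemma to the non-invertible setting. That is precisely your route~(i). Your alternative route~(ii), pushing the periodic pseudo-orbit up to the inverse limit space $(M_f,\sigma)$, applying the classical closing lemma there, and projecting the resulting periodic orbit back down to $M$, is a clean and equally valid way to obtain the needed exponential shadowing; it is not what the paper suggests, but it buys you the ability to quote the diffeomorphism-case closing lemma verbatim (the inverse limit dynamics is a genuine hyperbolic homeomorphism with local product structure) at the cost of having to verify that the H\"older estimate in the $\tilde d$-metric on $M_f$ descends to the $d$-metric on $M$, which it does because $\tilde d\ge \frac{1}{2^{0}}d$ on the zeroth coordinate. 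One small point worth stating explicitly if you write this out: after the closing lemma produces $p$ with $f^N(p)=p$, the period of $p$ may be a proper divisor of $N$, but the hypothesis still gives $\sum_{j=0}^{N-1}\phi(f^j(p))=0$ since the sum over a full multiple of the period vanishes --- you note this parenthetically, and it is the right observation.
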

	Note that the proof of this  proposition is quite similar to the Livschitz Theorem for transitive Anosov diffeomorphisms whose complete proof can be found in \cite[Corollary 6.4.17 and Theorem 19.2.1]{katok}.
	If one need, we also refer to \cite[ Theorem 2.1 and Proposition 2.3 ]{Przytycki} for the (un)stable manifolds theorem and the local product structure for Anosov maps, which are useful to prove the Anosov (exponential) closing lemma (see \cite[Corollary 6.4.17 ]{katok}) and the Livschitz Theorem for Anosov maps.
	
	We also mention that every Anosov map on torus is transitive, so that the Livschitz Theorem always holds for toral Anosov maps.
	If one need, we refer to \cite[Theorem 6.8.1]{aoki} which says that $\sigma_f:\mathbb{T}^d_f\to\mathbb{T}^d_f$ the inverse limit system of $f$ is topologically conjugate to $\sigma_A:\mathbb{T}^d_A\to\mathbb{T}^d_A$ the inverse limit system of $A$. Combining the fact that the toral Anosov map $A$ is transitive, we get the transitivity of $f$. We also refer to \cite[Proposition 1.2 ]{robustspecial} to get a proof for transitivity of Anosov maps on infra-nilmanifolds.
	
	\vspace{0.3cm}

	Now, using quotient dynamics, we can apply Livschitz Theorem to $F:\RR^d\to \RR^d$.
	\begin{proposition}\label{quotient bundle}
		Let $f\in\mathcal{U}$ given by Proposition \ref{leaf proposition}. Fix $1\leq i\leq k$, if $\lambda^s_i(p,f)=\lambda^s_i(A)$, for every $p\in{\rm Per}(f)$, then there exists a  H$\ddot{o}$lder continuous function $\Psi:\RR^d\to \RR$, such that
		$${\rm log}\|DF|_{\tilde{E}^s_i(x)}\|=\lambda^s_i(A)+\Psi(x)-\Psi(F(x)),$$
		for every $x\in\RR^d$. Moreover,  $\Psi$ is bounded on $\RR^d$.
	\end{proposition}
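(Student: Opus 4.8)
The plan is to reduce the cohomological equation over $F$ on $\RR^d$ to one over $f$ on the compact torus $\TT^d$, solve it there with the Livschitz Theorem (Proposition \ref{Livsic}), and lift the resulting transfer function back to $\RR^d$. The only genuine difficulty is that for $i\ge 2$ the line bundle $\tilde E^s_i$ is defined only on $\RR^d$ (equivalently, only on the inverse limit), not on $\TT^d$, so the expression $\log\|Df|_{E^s_i}\|$ has no meaning downstairs. I would circumvent this by working with the \emph{quotient line bundle} $Q_i:=E^s_{(1,i)}/E^s_{(1,i-1)}$, where $E^s_{(1,0)}:=\{0\}$ so that $Q_1=E^s_1$ and the case $i=1$ is the honest one. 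By item (2) of Proposition \ref{leaf proposition} together with Proposition \ref{s-bundle-continuous} and \eqref{bundleonorbitandlift}, the subbundles $E^s_{(1,j)}$ are genuinely defined on $\TT^d$, hence so is $Q_i$; since $Df$ preserves the filtration $E^s_{(1,i-1)}\subset E^s_{(1,i)}$ it induces a bundle automorphism $\overline{Df}\colon Q_i\to Q_i$ over $f$.

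Next I would set up and solve the cohomological equation on $\TT^d$. Equip $Q_i$ with the quotient norm of the ambient Riemannian metric and put $\phi_i(x):=\log\|\overline{Df}_x\|$; this is Hölder on $\TT^d$ because the distributions $\tilde E^s_{(1,i)}$, $\tilde E^s_{(1,i-1)}$ are Hölder with mutual angle bounded away from $0$ (Remark \ref{holderbundle}, Remark \ref{uniform continuity of foliation on Rd}) and $Df$ is Hölder as $f\in C^{1+\alpha}$. The key point is the periodic data of this cocycle. If $f^n(p)=p$, then by domination $E^s_{(1,i)}(p)$ and $E^s_{(1,i-1)}(p)$ are the sums of the generalized eigenspaces of $Df^n(p)$ for the $i$, respectively $i-1$, stable eigenvalues of smallest modulus, so $Q_i(p)$ carries exactly the $i$-th stable eigenvalue, of modulus $e^{\,n\lambda^s_i(p,f)}$. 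Since $Q_i(p)$ is one-dimensional, every operator norm of the scalar $\overline{Df^n}_p$ equals this modulus, and norms of compositions of maps between one-dimensional spaces are multiplicative, so $\sum_{j=0}^{n-1}\phi_i(f^j p)=\log\|\overline{Df^n}_p\|=n\lambda^s_i(p,f)=n\lambda^s_i(A)$ by hypothesis. Thus $\phi_i-\lambda^s_i(A)$ has vanishing periodic sums; since every toral Anosov map is transitive, Proposition \ref{Livsic} produces a Hölder function $\psi_i\colon\TT^d\to\RR$ with $\phi_i(x)-\lambda^s_i(A)=\psi_i(fx)-\psi_i(x)$.

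Finally I would transfer this from the quotient norm to the Riemannian norm on $\tilde E^s_i$ upstairs. For $0\ne v\in\tilde E^s_i(x)$ set $\rho(x):=\|v\|\big/\|\,v+\tilde E^s_{(1,i-1)}(x)\,\|_{Q_i}$; this is independent of $v$ by one-dimensionality, strictly positive, Hölder (the angle between $\tilde E^s_i$ and $\tilde E^s_{(1,i-1)}$ is Hölder and bounded away from $0$), and $\ZZ^d$-periodic, hence bounded above and below. Since $DF$ preserves the splitting $\tilde E^s_{(1,i-1)}\oplus\tilde E^s_i$ and $\pi$ is a local isometry carrying $\overline{DF}$ to $\overline{Df}$, one gets for $v\in\tilde E^s_i(x)$ that
\[
\log\|DF|_{\tilde E^s_i(x)}\|=\phi_i(\pi x)+\log\rho(Fx)-\log\rho(x).
\]
Substituting the cohomological equation and setting $\Psi:=-\psi_i\circ\pi-\log\rho$ gives $\log\|DF|_{\tilde E^s_i(x)}\|=\lambda^s_i(A)+\Psi(x)-\Psi(Fx)$, with $\Psi$ Hölder and bounded because $\psi_i$ is continuous on the compact $\TT^d$ and $\log\rho$ is $\ZZ^d$-periodic and continuous. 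I expect the main obstacle to be precisely the absence of $E^s_i$ on $\TT^d$ for $i\ge 2$ — this is where non-invertibility bites; once it is handled by the quotient-bundle device, the argument is a routine Livschitz computation, the one nontrivial input being the elementary remark that on a line every operator norm detects only the modulus of the scalar, which is what forces the quotient cocycle's periodic sums to equal exactly $\lambda^s_i(\cdot,f)$.
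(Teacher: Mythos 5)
Your proposal is correct and follows essentially the same route as the paper's proof. The paper works with the orthogonal complement $N$ of $E^s_{(1,i-1)}$ inside $E^s_{(1,i)}$ (a bundle honestly defined on $\TT^d$), defines $\mu(x)=\|\overline{D}f|_{N(x)}\|$ with $\overline{D}f=\pi^N\circ Df$, verifies the periodic sums equal $\pi(p)\lambda^s_i(A)$ by decomposing a unit vector of $N(p)$ into eigenvectors of $D_pf^{\pi(p)}$ and noting that $\pi^N_p$ kills all but the $i$-th component, applies Livschitz, and finally corrects to the norm on $\tilde E^s_i$ via the function $\alpha(x)=\log\cos\angle(\tilde N(x),\tilde E^s_i(x))$. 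Your abstract quotient bundle $Q_i=E^s_{(1,i)}/E^s_{(1,i-1)}$ with its quotient norm is canonically isometric to $N$ with the restricted inner product (quotient norm $=$ distance to the subspace $=$ norm of orthogonal projection), so your cocycle $\phi_i$ coincides with the paper's $\log\mu$, your periodic-data argument (the quotient sees exactly the $i$-th stable eigenvalue, and norms of 1D linear maps multiply) is a cleaner phrasing of the paper's eigenspace computation, and your correction factor $\log\rho$ is precisely $-\alpha$. So this is the same proof in different clothing — the quotient-bundle language is arguably a little more intrinsic, but there is no genuine methodological divergence.
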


	\begin{proof}
		
		For $i=1$, since $E^s_1$ is well defined on $T\TT^d$ (see Proposition \ref{s-bundle-continuous}), we can use  Livschitz Theorem (Proposition \ref{Livsic}) for function $\|Df|_{E^s_1(x)}\|$. By the assumption that  $\lambda^s_1(p,f)=\lambda^s_1(A)$ for all $p\in {\rm Per}(f)$, there exists a H$\ddot{\rm o}$lder continuous function $\psi:\TT^d\to \RR$ such that ${\rm log}\|Df|_{E^s_1(x)}\|=\lambda^s_1(A)+\psi(x)-\psi(f(x))$ for all $x\in\TT^d$.  The lifting of $\psi$, $\Psi:\RR^d\to \RR$ is the function we need.
		
		Fix $2\leq i\leq k$. Since the strong stable bundle $E^s_{(1,j)}$ $(1\le j\le k)$ is always well defined  on $T\TT^d$, we can define $N\subset T\TT^d$ the normal bundle of $E^s_{(1,i-1)}$ in $E^s_{(1,i)}$.  Let  $\pi^{N} : E^s_{(1,i)}\to N$ be the natural projection and  $\overline{D}f:E^s_{(1,i)}\to N$ defined as $\overline{D}f(x,v)=\pi^{N}\circ D_xf(v)$, for all $x\in\TT^d$ and $v\in E^s_{(1,i)}(x)$.
		
		Let $\mu(x):= \|\overline{D}f|_{N(x)}\|$ and $E^s_i(p)$ be certained by the return map of the periodic point $p$ with period $\pi(p)$, that is the eigenspace of the eigenvalue ${\rm exp}(\lambda^s_i(p,f))=\mu^s_i(A)$ for $D_pf^{\pi(p)}$. For any unit vector $v\in N(p)$,  let $v=\sum_{j=1}^{i}v^s_j$, where $v^s_j\in E^s_j(p)$.	One has
		\begin{align}
		D_pf^{\pi(p)}(v)=\sum_{j=1}^{i} (\mu^s_j(A))^{\pi(p)}v^s_j. \label{prop4.2.1}
		\end{align}
		Note that \begin{align}
			\overline{D}f(x,w)=\overline{D}f\circ\pi^{N}(x,w), \quad \forall w\in E^s_{(1,i)} (x).\label{prop4.2.2}
		\end{align}
		Indeed, let $ w\in E^s_{(1,i)}(x)$ and $w=w^s_{(1,i-1)}+w^N$ be the decomposition in $E^s_{(1,i-1)}\oplus N$, one has
		\begin{align*}
				\pi_{fx}^{N}\circ D_xf(w) & =\pi_{fx}^{N}\circ D_xf\left(w^s_{(1,i-1)}+w^N\right)=\pi_{fx}^N\left (   D_xfw^s_{(1,i-1)}+ D_xfw^N\right ), \\
				& =\pi_{fx}^N\circ D_xfw^s_{(1,i-1)}+\pi_{fx}^N\circ D_xfw^{N}=\pi_{fx}^N\circ D_xf(w^{N}).
		\end{align*}
	  Hence, by \eqref{prop4.2.1} and \eqref{prop4.2.2},
			$$(\overline{D}f)^{\pi(p)}(x,v)=\pi^N_p \circ D_pf^{\pi(p)}(v)=\left(\mu^s_i(A)\right)^{\pi(p)}\pi^N_p( v^s_i)
			=\left(\mu^s_i(A)\right)^{\pi(p)}v.$$
	 It follows that $\sum_{i=0}^{\pi(p)-1}{\rm log}\mu(f^i(p))=\pi(p) \cdot \mu^s_i(A)$ for all $p\in{\rm Per}(f)$.
      Now, using Livschitz Theorem for log$\mu(x)$,  there exists a H$\ddot{\rm o}$lder continuous  function $\phi: \TT^d \to \RR$ such that
			$${\rm log}\mu(x)=\lambda^s_i(A)+\phi(x)-\phi(f(x)).$$

		Let $ \overline{D}F:\tilde{E}^s_{(1,i)}\to \tilde{N}$ be the lifting of $ \overline{D}f:E^s_{(1,i)}\to N$, where $\tilde{N}\subset T\RR^d$ is the lifting of $N$. Let  $\Phi:\RR^d\to \RR$ be the lifting of $\phi:\TT^d\to \RR$. Denote $ \tilde\mu(x):= \|\overline{D}F|_{\tilde{N}(x)}\|$. Thus, we have
		\begin{align}
			{\rm log}\tilde\mu(x)=\lambda^s_i(A)+\Phi(x)-\Phi(F(x)).  \label{quotient.1}
		\end{align}
		Note that $\Phi$ is bounded  and H$\ddot{\rm o}$lder continuous on $\RR^d$.
		
			Let $\alpha(x):= {\rm log\;cos} \angle(\tilde{N}(x), \tilde{E}^s_i(x))$,  we claim that
			\begin{align}
				{\rm log}\|DF|_{\tilde{E}^s_i(x)}\|={\rm log}\tilde\mu(x)+\alpha(x)-\alpha(F(x)).  \label{quotient.2}
			\end{align}
			Indeed, it is just a linear algebraic calculation. Let $v\in \tilde{E}^s_i(x)$, we have
			$$\|v^{\tilde{N}}\|=\text{cos}  \angle(\tilde{N}(x), \tilde{E}^s_i(x))\cdot\|v\|= e^{\alpha(x)}\cdot \|v\|,$$ and by \eqref{prop4.2.2} (this equation can also lift on $T\RR^d$),
			$$ \tilde\mu(x)\|v^{\tilde{N}}\|=\pi_{Fx}^{\tilde{N}} \circ D_xF(v)= \text{cos} \angle(\tilde{N}(Fx), \tilde{E}^s_i(Fx))\cdot\|D_xFv\|= e^{\alpha(Fx)}\cdot \|D_xFv\|.$$ Thus, we get \eqref{quotient.2}.

		Now,  by \eqref{quotient.1} and \eqref{quotient.2}, $\Psi=\Phi+\alpha$ is a function satisfying
		$${\rm log}\|DF|_{\tilde{E}^s_i(x)}\|=\lambda^s_i(A)+\Psi(x)-\Psi(F(x)).$$
		Moreover, since the angle $ \angle(\tilde{N}(x), \tilde{E}^s_i(x))$ is uniformly away from $\pi/2$ (see Proposition \ref{s-bundle-continuous} and \eqref{epsilon0}), $\alpha(x)$ is a bounded function defined on $\RR^d$. And, so is $\Psi$. The  H$\ddot{\rm o}$lder continuity of both $ \angle(\tilde{N}(x), \tilde{E}^s_i(x))$ (see Remark \ref{holderbundle}) and function $\Phi$ implies one of  $\Psi$.
	\end{proof}

	Now, we can endow an affine metric to each leaf of $\tildeF^s_i$ and it would be invariant under some certain holonomy maps. Let foliation $\tildeF$ be subfoliated by foliations $\tildeF_1$ and $\tildeF_2$ which admit the Global Product Structure on $\tildeF$. We define the \textit{holonomy map} of $\tildeF_1$ along $\tildeF_2$ restricted on $\tildeF$ as
	\begin{align*}
		 {\rm Hol}_{x,x'}:\tildeF_1(x)\to \tildeF_1(x') \quad {\rm with } \quad
		 {\rm Hol}_{x,x'}(y)=\tildeF_1(x')\cap \tildeF_2(y),
	\end{align*}
  for every $x,x'\in\RR^d and y\in\tildeF_1(x)$.  Let $d^s_i(\cdot,\cdot)$ be a metric defined on  each leaf of  $\tildeF^s_i$, we say $d^s_i(\cdot,\cdot)$ is \textit{continuous}, if  for any $\e>0$, $x\in\RR^d$ and $y\in\tildeF^s_i(x)$, there exists $\delta>0$ such that $$\left|  d^s_i(x,y)-d^s_i(x',y')       \right|<\e,$$ for all $x'\in B(x,\delta)$ and $y'\in B(y,\delta)$ with $y'\in\tildeF^s_i(x')$.	

	\begin{proposition}\label{affine metric}
			Let $f\in\mathcal{U}$ given by Proposition \ref{leaf proposition}. Fix $1\le i\le k$, if $\lambda^s_i(p,f)=\lambda^s_i(A)$, for every $p\in{\rm Per}(f)$, then there exists a  continuous metric  $d^s_i(\cdot,\cdot)$ defined on  each leaf of  $\tildeF^s_i$ satisfying,
		\begin{enumerate}
			\item There exists a constant $K>1$, such that $1/K\cdot d_{\tildeF^s_i}(x,y) < d^s_i(x,y)<K\cdot d_{\tildeF^s_i}(x,y) $, for every  $x\in \RR^d$ and $y\in \tildeF^s_i(x)$.
			\item $d^s_i(Fx,Fy)={\rm exp}(\lambda^s_i(A))\cdot d^s_i(x,y)$, for every  $x\in \RR^d$ and $y\in \tildeF^s_i(x)$.
			\item The holonomy maps of $\tildeF^s_i$ along $\tildeF^s_{(1,i-1)}(2\leq i \leq k)$ restricted on $\tildeF^s_{(1,i)}$	are isometric under the metric $d^s_i(\cdot,\cdot)$.
			\item If $\tilde{E}^s_i\oplus \tilde{E}^u$ is integrable, then the holonomy maps of $\tildeF^s_i$ along $\tildeF^u$ restricted on $\tildeF^s_i\oplus \tildeF^u$ are isometric under the metric $d^s_i(\cdot,\cdot)$.
		\end{enumerate}
	Especially, when dim$E^s=1$, if  $\lambda^s(p,f)=\lambda^s(A)$, for every $p\in{\rm Per}(f)$, there exists a continuous metric $d^s(\cdot,\cdot)$ defined on each leaf of  $\tildeF^s$ satisfying the first two items.  Moreover, the holonomy maps of $\tildeF^s$ along $\tildeF^u$ are isometric under the metric $d^s(\cdot,\cdot)$.
	\end{proposition}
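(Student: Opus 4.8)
The plan is to build $d^s_i$ by reparametrising the Riemannian arc length along the leaves of $\tildeF^s_i$ with the weight coming from the coboundary produced in Proposition \ref{quotient bundle}. Let $\Psi:\RR^d\to\RR$ be the bounded, H\"older function with $\log\|DF|_{\tilde{E}^s_i(x)}\|=\lambda^s_i(A)+\Psi(x)-\Psi(Fx)$, and for $x\in\RR^d$, $y\in\tildeF^s_i(x)$ put
$$
d^s_i(x,y):=\int_{\gamma}e^{\Psi(w)}\,dw ,
$$
where $\gamma$ is the arc of $\tildeF^s_i$ from $x$ to $y$ and $dw$ is the induced arc length (the leaves of $\tildeF^s_i$ are $C^1$, being uniquely integral to the H\"older bundle $\tilde{E}^s_i$, cf.\ Remark \ref{holderbundle} and Proposition \ref{leaf proposition}). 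Boundedness of $\Psi$ gives $e^{-\|\Psi\|_\infty}\le e^{\Psi}\le e^{\|\Psi\|_\infty}$, so $d^s_i$ is a metric on each leaf, it is comparable to $d_{\tildeF^s_i}$ with $K=e^{\|\Psi\|_\infty}$ (item (1)), and its continuity follows from the continuity of $\Psi$ together with the uniform continuity of $\tildeF^s_i$ (Remark \ref{uniform continuity of foliation on Rd}). Item (2) is a change of variables: along $\tildeF^s_i$ one has $d(Fw)=\|DF|_{\tilde{E}^s_i(w)}\|\,dw=e^{\lambda^s_i(A)+\Psi(w)-\Psi(Fw)}\,dw$, so $d^s_i(Fx,Fy)=\int_{\gamma}e^{\Psi(Fw)}e^{\lambda^s_i(A)+\Psi(w)-\Psi(Fw)}\,dw=e^{\lambda^s_i(A)}d^s_i(x,y)$.

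The substance of the proposition is the holonomy invariance in items (3) and (4), and this is where I expect the main difficulty. The common engine is the $F$-equivariance of the holonomies: whenever $\tildeF_1\in\{\tildeF^s_{(1,i-1)},\tildeF^u\}$ is the sub-foliation along which ${\rm Hol}$ is taken inside $\tildeF^s_{(1,i)}$ or $\tildeF^s_i\oplus\tildeF^u$, one has $F\circ{\rm Hol}_{x,x'}={\rm Hol}_{Fx,Fx'}\circ F$ (and the same with $F^{-1}$), since all the foliations involved are $F$-invariant. Combined with item (2) this yields, for every $n$ and all $y,z\in\tildeF^s_i(x)$,
$$
\frac{d^s_i\big({\rm Hol}_{x,x'}(y),{\rm Hol}_{x,x'}(z)\big)}{d^s_i(y,z)}
=\frac{d^s_i\big({\rm Hol}_{F^{n}x,F^{n}x'}(F^{n}y),{\rm Hol}_{F^{n}x,F^{n}x'}(F^{n}z)\big)}{d^s_i(F^{n}y,F^{n}z)} ,
$$
and the idea is to let $n\to+\infty$ for item (3) --- where ${\rm Hol}$ is along $\tildeF^s_{(1,i-1)}$, which is contracted strictly faster than $\tildeF^s_i$, so that the transverse size $d(F^{n}x,F^{n}x')$ of the thin strip between $F^n\tildeF^s_i(x)$ and $F^n\tildeF^s_i(x')$ becomes negligible compared with the (also shrinking) length of the arc from $F^ny$ to $F^nz$ --- and $n\to-\infty$ for item (4), where ${\rm Hol}$ is along $\tildeF^u$, contracted backward, so $d(F^{-n}x,F^{-n}x')\to0$ while the arc from $F^{-n}y$ to $F^{-n}z$ lengthens but is kept geometrically tame by the quasi-isometry of $\tildeF^s_i$ (Proposition \ref{leaf proposition}). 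In both limits the distortion of ${\rm Hol}$ on $\tildeF^s_i$-arcs must be shown to tend to $1$; this is the delicate point, for which I would use the H\"older regularity of $\tilde{E}^s_i$ and $\tilde{E}^s_{(1,i-1)}$ (Remark \ref{holderbundle}) and the uniform transversality and continuity of the foliations from Subsection \ref{subsec: foliation on Rd}. A cleaner route I would try first is to recognise that $d^s_i$ is intrinsic to the quotient: for $i\ge2$ the weight factorises as $e^{\Psi}=e^{\Phi}\cdot\cos\angle(\tilde{N},\tilde{E}^s_i)$, which rewrites the $\tilde{E}^s_i$-length of an arc as the $e^{\phi}$-weighted length of its image in the quotient bundle $\tilde{N}=\tilde{E}^s_{(1,i)}/\tilde{E}^s_{(1,i-1)}$; since the holonomy along $\tildeF^s_{(1,i-1)}$ acts on $\tilde{N}$ through the canonical identification that is flat along those leaves, invariance of $d^s_i$ reduces to constancy of $\phi$ along $\tildeF^s_{(1,i-1)}$, which in turn follows from the uniqueness clause in the Livschitz Theorem (Proposition \ref{Livsic}) applied to the quotient cocycle.

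Finally, the case ${\rm dim}E^s=1$ is the degenerate instance of this: then $\tildeF^s_i=\tildeF^s$, item (3) is vacuous, $\tildeF^s\oplus\tildeF^u$ is the full tangent foliation, and item (4) becomes the statement that the $\tildeF^u$-holonomy of $\tildeF^s$ preserves $d^s$, obtained by the same backward renormalisation and coinciding with the classical construction for Anosov diffeomorphisms (cf.\ \cite{ganshirigidity}). The single hardest point I anticipate is the quantitative control of holonomy length-distortion on arcs that are not uniformly short --- which is precisely why the quasi-isometry of $\tildeF^s_i$ and the H\"older and transversality estimates of Subsection \ref{subsec: foliation on Rd} are needed --- while everything else reduces to the change-of-variables identity above.
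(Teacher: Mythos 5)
Your plan matches the paper's proof at every structural point: the metric $d^s_i$ is defined by the same $e^{\Psi}$-weighted arc length coming from Proposition~\ref{quotient bundle}, items (1) and (2) follow by exactly the boundedness and change-of-variables arguments you give, and the holonomy invariance is obtained by the same renormalisation — preserve the $d^s_i$-ratio under $F^{\pm n}$ using item (2), push the transverse separation to zero, and conclude from a distortion estimate.

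The one place you stop short is the distortion estimate itself, and it is worth spelling out how the paper handles it (Claim~\ref{uniformcontinuousmetric}). For ${\rm Hol}^s$ the paper does not try to control distortion "by hand" from the H\"older estimates of Subsection~\ref{subsec: foliation on Rd}; instead it invokes \cite[Theorem~B]{holderfoliation} applied to the dominated splitting $\tilde{E}^s_{(1,i-1)}\oplus\tilde{E}^s_i\oplus\tilde{E}^{s,u}_{(i+1,k)}$ to get that ${\rm Hol}^s$ is $C^1$ with derivative tending to $1$ as the transverse distance tends to $0$, and combines this with the uniform continuity of $\Psi$. For ${\rm Hol}^u$ it first uses Journ\'e (Proposition~\ref{journe}) to get $C^{1+\alpha}$ leaves of $\tildeF^{s,u}_i$ and then the standard absolute-continuity Radon–Nikodym product formula, controlled via the H\"older continuity of $\tilde{E}^s_i$. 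Both outputs are statements of the form: if the pointwise transverse distance along the arc is below $\delta$, the $d^s_i$-ratio is within $(1\pm\varepsilon)$, \emph{uniformly in the arc length}. Note that this absolute smallness, not the comparison "transverse size negligible relative to the (shrinking) arc length" that you describe, is what the iteration needs and what makes the argument close; otherwise the fact that both the strip width and the arc length contract under forward iteration (for ${\rm Hol}^s$) would be a genuine worry. Finally, your proposed "cleaner route" through the quotient cocycle is appealing in spirit but as stated has a gap: the uniqueness clause of Livschitz gives $\phi$ up to an additive constant, not constancy of $\phi$ along $\tildeF^s_{(1,i-1)}$, and the latter is essentially equivalent to the holonomy invariance you are trying to prove, so without an independent argument this reduction is circular.
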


	\begin{proof}[Proof of Proposition \ref{affine metric}]
		
		For every $x\in\RR^d$  and $y\in\tildeF(x)$, let $\gamma:[0,1]\to \tildeF^s_i(x)$ be a $C^1$-parametrization with $\gamma(0)=x$ and $\gamma(1)=y$. Using the same notations in Proposition \ref{quotient bundle},
		the following formula $$d^s_i(x,y):=\int_{0}^{1}e^{\Psi\circ \gamma(t)}\cdot |\gamma'(t)|dt,$$
		defines the metric we need. It is clear that the metric $d^s_i(\cdot,\cdot)$ is continuous.
		
		 Since $\Psi$ is a bounded function defined on $\RR^d$, say $\|\Psi\|_{C_0}\leq {\rm log}K$, the metric $d^s_i(\cdot,\cdot)$ is $K$-equivalent to $d_{\tildeF^s_i}(\cdot,\cdot)$. So, we get the first item.
		For the second one,  we calculate directly. Let $y\in\tildeF^s_i(x)$,
		\begin{align*}
			d^s_i(F(x),F(y))&=\int_{0}^{1}e^{\Psi\circ F\circ \gamma(t)}|(F\circ \gamma)'(t)|dt,\\
			&=\int_{0}^{1}\frac{\mu^s_i(A)}{\|DF|_{\tilde{E}^s_i(\gamma(t))}\|}\cdot e^{\Psi\circ \gamma(t)}\cdot \|DF|_{\tilde{E}^s_i(\gamma(t))}\| \cdot |\gamma'(t)|dt,\\
			&=\mu^s_i(A)d^s_i(x,y).
		\end{align*}

		Denote the holonomy maps of $\tildeF^s_i$ along $\tildeF^s_{(1,i-1)}$ restricted on $\tildeF^s_{(1,i)}$ and the holonomy maps of $\tildeF^s_i$ along  $\tildeF^u$ restricted on $\tildeF^{s,u}_i$ by Hol$^s$ and  Hol$^u$, respectively.
		
		\begin{claim}\label{uniformcontinuousmetric}
			For any $\varepsilon>0$, there exists $\delta>0$, such that, for any  two curves on $\tildeF^s_i$,
			$\gamma_1:[0,1]\to\tildeF^s_i(x)$ and $\gamma_2:[0,1]\to \tildeF^s_i(x')$. Then, each one of the follwing conditions,
			\begin{enumerate}
				\item $\gamma_2(t)={\rm Hol}_{x,x'}^s\circ \gamma_1(t)$, and $d_{\tildeF^s_{(1,i-1)}}\big(\gamma_1(t), \gamma_2(t)\big)<\delta, \forall t\in[0,1]$.
				\item  $\gamma_2(t)={\rm Hol}_{x,x'}^u\circ \gamma_1(t)$, and $d_{\tildeF^u}\big(\gamma_1(t), \gamma_2(t)\big)<\delta, \forall t\in[0,1]$.
			\end{enumerate}
			implies,
			$$\frac{d^s_i\big(\gamma_1(0),\gamma_1(1)\big)}{d^s_i\big(\gamma_2(0),\gamma_2(1)\big)} \in (1-\varepsilon,1+\varepsilon).$$
		\end{claim}
		
		\begin{proof}[Proof of Claim \ref{uniformcontinuousmetric}]
			Firstly, by the uniform continuity of $\Psi$ (see Proposition \ref{quotient bundle}), for any $\varepsilon_0>0$, there exists $\delta>0$ such that  $\|\Psi\circ \gamma_1- \Psi\circ \gamma_2\|_{C_0}\leq \varepsilon_0$.
			
			Then, we need control the deviation of holonomy maps. Applying \cite[Theorem B ]{holderfoliation}  for dominated splitting
			$$\tilde{E}^s_{(1,i-1)}\oplus \tilde{E}^s_i\oplus\tilde{E}^{s,u}_{(i+1,k)},$$
			we get that Hol$_{x,x'}^s$  is  $C^1$-smooth. Hence, we can assume there exists $\delta>0$ such that
			$|D{\rm Hol}_{x,x'}^s(\gamma_1(t))|\in(1-\varepsilon_0,1+\varepsilon_0), \forall t\in[0,1]$.
			
			Finally, we compute by definition,
			\begin{align}
				\frac{d^s_i(\gamma_1(0),\gamma_1(1))}{d^s_i(\gamma_2(0),\gamma_2(1))}&=
				\frac{\int_{0}^{1}e^{\Psi\circ \gamma_1(t)}\cdot |\gamma_1'(t)|dt}{\int_{0}^{1}e^{\Psi\circ {\rm Hol}_{x,x'}^s\circ\gamma_1(t)}\cdot |D{\rm Hol}_{x,x'}^s(\gamma_1(t))|\cdot |\gamma_1'(t)|dt},\\
				&\in\big[(1+\varepsilon_0)^{-1}e^{-\varepsilon_0}\;,\; (1-\varepsilon_0)^{-1}e^{\varepsilon_0}\big]. \label{holonomycontrol}
			\end{align}
			
			For the second case, note that by  Journ$\acute{\rm e}$ Lemma \cite{journe}(see Proposition \ref{journe}), if $\tilde{E}^s_i\oplus \tilde{E}^u$ is integrable,  then each leaf of  $\tildeF^{s,u}_i$ is $C^{1+\alpha}$. Therefore, we can use the same way of   \cite[Theorem 7.1]{pesinbook} to get  the absolute continuity of Hol$_{x,x'}^u$. In fact, we have the Radon-Nikodym derivative of ${\rm Hol}_{x,x'}^u$ as follow,
			$${\rm Jac}({\rm Hol}_{x,x'}^u)(x)= \prod_{n=0}^{\infty}\frac{\| DF|_{\tilde{E}^s_i(F^{-n}z)}  \|}{\| DF|_{\tilde{E}^s_i(F^{-n}x)}  \|},$$
			where $z={\rm Hol}_{x,x'}^u(x)$. Since the distribution $\tilde{E}^s_i$ is H$\ddot{\rm o}$lder continuous (Remark \ref{holderbundle}), by the standard distortion control techniques,  there exists $\delta>0$ such that $|{\rm Jac}({\rm Hol}_{x,x'}^u)(\gamma_1(t))|\in(1-\varepsilon_0,1+\varepsilon_0), \forall t\in[0,1]$.
			The rest of proof is similar to \eqref{holonomycontrol}, since $\tildeF^s_i$ is one-dimensional. Note that the case for Hol$^s$ can also be proved by using only absolute continuity.
		\end{proof}

		Now, we prove that Hol$_{x,x'}^u$ is isometric under the metric $d^s_i(\cdot,\cdot)$ by iterating backward. An analogical way can prove one for Hol$_{x,x'}^s$ by iterating forward.
		
		If there exist $y\in \tildeF^s_i(x)$ and $y'\in \tildeF^s_i(x')$ with Hol$_{x,x'}^u(x)=x'$ and Hol$_{x,x'}^u(y)=y'$ such that $d^s_i(x,y)\neq d^s_i(x',y')$.  Iterating these points backward, we can assume that  $\gamma_1(0)=F^{-n}(x), \gamma_1(1)=F^{-n}(y)$ and  $\gamma_2(0)=F^{-n}(x'), \gamma_2(1)=F^{-n}(y')$  satisfy the conditions of Claim \ref{uniformcontinuousmetric}, for a large $n\in\NN$. Since $F$ is affine along $\tildeF^s_i$ under the metric $d^s_i(\cdot,\cdot)$, one has
		\begin{align}
			\frac{d^s_i(F^{-n}x,F^{-n}y)}{d^s_i(F^{-n}x',F^{-n}y')}= \frac{d^s_i(x,y)}{d^s_i(x',y')}. \label{prop4.3.1}
		\end{align}
		 If we pick $\e$ small enough in Claim \ref{uniformcontinuousmetric}, then it contradicts with \eqref{prop4.3.1}. This complete the  proof.
	\end{proof}

	\section{Existence of integrable subbundles}\label{special}
	
		In this section, we prove the sufficient parts of Theorem \ref{main theorem 1} and Theorem \ref{main theorem 2} under the assumption that every periodic point of $f$ has the same Lyapunov spectrum on the stable bundle ( \textit{a priori}, need not equal to one of the linearization) and we restate as follow.
	\begin{theorem}\label{s-rigidity implies special}
		Let $A:\TT^d\to \TT^d$ be an irreducible linear Anosov map. Assume that $A$ admits the finest (on stable bundle) dominated splitting,
		$$T\TT^d=L^s_1\oplus L^s_2\oplus...\oplus L^s_k\oplus L^u,$$
		where ${\rm dim}L^s_i=1$, $1\leq i\leq k$.
		
		There exists a $C^1$ neighborhood $\mathcal{U}\subset C^1(\TT^d)$ of $A$ such that  for every $C^{1+\alpha}$-smooth $f\in \mathcal{U}$, if
		$\lambda^s_i(p,f)=\lambda^s_i(q,f)$, for all $p,q\in {\rm Per}(f)$ and all $1\leq i\leq k$, then $f$ admits the finest (on stable bundle) dominated splitting,
		$$T\TT^d=E^s_1\oplus E^s_2\oplus...\oplus E^s_k\oplus E^u,$$
		where ${\rm dim}E^s_i=1$, $1\leq i\leq k$. Especially, $f$ is special.
		
		Moreover, when $k=1$, for every $C^{1+\alpha}$ Anosov map $f$ with linearization $A$, if $\lambda^s(p,f)=\lambda^s(q,f)$,  for every $p,q\in {\rm Per}(f)$, then  $f$ is special.
	\end{theorem}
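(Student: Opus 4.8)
The plan is to prove that $f$ is special; granting this, the finest (on stable bundle) dominated splitting on $\TT^d$ is delivered by Proposition~\ref{finest dominated splitting of f}, the coincidence of the common periodic stable exponents with those of $A$ follows from Propositions~\ref{su-integrable and leaf-conjugate} and~\ref{s-exp coincide with linear}, and the smoothness of the conjugacy along the stable foliation (Corollaries~\ref{cor-1} and~\ref{cor-d}) follows from Journ\'e's Lemma (Proposition~\ref{journe}). By Proposition~\ref{special and conjugate}, ``$f$ is special'' is equivalent to the lifted conjugacy $H$ of Proposition~\ref{lifting conjugate} being $\ZZ^d$-equivariant, i.e. to the vanishing of the \emph{stable defect}
$$
\tau_n(x)\ :=\ H(x+n)-n-H(x),\qquad x\in\RR^d,\ n\in\ZZ^d .
$$
By Proposition~\ref{H is Zd restricted on stable} each $\tau_n(x)$ lies in the stable direction $\tilde L^s(H(x))-H(x)$; since $\|H-\mathrm{Id}\|<C_0$ one has $|\tau_n|\le 2C_0$; reading $H(x+n+m)-(n+m)-H(x)$ in two ways gives the cocycle identity $\tau_{n+m}(x)=\tau_n(x+m)+\tau_m(x)$; and $F\circ T_n=T_{An}\circ F$ gives $\tau_{A^m\ell}(y)=A^m\tau_\ell(F^{-m}y)$, so $\sup_y|\tau_{n_m}(y)|\le 2C_0\|A|_{L^s}\|^m\to 0$ for every sequence $n_m\in A^m\ZZ^d$ (this is also Proposition~\ref{nmH}). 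Thus the whole problem reduces to showing $\tau_n\equiv 0$ for every $n$.

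First I treat $k=1$, which is self-contained. Here $H$ is automatically a stable leaf conjugacy, so $H(\tildeF^s)=\tildeL^s$ and Proposition~\ref{s-exp coincide with linear} gives $\lambda^s(p,f)=\lambda^s(A)$ for all $p\in\mathrm{Per}(f)$. Proposition~\ref{affine metric} then yields a continuous metric $d^s$ on the leaves of $\tildeF^s$, uniformly equivalent to the induced metric, multiplied by $\exp(\lambda^s(A))$ by $F$, and invariant under holonomy along $\tildeF^u$; moreover $d^s$ is $\ZZ^d$-invariant, because the function $\Psi$ producing it (Proposition~\ref{quotient bundle}) is the lift to $\RR^d$ of the H\"older function on $\TT^d$ furnished by the Livschitz Theorem (Proposition~\ref{Livsic}) for $\log\|Df|_{E^s}\|$. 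The key geometric step — the delicate one — is to show that for each $n$, $\tau_n$ is constant along every leaf of $\tildeF^s$: compare $d^s$ with the metric $d^H$ on the leaves of $\tildeF^s$ obtained by pulling back through $H$ the Euclidean metric on the leaves of $\tildeL^s$; since $d^H$ is also multiplied by $\exp(\lambda^s(A))$ by $F$ and is invariant under $\tildeF^u$-holonomy (the linear $\tildeL^u$-holonomy between parallel leaves of $\tildeL^s$ being a Euclidean isometry), the ``density'' of $d^H$ relative to $d^s$ is $F$-invariant and constant along $\tildeF^u$-leaves, hence, on passing to the leaf space $\RR^d/\tildeF^u\cong\tildeF^s(0)$ on which $F$ acts as a contraction fixing $0$, it is globally constant; so $d^H$ is a constant multiple of $d^s$, in particular $\ZZ^d$-invariant, and $\ZZ^d$-invariance of $d^H$ says exactly that $\tau_n(a)=\tau_n(b)$ whenever $a\in\tildeF^s(b)$.

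Granting that $\tau_n$ is constant on stable leaves, I finish $k=1$ as follows. Fix $n\in\ZZ^d$ and an arbitrary $p\in\RR^d$; by Proposition~\ref{projection leaf dense}(2) applied to $\tildeF^s=H^{-1}(\tildeL^s)$ (using that $A$ is irreducible) pick $z_m\in\tildeF^s(0)$ and $n_m\in A^m\ZZ^d$ with $z_m+n_m\to p$. The symmetric form of the cocycle identity gives
$$
\tau_n(z_m+n_m)-\tau_n(z_m)\ =\ \tau_{n_m}(z_m+n)-\tau_{n_m}(z_m)\ \longrightarrow\ 0 ,
$$
since $\sup_y|\tau_{n_m}(y)|\to 0$; as $\tau_n$ is continuous and constant along $\tildeF^s(0)$, the left-hand side tends to $\tau_n(p)-\tau_n(0)$, so $\tau_n(p)=\tau_n(0)$. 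Hence $\tau_n\equiv c_n$ is constant, the cocycle identity reads $c_{n+m}=c_n+c_m$, so $n\mapsto c_n$ is a homomorphism $\ZZ^d\to\tilde L^s$; being bounded it is $0$, so $\tau_n\equiv 0$ and $f$ is special. For general $k$ one runs the same scheme with an induction on $i=1,\dots,k$, proving at stage $i$ that $\tilde E^s_{(1,i)}\oplus\tilde E^u$ is uniquely integrable on $\RR^d$ with $\ZZ^d$-periodic integral foliation, that $H$ carries it onto $\tildeL^{s,u}_{(1,i)}$, and (via Proposition~\ref{leaf proposition}(4) and Proposition~\ref{s-exp coincide with linear}) that $\lambda^s_i(p,f)=\lambda^s_i(A)$ — stage $i=k$ being exactly ``$f$ special''. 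At stage $i$ the inductive hypothesis for the strata $j<i$ makes the conditional items~(3)--(4) of Proposition~\ref{affine metric} available; since $E^s_i$ is not yet known on $\TT^d$, one works with the quotient bundle $N$ of $E^s_{(1,i-1)}$ in $E^s_{(1,i)}$ and the induced quotient dynamics of Proposition~\ref{quotient bundle} in place of $\tildeF^s_i$, obtaining an affine metric $d^s_i$ whose holonomies along $\tildeF^s_{(1,i-1)}$ and along $\tildeF^u$ are isometries; then, exactly as in Claims~\ref{su-jointlyclaim} and~\ref{linearfoliationclaim} of Section~\ref{s-rigidity} — but with the affine-metric rigidity together with Propositions~\ref{projection leaf dense} and~\ref{nmF} playing the role of the $\ZZ^d$-equivariance of $H$ used there — one gets that $E^s_{(1,i)}\oplus E^u$ is jointly integrable, that $H(\tildeF^s_{(1,i)})$ is a linear foliation, hence equal to $\tildeL^s_{(1,i)}$, and finally that the corresponding defect vanishes, closing the induction.

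The main obstacle is precisely this passage from the (conditional) affine-metric rigidity to joint integrability and to the vanishing of the defect: in the ``necessity'' direction of Section~\ref{s-rigidity} the joint integrability of $E^s_{(1,i)}\oplus E^u$ came for free from $H(x+n)=H(x)+n$ (Claim~\ref{su-jointlyclaim}), whereas here that $\ZZ^d$-equivariance is the conclusion, so it must be extracted from the interaction of the $F$-affine, $\tildeF^u$-holonomy-invariant metric with the density of the linear unstable leaves (Propositions~\ref{projection leaf dense} and~\ref{nmF}); an additional technical nuisance for $i\ge 2$ is that $d^s_i$ is not a priori $\ZZ^d$-invariant (since $E^s_i$ need not descend to $\TT^d$ before we know $f$ is special), so the density/approximation argument has to be carried out relative to the already-handled sub-foliation $\tildeF^s_{(1,i-1)}$.
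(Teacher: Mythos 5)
Your reduction to the vanishing of the stable defect $\tau_n(x)=H(x+n)-n-H(x)$, the cocycle identity, and the second stage of your constancy argument (passing from ``constant along a stable leaf'' to ``globally constant'' via the $n_m$-sequences of Propositions~\ref{nmH} and~\ref{projection leaf dense}, then concluding from ``bounded additive $\ZZ^d\to\tilde L^s$'') are all correct and pleasantly structured. But the first stage — the ``key geometric step'' asserting that $d^H:=H^*|\cdot|$ is a constant multiple of the affine metric $d^s$ on the leaves of $\tildeF^s$ — has a genuine gap. You argue that the Radon--Nikodym density $\rho=dd^H/dd^s$, being $F$-invariant and $\tildeF^u$-holonomy-invariant, descends to the leaf space $\tildeF^s(0)$ on which $F$ acts as a contraction and is therefore constant. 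However, $H$ is only known to be bi-Lipschitz along stable leaves, so $\rho$ is a priori just a bounded measurable function, and a bounded measurable function on $\RR$ invariant under a contraction fixing $0$ need not be constant (e.g.\ $\rho(x)=1+\tfrac12\sin(2\pi\log_2|x|)$ is invariant under $x\mapsto x/2$); constancy would require continuity at the fixed point, which you do not have. The paper's Proposition~\ref{s-leaf isometric} closes exactly this gap by a different mechanism: Rademacher's theorem supplies a single differentiability point of $H^{-1}$ along a $\tildeL^s_i$-leaf, and Claim~\ref{claim for H isometric} then transports that derivative to every other point by sliding along $\tildeL^u$ and translating by $n_m\in A^m\ZZ^d$ (Propositions~\ref{projection leaf dense}, \ref{nmH}, \ref{nmF}, together with the holonomy invariance of $d^s_i$). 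Your invariant-density argument needs to be replaced by such a Rademacher-plus-propagation step.

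For $k>1$ the induction you sketch is also not well posed as written. The stage-$i$ assertion that $\tilde E^s_{(1,i)}\oplus\tilde E^u$ is $\ZZ^d$-periodically integrable is vacuously true at $i=k$ (where the bundle is all of $T\RR^d$), so ``stage $i=k$ being exactly `$f$ special' '' cannot be correct; and since, as you yourself note, $d^s_i$ is not a priori $\ZZ^d$-invariant for $i\geq2$, your metric comparison cannot even be set up on the higher strata in the form you state. The paper's order of operations is essential and rather different: a \emph{downward} induction on $i$ establishing $H(\tildeF^s_i)=\tildeL^s_i$ and the $d^s_i$-isometry of $H$ (Corollary~\ref{ key corollary}, alternating Propositions~\ref{s-leaf isometric} and~\ref{s-leaf conjugacy introduction}), then $\ZZ^d$-periodicity of the strong stable foliations $\tildeF^s_{(i,k)}$ (Proposition~\ref{Zd foliation}), and only then the vanishing of the $\tildeF^s_i$-component of the $H^{-1}$-defect for each $i$ (Proposition~\ref{H is no distance on Fi}), which one intersects over $i=1,\dots,k$ to obtain $f$ special. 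You should reorganise the higher-rank part along these lines rather than the upward induction you propose.
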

	
   We mention that in Theorem \ref{s-rigidity implies special}, $f$ can be inverse since an Anosov diffeomorphism is always a special Anosov map.

	It is convenient to give the scheme of our proof. In this section we always assume that $A$ is irreducible and has the finest (on stable bundle) dominated splitting.  Let $f\in\mathcal{U}$ given by Proposition \ref{leaf proposition} and $F:\RR^d\to\RR^d$ be a lifting of $f$ and $H:\RR^d\to\RR^d$ be the conjugacy between $F$ and $A$.
	
	Firstly, we show that $H$ maps every one-dimensional stable foliation $\tildeF^s_i (1\leq i\leq k)$  to one of the linearization $\tildeL^s_i$. Moreover, it is an isometry along each leaf of $\tildeF^s_i$. As proved in Proposition \ref{leaf proposition}, we already have that $H$ preserves weak stable foliations.  For reducing to each single leaf, we need the following two propositions.
	The  idea to reduce the leaf conjugacy originated from \cite{Gogolevhighdimrigidity} (also see \cite{GKS2011}). A main tool in \cite{Gogolevhighdimrigidity} is the minimal property of the foliation $\mathcal{F}^s_i$. However, in our case, there is a priori no $\mathcal{F}^s_i (i\geq 2)$ on $\TT^d$. So, we cannot use the minimal property, directly. This obstruction can be overcomed by using a special $\ZZ^d$-sequence described in Proposition \ref{nmH},  Proposition \ref{nmF} and Proposition \ref{projection leaf dense}.
	
	Again, by Proposition \ref{leaf proposition}, we already have $H(\tildeF^s_k)=\tildeL^s_k$. We show that it is an isometry restricted on each leaf of $\tildeF^s_k$ under the metric $d^s_k(\cdot,\cdot)$ given by Proposition \ref{affine metric}. Generally, we have the following proposition.
	\begin{proposition}\label{s-leaf isometric}
		Let $f\in\mathcal{U}$ given by Proposition \ref{leaf proposition}. Fix $1\leq i\leq k$, assume that $H(\tildeF^s_i)=\tildeL^s_i$ and $\lambda^s_i(p,f)=\lambda^s_i(A)$ for all $p\in{\rm Per}(f)$. Then $H$ is isometric restricted on each leaf of $\tildeF^s_i$ under the metric $d^s_i(\cdot,\cdot)$ given by Proposition \ref{affine metric}. Especially,  for an irreducible Anosov map $f$ with dim$E^s=1$, if $\lambda^s(p,f)=\lambda^s_i(A)$ for all $p\in{\rm Per}(f)$, then $H$ is isometric restricted on each leaf of $\tildeF^s$ under the metric $d^s(\cdot,\cdot)$.
	\end{proposition}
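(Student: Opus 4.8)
The plan is to pass to arc-length coordinates along leaves and reduce the statement to showing that a certain uniformly bi-Lipschitz homeomorphism of the real line is in fact the identity.

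Fix $x\in\RR^d$ and parametrize the leaf $\tildeF^s_i(x)$ by $d^s_i$-arclength, $\gamma_x:\RR\to\tildeF^s_i(x)$ with $\gamma_x(0)=x$; since $H(\tildeF^s_i)=\tildeL^s_i$ by hypothesis, $H(\tildeF^s_i(x))$ is the affine line $\tildeL^s_i(Hx)$, which we parametrize by Euclidean arclength $\psi_x:\RR\to\tildeL^s_i(Hx)$ with $\psi_x(0)=Hx$. Fixing orientations (passing to $f^2$ if needed) we may write $H\circ\gamma_x=\psi_x\circ\phi_x$ for an increasing homeomorphism $\phi_x:\RR\to\RR$ with $\phi_x(0)=0$, and the assertion to prove is exactly $\phi_x=\mathrm{Id}$ for every $x$ (the case $\dim E^s=1$ being formally identical with $\tildeF^s,\tildeL^s,d^s$ in place of $\tildeF^s_i,\tildeL^s_i,d^s_i$, and in fact easier). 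Using $\|H-\mathrm{Id}\|\le C_0$, $\|H^{-1}-\mathrm{Id}\|\le C_0$ (Proposition \ref{lifting conjugate}), the quasi-isometry of $\tildeF^s_i$ (Proposition \ref{leaf proposition}(3)), the $K$-equivalence of $d^s_i$ with $d_{\tildeF^s_i}$ (Proposition \ref{affine metric}(1)), and the fact that $\tildeL^s_i$ is a straight line, one first gets $C_1^{-1}|s-t|-C_2\le|\phi_x(s)-\phi_x(t)|\le C_1|s-t|+C_2$ with constants independent of $x$; applying this to $F^{-n}\gamma_x(s),F^{-n}\gamma_x(t)$, using that $F$ is $\exp(\lambda^s_i(A))$-affine on $\tildeF^s_i$ (Proposition \ref{affine metric}(2)) while $A$ is $\exp(\lambda^s_i(A))$-affine on $\tildeL^s_i$, and letting $n\to\infty$, removes the additive term, so all $\phi_x$ are uniformly $C_1$-bi-Lipschitz.

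Now I record the two structural features. (i) \emph{Exact self-similarity along orbits:} from $A\circ H=H\circ F$ and the matching affine structures, $\phi_{Fx}(t)=\lambda\,\phi_x(t/\lambda)$, where $\lambda:=\exp(\lambda^s_i(A))$; since $F(0)=0$ and $H(0)=0$, the leaf $\tildeF^s_i(0)$ is $F$-invariant and $\phi_0$ is $\lambda$-homogeneous, $\phi_0(\lambda s)=\lambda\phi_0(s)$. (ii) \emph{Near $\ZZ^d$-invariance:} by Proposition \ref{H is Zd restricted on stable} and $\|H-\mathrm{Id}\|\le C_0$, the $\phi$-function of a leaf and that of its $\ZZ^d$-translate differ, after matching basepoints, by a uniformly bounded $C^0$-amount; combining this with Propositions \ref{nmH}, \ref{nmF} and \ref{projection leaf dense} — which give that $\bigcup_n(\tildeF^s_i(x)+n)$ accumulates on every leaf and that the special lattice vectors $n_m\in A^m\ZZ^d$ almost commute with both the foliation $\tildeF^s_i$ and the conjugacy $H$ as $m\to\infty$ — shows that every $\phi_x$ coincides, up to an affine reparametrization and a uniformly bounded $C^0$-error, with the single $\lambda$-homogeneous homeomorphism $\phi_0$. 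For $i=1$ this is cleaner because $\tildeF^s_1$ is $\ZZ^d$-periodic and $d^s_1$ is $\ZZ^d$-invariant, so the bounded-error comparison is immediate and Propositions \ref{nmF}, \ref{projection leaf dense} are not needed in full strength.

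The final step is to combine (i) and (ii). Feeding $\phi_{Fx}(t)=\lambda\phi_x(t/\lambda)$ into the fact that $\phi_{Fx}$ and $\phi_x$ are both bounded perturbations of reparametrizations of $\phi_0$ yields, for $\phi_0$ itself, a renormalization relation: a bounded-error version of $\lambda\phi_0(t/\lambda)=\phi_0(t)$ must hold compatibly with translations as well, and iterating the homogeneity $\phi_0(\lambda^n s)=\lambda^n\phi_0(s)$ towards both scales $\lambda^n\to0$ and $\lambda^{-n}\to\infty$ forces the difference quotients of $\phi_0$ to be constant, hence $\phi_0$ linear; the slope equals $1$ since $H$ has bounded displacement and the two affine rates agree. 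Pulling this back along $F$-orbits and along the density of $\ZZ^d$-translates gives $\phi_x=\mathrm{Id}$ for all $x$, i.e.\ $H$ is a $d^s_i$-isometry on every leaf. I expect this last step to be the main obstacle: everything up to and including the bi-Lipschitz bound and the two structural features is soft, and the delicate point is to make the near-invariance of step (ii) quantitative enough — via the special sequences $n_m\in A^m\ZZ^d$ of Propositions \ref{nmH}--\ref{projection leaf dense} — to kill the multiplicative constant $C_1$, not merely the additive errors that the orbit-scaling argument already removes.
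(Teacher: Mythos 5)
Your overall shape --- establish uniform bi-Lipschitz bounds by iterating $F$, then upgrade to an isometry --- does track the paper's proof, and the bi-Lipschitz step (affine rate from Proposition~\ref{affine metric}(2) plus quasi-isometry of $\tildeF^s_i$ plus uniform continuity of $H$, $H^{-1}$) is essentially the paper's argument. The gap, which you correctly flag as the obstacle, is in the final step, and it is a real one. First, the renormalization claim is false as stated: a bi-Lipschitz, $\lambda$-homogeneous self-homeomorphism of $\RR$ need not be linear (take any bi-Lipschitz bijection of $[1,\lambda]$ fixing the endpoints and extend by $\phi(\lambda s)=\lambda\phi(s)$; it is $\lambda$-homogeneous, within bounded distance of the identity, yet nonlinear), and a bounded-error version of translation-commutation does not repair this --- the multiplicative constant survives. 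Second, and decisively, you never invoke item~(4) of Proposition~\ref{affine metric}, the unstable-holonomy invariance of $d^s_i$. This is indispensable: the translates $\tildeF^s_i(\cdot)+n_m$ land on \emph{different} $\tildeF^s_i$-leaves near the target, and comparing $d^s_i$-arclengths across leaves needs a genuine isometry between them. The paper obtains one by sliding along the unstable holonomy, which applies here because $H^{-1}(\tildeL^s_i)=\tildeF^s_i$ and $H^{-1}(\tildeL^u)=\tildeF^u$ give integrability of $\tilde E^s_i\oplus\tilde E^u$; without it the ``bounded-error comparison'' in your step (ii) has no rigorous content.

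The paper's route from bi-Lipschitz to affine is different and simpler than homogeneous renormalization: $H^{-1}|_{\tildeL^s_i}$ is Lipschitz, hence (Rademacher/Lebesgue) differentiable at some point $x$ with derivative $C_0$; then Claim~\ref{claim for H isometric} transports this derivative to any $y$ by building a quadrilateral out of $\tildeL^s_i$, $\tildeL^u$, $\tildeL^{s,\perp}_i$ and an approximating sequence $n_m\in A^m\ZZ^d$, using Propositions~\ref{nmH}, \ref{nmF} and the unstable holonomy isometry for $d^s_i$. This shows the derivative is $C_0$ everywhere, so $H^{-1}|_{\tildeL^s_i}$ is affine, and rescaling $d^s_i$ normalizes $C_0=1$. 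To close your argument you would need both missing ingredients: a.e.\ differentiability of the Lipschitz map at a single point, and the unstable holonomy isometry to transport the derivative --- the $\ZZ^d$-near-invariance and the $n_m$-sequences alone are not enough.
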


	The following proposition allow us to reduce the leaf conjugacy by induction.
	\begin{proposition}\label{s-leaf conjugacy introduction}
			Let $f\in\mathcal{U}$ given by Proposition \ref{leaf proposition}. Fix $1<i\leq k$, assume that $H(\tildeF^s_{(1,i)})=\tildeL^s_{(1,i)}$ and $H$ is isometric restricted on  each leaf of $\tildeF^s_i$ under the metric $d^s_i(\cdot,\cdot)$, then	$H(\tildeF^s_{(1,i-1)})=\tildeL^s_{(1,i-1)}$.
	\end{proposition}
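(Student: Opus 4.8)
The plan is to work in affine coordinates $(\xi,\eta)\in L^s_{(1,i-1)}(0)\oplus L^s_i(0)$ on $\tildeL^s_{(1,i)}(H(x))$ based at $H(x)$ and adapted to the $A$‑invariant splitting, and to prove that the set $H(\tildeF^s_{(1,i-1)}(x))-H(x)$ is the horizontal plane $\{\eta=0\}=\tildeL^s_{(1,i-1)}(H(x))-H(x)$; this is exactly the assertion $H(\tildeF^s_{(1,i-1)})=\tildeL^s_{(1,i-1)}$. First I would collect the rigidity the hypotheses give on a single weak‑stable leaf. Since $\tildeF^s_i=\tildeF^s_{(1,i)}\cap\tildeF^s_{(i,k)}$ and $H$ carries both $\tildeF^s_{(1,i)}$ (by hypothesis) and $\tildeF^s_{(i,k)}$ (Proposition \ref{leaf proposition}) onto the corresponding linear foliations, $H$ maps $\tildeF^s_i$‑leaves onto $\tildeL^s_i$‑leaves; by Proposition \ref{s-leaf isometric} it does so isometrically from $d^s_i$ to Euclidean length, so $\xi$ is constant along $\tildeF^s_i$‑leaves, $\eta$ restricts to a $d^s_i$‑arclength parameter on each, and no additive $\|H-{\rm Id}\|_{C^0}$‑error appears in the $\tildeF^s_i$ direction. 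Hence $H(\tildeF^s_{(1,i-1)}(x))-H(x)$ meets each $\tildeL^s_i$‑leaf exactly once (Global Product Structure of $\tildeF^s_{(1,i-1)}$ and $\tildeF^s_i$ inside $\tildeF^s_{(1,i)}$), i.e.\ it is the graph of a continuous $g_x\colon L^s_{(1,i-1)}(0)\to L^s_i(0)$ with $g_x(0)=0$, and the ``drift'' of $y\in\tildeF^s_{(1,i-1)}(x)$ is $\eta(y)=g_x(\xi(y))$. By Proposition \ref{affine metric}(3) the holonomy of $\tildeF^s_i$ along $\tildeF^s_{(1,i-1)}$ inside $\tildeF^s_{(1,i)}$ is $d^s_i$‑isometric, so $H\circ{\rm Hol}_{x,y}\circ H^{-1}$ is translation of $\tildeL^s_i(H(x))$ onto $\tildeL^s_i(H(y))$ by $H(y)-H(x)$; and since $AH=HF$ with $A$ diagonal $(\xi,\eta)\mapsto(A|_{L^s_{(1,i-1)}}\xi,\mu^s_i(A)\eta)$ in these coordinates, $g_x$ satisfies the functional equation $g_x(\xi)=\mu^s_i(A)^{-1}g_{Fx}(A|_{L^s_{(1,i-1)}}\xi)$, equivalently $g_x(\xi)=\mu^s_i(A)^{-n}\,g_{F^nx}\big((A|_{L^s_{(1,i-1)}})^n\xi\big)$ for all $n\ge0$.

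Iterating this forward, $(A|_{L^s_{(1,i-1)}})^n\xi\to0$ at rate $|\mu^s_{i-1}(A)|^n$, which is faster than $\mu^s_i(A)^n$; so it suffices to show that the family $\{g_x\}_{x\in\RR^d}$ has uniformly bounded transverse slope near $0$, i.e.\ $|g_x(\xi)|\le L|\xi|$ for $|\xi|$ small, uniformly in $x$. Indeed then $|g_x(\xi)|\le\mu^s_i(A)^{-n}L\,|(A|_{L^s_{(1,i-1)}})^n\xi|\to0$, so $g_x\equiv0$ near $0$, and then globally along each leaf by the cocycle property of $\xi\mapsto\eta$. To get the uniform slope bound I would invoke the special $\ZZ^d$‑sequences of Propositions \ref{nmH}, \ref{nmF}, \ref{projection leaf dense} in place of the minimality of $\mathcal{F}^s_i$ used in \cite{Gogolevhighdimrigidity}: $\tildeF^s_{(1,i-1)}$ is genuinely $\ZZ^d$‑periodic (Proposition \ref{leaf proposition}(2)); $\tildeF^s_i=H^{-1}(\tildeL^s_i)$ has uniformly $C^{1+\alpha}$ leaves, so $\tildeF^s_i(x+n_m,R)-n_m\to\tildeF^s_i(x,R)$ along $n_m\in A^m\ZZ^d$ (Proposition \ref{nmF}); such translates of $\tildeF^s_i$‑leaves are dense (Proposition \ref{projection leaf dense}); and $H$ is asymptotically $\ZZ^d$‑equivariant along $\{n_m\}$ (Proposition \ref{nmH}). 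Combining these, $H(\tildeF^s_{(1,i-1)}(x))-H(x)$ is, up to errors tending to $0$, invariant under translation by the dense family $\{n_m\}$, which should pin down its slope uniformly and feed into the functional equation to give $g_x\equiv0$. Once $g_x\equiv0$ the reverse inclusion is automatic: a point of $H^{-1}(\tildeL^s_{(1,i-1)}(H(x)))\setminus\tildeF^s_{(1,i-1)}(x)$ would, by sliding along $\tildeF^s_i$ and using the exact length preservation, produce a point of $\tildeF^s_{(1,i-1)}(x)$ with nonzero drift.

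I expect the density‑to‑rigidity step to be the main obstacle. Because $H$ is only assumed uniformly continuous, a direct H\"older estimate on $H$ would require an exponent exceeding $\log|\mu^s_i(A)|/\log|\mu^s_{i-1}(A)|$, which is not available; the affine metric is precisely what makes the $\tildeF^s_i$ direction a genuine isometry with no additive error, so every estimate must be routed through that direction, while the failure of $\tildeF^s_i$ to descend to $\TT^d$ forces the use of the asymptotic periodicity of Propositions \ref{nmF}–\ref{projection leaf dense} rather than honest minimality. Everything else—the coordinate set‑up, the functional equation, and the forward iteration—is bookkeeping built on Propositions \ref{leaf proposition}, \ref{s-leaf isometric} and \ref{affine metric}.
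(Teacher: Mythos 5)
You have the right ingredients but the wrong endgame, and the step you flag as the ``main obstacle'' is in fact a genuine gap. Your plan reduces everything to a uniform Lipschitz bound on $g_x$ near $0$, and then proposes to extract that bound from ``$H(\tildeF^s_{(1,i-1)}(x))-H(x)$ being, up to vanishing errors, invariant under translation by $\{n_m\}$.'' That heuristic does not control the slope of a graph: approximate translation invariance of the leaf by a $\ZZ^d$-sequence is a statement about translates of the whole set, not about its modulus of continuity, and since $H$ is only uniformly continuous there is no way to upgrade it to a Lipschitz bound on $g_x$. (Nor is $g_x$ globally bounded, so you cannot instead iterate the functional equation backward.) Without that bound the forward iteration $g_x(\xi)=\mu^s_i(A)^{-n}g_{F^nx}\bigl((A|_{L^s_{(1,i-1)}})^n\xi\bigr)$ proves nothing.

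The paper closes this by a different device, and you already have the key fact in hand: since $H$ is $d^s_i$-to-Euclidean isometric on $\tildeF^s_i$ and (Proposition~\ref{affine metric}(3)) the $\tildeF^s_{(1,i-1)}$-holonomy of $\tildeF^s_i$ inside $\tildeF^s_{(1,i)}$ is $d^s_i$-isometric, the holonomy of $\tildeL^s_i$ along $\tildeF:=H(\tildeF^s_{(1,i-1)})$ is a \emph{Euclidean} isometry. Suppose some $x_1\in\tildeF(x_0)$ has nonzero $\eta$-drift, say $\xi(x_1)=\xi_1$, $\eta(x_1)=\alpha\ne0$. Using Proposition~\ref{projection leaf dense} to choose $b_m\in\tildeL^s_i(x_0)$, $k_m\in A^m\ZZ^d$ with $b_m+k_m\to x_1$, and Propositions~\ref{nmH}--\ref{nmF} to control the error under the $k_m$-translation, the paper's Claims~\ref{bnyn}--\ref{claimprop5.2.2} show that the isometric holonomy lets you \emph{concatenate} drifts: there is $x_2\in\tildeF(x_0)$ with $\xi(x_2)=2\xi_1$ and $\eta(x_2)=2\alpha$, and by iteration $x_l\in\tildeF(x_0)$ with $\xi(x_l)=l\xi_1$, $\eta(x_l)=l\alpha$. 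Now use the $A$-equivariance of $\tildeF$ (your functional equation): apply $A^{N_l}$ with $N_l$ minimal so that $|A^{N_l}\xi(x_l)|\le\delta$; since $|\mu^s_j(A)|<\mu^s_i(A)$ for $j\le i-1$, the corresponding $\eta$-drift $\mu^s_i(A)^{N_l}\,l\alpha\to\infty$, contradicting the local uniform bound on the drift over a $\delta$-ball (which does follow from uniform continuity of $H,H^{-1}$). This additive ``doubling then renormalize'' argument is what replaces the Lipschitz bound you were after; you should drop the Lipschitz route and use the holonomy isometry in this concatenation role instead.
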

	
		We leave the proofs for Proposition \ref{s-leaf isometric} and Proposition \ref{s-leaf conjugacy introduction}  in subsection \ref{subsection 5.1}. Combining these two propositions, we can prove that $H$ preserves every one-dimensional stable foliation $\tildeF^s_i$ and in fact is an isometry restricted on each leaf of $\tildeF^s_i$.
	\begin{corollary}\label{ key corollary}
		Let $f\in\mathcal{U}$ given by Proposition \ref{leaf proposition} with $\lambda^s_i(p,f)=\lambda^s_i(q,f)$, for all $p,q\in {\rm Per}(f)$ and all $1\leq i\leq k$. Then, for every $1\leq i\leq k$, $H(\tildeF^s_i)=\tildeL^s_i$ and $H$ is isometric along  each leaf of $\tildeF^s_i$ under the metric $d^s_i(\cdot,\cdot)$.
	\end{corollary}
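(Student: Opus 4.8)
The plan is to run a downward induction on $i$, from $i=k$ down to $i=1$, with the inductive hypothesis $P(i)$: $H(\tildeF^s_{(1,i)})=\tildeL^s_{(1,i)}$. Since $\tildeF^s_{(1,k)}=\tildeF^s$ and $\tildeL^s_{(1,k)}=\tildeL^s$, the base case $P(k)$ is immediate from the fact that $H$ is a stable leaf conjugacy between $F$ and $A$ (see \eqref{Hpreservestable} and the fourth item of Proposition \ref{leaf proposition}).

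For the inductive step, I would assume $P(i)$ holds for some $1<i\le k$ and first extract $H(\tildeF^s_i)=\tildeL^s_i$. Using the identity $\tildeF^s_i=\tildeF^s_{(1,i)}\cap\tildeF^s_{(i,k)}$ (from Proposition \ref{leaf proposition}), together with $P(i)$ and the fourth item of Proposition \ref{leaf proposition}, which gives $H(\tildeF^s_{(i,k)})=\tildeL^s_{(i,k)}$, a leafwise argument shows that for each $x$ one has $H(\tildeF^s_i(x))\subseteq \tildeL^s_{(1,i)}(Hx)\cap\tildeL^s_{(i,k)}(Hx)=\tildeL^s_i(Hx)$, and the reverse inclusion follows by applying the same two identities to $H^{-1}$; since $H$ is a homeomorphism this yields $H(\tildeF^s_i)=\tildeL^s_i$. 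Now I would invoke Proposition \ref{s-exp coincide with linear}: as $\lambda^s_i(p,f)=\lambda^s_i(q,f)$ for all periodic $p,q$ and $H(\tildeF^s_i)=\tildeL^s_i$, the common value equals $\lambda^s_i(A)$. Then Proposition \ref{s-leaf isometric} applies and gives that $H$ is isometric along each leaf of $\tildeF^s_i$ for the metric $d^s_i(\cdot,\cdot)$ of Proposition \ref{affine metric}. Finally, feeding the two facts $H(\tildeF^s_{(1,i)})=\tildeL^s_{(1,i)}$ and "$H$ isometric along $\tildeF^s_i$" into Proposition \ref{s-leaf conjugacy introduction} produces $H(\tildeF^s_{(1,i-1)})=\tildeL^s_{(1,i-1)}$, which is exactly $P(i-1)$. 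This closes the induction, so $P(i)$ holds for every $1\le i\le k$.

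With all the $P(i)$ in hand, the two assertions of the corollary for each fixed $i$ follow by repeating the middle of the induction: $P(i)$ plus the fourth item of Proposition \ref{leaf proposition} gives $H(\tildeF^s_i)=\tildeL^s_i$; Proposition \ref{s-exp coincide with linear} then upgrades the common periodic exponent to $\lambda^s_i(A)$; and Proposition \ref{s-leaf isometric} gives the leafwise isometry under $d^s_i(\cdot,\cdot)$. (For $i=k$ one may skip $P(k)$ and obtain $H(\tildeF^s_k)=\tildeL^s_k$ directly from the fourth item of Proposition \ref{leaf proposition}.)

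The genuine difficulty is not in assembling this corollary but in the ingredients it consumes: the passage from a weak–stable leaf conjugacy to a single-leaf conjugacy, carried out in Propositions \ref{s-leaf isometric} and \ref{s-leaf conjugacy introduction}, which rest on the affine metric constructed in Section \ref{section affine metric} and on the "special $\ZZ^d$-sequence" machinery of Propositions \ref{nmH}--\ref{projection leaf dense}. Within the proof of the corollary proper, the only step that needs a little care is the bookkeeping that $H$, being a homeomorphism which separately respects the two subfoliations $\tildeF^s_{(1,i)}$ and $\tildeF^s_{(i,k)}$ whose leafwise intersection is $\tildeF^s_i$, must send $\tildeF^s_i$-leaves onto $\tildeL^s_i$-leaves; everything else is a direct invocation of the stated propositions in the right order.
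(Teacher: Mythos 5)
Your proof is correct and follows essentially the same downward induction on $i$ as the paper: base case from Proposition \ref{leaf proposition}, then at each step extract $H(\tildeF^s_i)=\tildeL^s_i$ via intersection with $\tildeF^s_{(i,k)}$, upgrade the periodic exponent via Proposition \ref{s-exp coincide with linear}, obtain the leafwise isometry via Propositions \ref{affine metric} and \ref{s-leaf isometric}, and descend to $P(i-1)$ via Proposition \ref{s-leaf conjugacy introduction}. The only stylistic difference is that you formalize the inductive hypothesis as $P(i): H(\tildeF^s_{(1,i)})=\tildeL^s_{(1,i)}$, which the paper keeps implicit.
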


	\begin{proof}[Proof of Corollary \ref{ key corollary}]
	We get the proof by induction.The beginning of the induction is $H(\tildeF^s_{(i,k)})=\tildeL^s_{(i,k)}$ (see Proposition \ref{leaf proposition}), especially,  $H(\tildeF^s_k)=\tildeL^s_k$. Then, by Proposition \ref{s-exp coincide with linear}, the assumption that $\lambda^s_k(p,f)=\lambda^s_k(q,f)$,  for every $p,q\in{\rm Per}(f)$ implies  $\lambda^s_k(p,f)=\lambda^s_k(A)$. It allows us to define an affine metric $d^s_k(\cdot, \cdot)$ by Proposition \ref{affine metric}.
		
		Now, by Proposition \ref{s-leaf isometric}, $H: \tildeF^s_k\to \tildeL^s_k$ is isometric. Thus, by Proposition \ref{s-leaf conjugacy introduction}, $H(\tildeF^s_{(1,k)})=\tildeL^s_{(1,k)}$ implies $H(\tildeF^s_{(1,k-1)})=\tildeL^s_{(1,k-1)}$.
		Moreover, since $H$ preserves the weak stable foliation $H(\tildeF^s_{(k-1,k)})=\tildeL^s_{(k-1,k)}$, we have
		$$H(\tildeF^s_{k-1})=H\left(\tildeF^s_{(1,k-1)}\cap \tildeF^s_{(k-1,k)} \right)\ =\tildeL^s_{(1,k-1)}\cap \tildeL^s_{(k-1,k)}=\tildeL^s_{k-1}.$$
		
		Applying the preceding methods to $\tildeF^s_{k-1}$ and $\tildeF^s_{(1,k-1)}$ , we have $H(\tildeF^s_{(1,k-2)})=\tildeL^s_{(1,k-2)}$. Moreover, by intersecting with $\tildeF^s_{(k-2,k)}$, we have $H(\tildeF^s_{k-2})=\tildeL^s_{k-2}$.
		
		Consequently, we can finish our proof by induction.
	\end{proof}

    Using the isometry $H$ along each leaf of single stable foliations, we also can show that all stable foliations $\tildeF^s_i\; (1\leq i\leq k)$ are $\ZZ^d$-periodic. Moreover, there is no deviation between $H^{-1}(x+n)$ and $H^{-1}(x)+n$ along $\tildeF^s_i$, for all $x\in\RR^d$ and $n\in\ZZ^d$. More precisely, we have the following two propositions.
	
	\begin{proposition}\label{Zd foliation}
		Let $f\in\mathcal{U}$ given by Proposition \ref{leaf proposition}. Assume that 	for every $1\leq j\leq k$, $H(\tildeF^s_j)=\tildeL^s_j$ and $H$ is isometric along  each leaf of $\tildeF^s_j$ under the metric $d^s_j(\cdot,\cdot)$.  Fix $1\leq i<k$, if $\tildeF^s_{(i,k)}$ is $\ZZ^d$-periodic,  then so is $\tildeF^s_{(i+1,k)}$.
	\end{proposition}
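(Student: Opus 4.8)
The plan is to phrase everything through the conjugacy $H$ and reduce the $\ZZ^d$-periodicity of $\tildeF^s_{(i+1,k)}$ to the single assertion: for all $x\in\RR^d$ and $n\in\ZZ^d$,
\begin{equation}
H(x+n)-n\in\tildeL^s_{(i+1,k)}\big(H(x)\big).\tag{$\star$}
\end{equation}
Granting $(\star)$: for $y\in\tildeF^s_{(i+1,k)}(x)$ we have $H(y)\in\tildeL^s_{(i+1,k)}(H(x))$, so $(\star)$ applied at $x$ and at $y$ places $H(x+n)-n$ and $H(y+n)-n$ on a common leaf of the linear foliation $\tildeL^s_{(i+1,k)}$; adding $n$ and using linearity gives $H(y+n)\in\tildeL^s_{(i+1,k)}(H(x+n))$, hence $y+n\in\tildeF^s_{(i+1,k)}(x+n)$ since $H(\tildeF^s_{(i+1,k)})=\tildeL^s_{(i+1,k)}$ by Proposition~\ref{leaf proposition}(4). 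Thus $\tildeF^s_{(i+1,k)}(x)+n\subseteq\tildeF^s_{(i+1,k)}(x+n)$, and the reverse inclusion follows by symmetry, so it suffices to prove $(\star)$.

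By Proposition~\ref{H is Zd restricted on stable} we already know $H(x+n)-n\in\tildeL^s(H(x))=\tildeL^s_{(1,k)}(H(x))$, and the task is to sharpen this to the weak subfoliation $\tildeL^s_{(i+1,k)}$. The first sharpening, to $\tildeL^s_{(i,k)}$, combines the hypothesis that $\tildeF^s_{(i,k)}$ is $\ZZ^d$-periodic with the backward-iteration estimate from the proof of Proposition~\ref{leaf proposition}(4): first, since $\tilde E^s_{(1,i)}$ is $\ZZ^d$-periodic (Proposition~\ref{leaf proposition}(2)) and $\tilde E^s_{(i,k)}$ is $\ZZ^d$-periodic (differentiating the periodic foliation), the bundle $\tilde E^s_i=\tilde E^s_{(1,i)}\cap\tilde E^s_{(i,k)}$ and hence $\tildeF^s_i$ are $\ZZ^d$-periodic; second, comparing the $F^{-m}$-expansion rates of $\tildeF^s_{(1,i-1)}$ and $\tildeF^s_{(i,k)}$ together with the quasi-isometry of $\tildeF^s_{(1,i-1)}$ forces the point $H^{-1}(H(x+n)-n)$, which lies on $\tildeF^s(x)$ by Proposition~\ref{H is Zd restricted on stable}, to lie on $\tildeF^s_{(i,k)}(x)$. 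Writing $w:=H(x+n)-n-H(x)\in\tildeL^s_{(i,k)}(0)$ and decomposing $w=w_i+w_{>i}$ with $w_i\in\tildeL^s_i(0)$, $w_{>i}\in\tildeL^s_{(i+1,k)}(0)$, it then remains to prove $w_i=0$.

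The heart of the proof is this last step, and it is where the hypotheses $H(\tildeF^s_i)=\tildeL^s_i$ and "$H$ isometric along $\tildeF^s_i$ for $d^s_i$" enter (these force $\lambda^s_i(p,f)=\lambda^s_i(A)$, so by Proposition~\ref{affine metric} $F$ is affine along $\tildeF^s_i$ with ratio $\mu^s_i(A)$). Set $P:=H^{-1}(H(x)+w_{>i})\in\tildeF^s_{(i+1,k)}(x)$ and $Q:=H^{-1}(H(x+n)-n)\in\tildeF^s_{(i,k)}(x)$. Then $H(Q)-H(P)=w_i\in\tildeL^s_i(0)$, so $Q\in\tildeF^s_i(P)$; the isometry hypothesis gives $d^s_i(P,Q)=|w_i|$ (in the linear metric on $\tildeL^s_i$); and $\|H-\mathrm{Id}\|_{C^0}<C_0$ keeps $P$ and $Q$ within a bounded distance of $x$. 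Iterating, $d^s_i(F^{-m}P,F^{-m}Q)=\mu^s_i(A)^{-m}|w_i|$, which would blow up if $w_i\neq0$; the difficulty is to bound $F^{-m}Q$ against $F^{-m}(x+n)$, and here the non-invertibility bites, because for a generic $n\in\ZZ^d$ one does \emph{not} have $F^{-m}(x+n)=F^{-m}x+A^{-m}n$. This is exactly what forces us through the special $\ZZ^d$-sequences: by Proposition~\ref{nmH} the deviation $|H(u+\nu)-H(u)-\nu|$ is exponentially small for $\nu\in A^m\ZZ^d$, while Proposition~\ref{nmF} and Proposition~\ref{projection leaf dense} let us approximate the translation $T_n$ acting on the $\ZZ^d$-periodic leaf $\tildeF^s_{(i,k)}(x)$ and its subfoliations $\tildeF^s_i$, $\tildeF^s_{(i+1,k)}$ by translations $T_{\nu_m}$ with $\nu_m\in A^m\ZZ^d$, along which $H$ almost commutes and the backward dynamics is well behaved; feeding this into the estimate above and passing to the limit rules out $w_i\neq0$. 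Together with the previous paragraph this gives $(\star)$, hence the proposition.

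I expect the only genuine obstacle to be precisely this coupling in the third paragraph: transporting the affine/isometric rigidity along the one-dimensional foliation $\tildeF^s_i$ across the $\ZZ^d$-action while the latter fails to commute with the backward dynamics — which is what makes the detour through the sequences $n_m\in A^m\ZZ^d$, rather than a direct argument with a fixed $n$, essential, and which requires some care in the limiting step.
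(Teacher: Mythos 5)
Your reduction to $(\star)$ in the first paragraph is correct, but the ``first sharpening'' in your second paragraph is not established --- and this is the real gap, not the difficulty you flag in the third paragraph. You claim that $H(x+n)-n\in\tildeL^s_{(i,k)}\big(H(x)\big)$ follows from the $\ZZ^d$-periodicity of $\tildeF^s_{(i,k)}$ together with the backward-iteration estimate from the proof of Proposition~\ref{leaf proposition}(4). But that estimate characterizes $p\in\tildeF^s_{(i,k)}(x)$ by comparing $d(F^{-m}p,F^{-m}x)$ against $\mu^s_i(A)^{-m}$-growth, and for $p=H^{-1}\big(H(x+n)-n\big)$ the only available upper bound is $d(F^{-m}p,F^{-m}x)\le|A^{-m}w|+2C_0$ with $w:=H(x+n)-n-H(x)\in\tildeL^s(0)$. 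If $w$ has a nonzero $\tildeL^s_{(1,i-1)}$-component, then $|A^{-m}w|$ itself grows at rate $\mu^s_j(A)^{-m}$ for some $j\le i-1$, which is at least as fast as the lower bound produced by the quasi-isometry of $\tildeF^s_{(1,i-1)}$; the rate comparison therefore produces no contradiction and cannot rule out $w_{(1,i-1)}\ne 0$. The periodicity of $\tildeF^s_{(i,k)}$ does give the set inclusion $\tildeF^s_{(i+1,k)}(x_0+n)-n\subset\tildeF^s_{(i,k)}(x_0)$ (this is the paper's opening observation), but that is a statement about which leaf a set of points lies on; it says nothing about where the conjugacy displacement $w$ lands. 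What you need is $w\in\tildeL^s_{(i,k)}(0)$, and that is exactly $(\star)$ at level $i-1$, i.e.\ the conclusion of the previous inductive step --- so to make this legal you would have to change the inductive invariant to be $(\star)$ itself rather than foliation periodicity. Even granting that repair, the third paragraph leaves the cancellation $w_i=0$ unproved: iterating $F^{-m}$ rescales $d^s_i(F^{-m}P,F^{-m}Q)$ and $|A^{-m}w_i|$ by the same factor $\mu^s_i(A)^{-m}$, so rate comparisons alone give no contradiction, and the interaction with the $\ZZ^d$-action through $n_m\in A^m\ZZ^d$ sequences is only gestured at.

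The paper's proof takes a different route that sidesteps both issues. It never isolates $(\star)$. Instead it argues by contradiction directly with the foliation: if $\tildeF^s_{(i+1,k)}$ is not $\ZZ^d$-periodic, take $x_1\in\tildeF^s_{(i+1,k)}(x_0+n)-n\setminus\tildeF^s_{(i+1,k)}(x_0)$, note $x_1\in\tildeF^s_{(i,k)}(x_0)$ by the periodicity hypothesis, and set $z_1=\tildeF^s_i(x_1)\cap\tildeF^s_{(i+1,k)}(x_0)$, $a:=d^s_i(x_1,z_1)>0$. The holonomy-invariance of $d^s_i$ (Proposition~\ref{affine metric}) and the $n_m\in A^m\ZZ^d$-approximation machinery (Propositions~\ref{nmF} and \ref{projection leaf dense}) are then used to \emph{accumulate} the defect: one builds points $x_l\in\tildeF^s_{(i+1,k)}(x_0+n)-n$ with $z_l=\tildeF^s_i(x_l)\cap\tildeF^s_{(i+1,k)}(x_0)$ such that $d^s_i(x_l,z_l)=la\to\infty$. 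Quasi-isometry of $\tildeF^s_i$ forces $d(x_l,z_l)\to\infty$, contradicting the uniform bound $\|H-\mathrm{Id}\|<C_0$, which keeps $\tildeF^s_{(i+1,k)}(x_0+n)$ within bounded Hausdorff distance of $\tildeF^s_{(i+1,k)}(x_0)+n$. This accumulation argument is the mechanism that actually rules out the defect; it is not a technical detail to be filled in after a sharpening step, and it is missing from your outline.
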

	
	\begin{remark}\label{s-dominatedonTd}
	   Using Proposition \ref{Zd foliation} and by induction beginning with the fact that $\tildeF^s_{(1,k)}=\tildeF^s$ is $\ZZ^d$-periodic, we have that $\tildeF^s_{(i,k)} (1\leq i\leq k)$ is $\ZZ^d$-periodic.  Note that $\tildeF^s_{(1,i)} (1\leq i\leq k)$ is always $\ZZ^d$-periodic (see Proposition \ref{leaf proposition}).  Thus, $\tildeF^s_i=\tildeF^s_{(1,i)}\cap \tildeF^s_{(i,k)}$ is also $\ZZ^d$-periodic, for all $1\leq i\leq k$. It follows that for every $1\leq i\leq k$, $\tilde{E}^s_i$ is $\ZZ^d$-periodic. Hence, by \eqref{bundleonorbitandlift}, we get the bundle $E^s_i$ defined well on $\TT^d$.
	\end{remark}

	By Corollary \ref{ key corollary}, $\tilde{E}^s_{(1,i-1)}\oplus \tilde{E}^s_{(i+1,k)}$ is interagble and denote the $F$-invariant integral foliations by $\tildeF^{s,\perp}_i$. We also denote $\tildeL^s_{(1,i-1)}\oplus\tildeL^s_{(i+1,k)}$ by $\tildeL^{s,\perp}_i$.
	
	\begin{proposition}\label{H is no distance on Fi}
			Let $f\in\mathcal{U}$ given by Proposition \ref{leaf proposition}. Assume that 	for every $1\leq i\leq k$, $H(\tildeF^s_i)=\tildeL^s_i$ and $H$ is isometric along  each leaf of $\tildeF^s_i$ under the metric $d^s_i(\cdot,\cdot)$. Then $$H^{-1}(x+n)-n\in \tildeF^{s,\perp}_i\big(H^{-1}(x)\big),$$  for all $1\leq i\leq k$, $x\in\RR^d$ and $n\in \ZZ^d$. Especially, for every irreducible Anosov map $f$ on torus with one dimensional stable bundle, if $H$ is isometric along  each leaf of $\tildeF^s$ under the metric $d^s(\cdot,\cdot)$ then $$H^{-1}(x+n)-n= H^{-1}(x),$$ for all $x\in\RR^d$ and $n\in\ZZ^d$.
	\end{proposition}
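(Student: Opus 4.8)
Since the intersection $\bigcap_{i=1}^{k}\tildeF^{s,\perp}_i(p)$ is tangent to $\bigcap_i\bigoplus_{j\neq i}\tilde E^s_j=\{0\}$, it reduces to the single point $p$; hence the asserted conclusion (for \emph{all} $i$) is equivalent to $H^{-1}(x+n)=H^{-1}(x)+n$, i.e. to the vanishing of
$$\zeta_n(x):=H\!\left(H^{-1}(x+n)-n\right)-x .$$
So the plan is to prove $\zeta_n\equiv 0$ for every $n\in\ZZ^d$; this also gives the one–dimensional statement directly (and, via Proposition~\ref{special and conjugate}, that $f$ is special). First I would record the elementary structure of $\zeta_n$. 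Running the argument of Proposition~\ref{H is Zd restricted on stable} for $H$ (in place of $H^{-1}$) gives $\zeta_n(x)\in\tilde L^s$, and $\|H-Id\|,\|H^{-1}-Id\|\le C_0$ gives $|\zeta_n(x)|\le 2C_0$ uniformly in $x,n$. Using $H\circ F=A\circ H$, $H^{-1}\circ A=F\circ H^{-1}$ and $F(z-n)=F(z)-An$ one checks the covariance relation $\zeta_{An}(Ax)=A\,\zeta_n(x)$, together with a (base–twisted) additivity $\zeta_{n+n'}(x)=\zeta_n(x+n')+\zeta_{n'}\!\left(x+\zeta_n(x+n')\right)$. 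By Corollary~\ref{ key corollary}, Proposition~\ref{Zd foliation} and Remark~\ref{s-dominatedonTd}, all the foliations $\tildeF^s_j$, $\tildeF^{s,\perp}_i$ and the affine metrics $d^s_i$ of Proposition~\ref{affine metric} are $\ZZ^d$–periodic/invariant; and since $H$ is an isometry of $\tildeF^s_i$ onto $\tildeL^s_i$ it maps (by the accessibility/product structure used in Claim~\ref{linearfoliationclaim}) $\tildeF^{s,\perp}_i$ onto $\tildeL^{s,\perp}_i$, so that, projecting inside the stable leaf, the $d^s_i$–distance from $H^{-1}(x)$ to $\tildeF^s_i(H^{-1}(x))\cap\tildeF^{s,\perp}_i\!\left(H^{-1}(x+n)-n\right)$ equals $|\pi^i\zeta_n(x)|$, where $\pi^i:\tilde L^s\to\tilde L^s_i$ is the projection along $\tilde L^{s,\perp}_i$. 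It is also useful to set $g:=\pi^s(H-Id):\RR^d\to\tilde L^s$ (bounded by $C_0$); one then has $\zeta_n(x)=g\!\left(H^{-1}(x+n)-n\right)-g\!\left(H^{-1}(x+n)\right)$, so $\zeta_n\equiv0$ is exactly the statement that $g$ is $n$–periodic, and $g$ satisfies the twisted cohomological equation $g\circ F=A\,g+G$ with $G$ being $\ZZ^d$–periodic.

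Next I would study the period lattice $N:=\{n\in\ZZ^d:\zeta_n\equiv0\}=\{n:g(\cdot+n)=g\}$. The twisted additivity shows $N$ is a subgroup, and the covariance relation (with $A$ surjective on $\RR^d$) gives $n\in N\Leftrightarrow An\in N$; in particular $A(N)\subseteq N$, so $V:=N\otimes_{\ZZ}\RR$ is a rational $A$–invariant subspace. By irreducibility of $A$, either $V=\RR^d$ or $V=\{0\}$. In the first case $N$ has finite index, the induced endomorphism of the finite group $\ZZ^d/N$ is injective, hence a permutation of some order $\ell$, so $A^{\ell}n-n\in N$ for all $n$; feeding this into the $\ell$–th iterate $\zeta_{A^{\ell}n}(A^{\ell}x)=A^{\ell}\zeta_n(x)$ together with $\zeta_{A^{\ell}n}(x)=\zeta_n\!\left(x+(A^{\ell}n-n)\right)$ and iterating the resulting contraction (using $\|A|_{L^s}\|<1$ and $\|\zeta_n\|_\infty\le2C_0$) forces $\zeta_n\equiv0$, i.e. $N=\ZZ^d$. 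Thus everything comes down to excluding the remaining possibility $N=\{0\}$, and this is exactly where the hypothesis "$H$ isometric along $\tildeF^s_i$" (equivalently the periodic–exponent rigidity $\lambda^s_i(p,f)=\lambda^s_i(A)$ underlying Proposition~\ref{s-leaf isometric}) must be spent.

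The hard part is this exclusion of $N=\{0\}$, and I expect it to resist a purely quantitative attack. Iterating $F$ backwards and using that $H$ is a $d^s_i$–isometry along $\tildeF^s_i$ and that $F$ acts affinely there, one gets the exact identity $d^s_i(F^{-m}H^{-1}(x),\,F^{-m}z)=\mu^s_i(A)^{-m}\,|\pi^i\zeta_n(x)|$, whereas Proposition~\ref{nmH} bounds $\|\zeta_v\|_\infty$ by $C\,\|A|_{L^s}\|^{m}$ (and its $\pi^i$–part by $C\,|\mu^s_i(A)|^{m}$) only on the thin lattices $v\in A^m\ZZ^d$; the two exponential rates cancel exactly, so no naive estimate closes the argument. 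Instead I would substitute for the density of $\mathcal F^s_i$ on $\TT^d$ (which is unavailable in the non‑invertible setting) the "special $\ZZ^d$–sequence" technology of Propositions~\ref{nmH}, \ref{nmF} and \ref{projection leaf dense}: along $v_m\in A^m\ZZ^d$ one has $\zeta_{v_m}\to0$ uniformly, hence $g$ is almost $A^m\ZZ^d$–periodic with exponentially small error, and — combining this almost‑periodicity with the equivariance $g\circ F=A g+G$, the isometry relations of Proposition~\ref{affine metric} (items~3 and~4), and the minimality of the linear (weak) stable foliations of the irreducible $A$ — one transports the almost‑periodicity of $g$ across $\TT^d$ to produce a genuine nonzero period, i.e. $N\neq\{0\}$. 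By the dichotomy above this yields $N=\ZZ^d$, hence $H^{-1}(x+n)-n=H^{-1}(x)$ when $\dim E^s=1$ and, in general, $H^{-1}(x+n)-n\in\tildeF^{s,\perp}_i(H^{-1}(x))$ for every $i$. The single genuinely delicate point is precisely the passage from almost‑periodicity of $g$ along the sparse lattices $A^m\ZZ^d$ to an exact $\ZZ^d$–period — the role played by minimality of $\mathcal F^s_i$ in the diffeomorphism arguments of Gogolev–Guysinsky and Gan–Shi, here replaced by the approximation scheme of Section~\ref{sec:preliminaries}.
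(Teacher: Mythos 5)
Your proposal is not complete: the step you yourself flag as ``the single genuinely delicate point'' --- excluding $N=\{0\}$, i.e.\ producing a nonzero element of the period lattice from the almost-periodicity along the sparse lattices $A^m\ZZ^d$ --- is exactly where the substance of the proposition lies, and it is not carried out. Everything you do write (the reduction of the conclusion to $\zeta_n\equiv 0$; $\zeta_n$ is $\tilde L^s$-valued and uniformly bounded by $2C_0$; the covariance $\zeta_{An}(Ax)=A\zeta_n(x)$ and twisted additivity; $N$ a subgroup with $A(N)\subset N$; the finite-index case dispatched by iterating the covariance backwards, since $A^{-j\ell}v$ blows up for $0\neq v\in \tilde L^s$ while $\|\zeta_n\|_\infty\leq 2C_0$) is correct, but without the missing step it proves nothing, and the finite-index branch of the dichotomy turns out to be a detour the paper never takes.

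The paper's actual proof is organized around a different key claim which you have not formulated: for a \emph{fixed} $n$, the ``$i$-th component of the defect''
$$\tilde d^s_i\big(H^{-1}(x),\,H^{-1}(x+n)-n\big),$$
(where $\tilde d^s_i$ is the $d^s_i$-length of the $\tildeF^s_i$-component inside the stable leaf, well defined because holonomy along $\tildeF^{s,\perp}_i$ is a $d^s_i$-isometry) is a \emph{constant} $\alpha=\alpha(n)$ independent of $x$. This is proved by transporting $x$ to an arbitrary $y$ via Proposition~\ref{projection leaf dense} (choose $z_m\in\tildeL^s_i(x)$ and $k_m\in A^m\ZZ^d$ with $z_m+k_m\to y$), using Proposition~\ref{nmH} and uniform continuity of $H^{\pm1}$ to handle the $k_m$-shifts, the $\ZZ^d$-periodicity of $\tildeF^s_i$ and $\tildeF^{s,\perp}_i$ to move by $k_m$, and the isometry of $H^{-1}$ along $\tildeL^s_i$ to move from $x$ to $z_m$. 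Once the defect is constant, the proof is immediate: applying the claim at $y$ with $H^{-1}(y)=H^{-1}(x+(l-1)n)-(l-1)n$ and iterating gives $\tilde d^s_i\big(H^{-1}(x+ln)-ln,\,H^{-1}(x)\big)=l\alpha\to\infty$, which (via quasi-isometry of $\tildeF^s_i$) contradicts the uniform bound $d\big(H^{-1}(x+m)-m,\,H^{-1}(x)\big)\leq 2C_0$. In your language: rather than arguing abstractly that $N\neq\{0\}$ and then invoking irreducibility, the paper shows that $|\pi^i\zeta_n(\cdot)|$ (measured with the affine metric) is constant, and a nonzero constant accumulates linearly under $x\mapsto x+n$, overwhelming the a priori $L^\infty$-bound. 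Your covariance/dichotomy scaffolding is consistent with all of this, but the load-bearing idea --- constancy of the defect via the approximation scheme, which is precisely where the isometry hypothesis is spent --- is the piece you needed and did not supply.
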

	
	\vspace{0.1cm}
	
     We leave the proofs for Proposition \ref{Zd foliation} and Proposition \ref{H is no distance on Fi} in subsection \ref{subsection 5.2}. Now, by the previous propositions, we can prove Theorem \ref{s-rigidity implies special}.
	\begin{proof}[Proof of Theorem \ref{s-rigidity implies special}]
		In the case of dim$E^s=1$, since $\lambda^s(p,f)=\lambda^s(q,f)$ for all $p,q\in{\rm Per}(f)$, combining Proposition \ref{s-leaf isometric} and Proposition \ref{H is no distance on Fi}, we have $H^{-1}(x+n)= H^{-1}(x)+n$, for all $x\in\RR^d$ and $n\in\ZZ^d$. It means that $H$ can descend to $\TT^d$. By Proposition \ref{special and conjugate}, $f$ is special.
		
		In the case of higher-dimensional stable bundle, by Corollary \ref{ key corollary} and Proposition \ref{H is no distance on Fi}, we have
		$$	H^{-1}(x+n)-n \in \bigcap_{i=1}^{k}\left( \tildeF^{s,\perp}_i(H^{-1}x) \right)= \big\{H^{-1}(x)\big\},$$
		for all $x\in\RR^d$ and $n\in\ZZ^d$. Hence, $f$ is special.
		And by Remark \ref{s-dominatedonTd}, $f$ admits $T\TT^d=E^s_1\oplus..\oplus E^s_k\oplus E^u.$
	\end{proof}

	Combining Theorem \ref{special implies s-rigidity} and Corollary \ref{ key corollary}, we can show that, if the conjugacy between the non-invertible Anosov maps $f$ and $A$ exists, then it must be smooth along the stable foliation (see Corollary \ref{cor-1} and Corollary \ref{cor-d}). We mention that it can be proved without Proposition \ref{Zd foliation} and  Proposition \ref{H is no distance on Fi}.
	\begin{proof}[Proof of Corollary \ref{cor-1} and Corollary \ref{cor-d}]
		Assume that $f$ is $C^{1+\alpha}$-smooth. Let $h$ be a conjugacy  between $f$ and $A$. By Proposition \ref{special and conjugate}, $f$ is special. Then, by Theorem \ref{special implies s-rigidity}, we have the spectral rigidity on stable bundle for $f$ which is exactly the condition stated in Theorem \ref{s-rigidity implies special}. Moreover, $f$ admits the finest (on stable bundle) dominated splitting and the conjugacy $h$ maps each stable foliation $\mathcal{F}^s_i$ to $\mathcal{L}^s_i$ (see Proposition \ref{su-integrable and leaf-conjugate}). Since the bundle $E^s_i (1\leq i\leq k)$ is H$\ddot{\rm o}$lder continuous (see Remark \ref{holderbundle}) with exponent $\beta$, for some $0<\beta\leq \alpha$,  by Proposition \ref{Livsic} and the construction of $d^s_i(\cdot,\cdot)$ (see Proposition \ref{affine metric}), the metric $d^s_i(\cdot,\cdot)$ is $C^{1+\beta}$-smooth along each leaf of $\tildeF^s_i$. So, Corollary \ref{ key corollary} actually shows that $h$ is $C^{1+\beta}$-smooth along $\mathcal{F}^s_i$, $1\leq i\leq k$. It follows that $h$ is $C^{1+\beta}$-smooth along the stable foliation $\mathcal{F}^s$, by Journ$\acute{\rm e}$ Lemma \cite{journe}.	In the case of dim$E^s=1$, since each leaf of  the stable foliation $\mathcal{F}^s$ is $C^{1+\alpha}$-smooth, the metric $d^s(\cdot,\cdot)$ is $C^{1+\alpha}$-smooth along each leaf of the  stable foliation $\mathcal{F}^s$ and so is $h$.
	\end{proof}

	\subsection{Induction of the leaf conjugacy} \label{subsection 5.1}
	In this subsection, we prove Proposition \ref{s-leaf isometric} and Proposition \ref{s-leaf conjugacy introduction}. 	Let $f\in\mathcal{U}$ given by Proposition \ref{leaf proposition}. Fix $1\leq i\leq k$, assume that $H(\tildeF^s_i)=\tildeL^s_i$ and $\lambda^s_i(p,f)=\lambda^s_i(A)$ for all $p\in{\rm Per}(f)$. We show that $H$ is isometric restricted on $\tildeF^s_i$.
	
	\begin{proof}[Proof of Proposition \ref{s-leaf isometric}]
		By the existence of affine metric $d^s_i(\cdot,\cdot)$ given by Proposition \ref{affine metric}, we have that $H$ is bi-Lipschitz along $\tildeF^s_i$.
		Indeed, one can just iterate any two points $x_0,y_0\in\RR^d$ such that they are away an almost fixed distance, say $d^s_i\big(F^n(x_0),F^n(y_0)\big)\in [C_1,C_2]$. By the uniform continuity of $H$  and $\tildeF^s_i$ (see Remark \ref{uniform continuity of foliation on Rd}), there exist $C_1', C_2'>0$ such that for every $x,y\in\RR^d$, if $d^s_i(x,y)\in[C_1,C_2]$ then   $d\big(H(x), H(y)\big)\in [C_1',C_2']$. Hence, by Proposition \ref{affine metric} we have
		$$\frac{d\big(H(x_0),H(y_0)\big)}{d_i^s(x_0,y_0)}=\frac{d\big(H\circ F^n(x_0),H\circ F^n(y_0)\big)}{d_i^s\big(F^n(x_0),F^n(y_0)\big)}\in \left[\frac{C_1'}{C_2},  \frac{C_2'}{C_1}  \right].$$
		Similarly, we have $H^{-1}$ is Lipschitz along $\tildeL^s_i$.
		
		It is convenient to prove that $H^{-1}$ is isometric along $\tildeL^s_i$. Thus, so is $H$ along $\tildeF^s_i$.
		
		By Lipschitz continuity, there exists a point $x\in \tildeL^s_i(0)$ differentiable  and we assume $\big(H^{-1}|_{\tildeL^s_i}\big)'(x)=C_0$. It means that, for any $\varepsilon>0$ small enough, there exists $\delta>0$ such that
		\begin{align}
			\left|   \frac{d^s_i\big(H^{-1}(x),H^{-1}(w)\big)}{d(x,w)}-C_0  \right| <\frac{\e}{2}, \label{prop.5.1.0}
		\end{align}
		for every $w\in \tildeL^s_i(x,\delta)$.
	    Fix $y\in\RR^d$, for every $z\in \tildeL^s_i(y)$ and $n_m\in \ZZ^d$, we denote
		$$y_m=\tildeL^s(y+n_m)\cap \tildeL^u(x), \;
		z_m=\tildeL^s_i(y_m)\cap\tildeL^{s,\perp}_i(z+n_m)\;\; {\rm and}\;\;
		x_m=\tildeL^s_i(x)\cap\tildeL^u(z_m) .$$
		
		\begin{claim}\label{claim for H isometric}
		There exists a sequence $\{n_m\}\subset \ZZ^d$ with $n_m\in A^m\ZZ^d$, such that  when $m\to +\infty$,
			\begin{align}
				d(x,x_m)\to d(y,z), \label{proposition5.1-0}
			\end{align}
	    and \begin{align}
				d^s_i\big(H^{-1}(x),H^{-1}(x_m)\big)\to d^s_i\big(H^{-1}(z),H^{-1}(y)\big) .\label{proposition5.1-7}
			\end{align}
		\end{claim}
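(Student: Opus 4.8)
The plan is to read off the sequence $\{n_m\}$ from the density statement Proposition~\ref{projection leaf dense} and then prove the two limits by separate devices: \eqref{proposition5.1-0} by a purely affine bookkeeping on the $A$-side, and \eqref{proposition5.1-7} by transporting the metric $d^s_i$ along the same chain of leaves under $H^{-1}$, using that $H^{-1}$ intertwines the linear holonomies of $A$ with the $\tildeF^u$-holonomies of $F$ and, up to a vanishing error, with the $\ZZ^d$-translations.

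To choose $n_m$, I would apply the first item of Proposition~\ref{projection leaf dense} to the linear foliation $\tildeL^u$ and to the ordered pair $y,x$, obtaining $v_m\in\tildeL^u(y)$ and $n_m\in A^m\ZZ^d$ with $v_m+n_m\to x$; since $v_m-y\in\tilde L^u$, the point $y+n_m$ is then $o(1)$-close to the affine leaf $\tildeL^u(x)$, and as $\tildeL^s,\tildeL^u$ are complementary linear foliations the point $y_m=\tildeL^s(y+n_m)\cap\tildeL^u(x)$ satisfies $d(y_m,y+n_m)\to0$. Note that $y+n_m,\,z+n_m,\,y_m,\,z_m$ all lie on the single affine leaf $\tildeL^s(y+n_m)$, while $x,x_m$ lie on $\tildeL^s_i(x)$. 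Everything on the $A$-side being affine, if $\pi_i\colon\tilde L^s\to\tilde L^s_i$ denotes the projection along $\tilde L^{s,\perp}_i$, the defining intersections give $z_m=y_m+a_m$ and $x_m=x+a_m$ with $a_m:=\pi_i\big((z+n_m)-y_m\big)$; writing $(z+n_m)-y_m=(z-y)+\big((y+n_m)-y_m\big)$ with $z-y\in\tilde L^s_i$ and $(y+n_m)-y_m\to0$ shows $a_m\to z-y$, hence $x_m\to x+(z-y)$ and $d(x,x_m)\to d(y,z)$, that is, \eqref{proposition5.1-0}. In particular $x_m$ converges, so the left side of \eqref{proposition5.1-7} automatically converges to $d^s_i\big(H^{-1}(x),H^{-1}(x+(z-y))\big)$ and the content of \eqref{proposition5.1-7} is that this equals $d^s_i\big(H^{-1}(z),H^{-1}(y)\big)$.

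For \eqref{proposition5.1-7} the crucial observation is that $x_m-x=z_m-y_m=a_m$, so the holonomy of $\tildeL^s_i$ along $\tildeL^u$ inside $\tildeL^{s,u}_i$ carries the pair $(y_m,z_m)$ onto $(x,x_m)$. Under $H^{-1}$ this becomes the holonomy of $\tildeF^s_i$ along $\tildeF^u$ inside $\tildeF^{s,u}_i$, which is a $d^s_i$-isometry by Proposition~\ref{affine metric}(4) — its hypothesis that $\tilde E^s_i\oplus\tilde E^u$ be integrable holds here since $H$ is a leaf conjugacy carrying the pair $(\tildeF^s_i,\tildeF^u)$ onto the jointly integrable linear pair $(\tildeL^s_i,\tildeL^u)$. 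Hence $d^s_i\big(H^{-1}(x),H^{-1}(x_m)\big)=d^s_i\big(H^{-1}(y_m),H^{-1}(z_m)\big)$ exactly. Next, $d(y_m,y+n_m)\to0$ and $d(z_m,z+n_m)\to0$ (the latter from $z_m-(z+n_m)=\big(y_m-(y+n_m)\big)+\big(a_m-(z-y)\big)\to0$); combined with the uniform continuity of $H^{-1}$ and of $d^s_i$ — the latter being a consequence of the boundedness and uniform continuity of $\Psi$ (Proposition~\ref{quotient bundle}) together with the uniform continuity and quasi-isometry of $\tildeF^s_i$ (Proposition~\ref{leaf proposition}, Remark~\ref{uniform continuity of foliation on Rd}) — this yields $d^s_i\big(H^{-1}(y_m),H^{-1}(z_m)\big)-d^s_i\big(H^{-1}(y+n_m),H^{-1}(z+n_m)\big)\to0$.

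The remaining step, namely $d^s_i\big(H^{-1}(y+n_m),H^{-1}(z+n_m)\big)\to d^s_i\big(H^{-1}(y),H^{-1}(z)\big)$, is where I expect the real difficulty. Proposition~\ref{nmH} only gives $H^{-1}(y+n_m)=H^{-1}(y)+n_m+O(\varepsilon_m)$ and $H^{-1}(z+n_m)=H^{-1}(z)+n_m+O(\varepsilon_m)$ with $\varepsilon_m\to0$, and since $\tildeF^s_i$ and $d^s_i$ are not yet known to be $\ZZ^d$-periodic when $i\ge2$, this $C^0$-closeness cannot be converted directly into an invariance of $d^s_i$. I would handle it by renormalising with the dynamics: using $n_m\in A^m\ZZ^d$ so that $F^{-m}(w+n_m)=F^{-m}(w)+A^{-m}n_m$ with $A^{-m}n_m\in\ZZ^d$, the $F$-affineness of $d^s_i$ (Proposition~\ref{affine metric}(2)) and $F^{-m}\circ H^{-1}=H^{-1}\circ A^{-m}$ turn the quotient $d^s_i\big(H^{-1}(y+n_m),H^{-1}(z+n_m)\big)/d^s_i\big(H^{-1}(y),H^{-1}(z)\big)$ into
$$\frac{d^s_i\big(H^{-1}(A^{-m}y+A^{-m}n_m),\,H^{-1}(A^{-m}z+A^{-m}n_m)\big)}{d^s_i\big(H^{-1}(A^{-m}y),\,H^{-1}(A^{-m}z)\big)};$$
since $A^{-m}$ expands $\tilde L^s_i$, the denominator grows like $|\mu^s_i(A)|^{-m}\to\infty$ (quasi-isometry of $\tildeF^s_i$ and equivalence of $d^s_i$ with $d_{\tildeF^s_i}$), whereas the effect on the numerator of the now-integer translation $A^{-m}n_m$ is only a bounded perturbation of the two endpoints — controlled via $\|H^{-1}-Id\|<C_0$, Proposition~\ref{H is Zd restricted on stable}, the $\ZZ^d$-periodicity of the full stable foliation $\tildeF^s$ (Proposition~\ref{leaf proposition}) and the Hölder continuity of $\tilde E^s_i$ (Remark~\ref{holderbundle}) to bound the resulting distortion of $d^s_i$ — so the quotient tends to $1$, which is \eqref{proposition5.1-7}. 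When $\dim E^s=1$ (the case $i=1$) both $\tildeF^s$ and $\Psi$ descend to $\TT^d$, so $d^s$ is genuinely $\ZZ^d$-invariant and this last step is immediate from Proposition~\ref{nmH} and the continuity of $d^s$; the genuine obstacle is thus the $i\ge2$ case, where pinning down exactly which estimates make the bounded endpoint-perturbation negligible against the exploding denominator is the heart of the argument.
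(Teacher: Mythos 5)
Your choice of $n_m$ and your proof of \eqref{proposition5.1-0} match the paper's (the paper picks $\tilde n_m$ with $A^{-m}y+\tilde n_m$ within distance $1$ of $\tildeL^u(A^{-m}x)$ and sets $n_m=A^m\tilde n_m$, then contracts by $A^m$ on the stable direction to get $d(y_m,y+n_m)\to0$; your affine bookkeeping via $a_m=\pi_i((z+n_m)-y_m)$ is a more explicit version of the same computation). The first two reductions for \eqref{proposition5.1-7} — the $\tildeF^u$-holonomy isometry carrying $(y_m,z_m)$ to $(x,x_m)$, then uniform continuity of $H^{-1}$ and of $d^s_i$ to replace $(y_m,z_m)$ by $(y+n_m,z+n_m)$ — are exactly what the paper does (equations (5.9)–(5.11) there). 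So the two arguments coincide up to the last step $d^s_i\big(H^{-1}(y+n_m),H^{-1}(z+n_m)\big)\to d^s_i\big(H^{-1}(y),H^{-1}(z)\big)$.

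At that step you deviate from the paper, and this is where your proposal has a real gap. The paper stays at the fixed location: it applies Proposition~\ref{nmH} to pull $H^{-1}(y+n_m)$ and $H^{-1}(z+n_m)$ back by $-n_m$ to within $o(1)$ of $H^{-1}(y)$, $H^{-1}(z)$, and then applies Proposition~\ref{nmF} (which for $\tildeL=\tildeL^s_i$ gives $\tildeF^s_i(w+n_m,R)-n_m\to\tildeF^s_i(w,R)$ in Hausdorff distance about the \emph{fixed} point $w=H^{-1}(y)$) to conclude that the curve joining $H^{-1}(y+n_m)$ to $H^{-1}(z+n_m)$ inside $\tildeF^s_i$, once translated by $-n_m$, converges to the fixed finite curve joining $H^{-1}(y)$ to $H^{-1}(z)$, so the $d^s_i$-lengths converge. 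You instead renormalise by $F^{-m}$, reducing the desired ratio to $d^s_i\big(H^{-1}(A^{-m}y+\tilde n_m),H^{-1}(A^{-m}z+\tilde n_m)\big)\,/\,d^s_i\big(H^{-1}(A^{-m}y),H^{-1}(A^{-m}z)\big)$, and claim that the $\ZZ^d$-shift by $\tilde n_m$ perturbs the numerator by only $O(1)$. But for $i\ge 2$ that claim does not follow from the facts you cite: at this stage of the argument $\tildeF^s_i$ is \emph{not} known to be $\ZZ^d$-periodic and $\Psi$ is \emph{not} $\ZZ^d$-periodic (its summand $\alpha(x)=\log\cos\angle(\tilde N(x),\tilde E^s_i(x))$ involves the non-periodic bundle $\tilde E^s_i$), so the translate of the $\tildeF^s_i$-segment from $H^{-1}(A^{-m}y)$ to $H^{-1}(A^{-m}z)$ by $\tilde n_m$ is not an $\tildeF^s_i$-segment and its $\Psi$-weighted length need not agree with $d^s_i$ on the actual $\tildeF^s_i$-leaf; the $\|H^{-1}-Id\|<C_0$ bound only controls the two \emph{endpoints}, not the deviation of the two curves over their length $\sim|\mu^s_i(A)|^{-m}\to\infty$, and Proposition~\ref{H is Zd restricted on stable} only places $H^{-1}(w+\tilde n_m)-\tilde n_m$ in the full stable leaf $\tildeF^s(H^{-1}(w))$, not in $\tildeF^s_i(H^{-1}(w))$. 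You acknowledge this yourself by flagging ``pinning down exactly which estimates\ldots is the heart of the argument''; but that is precisely the content that is missing, and the renormalisation turns a fixed-scale comparison (where Proposition~\ref{nmF} applies cleanly) into a comparison of two unboundedly long curves, which is strictly harder. Replacing your last step by the paper's direct appeal to Proposition~\ref{nmF} at the unrenormalised scale closes the gap.
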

	
	\begin{proof}[Proof of Claim \ref{claim for H isometric}]
		Fix $m\in\NN$, we consider the point $A^{-m}y$. Since $A$ is irreducible, the unstable foliation $\mathcal{L}^u$ is minimal on $\TT^d$. Thus  we can choose $\tilde{n}_m\in\ZZ^d$ such that $d\big(A^{-m}y+\tilde{n}_m, \tildeL^u(A^{-m}x) \big)\leq 1$.  Let $y(m)= \tildeL^s(A^{-m}y+\tilde{n}_m )\cap  \tildeL^u(A^{-m}x)$ and  $n_m=A^m\tilde{n}_m$.  Note that $y_m=A^my(m)$ and $$d(y_m, y+n_m)\leq \|A|_{L^s}\|^m d(y(m), A^{-m}y+\tilde{n}_m)\to 0,$$ as $m\to +\infty$. The sequence $\{n_m\}$ is what we need.	 Let $m\to+\infty$, we have
		\begin{align}
				d(y_m,y+n_m)\to 0 \quad{\rm and} \quad  d(z+n_m,z_m)\to 0.  \label{prop5.1.1}
			\end{align}
			 Moreover,
			 $$\left| d(y+n_m,z+n_m)-d(y_m,z_m)    \right| \to 0.$$
		    Note that $d(x,x_m)=d(y_m,z_m)$,  then we get \eqref{proposition5.1-0}. See Figure 3.
	
				\begin{figure}[htbp]
				\centering
				\includegraphics[width=14cm]{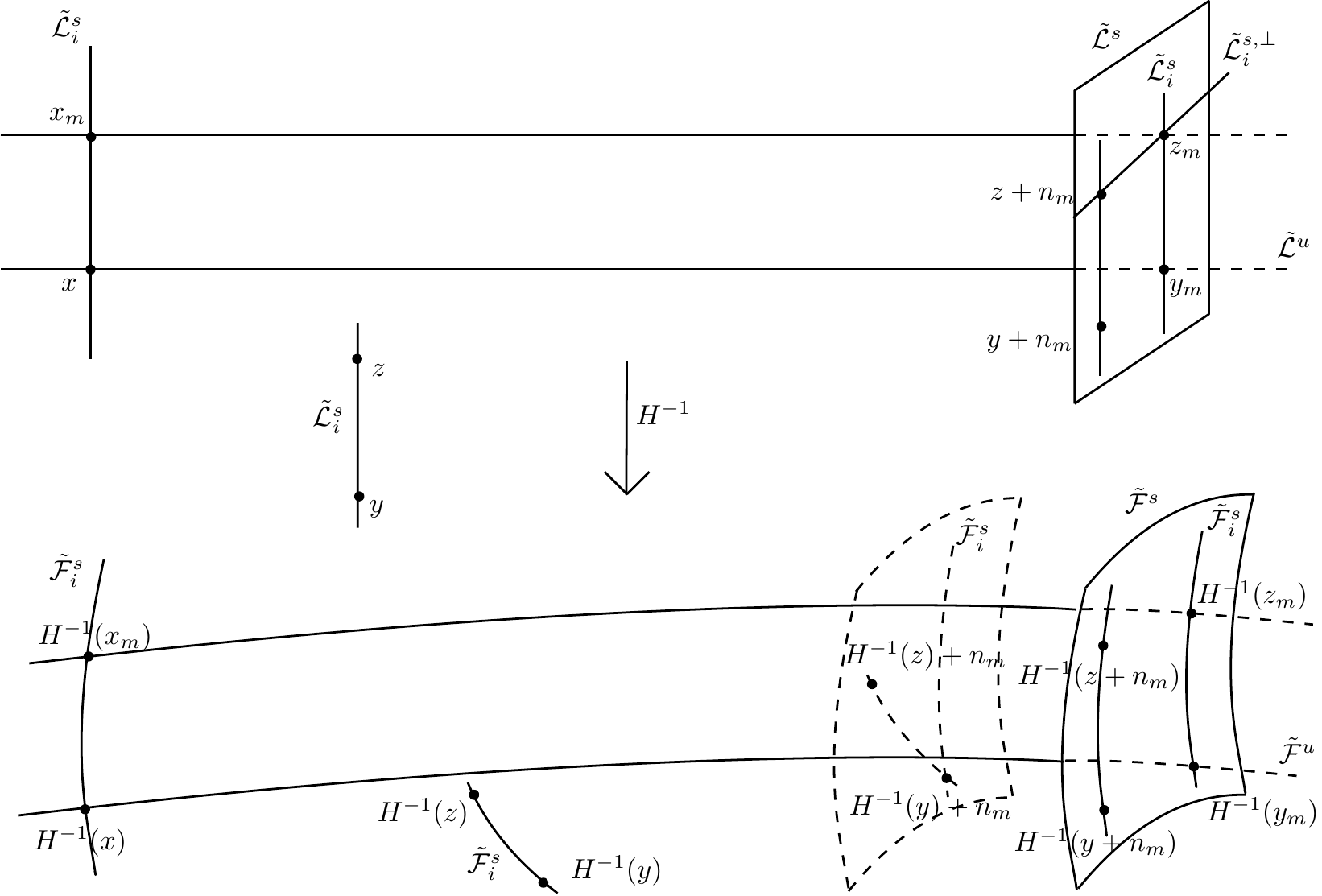}
				\caption{Approaching by $n_m$ sequence.}	
			\end{figure}
			
			Now, by the uniform continuity of $H^{-1}$ and \eqref{prop5.1.1}, one has
			\begin{align}
				d\big(H^{-1}(z_m),H^{-1}(z+n_m)\big)\to 0 \quad {\rm and} \quad d\big(H^{-1}(y_m),H^{-1}(y+n_m)\big)\to 0,  \label{proposition5.1-1}
			\end{align}
			as $m\to +\infty$. By $H^{-1}(\tildeL^s_i)=\tildeF^s_i$ and $H^{-1}(\tildeL^u)=\tildeF^u$, we have $\tilde{E}^s_i\oplus\tilde{E}^u$ is jointly integrable. Thus, by the forth item of Proposition \ref{affine metric}, one has
		\begin{align}
			d^s_i\big(H^{-1}(x),H^{-1}(x_m)\big)=d^s_i\big(H^{-1}(z_m),H^{-1}(y_m)\big).  \label{proposition5.1-2}
		\end{align}
		Combining \eqref{proposition5.1-1} and \eqref{proposition5.1-2},  we get
		\begin{align}
			\Big| d^s_i\big(H^{-1}(x),H^{-1}(x_m)\big)-d^s_i\big(H^{-1}(z+n_m),H^{-1}( y+n_m)\big)   \Big|\to 0. \label{proposition5.1-3}
		\end{align}

	   On the other hand, by Proposition \ref{nmH}, we have
	   \begin{align}
	   	\big(H^{-1}(z+n_m)-n_m\big)\to H^{-1}(z) \quad {\rm and }  \quad \big(H^{-1}(y+n_m)-n_m\big)\to H^{-1}(y), \label{proposition5.1-4}
	   \end{align}
	   as $m\to +\infty$.
		Note that $H^{-1}(z)+n_m$ may not belong to $\tildeF^s_i(H^{-1}(y)+n_m)$.  But, by Proposition \ref{nmF},
		\begin{align}
			d\left( H^{-1}(z)+n_m \;,\;  \tildeF^s_i(H^{-1}(y)+n_m) \right)\to 0. \label{proposition5.1-5}
		\end{align}
		Hence, by \eqref{proposition5.1-4} and \eqref{proposition5.1-5}, one has
		\begin{align}
			d^s_i\big(H^{-1}(z+n_m),H^{-1}(y+n_m)\big)\to d^s_i\big(H^{-1}(z),H^{-1}(y)\big).  \label{proposition5.1-6}
		\end{align}
		Consequently, according to  \eqref{proposition5.1-3} and \eqref{proposition5.1-6}, when $m\to +\infty$, we get \eqref{proposition5.1-7}.
		
	\end{proof}
	
		Now, by Claim \ref{claim for H isometric}, for any $\e>0$ and $z\in \tildeL^s_i(y)$, there exists $N_0\in\NN$ such that when $m>N_0$, one has
		\begin{align}
					\left|   \frac{d^s_i(H^{-1}(y),H^{-1}(z))}{d(y,z)}-\frac{d^s_i(H^{-1}(x),H^{-1}(x_m))}{d(x,x_m)} \right| <\frac{\e}{2}. \label{prop5.1.11}
		\end{align}
		Moreover, let $z\in \tildeL^s_i(y,\frac{\delta}{2})$, we have $d(x,x_m)<\delta$, for all $m>N_0$. Combining \eqref{prop.5.1.0} and \eqref{prop5.1.11},
		 for every $y\in\RR^d$ and every $\varepsilon>0$, there exists $\delta>0$ such that
		$$\left|   \frac{d^s_i\big(H^{-1}(y),H^{-1}(z)\big)}{d(y,z)}-C_0  \right| <\varepsilon,$$
		for every $z\in \tildeL^s_i(y,\frac{\delta}{2})$.   It follows that $H$ is differentiable along $\tildeL^s_i$ and the derivative is the constant $C_0$.  Note that we can change the metric $d^s_i(\cdot,\cdot)$ by scaling such that $C_0=1$.
		
	\end{proof}

	Now, fix $1<i\leq k$, suppose that $H(\tildeF^s_{(1,i)})=\tildeL^s_{(1,i)}$. Note that  since $H$ always preserves weak stable foliations, $H(\tildeF^s_{(1,i)})=\tildeL^s_{(1,i)}$ implies $H(\tildeF^s_i)=H(\tildeF^s_{(1,i)}\cap \tildeF^s_{(i,k)})= \tildeL^s_i$.  Assume that  $H$ is isometric restricted on  each leaf of $\tildeF^s_i$ under the metric $d^s_i(\cdot,\cdot)$. We show $H(\tildeF^s_{(1,i-1)})=\tildeL^s_{(1,i-1)}$.
	\begin{proof}[Proof of Proposition \ref{s-leaf conjugacy introduction}]
		By the assumption $H(\tildeF^s_{(1,i)})=\tildeL^s_{(1,i)}$,
		the foliation $\tildeF=H(\tildeF^s_{(1,i-1)})$ is a sub-foliation of  $\tildeL^s_{(1,i)}$. It is clear that $\tildeF$ and $\tildeL^s_i$ give the Global Product Structure on $\tildeL^s_{(1,i)}$.
		By Proposition \ref{affine metric}, the holonomy maps of $\tildeF^s_i$ along $\tildeF^s_{(1,i-1)}$ restricted on $\tildeF^s_{(1,i)}$ are isometric under the metric $d^s_i(\cdot,\cdot )$.
		Combining with the assumption that $H^{-1}:\tildeL^s_i\to \tildeF^s_i$ is isometric, we have that the holonomy maps of  $\tildeL^s_i$ along $\tildeF$ restricted on $\tildeL^s_{(1,i)}$ are isometric.
		
		Assume that $H^{-1}(\tildeL^s_{(1,i-1)})\neq \tildeF^s_{(1,i-1)}$. It follows that  there exist points $x_0$ and $x_1$ such that
		$x_1\in \tildeF(x_0)\setminus \tildeL^s_{(1,i-1)}(x_0)$.
		\begin{claim}\label{bnyn}
			There exist  $b_m\in \tildeL^s_i(x_0)$, $k_m\in  A^m\ZZ^d$ and $x_2\in \tildeF(x_0)$ such that when $m\to +\infty$,
			\begin{enumerate}
				\item  $(b_m+k_m)\to x_1$.
				\item  $d\left(y_m+k_m, \tildeF(b_m+k_m)\right)\to 0$, where $y_m= \tildeF(b_m)\cap \tildeL^s_i(x_1)$.
				\item $(y_m+k_m)\to x_2$.
			\end{enumerate}
		\end{claim}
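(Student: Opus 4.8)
The plan is a two–step approximation argument: first slide a point of $\tildeL^s_i(x_0)$ onto $x_1$ modulo a \emph{special} lattice element $k_m\in A^m\ZZ^d$, then transport the product–structure point $y_m$ by the same $k_m$ and check that, in the limit, it returns to the leaf $\tildeF(x_0)$. For item (1): since $A$ is irreducible, every $A$–invariant linear foliation is minimal, so $\tildeL^s_i(0)+\ZZ^d$ is dense in $\RR^d$, and applying $A^m$ together with $A^m\tildeL^s_i(0)=\tildeL^s_i(0)$ shows $\tildeL^s_i(x_0)+A^m\ZZ^d$ is dense for every $m$. This is precisely Proposition~\ref{projection leaf dense}(1) with $\tildeL=\tildeL^s_i$, $x=x_0$, $y=x_1$, and it furnishes $b_m\in\tildeL^s_i(x_0)$ and $k_m\in A^m\ZZ^d$ with $b_m+k_m\to x_1$. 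Note that $x_1\in\tildeF(x_0)\subseteq\tildeL^s_{(1,i)}(x_0)$ and $b_m\in\tildeL^s_i(x_0)\subseteq\tildeL^s_{(1,i)}(x_0)$ lie in the same $\tildeL^s_{(1,i)}$–leaf, so $y_m=\tildeF(b_m)\cap\tildeL^s_i(x_1)$ is well defined via the product structure of $\tildeF$ and $\tildeL^s_i$ on that leaf.

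For item (2) I would transfer the statement to the $F$–side. Because $\tildeF=H(\tildeF^s_{(1,i-1)})$ and $\tildeF^s_{(1,i-1)}$ is $\ZZ^d$–periodic (Proposition~\ref{leaf proposition}(2)), from $H^{-1}(y_m)\in\tildeF^s_{(1,i-1)}(H^{-1}(b_m))$ we obtain $H^{-1}(y_m)+k_m\in\tildeF^s_{(1,i-1)}(H^{-1}(b_m)+k_m)$. Applying Proposition~\ref{nmH} with $n_m=k_m$ (admissible since $A^{-j}k_m\in\ZZ^d$ for $1\le j\le m$) gives $H^{-1}(b_m+k_m)=H^{-1}(b_m)+k_m+O(\varepsilon_m)$ and $H^{-1}(y_m+k_m)=H^{-1}(y_m)+k_m+O(\varepsilon_m)$ with $\varepsilon_m\to 0$ uniformly. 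Feeding the first estimate into the uniform continuity of the compact–quotient foliation $\tildeF^s_{(1,i-1)}$, the local leaf through $H^{-1}(b_m)+k_m$ converges to the one through $H^{-1}(b_m+k_m)$; since $H^{-1}(y_m)+k_m$ lies on the former within a bounded leaf–radius (by the boundedness discussed below and the quasi-isometry of $\tildeF^s_{(1,i-1)}$ from Proposition~\ref{leaf proposition}(3)), this forces $d\big(H^{-1}(y_m+k_m),\tildeF^s_{(1,i-1)}(H^{-1}(b_m+k_m))\big)\to 0$, and then uniform continuity of $H$ upgrades it to $d\big(y_m+k_m,\tildeF(b_m+k_m)\big)\to 0$.

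For item (3): once $y_m+k_m$ is known to stay bounded, a subsequence converges to some point $x_2$. Since $b_m+k_m\to x_1\in\tildeF(x_0)$ and $\tildeF$ is a continuous foliation (being the $H$–image of one), the local leaves $\tildeF(b_m+k_m,R)$ converge in Hausdorff distance to $\tildeF(x_0,R)$ for each fixed $R$; combined with item (2) and boundedness this places $x_2$ on $\tildeF(x_0)$. Passing to a subsequence is harmless, as the claim only asserts existence.

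The hard part is the uniform bound on $y_m+k_m$, equivalently the fact that $d(b_m,y_m)$ stays bounded while $b_m$ escapes to infinity along $\tildeL^s_i(x_0)$. The leaves $\tildeL^s_i(x_0)$ and $\tildeL^s_i(x_1)$ are parallel lines, so $b_m$ remains within a fixed distance of $\tildeL^s_i(x_1)$; the trouble is that $y_m=\tildeF(b_m)\cap\tildeL^s_i(x_1)$ is pinned by the leaf $\tildeF(b_m)$, which is only a topological object. Controlling it requires the product structure of $\tildeF$ and $\tildeL^s_i$ inside the single leaf $\tildeL^s_{(1,i)}(x_0)$, the isometry of the $\tildeL^s_i$–holonomy along $\tildeF$ established just above, and — crucially — the \emph{exact} scaling $d^s_i(Fx,Fy)={\rm exp}(\lambda^s_i(A))\cdot d^s_i(x,y)$ from Proposition~\ref{affine metric}(2), which (unlike a merely bi–Lipschitz comparison between $d^s_i$ and $d_{\tildeF^s_i}$) permits iterating backward and trading the unbounded $\tildeL^s_i$–displacement for a bounded $\tildeF$–displacement. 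I expect this uniformity step, rather than the formal limit manipulations, to be the real content of the claim.
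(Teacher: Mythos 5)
Your formal steps for items (1)--(3) match the paper's proof essentially line for line: item (1) is indeed Proposition~\ref{projection leaf dense}(1) applied to $\tildeL^s_i$; items (2)--(3) are obtained by passing to the $F$-side, using the $\ZZ^d$-periodicity of $\tildeF^s_{(1,i-1)}$, Proposition~\ref{nmH} twice, the uniform continuity of $H$ and the foliation, and the quasi-isometry from Proposition~\ref{leaf proposition}(3). Your remark that one may pass to a subsequence for item (3) is also fine.

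The genuine gap is the uniform bound you defer to the end: you correctly identify that the whole argument hinges on showing
$d_{\tildeF^s_{(1,i-1)}}\bigl(H^{-1}(b_m),H^{-1}(y_m)\bigr)<R$
uniformly in $m$, but you do not prove it; you only speculate that it ``requires\ldots the \emph{exact} scaling $d^s_i(Fx,Fy)=\exp(\lambda^s_i(A))\cdot d^s_i(x,y)$\ldots which permits iterating backward,'' and you leave the argument as an expectation. This is both incomplete and heavier machinery than necessary. The paper obtains $R$ from much more elementary data: since $|H^{-1}-\mathrm{Id}|\le C_0$, the two curves $H^{-1}(\tildeL^s_i(x_0))=\tildeF^s_i(H^{-1}(x_0))$ and $H^{-1}(\tildeL^s_i(x_1))=\tildeF^s_i(H^{-1}(x_1))$ lie within bounded Hausdorff distance of the parallel lines $\tildeL^s_i(x_0)$ and $\tildeL^s_i(x_1)$, hence of each other; combined with the quasi-isometry of $\tildeF^s_{(1,i-1)}$ and the uniform transversality guaranteed by the cone control (Remark~\ref{size of U}), the $\tildeF^s_{(1,i-1)}$-holonomy between these two $\tildeF^s_i$-curves has bounded displacement, giving the constant $R$ directly. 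No affine metric, no isometry of holonomy, and no backward iteration are used at this point; those tools enter only in the \emph{subsequent} Claim~\ref{claimprop5.2.2}, not in Claim~\ref{bnyn} itself. Moreover it is not clear that the backward-iteration route you sketch would even close: iterating $F^{-1}$ \emph{expands} $\tildeL^s_i$-distances, which does not obviously trade an unbounded $\tildeL^s_i$-displacement for a bounded $\tildeF$-displacement. You should replace the speculative paragraph with the direct $|H^{-1}-\mathrm{Id}|$ bound sketched above, after which your treatment of the three items is complete.
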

		
		\begin{proof}[Proof of 	Claim \ref{bnyn}]
			By the first item of Proposition \ref{projection leaf dense}, we can choose $k_m\in A^m\ZZ^d $   and $b_m\in\tildeL^s_i(x_0)$ such that when $m\to +\infty$, $(b_m+k_m)\to x_1$.
			
			Now, let $y_m= \tildeF(b_m)\cap \tildeL^s_i(x_1)$. Since $|H^{-1}-Id|$ is bounded, $H^{-1}\big(\tildeL^s_i(x_1)\big)$ is located in a neighborhood of $H^{-1}\big(\tildeL^s_i(x_0)\big)$. Hence,  there exists $R>0$ such that  $d_{\tildeF^s_{(1,i-1)}} \big( H^{-1}(b_m), H^{-1}(y_m) \big)<R$, for all $b_m\in\tildeL^s_i(x_0)$. By Proposition \ref{nmH}, $y_m\in\tildeF(b_m)$ implies
			$$ d\left(H^{-1}(y_m) \;,\; \tildeF^s_{(1,i-1)}\left(H^{-1}(b_m+k_m)-k_m,R\right)\right)\to 0.$$
			Since $\tildeF^s_{(1,i-1)}$ is always $\ZZ^d$-periodic (see Proposition \ref{leaf proposition}), one has
			$$d\left(H^{-1}(y_m)+k_m \;,\;  \tildeF^s_{(1,i-1)}\left(H^{-1}(b_m+k_m),R\right)\right)\to 0.$$
			Again, by Proposition \ref{nmH},
			\begin{align}
				d\left(H^{-1}(y_m+k_m) \;,\; \tildeF^s_{(1,i-1)}\left(H^{-1}(b_m+k_m), R \right)\right)\to 0. \label{prop.5.2.1}
			\end{align}
			It follows that
			$$	d(y_m+k_m \;,\;  \tildeF(b_m+k_m))\to0.$$
			So, we complete the second item of claim.

			Note that since $(b_m+k_m)\to x_1$, \eqref{prop.5.2.1} also implies
			$$	d\left(H^{-1}(y_m+k_m) \;,\; \tildeF^s_{(1,i-1)}\left(H^{-1}(x_1), R \right)\right)\to 0.$$
			 Hence, $d\left(y_m+k_m, \tildeF(x_0) \right)\to 0$ and we get the third item.
		\end{proof}

	   Let $z_1=\tildeL^s_i(x_1) \cap \tildeL^s_{(1,i-1)}(x_0)$ and $z_2=\tildeL^s_i(x_2) \cap \tildeL^s_{(1,i-1)}(x_0)$.  Since $x_1\notin\tildeL^s_{(1,i-1)}(x_0)$, we have $d(z_1,x_1)>0$. See Figure 4.
		\begin{claim}\label{claimprop5.2.2}
		 $d(z_2,x_2)=2\cdot d(z_1,x_1)$ and $d(x_0,z_2)=2\cdot d(x_0,z_1)$.
		\end{claim}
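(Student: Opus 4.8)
The plan is to prove the single vector identity $x_2-x_0=2(x_1-x_0)$ in $\RR^d$ (so that $x_1$ is the midpoint of the affine segment $[x_0,x_2]$ inside the linear leaf $W:=\tildeL^s_{(1,i)}(x_0)$); the two displayed equalities then drop out of the linearity of the projection of $W$ onto $\tildeL^s_{(1,i-1)}(x_0)$ along the $\tildeL^s_i$-direction, which fixes $x_0$ and hence doubles every displacement. Concretely, writing $x_1-x_0=v+w$ with $v\in L^s_{(1,i-1)}$ and $w\in L^s_i$, one reads off $z_1=\tildeL^s_i(x_1)\cap\tildeL^s_{(1,i-1)}(x_0)=x_0+v$ and $x_1-z_1=w$; if $x_2-x_0=2(x_1-x_0)$ then $z_2=x_0+2v$, so $d(x_0,z_2)=|2v|=2\,d(x_0,z_1)$ and $d(z_2,x_2)=|x_2-z_2|=|2w|=2\,d(z_1,x_1)$.

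The heart of the matter is that $y_m-b_m=x_1-x_0$ for every $m$. Since $b_m\in\tildeL^s_i(x_0)\subset W$ and $x_1\in\tildeF(x_0)\subset W$, the Global Product Structure of $\tildeF$ and $\tildeL^s_i$ on $W$ (inherited from Proposition~\ref{leaf proposition} through $H$) makes $\tildeF(b_m)\cap\tildeL^s_i(x_1)=\{y_m\}$ and $\tildeF(x_0)\cap\tildeL^s_i(x_1)=\{x_1\}$ single points, so $y_m$ and $x_1$ are the images of $b_m$ and $x_0$ under the holonomy map $\mathrm{Hol}_{x_0,x_1}$ of $\tildeL^s_i$ along $\tildeF$ restricted to $W$. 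This holonomy was shown, just before Claim~\ref{bnyn}, to be an isometry for the metric $\bar d^s_i$ on the leaves of $\tildeL^s_i$ obtained by pushing $d^s_i$ forward by $H$. I would then observe that on each leaf of $\tildeL^s_i$ the metric $\bar d^s_i$ is a constant multiple --- with a leaf-independent constant --- of Euclidean arclength: it is globally H\"older and $K$-equivalent to arclength, it is scaled by $\mu^s_i(A)$ under $A$, and the induced action of $A$ on the leaf space of $\tildeL^s_i$ is linear hyperbolic, so invariance of the (bounded, H\"older) density under this action forces it to be constant (flow along the expanding directions to the fixed point and use uniform continuity). Hence $\mathrm{Hol}_{x_0,x_1}$ is an orientation-preserving Euclidean isometry between the parallel lines $\tildeL^s_i(x_0)$ and $\tildeL^s_i(x_1)$ sending $x_0\mapsto x_1$, i.e. it is $x_0+te\mapsto x_1+te$ in the obvious coordinate; therefore $y_m-b_m=x_1-x_0$.

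Finally, letting $m\to\infty$: by item~(1) of Claim~\ref{bnyn}, $b_m+k_m\to x_1$, so $y_m+k_m=(b_m+k_m)+(x_1-x_0)\to 2x_1-x_0$; by item~(3) of Claim~\ref{bnyn}, $y_m+k_m\to x_2$. Thus $x_2=2x_1-x_0$, and the computation of the first paragraph gives $d(x_0,z_2)=2\,d(x_0,z_1)$ and $d(z_2,x_2)=2\,d(z_1,x_1)$. I expect the main obstacle to be the middle paragraph, specifically upgrading ``$\mathrm{Hol}_{x_0,x_1}$ is a $\bar d^s_i$-isometry'' to ``$\mathrm{Hol}_{x_0,x_1}$ is a Euclidean translation'', which rests on pinning down the affine nature of $\bar d^s_i$ along $\tildeL^s_i$-leaves from the dynamics; one must also be a little careful that all the intersections used are taken on the correct branch inside $W$, which is exactly where the Global Product Structure on $\tildeL^s_{(1,i)}(x_0)$ enters, and that the holonomy is orientation preserving (so that the argument does not require $b_m$ itself to converge).
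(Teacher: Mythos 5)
Your proposal is correct and reaches the paper's Claim by a cleaner route: you prove the single vector identity $x_2=2x_1-x_0$ and derive both distance equalities from the linearity of the projection along $L^s_i$, whereas the paper works with the auxiliary points $c_m=\tildeL^s_{(1,i-1)}(b_m)\cap\tildeL^s_i(x_1)$, $z_2'=\tildeL^s_{(1,i-1)}(x_1)\cap\tildeL^s_i(x_2)$, $y_m'=\tildeF(b_m+k_m)\cap\tildeL^s_i(y_m+k_m)$, passes $d(c_m,y_m)=d(z_1,x_1)$ through the limits of Claim~\ref{bnyn}, and then assembles the two equalities from one-dimensionality of $\tildeL^s_i$ and the parallelism of the segments $\overline{x_0z_1}$, $\overline{b_mc_m}$, $\overline{z_1z_2}$. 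Both arguments ultimately rest on the same two facts: the holonomy of $\tildeL^s_i$ along $\tildeF$ on $\tildeL^s_{(1,i)}$ being an isometry sending $x_0\mapsto x_1$, and Claim~\ref{bnyn}. The one genuine issue in your write-up is the middle paragraph: the argument that the metric $\bar d^s_i$ on $\tildeL^s_i$-leaves is a leaf-independent constant multiple of arclength is entirely redundant, because the standing hypothesis of Proposition~\ref{s-leaf conjugacy introduction} already says $H$ is an isometry from $(\tildeF^s_i,d^s_i)$ to $\tildeL^s_i$ with its Euclidean metric, so the pushforward $\bar d^s_i$ \emph{is} Euclidean arclength (after the normalization $C_0=1$ fixed in Proposition~\ref{s-leaf isometric}). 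You flag this paragraph as the likely obstacle, but it is not an obstacle at all --- it can be deleted. What does need a word (and is also glossed over in the paper) is orientation-preservation of the holonomy, which you correctly identify; this follows from continuity of the holonomy in the base point $x_1$ together with the fact that it is the identity at $x_1=x_0$, so it cannot be a reflection. With these two adjustments your proof is a tidy alternative to the paper's: same ingredients, more streamlined bookkeeping.
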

		\begin{figure}[htbp]
		\centering
		\includegraphics[width=16cm]{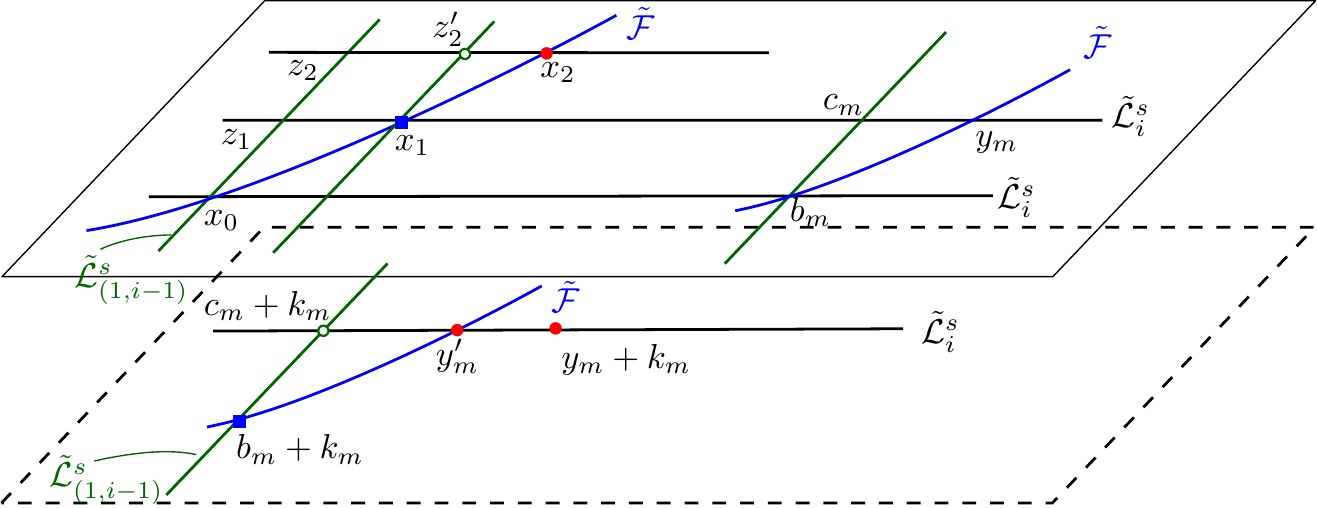}
		\caption{Deduction of $H$ preserving the strong stable foliations.}	
	\end{figure}
	\begin{proof}[Proof of Claim \ref{claimprop5.2.2}]
		Let $y_m\in\ZZ^d$ and $b_m\in \tildeL^s_i(x_0)$ given by Claim \ref{bnyn}. Denote
		$$c_m=\tildeL^s_{(1,i-1)}(b_m)\cap \tildeL^s_i(x_1), \quad z_2'=  \tildeL^s_{(1,i-1)}(x_1)\cap \tildeL^s_i(x_2)  \quad {\rm and } \quad y_m'=\tildeF(b_m+k_m) \cap \tildeL^s_i(y_m+k_m).$$
		By the proof of Claim \ref{bnyn}, we actually have $d(y_m',y_m+k_m)\to 0$, $y_m'\to x_2$ and $c_m+k_m\to z_2'$.
		Since the holonomy maps of  $\tildeL^s_i$ along $\tildeF$ restricted on $\tildeL^s_{(1,i)}$ are isometric, we have $d(c_m, y_m)=d(z_1,x_1)$.  It follows that $d(c_m+k_m, y_m+k_m)=d(z_1,x_1)$.  Hence,
		$$d(z_2',x_2)=\lim_{m\to +\infty}d(c_m+k_m, y_m')=\lim_{m\to +\infty}d(c_m+k_m, y_m+k_m)=d(z_1,x_1).$$
		Since the foliation $\tildeL^s_i$ is one-dimensional, $d(z_2,x_2)=d(z_2,z_2')+d(z_2',x_2)=2d(z_1,x_1)$.
	
		For the other equation,
		 $$d(z_1,z_2)=d(x_1,z_2')=\lim_{m\to+\infty}d(b_m+k_m,c_m+k_m)=\lim_{m\to+\infty}d(b_m,c_m)=d(x_0,z_1).$$
		Note that the dimension of $\tildeL^s_{(1,i-1)}$ could be more than one, but the line $\overline{x_0z_1}$ is parallel to the line  $\overline{b_mc_m}$ and also the line $\overline{(b_m+k_m)(c_m+k_m)}$.  Hence, $\overline{x_0z_1}$  is parallel to $\overline{x_1z_2'}$ and also $\overline{z_1z_2}$. Thus, we have $d(x_0,z_2)=d(x_0,z_1)+d(z_1,z_2)=2d(x_0,z_1)$.
	\end{proof}
	
	Repeating the construction in Claim \ref{bnyn} and  Claim \ref{claimprop5.2.2},
	there exists a sequence $\{ x_l\}\subset \tildeF(x_0), l\in\NN$ with  $z_l= \tildeL^s_i(x_l)\cap \tildeL^s_{(1,i-1)}(x_0)$ such that   $d(z_l,x_l)=l\cdot d(z_1,x_1)$ and $d(x_0,z_l)=l\cdot d(x_0,z_1)$.
	Fix $\delta>0$,	let $N_l>0$ be the minimal number such that $A^{N_l}z_l\in \tildeL^s_{(1,i-1)}(A^{N_l}x_0,\delta)$. Since $\mu^s_i(A)> \mu^s_{i-1}(A)$, we have 	$d(A^{N_l}z_l, A^{N_l}x_l)\to +\infty$. It contradicts with the fact that $d(y,y')$ is bounded, for every $y\in \tildeL^s_{(1,i-1)}(x_0,\delta)$ and $y'=\tildeL^s_i(y)\cap \tildeF(x_0)$.
	\end{proof}

	\subsection{$\ZZ^d$-periodic foliations}\label{subsection 5.2}
	
	Let $f\in\mathcal{U}$ given by Proposition \ref{leaf proposition} with $\lambda^s_i(p,f)=\lambda^s_i(q,f)$, for all $p,q\in {\rm Per}(f)$ and $1\leq i\leq k$. Now, we already have gotten  Corollay \ref{ key corollary} that is for each $1\leq i\leq k$, $H(\tildeF^s_i)=\tildeL^s_i$ and $H$ is isometric along  $\tildeF^s_i$ under the metric $d^s_i(\cdot,\cdot)$.  It follows that the holonomy maps of $\tildeF^s_i$ along  $\tildeF^{s}_{j_1}\oplus \dots\oplus \tildeF^s_{j_l}$ restricted on $\big(\tildeF^{s}_{j_1}\oplus \dots\oplus \tildeF^s_{j_l}\oplus \tildeF^s_i\big) $ are isometric under $d^s_i(\cdot,\cdot)$, where $1\leq j_1<j_2<...<j_l\leq k$.
	
	Fix $1\leq i< k$, assume that $\tildeF^s_{(i,k)}$ is $\ZZ^d$-periodic, we show that $\tildeF^s_{(i+1,k)}$ is also $\ZZ^d$-periodic. Note that the assumption $\ZZ^d$-periodic property for foliation $\tildeF^s_{(i,k)}$  implies one for  $\tildeF^s_i= \tildeF^s_{(1,i)}\cap \tildeF^s_{(i,k)}$.
	
	\begin{proof}[Proof of Proposition \ref{Zd foliation}]
		Assume that there exist $x_0\in\RR^d, n\in\ZZ^d$ and $x_1\in \tildeF^s_{(i+1,k)}(x_0+n)-n$, but $x_1\notin \tildeF^s_{(i+1,k)}(x_0)$.
		Note that, for every $x\in\RR^d$ and $n\in\ZZ^d$,  $\tildeF^s_{(i+1,k)}(x+n)-n \subset \tildeF^s_{(i,k)}(x+n)-n= \tildeF^s_{(i,k)}(x)$.
		Thus we have $x_1\in \tildeF^s_{(i,k)}(x_0)$. Let $z_1=\tildeF^s_i(x_1)\cap \tildeF^s_{(i+1,k)}(x_0)$. By the assumption, $a:=d^s_i(x_1,z_1)>0$.

		For every $b_m\in \tildeF^s_i(x_0)$, we denote $$c_m=\tildeF^s_{(i+1,k)}(b_m)\cap\tildeF^s_i(x_1) \quad {\rm and} \quad y_m=\big(\tildeF^s_{(i+1,k)}(n+b_m)-n\big)\cap\tildeF^s_i(x_1).$$ We claim $d^s_i(y_m,c_m)=a$. Indeed, since $d^s_i(\cdot,\cdot)$ is holonomy invariant, we have
		$$d^s_i(x_0,b_m)=d^s_i(z_1,c_m) \quad {\rm and }\quad  d^s_i(x_0+n,b_m+n)=d^s_i(x_1+n,y_m+n).$$
		Again, $\tildeF^s_{(i,k)}$ is $\ZZ^d$-periodic implies that for $\tildeF^s_i$. It follows that
		$$d^s_i(x_0,b_m)=d^s_i(x_0+n,b_m+n) \quad {\rm and }\quad  d^s_i(x_1+n,y_m+n)=d^s_i(x_1,y_m).$$
		Hence, $d^s_i(y_m,c_m)=d^s_i(x_1,z_1)=a$. See Figure 5.
		
		\begin{figure}[htbp]
			\centering
			\includegraphics[width=16cm]{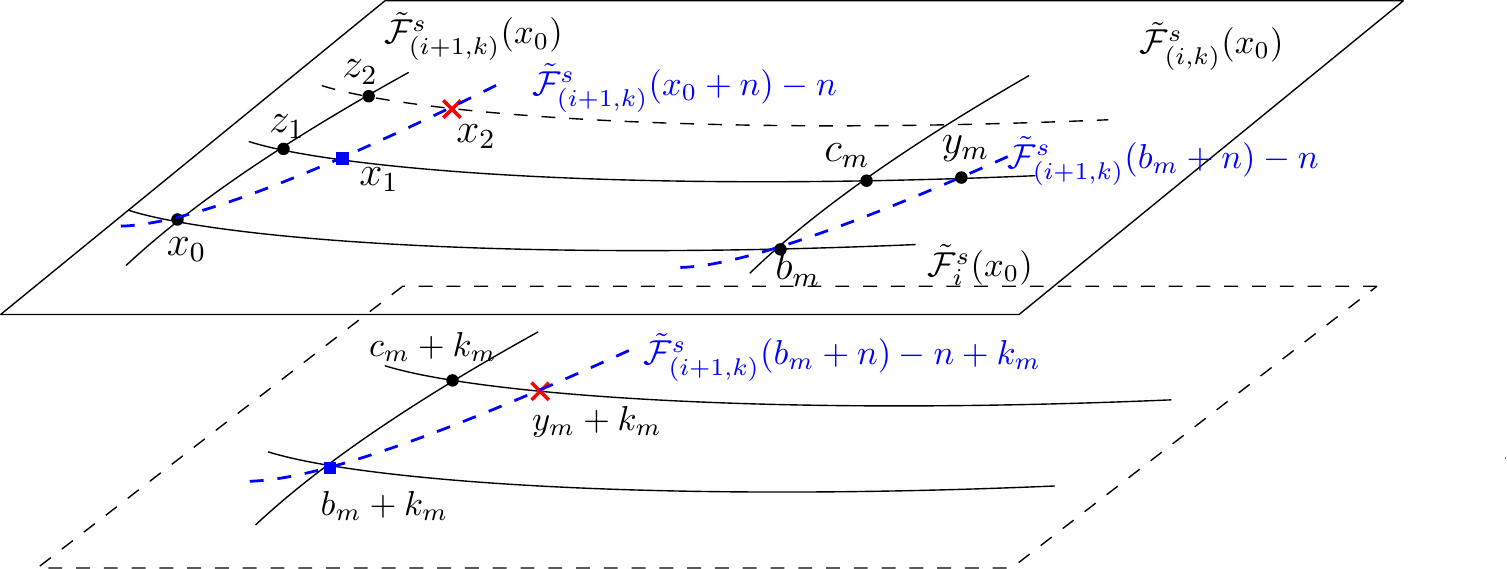}
			\caption{Deduction of $\ZZ^d$-periodic foliations.}	
		\end{figure}

		Since we already have  $H(\tildeF^s_i)=\tildeL^s_i$,  by  the second item of Proposition \ref{projection leaf dense}, we can choose $b_m\in\tildeF^s_i(x_0)$ and $k_m\in A^m\ZZ^d$  such that $(b_m+k_m)\to x_1$.  By Proposition \ref{nmF}, for a fixed size $R>0$, one has
		$$d_H\left(  \tildeF^s_{(i+1,k)}(b_m+n,R)+k_m  \;\;,\;\; \tildeF^s_{(i+1,k)}(b_m+n+k_m,R)    \right) \to 0,$$
		as $m\to +\infty$, where $d_H(\cdot,\cdot) $ is Hausdorff distance.
		Since $(b_m+k_m)\to x_1$ and $(x_1+n)\in  \tildeF^s_{(i+1,k)}(x_0+n)$, we have that
		$$d_H\left(  \tildeF^s_{(i+1,k)}(b_m+n,R)+k_m-n \; \;,\;\;  \tildeF^s_{(i+1,k)}(x_1+n,R) -n   \right) \to 0.$$
		It means that the sequence
		$(y_m+k_m) \in \tildeF^s_{(i+1,k)}(b_m+n)+k_m-n$ converges to $x_2\in \tildeF^s_{(i+1,k)}(x_0+n) -n$.

		Similarly, we can get
		$$d_H\left(  \tildeF^s_{(i+1,k)}(b_m,R)+k_m  \;\;,\;\; \tildeF^s_{(i+1,k)}(x_1,R)    \right) \to 0.$$
		It follows that $(c_m+k_m)\in \tildeF^s_{(i+1,k)}(b_m)+k_m$ converges to $z_2'\in  \tildeF^s_{(i+1,k)}(x_1)$. Moreover, since $\tildeF^s_i$ is $\ZZ^d$-periodic, we have  $$d^s_i(x_2,z_2')=\lim_{m\to+\infty}d^s_i(y_m+k_m,c_m+k_m)=\lim_{m\to+\infty}d^s_i(y_m,c_m)=a.$$
		Let $z_2=\tildeF^s_i(x_2)\cap \tildeF^s_{(i+1,k)}(x_0)$, by the holonomy-invariant metric, we have
		$d^s_i(x_2,z_2)=2a.$

		Repeating the preceding  method, we can pick $x_l \in \tildeF^s_{(i+1,k)}(x_0+n)-n$ and $z_l =\tildeF^s_i(x_l)\cap \tildeF^s_{(i+1,k)}(x_0)$ such that $$d^s_i(x_l,z_l)=la\to +\infty,$$ as $l\to +\infty$.
		Since $\tildeF^s_i$ is quasi-isometric  (Proposition \ref{leaf proposition}), it  follows that
		\begin{align}
		d(x_l,z_l)\to +\infty, \quad {\rm as} \;\;l\to +\infty. \label{la infty}
		\end{align}
	
	On the other hand, since there exists $C_0>0$ such that $|H-Id|<C_0$, one has
		\begin{align*}
			\tildeF^s_{(i+1,k)}(x_0+n)
			& \subset B_{C_0}\left(   \tildeL^s_{(i+1,k)}\big(H(x_0+n)\big)  \right),\\
			& \subset  B_{3C_0}\left(   \tildeL^s_{(i+1,k)}\big(H(x_0)+n\big)  \right),\\
			& = B_{3C_0}\left(   \tildeL^s_{(i+1,k)}\big(H(x_0)\big)  \right)+n\subset
			B_{4C_0}\left(   \tildeF^s_{(i+1,k)}(x_0)  \right)+n.
		\end{align*}
	It follows that  there exists $C\geq 4C_0$ such that  $d(x_l,z_l)\leq C$, for all $l\in\NN$. This contradicts with \eqref{la infty}.
	\end{proof}

	By Remark \ref{s-dominatedonTd}, we have that $\tildeF^s_i (1\leq i\leq k)$ is $\ZZ^d$-periodic. By Corollary \ref{ key corollary}, $H:\tildeF^s_i\to \tildeL^s_i$ is isomtric along each leaf of $\tildeF^s_i$. Now, we can show that there is no deviation between $H^{-1}(x+n)$ and $H^{-1}(x)+n$ along $\tildeF^s_i$. We mention that the following proof can also apply for the case of dim$E^s=1$ without small perturbation, since $H:\tildeF^s\to \tildeL^s$ is also isometric by Proposition \ref{s-leaf isometric}.
	\begin{proof}[Proof of Proposition \ref{H is no distance on Fi}]
		Recall  that $H^{-1}(x+n)-n\in \tildeF^s(H^{-1}(x))$ (see Proposition \ref{H is Zd restricted on stable}). Hence, we just need focus on $\tildeF^s$. For any $x\in\RR^d$ and $y\in\tildeF^s(x)$, let $$\tilde{d}^s_i(x,y):=d^s_i(x, y'),$$
		where $y'= \tildeF^{s,\perp}_i(y)\cap \tildeF^s_i(x)$. Note that the metric $\tilde{d}^s_i(\cdot,\cdot)$ is well defined, since the holonomy maps of $\tildeF^s_i$ along $\tildeF^{s,\perp}_i$ restricted on $\tildeF^s$ are isometric under  $d^s_i(\cdot,\cdot)$. For  proving Proposition \ref{H is no distance on Fi}, it suffices to show that $\tilde{d}^s_i\left(H^{-1}(x) \;,\;  H^{-1}(x+n)-n\right)= 0$, for all $x\in\RR^d$ and $n\in\ZZ^d$.
		
		Assume that there exist $x\in\RR^d$ and $n\in\ZZ^d$ such that $$\tilde{d}^s_i\left(H^{-1}(x) \;,\;  H^{-1}(x+n)-n\right)= \alpha\neq 0.$$
		
		\begin{claim}\label{claim for prop5.6}
		For every $y\in\RR^d$, $\tilde{d}^s_i\left(H^{-1}(y) \;,\; H^{-1}(y+n)-n\right)=\alpha$.
		\end{claim}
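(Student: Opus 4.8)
The plan is to set $g(y):=\tilde d^s_i\big(H^{-1}(y),\,H^{-1}(y+n)-n\big)$, so that the claim becomes ``$g$ is constant, equal to $\alpha=g(x)$''. First I would record the basic properties of $g$: by Proposition \ref{H is Zd restricted on stable} the point $H^{-1}(y+n)-n$ lies on $\tildeF^s\big(H^{-1}(y)\big)$, so $g$ is well defined; since $|H^{-1}-\mathrm{Id}|\le C_0$ the two points are $2C_0$-close in $\RR^d$ and lie on a common stable leaf, which is quasi-isometric (Proposition \ref{leaf proposition}), so $g$ is uniformly bounded; and $g$ is continuous because $H^{-1}$ is uniformly continuous, the foliations are continuous, and $d^s_i$ (hence $\tilde d^s_i$) is continuous (Proposition \ref{affine metric}). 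I would also use that $\tilde d^s_i$ is $\ZZ^d$-invariant: the function $\Psi$ of Proposition \ref{quotient bundle} descends to $\TT^d$ (here $\tilde E^s_i$ is $\ZZ^d$-periodic, Remark \ref{s-dominatedonTd}), hence $d^s_i$ is $\ZZ^d$-invariant, and $\tildeF^s_i,\tildeF^{s,\perp}_i$ are $\ZZ^d$-periodic.

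The key step is to show that $g$ is constant along each leaf of the linear foliation $\tildeL^{s,\perp}_i$. Given $y'\in\tildeL^{s,\perp}_i(y)$, put $p=H^{-1}(y)$, $q=H^{-1}(y+n)-n$, $p'=H^{-1}(y')$, $q'=H^{-1}(y'+n)-n$. Using $H(\tildeF^s_j)=\tildeL^s_j$ (Corollary \ref{ key corollary}) one gets $p'\in\tildeF^{s,\perp}_i(p)$, and from $y'+n\in\tildeL^{s,\perp}_i(y+n)$ together with the $\ZZ^d$-periodicity of $\tildeF^{s,\perp}_i$ one gets $q'\in\tildeF^{s,\perp}_i(q)$; all four points lie on $\tildeF^s(p)$. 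Then $g(y)=d^s_i(p,\hat q)$ and $g(y')=d^s_i(p',\hat q')$, where $\hat q,\hat q'$ are the $\tildeF^{s,\perp}_i$-projections of $q$ onto $\tildeF^s_i(p)$ and $\tildeF^s_i(p')$ respectively. Since the $\tildeF^{s,\perp}_i$-holonomy of $\tildeF^s_i$ is a $d^s_i$-isometry (Proposition \ref{affine metric}(3) combined with Corollary \ref{ key corollary}) and it carries $p\mapsto p'$, $\hat q\mapsto\hat q'$, we conclude $g(y)=g(y')$.

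To finish, I would propagate the value globally. Because $A$ is irreducible, $\mathcal L^{s,\perp}_i$ is a minimal linear foliation, so Proposition \ref{projection leaf dense}(1) provides $x_m\in\tildeL^{s,\perp}_i(x)$ and $n_m\in A^m\ZZ^d$ with $x_m+n_m\to y$. By the previous step $g(x_m)=g(x)$ for all $m$. By Proposition \ref{nmH} applied at $x_m$ and at $x_m+n$, the defining pair for $g(x_m+n_m)$ differs from the $n_m$-translate of the defining pair for $g(x_m)$ by an error $\varepsilon_m\to0$; using $\ZZ^d$-invariance and (uniform) continuity of $\tilde d^s_i$, this gives $g(x_m+n_m)\to g(x_m)=g(x)$, while $g(x_m+n_m)\to g(y)$ by continuity of $g$. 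Hence $g(y)=g(x)=\alpha$ for every $y$. The case $\dim E^s=1$ is handled by the same scheme, replacing $\tildeF^{s,\perp}_i$ by the unstable foliation $\tildeF^u$ (the $d^s$-holonomy along $\tildeF^u$ is isometric by Proposition \ref{affine metric}, and $\mathcal L^u$ is minimal), with Proposition \ref{nmF} used to control the translated unstable leaves.

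\textbf{Main obstacle.} The delicate point is precisely the comparison of $g$ at points related by $\tildeL^{s,\perp}_i$- (or, in the one-dimensional case, $\tildeL^u$-) motions and by $\ZZ^d$-translations: the unstable foliation $\tildeF^u$, and the translated foliations $\tildeF(\cdot+m)-m$, are not known a priori to be $\ZZ^d$-periodic (this is essentially equivalent to $f$ being special, which we are trying to prove), so none of these holonomy identities can be read off directly. They must instead be obtained as limits along the special $A^m\ZZ^d$-sequences furnished by Propositions \ref{nmH}, \ref{nmF} and \ref{projection leaf dense}, and one has to be careful that $\tilde d^s_i$ is genuinely continuous across nearby but distinct $\tildeF^s$-leaves so that the limiting argument in the last step is legitimate.
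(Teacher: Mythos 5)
Your overall strategy matches the paper's (propagate the value $\alpha$ to an arbitrary $y$ by Proposition~\ref{projection leaf dense} along a minimal $A$-invariant linear foliation, controlling the $A^m\ZZ^d$-corrections by Proposition~\ref{nmH} and the $\ZZ^d$-periodicity of the relevant $F$-foliations), but your choice of approximating foliation is dual to the paper's. The paper takes $z_m\in\tildeL^s_i(x)$ with $z_m+k_m\to y$, and transfers $\tilde d^s_i$ by exploiting that $H^{-1}$ is a $d^s_i$-isometry of $\tildeL^s_i$ onto $\tildeF^s_i$; you take $x_m\in\tildeL^{s,\perp}_i(x)$ and transfer $\tilde d^s_i$ by exploiting that the $\tildeF^{s,\perp}_i$-holonomy between $\tildeF^s_i$-leaves is a $d^s_i$-isometry. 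Both are valid when $k>1$, since $\tildeF^{s,\perp}_i$, $\tildeF^s_i$ are already known to be $\ZZ^d$-periodic at this stage (Remark~\ref{s-dominatedonTd}). In fact your variant is arguably a bit cleaner: the $\tildeF^{s,\perp}_i$-holonomy is an actual map carrying the defining pair for $g(y)$ to the one for $g(y')$, so the orientation/sidedness argument that is left implicit in the paper's passage from the displayed $d^s_i$-equalities to \eqref{prop5.7.3} never arises.

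Where I see a genuine gap is the one-dimensional stable case $k=1$, which the claim must also cover for the ``especially'' part of Proposition~\ref{H is no distance on Fi}. There $\tildeF^{s,\perp}_1$ is trivial and you propose to substitute $\tildeF^u$. But the key identity $q'\in\tildeF^u(q)$, i.e.\ that the pair for $g(y')$ is the $\tildeF^u$-holonomy image of the pair for $g(y)$, requires $\tildeF^u(z+n)-n=\tildeF^u(z)$ for the fixed integer vector $n$; this is exactly the $\ZZ^d$-periodicity of $\tildeF^u$, which is what the whole argument is designed to prove, and it cannot be circumvented by Proposition~\ref{nmF}, since $n$ is fixed and is not of the form $n_m\in A^m\ZZ^d$ for large $m$. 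You flag this obstacle yourself, but the assertion that the holonomy identities ``must instead be obtained as limits'' is not substantiated, and I do not see how to make the $\tildeF^u$-based mechanism work. The paper sidesteps this completely by always approximating along $\tildeL^s_i$ ($=\tildeL^s$ when $k=1$), for which the $\ZZ^d$-periodicity of the full stable foliation $\tildeF^s$ is automatic. For $k=1$ you should adopt that route rather than $\tildeF^u$.
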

	\begin{proof}[Proof of Claim \ref{claim for prop5.6} ]
	By Proposition \ref{projection leaf dense}, there exist $z_m\in \tildeL^s_i(x)$ and $k_m\in A^m\ZZ^d$ such that $(z_m+k_m)\to y$.  By Proposition \ref{nmH} and the uniform continuity of $H$, as $m\to +\infty$,
		\begin{align*}
			d\left(H^{-1}(z_m+k_m) \;,\;  H^{-1}(z_m)+k_m\right)\to 0  \quad &{\rm and } \quad d\left(H^{-1}(z_m+k_m) \;,\; H^{-1}(y)\right)\to 0,\\
			d\left(H^{-1}(z_m+n)+k_m \;,\;  H^{-1}(z_m+n+k_m)\right)\to 0 \quad &{\rm and } \quad  d\left(  H^{-1}(z_m+k_m+n)\;,\; H^{-1}(y+n) \right)\to 0.
		\end{align*}
		Hence,
		\begin{align}
    d\left(H^{-1}(z_m)+k_m \;,\;  H^{-1}(z_m+n)-n+k_m \right)\to d\left(H^{-1}(y) \;,\; H^{-1}(y+n)-n\right). \label{prop5.7.1}
		\end{align}
		On the other hand, since foliations $\tildeF^s_i$ and $\tildeF^{s,\perp}_i$ are both $\ZZ^d$-periodic,
		\begin{align}
			\tilde{d}^s_i\left( H^{-1}(z_m)+k_m \;,\;  H^{-1}(z_m+n)-n+k_m \right)
	        =\tilde{d}^s_i\left( H^{-1}(z_m) \;,\;  H^{-1}(z_m+n)-n\right). \label{prop5.7.2}
		\end{align}
	   Since $H^{-1}$ is isometric along $\tildeL^s_i$, we have
	   \begin{align*}
	   	d^s_i\left( H^{-1}(z_m) \;,\;  H^{-1}(x)\right)
	   	&=d^s_i\left( H^{-1}(z_m+n) \;,\;  H^{-1}(x+n)\right),\\
	   	&=d^s_i\left( H^{-1}(z_m+n)-n \;,\;  H^{-1}(x+n)-n\right).
	   \end{align*}
	   It follows that
	   \begin{align}
	   	\tilde{d}^s_i\left( H^{-1}(z_m) \;,\;  H^{-1}(z_m+n)-n\right)=\tilde{d}^s_i\left( H^{-1}(x) \;,\;  H^{-1}(x+n)-n\right)=\alpha. \label{prop5.7.3}
	   \end{align}
		Now, combining \eqref{prop5.7.1} with \eqref{prop5.7.2} and \eqref{prop5.7.3}, we get $$\tilde{d}^s_i\left(H^{-1}(y) \;,\; H^{-1}(y+n)-n\right)=\lim_{m\to+\infty} \tilde{d}^s_i\left( H^{-1}(z_m)+k_m \;,\;  H^{-1}(z_m+n)-n+k_m \right)=\alpha.$$
			\end{proof}

		For every $l\in\NN$, applying the previous claim for $H^{-1}y=H^{-1}\big(x+(l-1) n\big)-(l-1) n$,  one has
		\begin{align*}
			\tilde{d}^s_i\left(H^{-1}(x+ln)-ln \;,\;  H^{-1}(x)\right)
			&=\tilde{d}^s_i\left(H^{-1}y \;,\; H^{-1}(x)\right)+ \tilde{d}^s_i\left(H^{-1}y \;,\;  H^{-1}(x+ln)-ln \right),\\
			&=(l-1)\cdot \alpha +\alpha =l\cdot \alpha\to +\infty,
		\end{align*}
			as $l\to+\infty$. Since $\tildeF^s_i$ is quasi-isometric (Proposition \ref{leaf proposition}), it contradicts with the fact that for all $x\in\RR^d$ and all $m\in\ZZ^d$, $d\left(H^{-1}(x+m)-m \;,\;  H^{-1}(x)\right)$ is bounded.
		
	\end{proof}

	\flushleft{\bf Jinpeng An} \\
	School of Mathematical Sciences, Peking University, Beijing, 100871,  China\\
	\textit{E-mail:} \texttt{anjinpeng@gmail.com}\\
	
	\flushleft{\bf Shaobo Gan} \\
	School of Mathematical Sciences, Peking University, Beijing, 100871,  China\\
	\textit{E-mail:} \texttt{gansb@pku.edu.cn}\\
	
	\flushleft{\bf Ruihao Gu} \\
	School of Mathematical Sciences, Peking University, Beijing, 100871,  China\\
	\textit{E-mail:} \texttt{rhgu@pku.edu.cn}\\
	
	\flushleft{\bf Yi Shi} \\
	School of Mathematical Sciences, Peking University, Beijing, 100871,  China\\
	\textit{E-mail:} \texttt{shiyi@math.pku.edu.cn}\\

\end{document}